\theoremstyle{plain}
\newtheorem{theorem}{Theorem}[section]
\newtheorem{lemma}[theorem]{Lemma}
\newtheorem{proposition}[theorem]{Proposition}
\newtheorem{corollary}[theorem]{Corollary}
\theoremstyle{definition}
\newtheorem{definition}[theorem]{Definition}
\newtheorem{assumption}{Assumption}
\newtheorem*{notations}{Notation}{\itshape}{\rmfamily}
\theoremstyle{remark}
\newtheorem{remark}{Remark}
\def\paragraph#1{\noindent \textbf{#1}}
\numberwithin{equation}{section}
\def\d{\mathrm{d}}
\def\<{\langle}
\def\>{\rangle}
\def\a{\alpha}
\def\e{\e}
\def\k{\kappa}
\def\l{\lambda}
\def\s{\sigma}
\def\th{\theta}
\def\L{\Lambda}
\def\P{{\Bbb P}}
\let\cal=\mathcal
\def\mfm{{\mathfrak M}}
 \def \L {{\Lambda}}
 \def \k {{\kappa}}
 \def \e {{\epsilon}}
 \def \s {{\sigma}}
 \def \n {{\nu}}
 \def \th {{\theta}}
 \def \l {{\lambda}}
 \def \d {{\delta}}
 \def \a {{\alpha}}
 \def \th {{\theta}}
 \def \n {{\nu}}
 \newcommand{\be}{\begin{equation}}
 \newcommand{\ee}{\end{equation}}
\newcommand{\bea}{\begin{eqnarray}}
 \newcommand{\eea}{\end{eqnarray}}
\def\TH(#1){\label{#1}}\def\thv(#1){\ref{#1}}
\def\Eq(#1){\label{#1}}\def\eqv(#1){(\ref{#1})}
 \def \1{\mathbbm{1}}
\def \lb {\left(}
\def \rb {\right)}
\begin{document}
 \title[]{From stochastic, individual-based models to the canonical equation of adaptive dynamics -- in one step}
  
\author[M. Baar]{Martina Baar}
\address{M. Baar\\ Institut f\"ur Angewandte Mathematik\\
Rheinische Friedrich-Wilhelms-Universit\"at\\ Endenicher Allee 60\\ 53115 Bonn, Germany}
\email{mbaar@uni-bonn.de}
\author[A. Bovier]{Anton Bovier}
\address{A. Bovier\\Institut f\"ur Angewandte Mathematik\\
Rheinische Friedrich-Wilhelms-Universit\"at\\ Endenicher Allee 60\\ 53115 Bonn, Germany}
\email{bovier@uni-bonn.de}
\author[N. Champagnat]{Nicolas Champagnat}
\address{N. Champagnat\\Institut Elie Cartan de Lorraine\\ UMR CNRS 7502\\
Universit\'e de Lorraine\\ Site de Nancy B.P. 70239\\ 54506 Vand\oe uvre-l\`es-Nancy Cedex, France \\and TOSCA team \\Inria Nancy - Grand Est, France.}
\email{Nicolas.Champagnat@inria.fr}

\keywords{adaptive dynamics, canonical equation, large population limit,
mutation-selection individual-based model}

\thanks{M. B. is supported by the German Research Foundation through 
the Priority Programme  1590 ``Probabilistic Structures in Evolution''. A.B. is partially supported by the German Research Foundation in 
the Collaborative Research Center 1060 "The Mathematics of Emergent Effects", 
the Priority Programme  1590 ``Probabilistic Structures in Evolution'',
the Hausdorff Center for Mathematics (HCM), and  the Cluster of Excellence ``ImmunoSensation'' at Bonn University.
}

\begin{abstract}
We consider a model for Darwinian evolution in an asexual population with a large but non-constant populations size characterized by a natural birth rate, a logistic death rate modelling competition and a probability of mutation at each birth event. In the present paper, 
we study the long-term behavior of the system in the limit of  large population ($K\to \infty$) size,  
 rare mutations ($u\to 0$),  and small mutational effects ($\s\to 0$), proving convergence to the 
  canonical equation of adaptive dynamics (CEAD).
 In contrast to earlier works, e.g. by Champagnat and M\'el\'eard, we take the three limits simultaneously,  i.e. $u=u_K$ and $\s=\s_K$, tend to zero with 
 $K$, subject to conditions that ensure that the time-scale of 
   birth and death events remains separated from that of successful mutational events. This
  slows down the dynamics of the microscopic system and leads to serious technical difficulties
  that requires the use of completely different methods. In particular,
  we cannot use the law of large numbers on the diverging time needed for fixation to approximate the stochastic system with the corresponding deterministic one. 
   To solve this problem we develop a "stochastic Euler scheme" based on coupling arguments that
  allows to control the time evolution of the stochastic system over time-scales that diverge with $K$.   
\end{abstract}
\maketitle


\section{Introduction}
In this paper we study a microscopic model for evolution in a population characterized by a birth rate with a probability of mutation at each event and a logistic death rate, 
which has been studied in many works before \cite{C_TSS, C_CEAD, C_ME, C_PES, F_MA}. 
More precisely, it is a model for an asexual population in which each individual's ability to survive and to reproduce is a function of a one-dimensional phenotypic trait,
 such as body size, the age at maturity, or the rate of food intake. The evolution acts on the trait distribution and is the consequence of three basic mechanisms: heredity, mutation and selection.
Heredity passes the traits trough generations, mutation drives the variation of the trait values in the population,
and selection acts on individuals with different traits and is a consequence of competition between the individuals for limited resources or area.

The model is a generic  stochastic individual-based model and belongs to the models of adaptive dynamics. In general, adaptive 
dynamic models aim to study the interplay between ecology (viewed as driving selection) and evolution, more precisely, the interplay between the three basic 
mechanisms mentioned above. It tries to develop general tools to study the long time evolution of a wide variety of ecological 
scenarios \cite{D_DTC, D_ADPS, M_AD}. These tools are based on the assumption of separation of ecological and evolutionary time 
scales and on the notion of invasion fitness \cite{M_F, M_HSDF}.
While the biological theory of adaptive dynamics is based on partly heuristic derivations, various aspects of the theory 
have been derived rigorously over the last years in the context of stochastic, individual-based models~\cite{C_TSS,C_CEAD,C_ME,C_PES,G_CSS,H_AD}. All of them 
concern the limit when the population size, $K$, tends to infinity.
They either study the separation of ecological and evolutionary time scales based on a limit of rare mutations, $u\rightarrow 0$, 
combined with a limit of large population \cite{C_TSS,C_PES}, the limit of small mutation effects, $\sigma\rightarrow 0$, 
\cite{C_CEAD,C_PES,G_CSS}, the stationary behavior of the system \cite{H_AD}, or the links between individual-based and 
infinite-population models \cite{C_ME}.
A important concept in the theory of adaptive dynamics is the canonical
equation of adaptive dynamics (CEAD), introduced by U. Dieckmann and R. Law
\cite{D_DTC}. It is an ODE that describes the evolution in 
time of the expected trait value in a monomorphic population. 
The heuristics leading to the CEAD are based on the biological assumptions of large population and rare mutations with small effects 
and the assumption that no two different traits can coexist. (Note that we write sometimes mutation steps instead of effects.)
There are  mathematically rigorous papers that  show that the limit of large 
population combined with rare mutations leads to a jump process, the Trait Substitution Sequence, \cite{C_TSS}, 
and that this jump
process converges, in the  limit of small mutation steps,  to the CEAD, \cite{C_PES}.
Since these two limits are applied separately and on different time scales, they give no clue about how the
  biological parameters (population size $K$, probability of mutations $u$ and size of mutation steps $\s$) should compare to ensure that the CEAD approximation of the
  individual-based model is correct.
 
The purpose of the present paper is to analyse the situation when 
the limits of  large population size, $K\rightarrow\infty$, rare mutations, 
$u_K \rightarrow 0$, 
and small mutation steps, $\sigma_K\rightarrow 0$, are 
taken \emph{simultaneously}.
We consider populations with monomorphic initial condition, meaning that at time zero the population consists only 
of individuals with the same trait. 
Then we identify a time-scale where evolution can be described as a succession of 
mutant invasions. To prove convergence  to the CEAD, we show that if a mutation occurs, the individuals holding this 
mutant trait can either die out or invade the resident population on this time scale, where 
invasion means that the 
mutant trait supersedes the resident trait i.e.  the individuals with the resident trait become extinct after some time. This implies that the 
population stays essentially monomorphic with  a trait that evolves in time. 
We will impose conditions on the mutation rates  that imply a separation of ecological 
and evolutionary time scales in the sense that an invading  mutant population converges to its ecological equilibrium before a new 
invading 
(successful) mutant appears. In order to avoid too restrictive hypothesis on the mutation rates, we do, however, allow
non-invading (unsuccessful) mutation events during this time, in contrast to all earlier works.

We will see that the combination of the three limits simultaneously, entails some considerable technical 
difficulties. The fact that the mutants have only a $K$-dependent small evolutionary advantage  
decelerates the dynamics of the microscopic process such that the time of any 
macroscopic change between resident and mutant diverges with $K$. 
This makes it impossible to use a law of large numbers as in  \cite{C_TSS} 
 to approximate the stochastic system with the corresponding deterministic system during the time of invasion.
Showing  that the stochastic system
 still follows in an appropriate sense the  
 corresponding competition Lotka-Volterra system (with $K$-dependent coefficients)
 requires a completely  new approach. Developing this approach, which can be seen 
 as a rigorous "stochastic Euler-scheme", is the main novelty of the present paper. The proof requires methods, based on 
 couplings 
 with discrete time Markov chains combined some standard potential theory arguments for the "exit from a domain problem" 
in a moderate deviations regime, as well 
 as comparison and convergence results of branching processes.   
 Note that since the result of \cite{C_TSS} is already different from 
 classical time scales separations results (cf. \cite{F_RPoDS}), our result differs from them a fortiori. 
 Thus, our result can be seen as a rigorous justification of the biologically motivated, heuristic assumptions which lead to 
 CEAD.\\

The remainder of this paper is organised as follows.
In Section \ref{model}  and \ref{known_results} we introduce the model and give an overview on  previous related results. 
 In Section \ref{result} we state our results and give a detailed outline of the proof. Full details of the proof
 are presented in the Section \ref{First_Phase}, \ref{2.Phase} and \ref{Convergence}. In the appendix we state and prove several elementary
  facts that are  used throughout  the proof.


\section{The individual-based model} \label{model}

In this section we introduce the model we analyze. 
We consider a population of a single asexual species that is composed of a finite number of individuals, each of them characterized by a one-dimensional phenotypic trait.
The microscopic model is an individual-based model with non-linear density-dependence, which has already been studied in ecological or evolutionary contexts by many authors \cite{C_ME,C_TSS,C_PES,F_MA}.  \\

The \textit{trait space} $\mathcal X$ is assumed to be a compact interval of $\mathbb R$. 
We introduce the following biological parameters:
\begin{enumerate}[(i)]
\setlength{\itemsep}{6pt}
\item{$b(x)\in\mathbb R_+$ is the \textit{rate of birth} of an individual with trait $x\in\mathcal X$.}
\item{$d(x)\in\mathbb R_+$ is the \textit{rate of natural death} of an individual with trait $x\in\mathcal X$.}
\item{$K\in\mathbb N$ is a parameter which scales the population size.}
\item{$c(x,y)K^{-1}\in\mathbb R_+$ is the \textit{competition kernel}
		which models the competition pressure felt by an individual with trait $x\in\mathcal X$ from an individual with trait $y\in\mathcal X$.}
\item{$u_K m(x)$ with $u_K, m(x)\in [0,1]$ is the \textit{probability that a mutation occurs at birth} from an individual with trait $x\in\mathcal X$, where $u_K\in [0,1]$ is a scaling parameter.}
\item{$M(x,dh)$ is the \textit{mutation law} of the mutational jump $h$. If the mutant is born from an individual with trait $x$, then the mutant trait  is given by $x+\sigma_K h\in \mathcal X$, where $\sigma_K \in[0,1]$ is a parameter scaling the size of mutation and $h$ is a random variable with law $M(x,dh)$.  We restrict for simplicity the setting to mutation measures with support included in $\mathbb Z$. }	
\end{enumerate}

The three scaling  parameters of the model are 
 the \emph{population size}, controlled by the scaling parameter $K$, 
the \emph{mutation probability}, controlled by the scaling parameter $u_K$, 
the \emph{ mutation size}, controlled by the scaling parameter $\sigma_K$.
The novelty of our approach is that we consider the case where all these parameters tend to their limit jointly, more
precisely that both $u_K$ and $\sigma_K$ are functions of $K$ and tend to zero  as $K$ tends to infinity
 (subject to certain constraints).

At any time $t$  we consider a finite number, $N_t$, of individuals, each of them having  a trait value $x_i(t)\in \mathcal X$. 
It is convenient to represent the population state at time $t$ by the rescaled point measure, 
$\nu^K $, which depends on $K$, $u_K$ and $\sigma_K$
\be
 	\nu^K _t=\frac 1 K \sum_{i=1}^{N_t}\delta_{x_i(t)}.
 \ee 
Let $\langle \mu , f \rangle$ denote the integral of a measurable function $f$ with respect to the measure $\mu $. 
Then $\langle\nu^K _t,\mathds 1 \rangle=N_t K^{-1}$ and for any $x\in\mathcal X$, 
the positive number $\langle\nu^K _t,\mathds 1_{\{x\}}\rangle$ is called the \textit{density of trait $x$ at time $t$}. 
With this notation, an individual with trait $x$ in the population $\nu^K _t$ dies due to age or competition with rate
\be
 d(x)+\int_{\mathcal X}\ c(x,y)\nu^K _t(dy).
\ee
Let $\mathcal M(\mathcal X)$ denote the set of finite nonnegative measures on $\mathcal X$, equipped with the weak topology, 
and define 
\be
	\mathcal M^K(\mathcal X)\equiv\left\{\frac 1 K \sum_{i=1}^{n}\delta_{x_i}\,:\, n\geq 0,\; x_1,...,x_n\in \mathcal X\right\}.
 \ee
Similar as in \cite{F_MA}, we obtain that the population process, $(\nu^K_t)_{t\geq0}$, is a  
$\mathcal M^K(\mathcal X)$-valued Markov process with infinitesimal generator, $\mathscr L^K$, 
defined for any bounded measurable function $f$ from $\mathcal M^K(\mathcal X)$ to $\mathbb R$ and for all $\mu^K\in \mathcal M^K(\mathcal X)$ by
\begin{align} 
\mathscr L^K f(\mu^K )= & \int_{\mathcal X}\biggl(f\Bigl(\mu^K +\frac {\delta_x}{K}\Bigr)-f(\mu^K )\biggr)\bigl(1-u_K m(x)\bigr)b(x)\:K\mu^K (dx)\\ 							\nonumber
						&+\int_{\mathcal X}\int_{\mathbb Z}\biggl(f\Bigl(\mu^K +\frac{\delta_{x+\sigma_K h}} K\Bigr)-f(\mu^K )\biggr)u_K 			
							m(x)b(x)\:M(x,dh)\: K\mu^K (dx)\\\nonumber
						&+\int_{\mathcal X}\biggl(f\Bigl(\mu^K -\frac {\delta_x} K\Bigr)-f(\mu^K )\biggr)\Bigl(d(x)+
						\int_{\mathcal X}c(x,y)\mu^K (dy)\Bigr)\:K\mu^K (dx).
\end{align}
The first and second terms are linear (in $\mu^K$) and describe the births (without and with mutation), 
but the third term is non-linear and describes the deaths due to age or competition. 
The density-dependent non-linearity of the third term models the competition in the population, 
and hence drives the selection process.
\begin{assumption}\label{ass}We will use the following assumptions on the parameters of the model:
\begin{enumerate}
\renewcommand{\labelenumi}{(\roman{enumi})}
\item{$b$, $d$ and $c$ are measurable functions, and there exist $\overline b,\;\overline d,\overline c<\infty$ such that}
\begin{center}$\;b(.)\leq \overline b,\quad d(.)\leq \overline d\quad$ and $\quad c(.\:,.)\leq \overline c.$\end{center}
\item{For all $x\in\mathcal X$, $b(x)-d(x)>0$, and there exists $\underline c>0$ such that $x\in\mathcal X$, $\underline c\leq c(x,x)$.}
\item{The support of $M(x,\:.\:)$ is a subset of $\mathbb{Z}\cap\mathcal X-x$ and uniformly bounded for all $x\in \mathcal X$. This means that there exists an $A\in\mathbb N$ such that}
 \begin{center}$ M(x,dh)=\sum_{k=-A}^Ap_k(x)\delta_k(dh)$,\quad where $\sum_{k=-A}^Ap_k(x)=1$ for any $x\in\mathcal X$.\end{center}

\item{$b,d,m\in C^2(\mathcal X, \mathbb R)$ and $c\in C^2(\mathcal X^2,\mathbb R)\;$.}

\end{enumerate} 
\end{assumption}
Assumptions (i) and (iii) allow to deduce the existence and uniqueness in law of a process on $\mathbb D(\mathbb R_+,$ $\mathcal M^K(\mathcal X))$ with infinitesimal generator $ \mathscr L^K$ (cf. \cite{F_MA}). 
Note that Assumption (iii) differs from the assumptions in \cite{F_MA} because we restrict the setting to mutation measures with support included in $\mathbb Z$ and that it ensures that a mutant trait remains in $\mathcal X$.
Assumption (ii) prevents the population from exploding or becoming extinct too fast. Since $\mathcal X$ is compact,
Assumption (iv) ensures that the derivatives of the functions $b, c, d$ and $m$ are uniformly Lipschitz-continuous.
\section {Some notation and previous results}\label{known_results}
We start with a theorem, due to N. Fournier and S. M\'el\'eard, which describes the behavior of the populations process, for fixed $u$ and $\sigma$, when $K\rightarrow\infty$.
\begin{theorem}[Theorem 5.3 in \cite{F_MA}]
Fix $u$ and $\sigma$.
Let Assumption \ref{ass} hold and assume in addition that 
the initial conditions $\nu_0^K$ converge for $K\to\infty$ in law and for the weak topology on 
$\mathcal M(\mathcal X)$ to some deterministic finite measure $\xi_0\in \mathcal M(\mathcal X)$ 
and that $\sup_K \mathbb E[ \langle \nu^{K}_0, \mathds 1\rangle^3] <\infty$. \\[0.2em]
Then for all $T > 0$, the sequence $\nu^K$, generated by $\mathscr L^K$, converges  for $K\rightarrow \infty$ in law, 
in  $\mathbb D([0,T ],$ $\mathcal M ( \mathcal X))$, to a deterministic continuous function 
$\xi\in C([0,T ],\mathcal M ( \mathcal X))$.
This measure-valued function $\xi$ is the unique solution, 
satisfying $\sup_{t\in[0,T ]}\langle \xi_t, \mathds 1\rangle <\infty $, 
of the integro-differential equation written in its weak form:
for all bounded and measurable functions, $f:\mathcal X \to\mathbb R$,
\begin{align}
\int_{\mathcal X} \xi_t(dx)f(x)
	=&\int_{\mathcal X} \xi_0(dx)f(x) 
		+\!\int_0^t ds \int_{\mathcal X }\xi_s(dx) u m(x)b(x) \int_{\mathbb Z} M(x,dh) f(x\!+\!\sigma h)
						  \\
	&+\!\int_0^t  ds \int_{\mathcal X }\xi_s(dx) f(x)\Big(\left(1\!-\!u m(x)\right)b(x)
							\!-\!d(x)\!-\!\int_{\mathcal X}\xi_s(dy)c(x,y) \Big).\nonumber
\end{align}
\end{theorem}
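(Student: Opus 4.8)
The plan is to follow the classical martingale-problem / tightness route of Fournier--M\'el\'eard, which reduces the convergence to three ingredients: a semimartingale decomposition of $\langle\nu^K_t,f\rangle$, a uniform moment bound, and tightness in $\mathbb D([0,T],\mathcal M(\mathcal X))$, followed by identification of all limit points as solutions of the stated integro-differential equation and a uniqueness argument for that equation. First I would write, for fixed bounded measurable $f$, the Dynkin decomposition $\langle\nu^K_t,f\rangle=\langle\nu^K_0,f\rangle+\int_0^t \mathscr L^K\langle\cdot,f\rangle(\nu^K_s)\,ds+M^{K,f}_t$, where $M^{K,f}$ is a c\`adl\`ag square-integrable martingale. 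Applying $\mathscr L^K$ to $\mu\mapsto\langle\mu,f\rangle$ collapses the finite-difference terms $f(\mu+\delta_x/K)-f(\mu)=f(x)/K$ against the factor $K\mu(dx)$, producing exactly the drift appearing in the equation (birth without mutation contributing $(1-um(x))b(x)f(x)$, birth with mutation contributing $um(x)b(x)\int M(x,dh)f(x+\sigma h)$, and death contributing $-(d(x)+\int c(x,y)\mu(dy))f(x)$). A parallel computation with $f^2$, or directly from the jump structure, gives the predictable quadratic variation $\langle M^{K,f}\rangle_t=\frac1K\int_0^t\!\int_{\mathcal X}\big[\text{(birth + mutation + death rates)}\big]f(\cdot)^2\,\nu^K_s(dx)\,ds$, which is $O(1/K)$ on $[0,T]$ once we have the moment bound.

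Next I would establish the uniform third-moment bound $\sup_K\mathbb E[\sup_{t\le T}\langle\nu^K_t,\mathds1\rangle^3]<\infty$, which propagates the hypothesis $\sup_K\mathbb E[\langle\nu^K_0,\mathds1\rangle^3]<\infty$: apply $\mathscr L^K$ to $\mu\mapsto\langle\mu,\mathds1\rangle^3$, use Assumption~(i) to bound the birth contribution by a linear-in-$\langle\mu,\mathds1\rangle^3$ term (the $c$-term only helps, being a negative drift at large population size by Assumption~(ii)), and close the estimate with Gronwall after a stopping-time/localization argument and Fatou. This bound controls all the integrands uniformly, shows $\mathbb E[\langle M^{K,f}\rangle_T]\to0$, and hence $M^{K,f}\to0$ in $L^2$ uniformly on $[0,T]$ by Doob. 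For tightness I would use the criterion of Roelly (or Jakubowski) for $\mathcal M(\mathcal X)$-valued processes: $\mathcal X$ compact makes $\mathcal M(\mathcal X)$ with the weak topology metrizable and bounded sets relatively compact, so it suffices to check tightness of $(\langle\nu^K_\cdot,f\rangle)_K$ in $\mathbb D([0,T],\mathbb R)$ for $f$ in a dense subalgebra (e.g. polynomials or $C(\mathcal X)$), which follows from the Aldous--Rebolledo criterion: the finite-variation part has an equicontinuous modulus by the moment bound and Assumption~(i), and the martingale part has quadratic variation tending to zero uniformly.

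Having extracted a convergent subsequence with limit $\xi\in\mathbb D([0,T],\mathcal M(\mathcal X))$, I would pass to the limit in the semimartingale decomposition: the martingale term vanishes, the initial term converges by hypothesis, and the drift term converges because $s\mapsto\langle\nu^K_s,\varphi\rangle$ converges (for $\varphi\in\{f,\ b\,f,\ m\,b\,f,\ \int M(\cdot,dh)f(\cdot+\sigma h),\ c(x,\cdot)\}$, all bounded measurable, the $c$-term handled via $\int\!\int c(x,y)\nu^K_s(dx)\nu^K_s(dy)$ and continuity of $c$ together with weak convergence of the product measures on the compact $\mathcal X^2$) uniformly enough, using the moment bound to justify dominated convergence and to pass the time integral through. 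This yields that every limit point $\xi$ satisfies $\sup_{t\le T}\langle\xi_t,\mathds1\rangle<\infty$ and the weak-form equation; a standard argument (the compensator is absolutely continuous in $t$) upgrades c\`adl\`ag to continuous, so $\xi\in C([0,T],\mathcal M(\mathcal X))$. Finally, uniqueness of the solution: if $\xi^1,\xi^2$ are two solutions with $\sup_{t\le T}\langle\xi^i_t,\mathds1\rangle=:R<\infty$, I would estimate the total-variation-type distance $\sup_{\|f\|_\infty\le1}|\langle\xi^1_t-\xi^2_t,f\rangle|$ by writing the difference of the two equations, bounding each term using Assumption~(i) (all rates bounded) and the quadratic $c$-term via the bilinearity trick $|\langle\xi^1,c(\cdot,\cdot)\xi^1\rangle-\langle\xi^2,c(\cdot,\cdot)\xi^2\rangle|\le\bar c\,R\,(\text{dist}(\xi^1_s,\xi^2_s))$, and concluding by Gronwall. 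The main obstacle I anticipate is the uniform moment bound with its attendant localization — making the Gronwall estimate for $\langle\nu^K_t,\mathds1\rangle^3$ rigorous requires care with stopping times since the generator computation is only valid before the population exits a large ball, and one must check the resulting constants do not depend on $K$ — but this is exactly where the boundedness in Assumption~(i) and the positive competition lower bound in Assumption~(ii) are used; everything else is routine once that is in place, and in any case this is precisely Theorem~5.3 of~\cite{F_MA}, whose proof we may cite.
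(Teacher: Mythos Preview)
The paper does not prove this theorem at all: it is quoted verbatim as Theorem~5.3 of Fournier--M\'el\'eard~\cite{F_MA} in the ``previous results'' section, with no proof given. Your sketch is a correct outline of the standard martingale-problem/tightness argument that \cite{F_MA} uses, and you even note at the end that one may simply cite that reference; so there is nothing to compare and nothing to correct.
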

Without mutation one obtains a convergence to the competitive system of Lotka-Volterra equations defined below (see \cite{F_MA}).
\begin{corollary}[The special case $u=0$ and $\xi_0$ is n-morphic]\label{cor}
If the same assumptions as in the theorem above with $u=0$ hold and if in addition $\xi_0=\sum_{i=1}^{n} z_i(0)\delta_{x_i}$, then $\xi_t$ is given by  
$\xi_t=\sum_{i=1}^{n}z_i(t)\delta_{x_i}$, where $z_i$ is the solution of the competitive system of Lotka-Volterra equations defined below.
\end{corollary}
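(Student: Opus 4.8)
The plan is to verify directly that the measure-valued function $\xi_t := \sum_{i=1}^n z_i(t)\delta_{x_i}$ solves the integro-differential equation of the preceding theorem in the case $u=0$, where $z = (z_1,\dots,z_n)$ is the solution of the competitive Lotka--Volterra system
\begin{equation}
\dot z_i(t) = z_i(t)\Bigl(b(x_i) - d(x_i) - \sum_{j=1}^n c(x_i,x_j)\, z_j(t)\Bigr), \qquad i = 1,\dots,n,
\end{equation}
with initial values $z_i(0) \ge 0$ read off from $\xi_0$, and then to invoke the uniqueness statement contained in that theorem to conclude.

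First I would check that this system has a unique global solution with values in $\mathbb R_+^n$. Local existence and uniqueness follow from the Cauchy--Lipschitz theorem, the right-hand side being polynomial, hence locally Lipschitz, in $z$. The nonnegative orthant is invariant since $\dot z_i = 0$ whenever $z_i = 0$. To rule out blow-up in finite time, put $S(t) := \sum_{i=1}^n z_i(t)$; using $b \le \overline b$, $d \ge 0$, $c \ge 0$, $c(x_i,x_i) \ge \underline c$ and $\sum_i z_i^2 \ge S^2/n$, one gets the differential inequality $\dot S \le \overline b\, S - (\underline c/n)\, S^2$, so that $S(t)$ remains bounded on $\mathbb R_+$ by $\max\{S(0),\, \overline b\, n/\underline c\}$. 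In particular $\sup_{t\in[0,T]}\langle \xi_t, \mathds 1\rangle < \infty$ for every $T>0$, and $t\mapsto\xi_t$ is continuous for the weak topology, so $\xi$ is an admissible candidate in the theorem.

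It then remains to substitute the ansatz into the weak equation. With $u=0$ the mutation integral vanishes, and for any bounded measurable $g:\mathcal X\to\mathbb R$ one has $\int_{\mathcal X}\xi_s(dx)\, g(x) = \sum_{i=1}^n z_i(s)\, g(x_i)$; in particular $\int_{\mathcal X}\xi_s(dy)\, c(x_i,y) = \sum_{j=1}^n c(x_i,x_j)\, z_j(s)$. Hence, for bounded measurable $f$, the right-hand side of the integro-differential equation becomes $\sum_{i=1}^n f(x_i)\bigl(z_i(0) + \int_0^t z_i(s)\,(b(x_i) - d(x_i) - \sum_{j} c(x_i,x_j)\, z_j(s))\, ds\bigr)$, which equals $\sum_{i=1}^n f(x_i)\, z_i(t)$ exactly because each $z_i$ satisfies the integrated form of the Lotka--Volterra system. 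Since moreover $\xi_0 = \sum_i z_i(0)\delta_{x_i}$ by hypothesis, the uniqueness part of the theorem forces the limit $\xi_t$ to coincide with $\sum_{i=1}^n z_i(t)\delta_{x_i}$, which is the claim. The only point that requires any care is the a priori bound guaranteeing global existence of the Lotka--Volterra flow; everything else is a direct substitution.
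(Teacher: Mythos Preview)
Your argument is correct. The paper does not actually supply a proof of this corollary: it is stated as an immediate consequence of the preceding theorem (attributed to Fournier and M\'el\'eard) and the reader is referred to~\cite{F_MA}. Your verification---checking global existence of the Lotka--Volterra flow via the logistic bound on the total mass, then substituting the ansatz into the weak equation with $u=0$ and invoking the uniqueness clause of the theorem---is exactly the natural way to fill in the details, and nothing is missing.
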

\begin{definition}For any $(x_1,...,x_n)\in\mathcal X^n$, 
we denote by $LV(n,(x_1,...,x_n))$ the \emph{competitive system of Lotka-Volterra equations}  defined by\begin{align}\label{LV-System}
\frac {d\:z_i(t)}{dt}= z_i\biggl(b(x_i)-d(x_i)-\sum_{j=1}^n c(x_i,x_j)z_j\biggr), \qquad 1\leq i\leq n.
\end{align}
\end{definition}
Next, we introduce the notation of coexisting traits and of invasion fitness (see \cite{C_PES}).
\begin{definition}
We say that the distinct traits $x$ and $y$ \emph{coexist} if the system $LV(2,(x,y))$ admits an unique non-trivial equilibrium, named $\overline { z} (x,y) \in (0,\infty)^2$, which is locally strictly stable in the sense that the eigenvalues of the Jacobian matrix of the system $LV(2, (x,y))$ at $ \overline { z }(x,y)$ are all strictly negative. 
\end{definition}
The invasion of a single mutant trait in a monomorphic population which is close to its equilibrium is governed by its initial growth rate. Therefore, it is convenient to define the fitness of a mutant trait by its initial growth rate.
\begin{definition} If the resident population has the trait  $x\in\mathcal X$, then we call the following function \emph{invasion fitness} of the mutant trait $y$
\begin{align}
f(y, x)=b(y)-d(y)-c(y,x)\overline z(x).
\end{align}
\end{definition}
\begin{remark}
The unique strictly stable equilibrium of $LV(1,x)$ is $\ \overline z(x)=\frac{b(x)-d(x)}{c(x,x)}\,$, and hence $f(x,x)=0$ for all $x\in\mathcal{X}$.
\end{remark}
There is a relation between coexistence and invasion fitness (cf. \cite{I_MMLS}).
\begin{proposition}\label{pro_coex}
There is coexistence in the system $LV(2,(x,y))$ if and only if 
\begin{align} f(x,y)\equiv r(x)-c(x,y)\overline z(y)>0 \quad \text{and}\quad f(y,x)\equiv r(y)-c(y,x)\overline z(x)>0.\end{align}
\end{proposition}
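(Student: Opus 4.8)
The plan is to analyze the two-dimensional system $LV(2,(x,y))$ directly and relate the location of its interior equilibrium to the two fitness conditions. Write the system as $\dot z_1 = z_1(r(x) - c(x,x)z_1 - c(x,y)z_2)$ and $\dot z_2 = z_2(r(y) - c(y,x)z_1 - c(y,y)z_2)$, where $r(x)=b(x)-d(x)>0$ by Assumption \ref{ass}(ii). An interior equilibrium $\overline z(x,y)=(z_1^*,z_2^*)\in(0,\infty)^2$ must solve the linear system $c(x,x)z_1^* + c(x,y)z_2^* = r(x)$ and $c(y,x)z_1^* + c(y,y)z_2^* = r(y)$. First I would compute this solution by Cramer's rule: with $D := c(x,x)c(y,y) - c(x,y)c(y,x)$ the determinant of the interaction matrix, one gets $z_1^* = (r(x)c(y,y) - r(y)c(x,y))/D$ and $z_2^* = (r(y)c(x,x) - r(x)c(y,x))/D$. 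Using $\overline z(x) = r(x)/c(x,x)$ and $\overline z(y) = r(y)/c(y,y)$, a short manipulation rewrites the numerators as $c(y,y)(r(x) - c(x,y)\overline z(y)) = c(y,y) f(x,y)$ and $c(x,x)(r(y) - c(y,x)\overline z(x)) = c(x,x) f(y,x)$, so that $z_1^* = c(y,y)f(x,y)/D$ and $z_2^* = c(x,x)f(y,x)/D$.

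Next I would handle the Jacobian. At an interior equilibrium the Jacobian of $LV(2,(x,y))$ simplifies, because the bracketed growth terms vanish, to the matrix with entries $-c(x,x)z_1^*$, $-c(x,y)z_1^*$ in the first row and $-c(y,x)z_2^*$, $-c(y,y)z_2^*$ in the second. Its trace is $-(c(x,x)z_1^* + c(y,y)z_2^*) < 0$ whenever $z_1^*,z_2^*>0$, and its determinant is $z_1^* z_2^*(c(x,x)c(y,y) - c(x,y)c(y,x)) = z_1^* z_2^* D$. So both eigenvalues have strictly negative real part (hence, being real by the sign structure or by trace/determinant, are strictly negative) precisely when $D>0$ and $z_1^*, z_2^*>0$; and this is automatically the unique interior equilibrium since the defining linear system has a unique solution when $D\neq 0$.

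It remains to tie the two directions together. If $f(x,y)>0$ and $f(y,x)>0$: I claim $D>0$. Indeed, a positive interior equilibrium cannot exist if $D<0$ — one checks that $D<0$ together with positivity of both numerators $c(y,y)f(x,y)$ and $c(x,x)f(y,x)$ is impossible since those numerators are positive, forcing $z_i^* = (\text{positive})/D < 0$, a contradiction with the expectation of coexistence; more carefully, I would argue via the standard Lotka-Volterra phase-plane analysis (the two nullcline lines, each with negative slope, must cross in the open positive quadrant, which combined with the boundary behaviour forces $D>0$). Granting $D>0$, the formulas give $z_1^*,z_2^*>0$, the equilibrium is unique, and the Jacobian computation above shows local strict stability, so the traits coexist. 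Conversely, if the traits coexist, then $\overline z(x,y)\in(0,\infty)^2$ exists and is locally strictly stable; stability forces $\det(\text{Jacobian}) = z_1^*z_2^* D > 0$, hence $D>0$, and then $z_1^*>0$ gives $c(y,y)f(x,y)/D>0$, i.e. $f(x,y)>0$ since $c(y,y)\geq\underline c>0$, and likewise $f(y,x)>0$.

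The main obstacle is the direction "coexistence of fitnesses $\Rightarrow$ $D>0$": one must rule out the $D<0$ case cleanly. The safest route is the elementary planar Lotka-Volterra argument — draw the two nullclines, note each is a line of negative slope hitting both axes, observe that the sign conditions $f(x,y)>0$, $f(y,x)>0$ say exactly that each nullcline's intercepts straddle the other's, so the lines meet in the open first quadrant; then a linearization at that crossing (the computation above) gives the sign of $D$ via the determinant and forces $D>0$ for the crossing to be where it is. I would present this phase-plane step carefully, then let the algebra of the previous paragraphs do the rest; everything else is routine linear algebra using $c(\cdot,\cdot)\geq\underline c$ from Assumption \ref{ass}(ii) to divide safely.
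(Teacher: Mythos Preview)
Your approach is correct and essentially complete; the paper itself does not give a proof of this proposition but simply cites \cite{I_MMLS}. Your computation of the interior equilibrium, the Jacobian, and the converse direction are all sound, and your observation that the discriminant $(\mathrm{tr})^2-4\det=(c(x,x)z_1^*-c(y,y)z_2^*)^2+4z_1^*z_2^*c(x,y)c(y,x)\geq 0$ confirms the eigenvalues are real.

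The only place you hesitate unnecessarily is the ``main obstacle'' of deducing $D>0$ from $f(x,y)>0$ and $f(y,x)>0$. You propose a phase-plane argument, but there is a one-line algebraic route: the two fitness conditions read $r(x)c(y,y)>c(x,y)r(y)$ and $r(y)c(x,x)>c(y,x)r(x)$, with all four left-hand and right-hand quantities nonnegative and $r(x),r(y)>0$ by Assumption~\ref{ass}(ii). If either $c(x,y)$ or $c(y,x)$ vanishes then $D=c(x,x)c(y,y)>0$ trivially; otherwise all quantities are strictly positive and multiplying the two inequalities gives $r(x)r(y)c(x,x)c(y,y)>c(x,y)c(y,x)r(x)r(y)$, hence $D>0$ after dividing by $r(x)r(y)$. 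This replaces your phase-plane step entirely and closes the argument.
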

The following convergence result from  \cite{C_TSS} describes the limit behavior of the populations process, 
for fixed $\sigma$, when $K\rightarrow\infty$ and $u_K\rightarrow 0$.
More precisely, it says that  the rescaled individual-based process converges in the sense of finite dimensional distributions 
to the "trait substitution sequence" (TSS),
if one assumes  in addition to Assumption 1 the following "Invasion implies fixation" condition. 
\begin{assumption}\label{ass2}
	Given any $x\in \mathcal X$, Lebesgue almost any $y\in\mathcal X$ satisfies one of the following conditions:\qquad
						(i)\;\; $f(y,x)<0$ \qquad or\qquad (ii)\;\; $f(y,x)>0\;$ and $\;f(x,y)<0$.
\end{assumption}
Note that by Proposition \ref{pro_coex}, this means that either a mutant cannot invade, or cannot coexist with the resident.
\begin{theorem}[Corollary 1 in \cite{C_TSS}]
\label{TSS}
Let Assumption \ref{ass} and \ref{ass2} hold. Fix $\sigma$ and assume that 
\begin{align}\label{Con_TSS}
\forall V>0, \qquad \exp(-VK)\ll u_K \ll \frac{1}{K\ln(K)}, \qquad \text{as } K\rightarrow \infty.
\end{align}
Fix also $x\in \mathcal X$ and let $(N^{K}_0)_{K\geq1}$ be a sequence of $\mathbb N$-valued random variables such that $({N^{K}_0}/{K})$ converges for $K\to \infty$ in law to $\bar z(x)$ and is bounded in ${\mathbb L}^{{}^{p}}$ for some $p>1$. Consider the processes $\nu^{K}$ generated by $\mathscr L^K$ with monomorphic initial state $(N^{K}_0 /K)\delta_{\{x\}}$.
\\Then the sequence of the rescaled processes $\nu^{{}^{K}}_{ t/ Ku_K}$ converges in the sense of finite dimensional distributions to the measure-valued process
\begin{align}
\overline z(X_t)\delta_{X_t},
\end{align}
where the $\mathcal X$-valued Markov jump process $X$ has initial state $X_0=x$ and infinitesimal generator
\begin{align}
A\phi(x)=\int_{\mathbb Z}\left(\phi(x+\sigma h)-\phi(x)\right)m(x)b(x)\overline z(x)\frac{[f(x+\sigma h,x)]_+}{b(x+\sigma h)}M(x,dh).
\end{align}
\end{theorem}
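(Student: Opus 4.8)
The plan is to exploit a separation of time scales. On the accelerated scale $t\mapsto t/(Ku_K)$, individual births and deaths act on a time of order $1/K$, Lotka--Volterra relaxation towards an equilibrium takes time $O(1)$, a successful invasion takes time $O(\ln K)$, and the waiting time between two mutations is of order $1$. First I would reduce the statement to the analysis of a single invasion step: starting from a population close to $\overline z(x)\delta_{\{x\}}$, show that after an asymptotically exponential time the population is again essentially monomorphic, now with trait $x+\sigma h$ for $h$ drawn according to the law dictated by the generator $A$, while in the meantime the empirical measure stays within $o(1)$ of $\overline z(x)\delta_{\{x\}}$. Iterating, via the strong Markov property and the fact that at a fixed time one is, with probability tending to $1$, not in the middle of an invasion, yields convergence of the one-dimensional, hence of the finite-dimensional, distributions to the jump process generated by $A$. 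Convergence can hold only in this sense and not in the Skorokhod topology, since during the short invasion windows the rescaled process is genuinely dimorphic.

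For one step I would split the invasion of a mutant $y=x+\sigma h$, born into a resident population of size $\approx\overline z(x)K$, into three phases. \emph{Phase 1} runs from the birth of the mutant until its subpopulation either dies out or first reaches size $\varepsilon K$. Throughout, the resident subpopulation alone behaves like a logistic birth--death process near its equilibrium $\overline z(x)K$, whose excursions of size $\gtrsim\varepsilon K$ over times $O(\ln K)$ have $K$-exponentially small probability; since moreover the mutant density is $\le\varepsilon$, the mutant subpopulation is sandwiched, by explicit couplings, between two linear birth--death processes with per-capita birth rate $b(y)$ and death rates $d(y)+c(y,x)(\overline z(x)\pm\varepsilon)$. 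These have survival probabilities $[f(y,x)\pm O(\varepsilon)]_+/b(y)$ and, conditioned on survival, hit level $\varepsilon K$ within time $O(\ln K)$; letting first $K\to\infty$ and then $\varepsilon\to0$ identifies the invasion probability as $[f(y,x)]_+/b(y)$. \emph{Phase 2}: once the mutant density exceeds $\varepsilon$, the law of large numbers (Theorem~5.3 of~\cite{F_MA}, with the two-type initial datum) shows that the empirical measure follows the deterministic flow $LV(2,(x,y))$ on the $O(1)$ scale; by Assumption~\ref{ass2} and Proposition~\ref{pro_coex} the traits $x$ and $y$ cannot coexist, so this flow drives the resident density below $\varepsilon$ and the mutant density into a neighbourhood of $\overline z(y)$. \emph{Phase 3}: from the time the resident density drops below $\varepsilon$, the resident subpopulation is dominated by a subcritical linear birth--death process (birth rate $b(x)$, death rate close to $d(x)+c(x,y)\overline z(y)$, net rate $f(x,y)<0$) and dies out within time $O(\ln K)$, leaving a monomorphic population of trait $y$ near $\overline z(y)K$.

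It remains to match time scales. The three phases together last $O(\ln K)\ll 1/(Ku_K)$ by the upper bound $u_K\ll 1/(K\ln K)$, so with high probability no second mutation interferes with an ongoing invasion; this is exactly why unsuccessful mutants can be neglected, as a failed mutant lives $O(\ln K)=o(1/(Ku_K))$ time and never reaches macroscopic size. The lower bound $\exp(-VK)\ll u_K$ for every $V>0$ forces $1/(Ku_K)\ll\exp(cK)$, so a monomorphic population near its equilibrium is not extinguished by fluctuations before the next mutant appears. On the accelerated scale the mutation events from a resident population $\approx\overline z(x)K$ then occur at asymptotic rate $\overline z(x)m(x)b(x)$, each producing the jump $h\sim M(x,\cdot)$ and being independently ``successful'' with probability $[f(x+\sigma h,x)]_+/b(x+\sigma h)$; thinning the limiting Poisson stream of mutations by this probability produces precisely the jump rates of $A$.

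The hard part will be Phase~1. Making the branching coupling rigorous requires controlling, simultaneously and over the diverging window $O(\ln K)$, that the resident stays within $O(\varepsilon)$ of its equilibrium and that at most one mutant lineage is ever present and relevant (i.e.\ that an earlier failed mutant has disappeared before the next birth-with-mutation, which is where $u_K\ll1/(K\ln K)$ enters), and then extracting the exact survival probability $[f(y,x)]_+/b(y)$ uniformly in $(x,y)$ in the limit. The exit-from-a-neighbourhood estimates for the resident---obtainable by comparison with logistic birth--death processes, or by Freidlin--Wentzell-type bounds---and the interchange of the limits $K\to\infty$ and $\varepsilon\to0$ are the delicate points; Phases~2 and~3 are comparatively routine given Theorem~5.3 of~\cite{F_MA} and subcriticality.
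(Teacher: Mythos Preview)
Your proposal is correct and follows essentially the same three-phase decomposition that the paper sketches immediately after the statement of Theorem~\ref{TSS} (which is itself quoted from~\cite{C_TSS} rather than proved in full here): branching approximation for the mutant while the resident stays near $\overline z(x)$, then the law of large numbers for $LV(2,(x,y))$ once the mutant is macroscopic, then a subcritical branching approximation for the dying resident, with the two-sided condition on $u_K$ ensuring that invasions fit between successive mutations and that the resident survives until the next mutation. Your identification of Phase~1 as the delicate step, and of the exit-from-domain estimate plus the $K\to\infty$, $\varepsilon\to 0$ interchange as its crux, is exactly right.
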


Here we write $f(K)\ll g(K)$ if $f(K)/g(K)\rightarrow 0$ when $K\rightarrow \infty$. Note that,
for any $s<t$, the convergence does not hold  in law for the Skorokhod topology on $\mathbb D([s,t],\mathcal M(\mathcal X))$, for any topology $\mathcal M(\mathcal X)$ such that the total mass function $\nu\mapsto \langle\nu,\mathds 1\rangle$ is continuous, because the total mass of the limit process is a discontinuous function.
The main part of the proof of this theorem is the study of the invasion of a mutant trait $y$ 
that has just appeared in a monomorphic population with trait $x$.
The invasion can be divided into three steps.
Firstly, as long as the mutant population size $\langle\nu^K_t,\mathds 1_{\{y\}}\rangle$
is smaller than a fixed small $\epsilon>0$, the resident population size
$\langle\nu^K_t,\mathds 1_{\{x\}}\rangle$ stays close to $\overline z(x)$. Therefore, $\langle\nu^K_t,\mathds 1_{\{y\}}\rangle$ can be approximated by a linear branching process with birth rate $b(y)$ and death rate $d(y)+c(y,x)\bar z(x)$ 
until it goes extinct or reaches $\e$.
Secondly, once $\langle\nu^K_t,\mathds1_{\{y\}}\rangle$  
has reached $\epsilon$, for large $K$, $\nu^K_t$ is close to the solution of $LV(2,(x,y))$
with initial state $(\overline z(x),\epsilon)$, which reaches the $\epsilon$-neighborhood of $(0, \overline z(y))$ in finite time.  This is a consequence of Corollary \ref{cor}.
Finally, once $\langle\nu^K_t,\mathds 1_{\{y\}}\rangle$ is close to $\overline z(y)$ and 
$\langle\nu^K_t,\mathds 1_{\{x\}}\rangle$ is small, $\langle\nu^K_t,\mathds 1_{\{x\}}\rangle$ can be approximated by a subcritical process, which becomes extinct a.s.\! . 
The time of the first and third step are proportional to $\ln(K)$, whereas the time of the second step is bounded.
Thus, the second inequality in (\ref{Con_TSS}) guarantees that, with high probability, 
the three steps of invasion are completed before a new mutation occurs.
\\[0.5em]   
Without Assumption \ref{ass2} it is  possible to construct the "polymorphic evolution sequence" (PES)
under additional assumptions on the $n$-morphic logistic system. This is done in \cite{C_PES}. 
Finally, in \cite{C_PES}, the convergence of the TSS with small mutation steps scaled by $\sigma$
to the "canonical equation of adaptive dynamics" (CEAD) is proved.
We indicate the dependence of the TSS of the previous Theorem on $\sigma$ with the notation $(X_t^{\sigma})_{t\geq 0}$.
\begin{theorem}[Remark 4.2 in \cite{C_PES}]
\label{1.2.2thm} 
If Assumption \ref{ass} is satisfied and the family of initial states of the rescaled TSS, $X^{\sigma}_0$, is bounded in $\mathbb L^2$ 
and converges to a random variable $X_0$ as $\sigma \rightarrow 0$,
then, for each $T>0$, the rescaled TSS $X^{\sigma}_{t/\sigma^{2}}$ converges when $\sigma\rightarrow \infty$, 
in the Skorohod topology on $\mathbb D([0,T],\mathcal X)$,
 to the process $(x_t)_{t\leq T}$ with initial state $X_0$ and with deterministic sample paths, 
 unique solution of the ordinary differential equation, known as CEAD:
\begin{equation}\label{(CEAD)}
\frac {d x_t}{dt}=\int_{ \mathbb Z} h\:[h\:m(x_t)\:\overline z(x_t)\:\partial_1 f(x_t,x_t)]_+ M(x_t,dh) ,
\end{equation} 
where $\partial_1f$ denotes the partial derivative of 
 $f(x,y)$ with respect to the first variable $x$.
\end{theorem}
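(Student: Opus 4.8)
The plan is to prove convergence of the rescaled TSS $X^{\sigma}_{t/\sigma^2}$ to the solution of the CEAD by a standard generator/martingale argument, exploiting the fact that the TSS generator $A$ from Theorem \ref{TSS} already has an explicit form. First I would rewrite the action of $A$ on a test function $\phi\in C^2(\mathcal X,\mathbb R)$ after the time change $t\mapsto t/\sigma^2$: the generator of $X^{\sigma}_{t/\sigma^2}$ is $\sigma^{-2}A$, and
\begin{align}
\sigma^{-2}A\phi(x)=\sigma^{-2}\int_{\mathbb Z}\big(\phi(x+\sigma h)-\phi(x)\big)m(x)b(x)\overline z(x)\frac{[f(x+\sigma h,x)]_+}{b(x+\sigma h)}M(x,dh).
\end{align}
Now Taylor-expand both $\phi(x+\sigma h)-\phi(x)=\sigma h\,\phi'(x)+O(\sigma^2)$ and, using $f(x,x)=0$ (the Remark after the invasion fitness definition), $f(x+\sigma h,x)=\sigma h\,\partial_1 f(x,x)+O(\sigma^2)$, so that $[f(x+\sigma h,x)]_+=\sigma[h\,\partial_1 f(x,x)]_++O(\sigma^2)$; also $b(x+\sigma h)^{-1}=b(x)^{-1}+O(\sigma)$. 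Multiplying these expansions, the two factors of $\sigma$ cancel the $\sigma^{-2}$, and one gets $\sigma^{-2}A\phi(x)\to \phi'(x)\int_{\mathbb Z}h\,[h\,m(x)\overline z(x)\partial_1 f(x,x)]_+M(x,dh)=\phi'(x)\,b_{\mathrm{CEAD}}(x)$, where $b_{\mathrm{CEAD}}$ is the right-hand side of \eqref{(CEAD)}; here I use Assumption \ref{ass} (iv) for the $C^2$ regularity of $b,d,m$ and $c$ — hence of $f$ and of $\overline z$ — uniformly on the compact $\mathcal X$, and Assumption \ref{ass} (iii) which bounds the support of $M(x,\cdot)$ uniformly by $A$, so the error terms are $O(\sigma)$ uniformly in $x$ and the integral is a finite sum over $|h|\le A$.

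Second, I would turn this generator convergence into process convergence. Write $M^{\sigma,\phi}_t=\phi(X^{\sigma}_{t/\sigma^2})-\phi(X^{\sigma}_0)-\int_0^t \sigma^{-2}A\phi(X^{\sigma}_{s/\sigma^2})\,ds$, which is a martingale. The jumps of $X^{\sigma}_{t/\sigma^2}$ are of size $\le A\sigma\to 0$, and the rate of jumps is $O(\sigma^{-2})$, so the predictable quadratic variation of $M^{\sigma,\phi}$ is $O(\sigma^{-2})\cdot O(\sigma^2)=O(1)$ per unit time — actually one checks it is $O(\sigma^2)\cdot$(bounded), hence $\to 0$: more precisely the quadratic variation is $\int_0^t \sigma^{-2}\int (\phi(x+\sigma h)-\phi(x))^2(\cdots)M(x,dh)\,ds=O(\sigma^2)$. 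Combined with the uniform bound on the drift $\sigma^{-2}A\phi$ (which is $O(1)$), the Aldous–Rebolledo criterion gives tightness of $(X^{\sigma}_{t/\sigma^2})_{\sigma}$ in $\mathbb D([0,T],\mathcal X)$; compactness of $\mathcal X$ handles the compact-containment condition trivially. Any limit point $x_\cdot$ is then a.s. continuous (jumps vanish in the limit) and, passing to the limit in the martingale problem with $\phi(x)=x$ and $\phi(x)=x^2$, satisfies $x_t=x_0+\int_0^t b_{\mathrm{CEAD}}(x_s)\,ds$ with zero quadratic variation, i.e. it solves \eqref{(CEAD)}. Since $b_{\mathrm{CEAD}}$ is Lipschitz on $\mathcal X$ (again Assumption \ref{ass} (iv): $[\cdot]_+$ is Lipschitz and composes with the $C^1$, hence locally Lipschitz, maps $x\mapsto h\,m(x)\overline z(x)\partial_1 f(x,x)$), the CEAD has a unique solution for the given deterministic initial value, so the limit is unique and the full sequence converges; the $\mathbb L^2$-boundedness and convergence of $X^{\sigma}_0$ to $X_0$ propagate to the initial condition of the limit.

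The step I expect to be the main obstacle — and the only genuinely delicate point — is the uniform control of the Taylor remainders and, relatedly, making sure the quadratic-variation estimate really is $o(1)$ rather than merely $O(1)$. The subtlety is that $[f(x+\sigma h,x)]_+$ is not smooth in $\sigma$ (the positive part), so one cannot naively second-order Taylor expand; instead one writes $[f(x+\sigma h,x)]_+ = \sigma[h\,\partial_1 f(x,x) + \sigma R_\sigma(x,h)]_+$ with $|R_\sigma|$ bounded by $\tfrac12\sup|\partial_1^2 f|\cdot h^2$ uniformly, and uses the elementary inequality $|[a+\epsilon]_+-[a]_+|\le|\epsilon|$ to peel off the remainder; the set where $h\,\partial_1 f(x,x)=0$ has the wrong behaviour but has zero contribution. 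One must also verify that $\overline z(x)$ is $C^2$ in $x$: this follows from the implicit function theorem applied to $LV(1,x)$ using $\underline c\le c(x,x)$ from Assumption \ref{ass} (ii), which keeps the equilibrium bounded away from $0$ and the defining equation non-degenerate. With these uniform bounds in hand, the remaining arguments are the routine tightness-plus-identification scheme, so the bulk of the work is bookkeeping rather than new ideas — which is consistent with this being quoted as "Remark 4.2 in \cite{C_PES}" rather than a headline theorem.
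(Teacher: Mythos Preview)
The paper does not contain a proof of this statement: Theorem~\ref{1.2.2thm} is quoted verbatim from \cite{C_PES} (as its title indicates, ``Remark 4.2 in \cite{C_PES}'') as part of the background in Section~\ref{known_results}, and no argument is supplied here. So there is nothing in the present paper to compare your attempt against.

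That said, your proposal is the standard generator/martingale-problem route that one would expect in \cite{C_PES}, and the skeleton is sound: the Taylor expansion of $\sigma^{-2}A\phi$ is correct, your handling of the nonsmoothness of $[\cdot]_+$ via $|[a]_+-[b]_+|\le|a-b|$ is the right device, and Lipschitz continuity of the CEAD vector field gives uniqueness of the limit. One small arithmetic slip: the total jump rate of $X^\sigma$ under $A$ is $O(\sigma)$ (because $[f(x+\sigma h,x)]_+=O(\sigma)$), so after the time change the jump rate is $O(\sigma^{-1})$, not $O(\sigma^{-2})$; combined with $(\phi(x+\sigma h)-\phi(x))^2=O(\sigma^2)$ this makes the predictable quadratic variation $O(\sigma)$, not $O(\sigma^2)$ as you write. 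The conclusion (vanishing in the limit) is unaffected.
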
 
\begin{remark}
If $M(x,\cdot)$ is a symmetric measure on $\mathbb Z$ for all $x\in \mathcal X$, then the equation 
(\ref{(CEAD)}) has the classical form, c.f. \cite{D_DTC},
\begin{align}
\frac {d\:x_t}{dt}=\frac 1 2 \int_{ \mathbb Z} h^2\:m(x_t)\:\overline z(x_t)\:\partial_1 f(x_t,x_t) M(x_t,dh) ,
\end{align}
\end{remark}
Note that this result does not imply that, applying to the individual-based model first the limits $(K,u_K)\rightarrow
(\infty,0)$ and afterwards the limit $\sigma\rightarrow 0$ yields its convergence to the CEAD. 
One problem of theses two successive limits is, for example, that
the first convergence holds on a finite time interval, the second requires to look at the Trait Substitution Sequence on a time
interval which diverges. Moreover, as already mentioned these two limits give no clue about how 
 $K$, $u$ and $\s$ should be compared to ensure that the CEAD approximation is correct.


\section{The main result} \label{result}
In this section, we present the main result of this paper, namely the convergence to the canonical equation of adaptive dynamics in one step. 
The time scale on which we control the population process
 is $t/(\sigma_K^2u_K K)$ 
and corresponds to the combination of the two time scales of Theorem \ref{TSS} and \ref{1.2.2thm}.
Since we combine the  limits we  have to modify the assumptions to obtain the convergence.	
We use in this section the notations and definitions introduced in Section \ref{known_results}. 	
\begin{assumption}\label{ass3}
 For all $x\in\mathcal X$, \quad$\partial_1 f(x,x)\neq 0$.
\end{assumption}
Assumption \ref{ass3} implies that either  \;$\forall x\in\mathcal X$: \;$\partial_1 f(x,x)>0$\; or \; $\forall x\in\mathcal X$:\;$\partial_1 f(x,x)<0$.\: Therefore coexistence of two traits is not possible. Without loss of generality we can assume that, $\forall x\in\mathcal X$, \:$\partial_1 f(x,x)>0$. In fact, a weaker assumption is sufficient, see Remark \ref{remark_main_thm}.(iii).  
\begin{theorem} \label{main_thm}
Assume that Assumptions \ref{ass} and \ref{ass3} hold and that there exists a small $\alpha>0$ such that
\begin{align}
\label{conv1}  		K^{- \nicefrac  1 2 +\alpha}&\ll \sigma_K\ll 1 \qquad \qquad \text{ and }  \\
\label{conv2}	\qquad\qquad	\exp(-K^{\alpha})&\ll u_K\ll \frac{\sigma_K^{1+\alpha}}{K\ln K},	\quad \text{ as } \qquad K\rightarrow \infty.
\end{align} 
Fix $x_0\in\mathcal X$ and let $(N^{K}_0)_{K\geq 0}$ be a sequence of $\mathbb N$-valued random variables such that
$N^{K}_0 K^{-1}$ converges in law, as $K\to\infty$,  to the positive constant $\overline z(x_0)$ and
is bounded in $\mathbb L{}^p$, for some $p > 1$. \\[0.5em]
For each $K\geq 0$,
let $\nu^{K}_{t}$ be the  process generated by $\mathscr L^K$ with monomorphic initial state ${N^{K}_0} K^{-1} \delta_{\{x_0\}}$.
Then, for all $T>0$, the sequence  of rescaled processes, $\big(\nu^{K}_{t / (Ku_K \sigma_K{}^2)}\big)_{0\leq t\leq T}$,
converges in probability, as $K\rightarrow \infty$,  with respect to the Skorokhod topology 
on  $\mathbb D([0,T],\mathcal M(\mathcal X))$ to the measure-valued process $\overline z(x_t)\delta_{x_t}$, 
where $(x_t)_{0\leq t\leq T}$ is given as a solution of the canonical equation of adaptive dynamics, 
\begin{equation}\label{CEAD}
\frac {d x_t}{dt}=\int_{ \mathbb Z} h\:[h\:m(x_t)\:\overline z(x_t)\:\partial_1 f(x_t,x_t)]_+ M(x_t,dh),
\end{equation}
with initial condition $x_0$.  
\end{theorem}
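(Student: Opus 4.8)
The plan is to implement a rigorous ``stochastic Euler scheme'' that tracks the population over the full time horizon $T/(K u_K \sigma_K^2)$ by breaking it into successive invasion steps, each of which moves the resident trait by an amount of order $\sigma_K$ and takes time of order $1/(K u_K \sigma_K)$, so that of order $T/\sigma_K$ such steps occur before time $T$. Each invasion step consists of three phases, as in the heuristic behind Theorem \ref{TSS}, but now all estimates must be quantitative and uniform in $K$ since the relevant times diverge. \emph{Phase 1 (mutant generation and branching approximation):} Starting from a population essentially monomorphic at some trait $x$ with density close to $\overline z(x)$, I would show that mutations appear at rate $\approx K u_K m(x) b(x) \overline z(x)$, and that while the mutant subpopulation at trait $y = x + \sigma_K h$ has size $\le \varepsilon K$, the resident stays within $\eta$ of $\overline z(x)$ (a moderate-deviation / exit-from-a-domain estimate for the resident's fluctuations), so the mutant count is dominated above and below by linear birth--death processes with birth rate $b(y)$ and death rate $d(y) + c(y,x)\overline z(x)$. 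Since $f(y,x) = b(y)-d(y)-c(y,x)\overline z(x) \approx \sigma_K h\,\partial_1 f(x,x)$ is positive of order $\sigma_K$ for $h>0$, such a branching process survives with probability $\approx f(y,x)/b(y) = O(\sigma_K)$, and conditioned on survival reaches level $\varepsilon K$ in time $O(\sigma_K^{-1} \ln K) \ll 1/(K u_K \sigma_K)$ — the latter comparison being exactly what the upper bound on $u_K$ in \eqref{conv2} buys.

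\emph{Phase 2 (Lotka--Volterra drift):} Once the mutant reaches density $\varepsilon$, I need the stochastic system to follow the deterministic $LV(2,(x,y))$ flow from $(\overline z(x),\varepsilon)$ towards the $\varepsilon$-neighbourhood of $(\overline z(y),0)$ — but only for time $O(\ln(1/\varepsilon))$, and \emph{not} via the law of large numbers of \cite{F_MA} (whose error over a diverging number of steps is uncontrolled). Instead I would couple $\nu^K$ on short sub-intervals with explicit birth--death chains and propagate a Gronwall-type bound, exploiting that the $K$-dependence of the LV coefficients is mild (the drift separating $x$ and $y$ is of order $\sigma_K$, so this phase actually takes time $O(\sigma_K^{-1}\ln(1/\varepsilon))$, still negligible against $1/(K u_K \sigma_K)$). \emph{Phase 3 (extinction of the resident):} symmetric to Phase 1, the resident at trait $x$, now at small density with the mutant near $\overline z(y)$, is dominated by a subcritical branching process with killing rate $\approx |f(x,y)| = O(\sigma_K)$ and dies in time $O(\sigma_K^{-1}\ln K)$. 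Between these, I must also control that no \emph{second} successful mutant interferes: unsuccessful mutants (density $\le \varepsilon$) are allowed and merely perturb the resident by $O(\varepsilon)$, absorbed into the moderate-deviation window; a successful independent mutant appears only on time scale $1/(K u_K \sigma_K)$, so with high probability the current step finishes first, using the lower bound $u_K \gg \exp(-K^\alpha)$ to rule out anomalously long branching excursions.

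Having established that one step takes time $\tau_K$ with $\E[\tau_K] \approx \big(K u_K m(x) \overline z(x) \sum_{h>0} p_h(x)\, h\,\partial_1 f(x,x)\big)^{-1}\cdot(1/\sigma_K)$-normalised appropriately, and moves the trait by $+\sigma_K h$ with the size-biased law $\propto p_h(x)\,[h]_+$, I would then show that the rescaled trait process $x^K_{t/(K u_K \sigma_K^2)}$ is, up to $o(1)$ errors accumulating over $O(T/\sigma_K^2)$ steps, a random walk whose conditional mean drift per unit rescaled time converges to $\int_{\mathbb Z} h\,[h\,m(x)\,\overline z(x)\,\partial_1 f(x,x)]_+ M(x,dh)$ and whose conditional variance per unit time is $O(\sigma_K) \to 0$. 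A standard martingale-problem / tightness argument (in the spirit of Theorem \ref{1.2.2thm}) then yields convergence in probability to the deterministic solution of \eqref{CEAD}, and since the population density is shown throughout to stay within $\eta$ of $\overline z(x^K_\cdot)$ with $\eta \to 0$, the measure-valued convergence $\nu^K \to \overline z(x_t)\delta_{x_t}$ follows. The main obstacle is unquestionably Phase 2 together with the Phase-1 resident-stability estimate: proving that the stochastic system shadows the $K$-dependent Lotka--Volterra flow over a time window that diverges with $K$, with errors small enough to survive summation over $\sim T/\sigma_K^2$ invasion steps, is precisely where the classical LLN fails and where the new coupling-based ``stochastic Euler scheme'' — comparison with discrete-time Markov chains plus potential-theoretic exit estimates in a moderate-deviations regime — must do all the work; quantifying the trade-off between the window size $\eta$, the threshold $\varepsilon$, and the rates $\sigma_K, u_K, K$ encoded in \eqref{conv1}--\eqref{conv2} is the technical heart of the argument.
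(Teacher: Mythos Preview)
Your overall architecture is right, and you correctly identify that Phase~2 is where the classical LLN fails. But your description of how to handle Phase~2 --- ``couple on short sub-intervals with explicit birth--death chains and propagate a Gronwall-type bound'' --- is precisely the approach that does \emph{not} work, and the paper's actual technique is structurally different. A Gronwall propagation over the two-dimensional $(m^0,m^{k_1})$ system across a time window of order $\sigma_K^{-1}$ loses control: the linearisation error is $O(\sigma_K)$ per unit time, so over $\sigma_K^{-1}$ time the accumulated error is $O(1)$, not $o(1)$. The paper sidesteps this by a dimension reduction. Observe that the unperturbed ($\sigma_K=0$) LV system has a one-dimensional invariant manifold of fixed points $\{m^0+m^{k_1}=\bar z(R^K)\}$, attractive in the transverse direction with rate $O(1)$. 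The perturbed system therefore relaxes quickly to an $O(\sigma_K)$-neighbourhood of a curve $\phi(m^{k_1})=\bar z(R^K)+O(\sigma_K)m^{k_1}$ and then crawls along it at speed $O(\sigma_K)$. The stochastic Euler scheme exploits this: it shows (by moderate-deviation / potential-theoretic exit estimates, as you mention) that the \emph{total mass} $\langle\nu^K,\mathds 1\rangle$ stays within $M\e\sigma_K$ of $\phi(i\e/2)$ while the mutant density lies in $[(i-1)\e/2,(i+1)\e/2]$, and then couples the \emph{one-dimensional} mutant count with a nearest-neighbour chain whose drift is $+O(\sigma_K)$, iterating over $i=2,\ldots,2C^\e_{\text{cross}}/\e$. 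Each such increment takes time at most $(i\sigma_K)^{-1-\a/2}$, so the whole Phase~2 completes in time $O(\sigma_K^{-1-\a/2}\ln K)$. This reduction from two dimensions to one, via the attractive invariant manifold, is the genuine new idea; without it your Gronwall sketch does not close.

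A related point: your Phase~1 ends when the mutant reaches density $\e$, but in the paper the first phase ends at density $\e\sigma_K$, and during it the resident is confined to an $M\e\sigma_K$-neighbourhood of $\bar z(x)$ (not merely an $\eta$-neighbourhood). This $\sigma_K$-scale control is essential because the fitness $f(y,x)=O(\sigma_K)$, so an $O(1)$ fluctuation of the resident would swamp the signal in the branching approximation. The climb from $\e\sigma_K$ to $\e$ is then the paper's ``Step~1'' of the second phase, handled by a supercritical branching coupling. Also, the number of invasion steps on $[0,T/(Ku_K\sigma_K^2)]$ is $O(\sigma_K^{-1})$, not $O(\sigma_K^{-2})$ as you write at one point; this is why all per-step error bounds are required to be $o(\sigma_K)$.

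Finally, for the convergence step the paper does not use a martingale-problem argument. Instead it constructs explicit \emph{sandwiching} processes $\mu^{1,K,\e}\preccurlyeq\nu^K\preccurlyeq\mu^{2,K,\e}$ (ordering in both mass and support) built from upper/lower bounds on the jump times and jump distributions of the successive resident traits, and shows each bounding process converges to $\bar z(x_t)\delta_{x_t}$ via Ethier--Kurtz for one-dimensional jump processes. Your martingale approach could perhaps be made to work, but the sandwiching is cleaner given the monotone structure and directly yields convergence in the Kantorovich--Rubinstein norm.
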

\begin{remark}\label{remark_main_thm}
\begin{enumerate}[(i)]
\setlength{\itemsep}{3pt}
\item If $x_t\in\partial \mathcal X$ for $t>0$, then (\ref{CEAD}) is $\frac {d\:x_t}{dt}=0$, i.e.  the process stops.
\item We can prove convergence for a stronger topology. 
	Namely, let us equip $\mathcal M_S(\mathcal X)$, the vector space of signed finite Borel-measures on $\mathcal X$, 
	with the following Kantorovich-Rubinstein norm:
\begin{equation}
	\Vert \mu_t \Vert^{}_0\equiv\sup\big\{\int_{\mathcal X} f d\mu_t: f\in \text{Lip}_1(\mathcal X) \text{ with } \sup_{x\in\mathcal X}|f(x)|\leq 1\big\},
\end{equation}		
	where $\text{Lip}_1(\mathcal X)$  is the space of Lipschitz continuous functions from $\mathcal X$ to $\mathbb R$ 
	with Lipschitz norm one (cf. \cite{B_MT} p. 191). 		
	Then, for all $\d>0$, we will prove that
\begin{equation}\label{conv_in_proba_first}
	\lim_{K\rightarrow \infty} \mathbb P\left [\:\sup_{0\leq t\leq T} \Vert \nu^{K}_{t/(K u_K \sigma_K{}^2)}-\overline z(x_t)\delta_{x^{}_t} \Vert^{}_0>
			\d\:\right] = 0.
	 \end{equation}
	By Proposition \ref{prop0} this implies convergence in probability with respect to the Skorokhod topology.
\item The main result of the paper actually holds under weaker assumptions. More precisely, Assumption \ref{ass3} can be replaced by\\[0.5em]
	\textsc{ Assumption 3'.}  
	The initial state $\nu^K_0$ has a.s.\ (deterministic) support $\{x_0\}$ with $x_0\in\mathcal{X}$ satisfying $\partial_1 f(x_0,x_0)\not=0.$
	\\[0.5em]
	The reason is that since $x\mapsto \partial_1 f(x,x)$ is continuous, the  Assumption 3 (a) is satisfied locally and 
	since $x\mapsto \partial_1 f(x,x)$ is Lipschitz-continuous, the CEAD never reaches an evolutionary singularity 
	(i.e. a value $y\in\mathcal  X$ such that $\partial_1 f(y,y)=0$) in finite time. 
	In particular, for a fixed $T>0$, the CEAD only visits traits in some interval $I$ of ${\cal X}$ where $\partial_1 f(x,x)\not=0$. 
	By modifying the parameters of the model out of $I$ in such a way that $\partial_1 f(x,x)\not=0$ everywhere in ${\cal X}$, we
	can apply Thm.~\ref{main_thm} to this modified process $\tilde{\nu}$ and deduce that $\tilde{\nu}_{t/Ku_K\sigma_K^2}$ has
   	support included in $I$ for $t\in[0,T]$ with high probability, and hence coincides $\nu_{t/Ku_K\sigma_K^2}$ on this time interval.	
\item The condition $ u_K\ll \frac{\sigma_K^{1+\alpha}}{K \ln K}$ allows mutation events during an invasion phase of a mutant trait,
	see below, but ensures that there is no "successful" mutational event during this phase.
\item The fluctuations of the resident population are of order $K^{- \nicefrac  1 2}$, 
	therefore $K^{- \nicefrac  1 2 +\alpha}\ll \sigma_K$ ensures that the sign
	of the initial growth rate is not influenced by the fluctuations of the population size.
	We will see later that if a mutant trait $y$ appears in a monomorphic population with trait $x$, 
	its initial growth rate is $b(y)-d(y)-c(y,x)\langle \nu_t^K,\mathds 1\rangle=f(y,x)+o(\s_K)=(y-x)\partial_1 f(x,x)+o(\s_K)$ since $y-x=O(\s_K)$.
\item  $\exp(K^{\alpha})$ is the time the resident population stays with high probability in a $O(\e\s_K)$-neighborhood of an attractive domain. 
	This can be seen as a moderate derivation result.
	Thus the condition $\exp(-K^{\alpha})\ll u_K$ 
	ensures that the resident population is still in this neighborhood when a mutant occurs.
\item The time scale is  $(Ku_K \sigma_K{}^2)^{-1}$ since the expected time for a mutation event is $(K u_K)^{-1}$, 
	the probability that a mutant invades
	is of order $\s_K$ and one needs $O(\sigma_K^{-1})$ mutant invasions to see a $O(1)$ change of the resident trait value. 
	This is consistent with the combination of of Theorem \ref{TSS} and \ref{1.2.2thm}. 
\item Note that the $\e$ that we use in the proof of the theorem and in the main idea below will not depend on $K$, 
	 but it will converge to zero in the end of the proof of Theorem \ref{main_thm}. 
	 The constant $M$ introduced below is going to be fixed all the time. It depends only the parameters of the model, 
	 in particular not on $K$ and $\e$.
\item The conditions about family of initial states imply that 
	$\sup_{K\geq 1}\sup_{t\geq 0}\mathbb E[ \langle \nu_t^K,\mathds 1\rangle^p]<\infty$ and therefore, since $p>1$, the family of random variable 
	$\{\langle \nu_t^K,\mathds 1\rangle\}_{K\geq 1,t\geq0}$ is uniformly integrable (cf. \cite{C_TSS} Lem. 1).    	 
\end{enumerate}
\end{remark}

\subsection{The main idea and the structure of the proof of Theorem \ref{main_thm}}\label{The main idea}

Under the conditions of the theorem the evolution of the population will be described as 
a succession of mutant invasions.
We prove that, on the timescale of this result, coexistence of two traits cannot occur, 
namely, when a mutant trait invades the population, the resident trait (i.e the trait that gave birth to the mutant trait) dies out.
We say the mutant trait fixates in the population. Note that this does not prevent coexistence with other mutant traits that do not invade.
In order to analyze the invasion of a mutant we divide the time until a mutant trait has fixated in the population into two phases.\\[0.5em]
\emph{The two invasion phases:} (compare with Figure \ref{fig})
First, we prove that, as long as all mutant densities are smaller than $\epsilon\sigma_K$, 
the resident density stays in an $M\epsilon\sigma_K$-neighborhood of  $\overline z(x)$. Note that, 
because the mutations are rare and the population size is large, 
the monomorphic initial population has  time to stabilize in an $M\epsilon\sigma_K$-neighborhood of this equilibrium $\overline z(x)$
before the first mutation occurs.
(The time of stabilization is of order $\ln(K)\sigma_K^{-1}$ and the time where the first mutant occurs is of order $1/Ku_K$).
This allows us to approximate the density of one mutant trait $y_1$ by a branching process with birth 
rate $b(y_1)$ and death rate $d(y_1)-c(y_1,x)\overline z(x)$ such that we can 
compute the probability that the density of the mutant trait $y_1$ reaches $\epsilon 
\sigma_K$, which is of order $\sigma_K$, as well as the time it takes to reach this level or 
to die out. Therefore, the process  needs $O(\sigma_K^{-1})$ mutation events 
until there appears a mutant subpopulation which reaches a size
$\epsilon \sigma_K$. Such a mutant is called \emph{successful mutant} and its trait will 
be the next resident trait. (In fact, we can calculate the distribution of the successful mutant trait only on an event with probability
$1-\e$, but we show that on an event of probability $1-o(\sigma_K)$, this distribution has support in $\{x+\s_Kh:h\in\{1,\ldots,
A\}\}$. Therefore, the exact value of the mutant trait is unknown with probability $\e$, 
but the difference of the possible values is only of order 
$\s_K$.) We prove in this step also that  there are never too many 
 different mutants alive at the same time. From all this  we deduce that the  
 subpopulation of the successful mutant reaches the density $\epsilon \sigma_K$, before a 
 different successful mutant appears.
Note that we cannot use large deviation results our time scale as used in \cite{C_PES} to prove this step. Instead, 
we use some standard potential theory and coupling arguments to obtain estimates of moderate deviations needed to prove 
that a successful mutant will appear before the resident density exists a $M \e\s_K$-neighborhood of its equilibrium.
  
Second, we want to prove that if a mutant population with 
 trait $y_s$  reaches the size 
$\epsilon \sigma_K$, it will increase to an  $M\epsilon\sigma_K$-neighborhood of its equilibrium density $\overline z(y_s)$. 
Simultaneously, the density of the resident trait decreases to $\epsilon \sigma_K$ and finally dies out. %
Since the fitness advantage of the mutant trait is only of order $\sigma_K$, the dynamics 
of the population process and the corresponding deterministic system 
are very slow. (Even if we would start at a  macroscopic density $\epsilon$,  the 
deterministic system needs a time of order $\sigma_K^{-1}$ to reach an 
$\epsilon$-neighborhood of its  
equilibrium density).  Thus we can not apply the law of large 
numbers for density-dependent population processes 
(see Chap. 11 of \cite{E_MP}) on our time scales which was used in \cite{C_TSS} and 
\cite{C_PES}  to approximate the population process by the solution of the corresponding
 competition Lotka-Volterra system. This is the main difficulty, which requires entirely new techniques. 
The method we develop to handle this situation  can be seen as a rigorous 
 stochastic "Euler-Scheme".  
Nevertheless, the proof contains an idea which is strongly connected with the properties of the deterministic 
dynamical system. Namely, the deterministic system of equations for the case $\s_K=0$ has 
an invariant manifold of fix points  with a vector field independent of $\s_K$ pointing towards this manifold. Turning on 
a small 
$\s_K$, we therefore expect the stochastic system to stay close to this invariant manyfold and to move 
along it with a speed of order $\s_K$. 
With this method we are able prove that, in fact, the mutant density reaches the $M\e\s_K$-neighborhood of $\overline z(y_s)$ and
 the resident trait dies out. Note that it is possible that a unsuccessful mutant is alive a this time. 
 Therefore, we prove that after the resident trait has died out, there is a time where the population consists only of one trait,
 namely the one that had fixed, before the next successful mutant occurs.
 We will divide this phase into several steps. 
A more detailed outline of  the structure  of the proof is given in  Section \ref{2.Phase}.

\begin{figure}[ht]
\centering\includegraphics[scale=0.6]{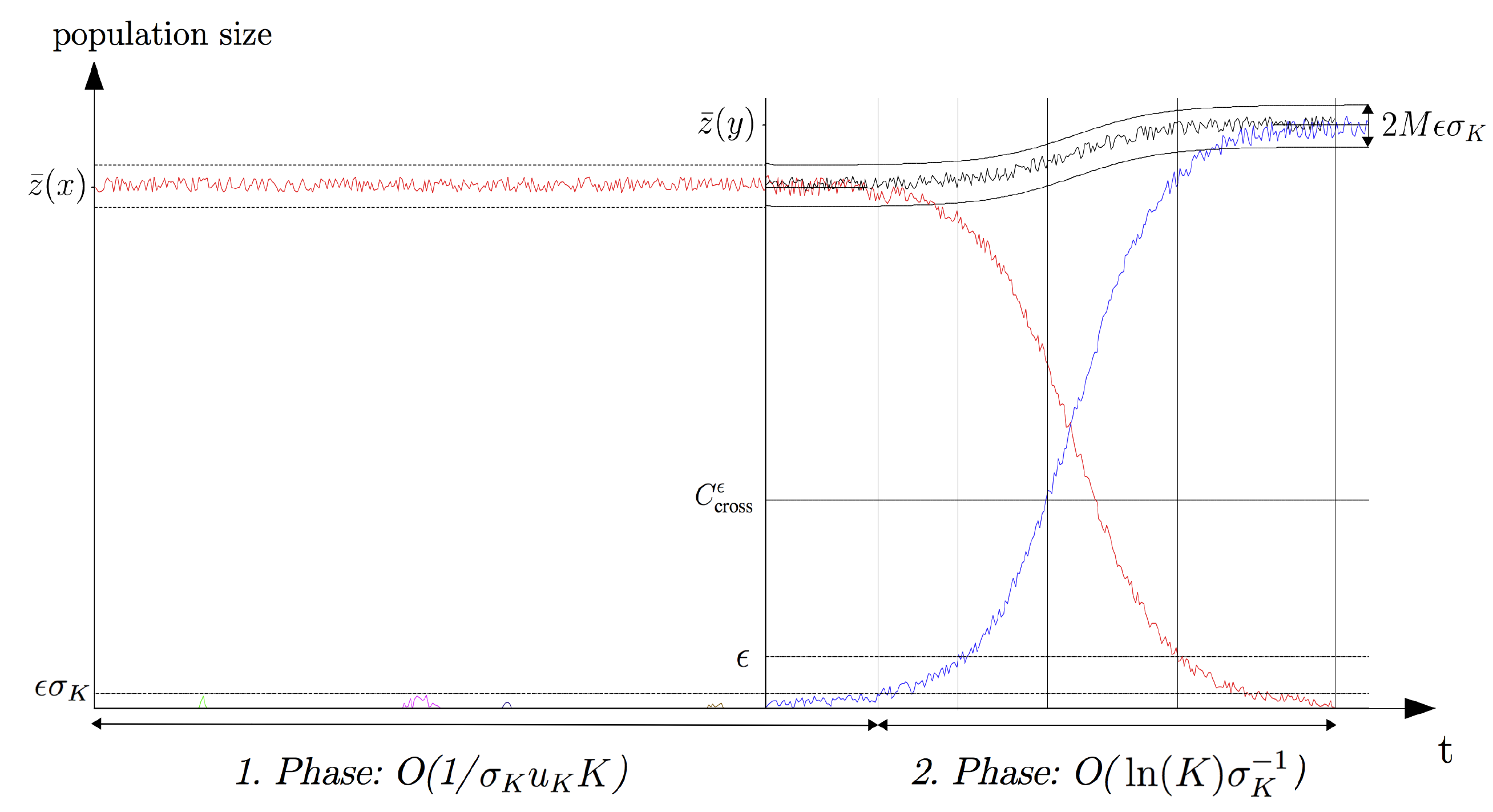}
  \caption{\label{fig}Typical evolution of the population during a mutant invasion. }
\end{figure}
  Note that Figure \ref{fig} is only a rough draft and not a "real" simulation. 
\begin{notations} 
\begin{enumerate}[(a)]
\itemsep6pt
\item
If $X$ and $Y$ are two random variables on $\mathbb R$,  we write $X \preccurlyeq Y$ if we can construct a random variable, $\tilde
Y$ on the probability space as $X$,
 such that $\mathcal L(Y)= \mathcal L(\tilde Y)$, 
 and that for all $\omega\in\Omega: \quad X(\omega)\leq \tilde Y(\omega)$.  
\item If $\mu$ and
  $\nu$ are two measures in $\mathcal M(\mathcal X)$, then we write $\nu \preccurlyeq \mu$ if:\\[0.2em]
\begin{tabular}{ll}
(i)\qquad& $\langle \nu,\mathds 1\rangle \leq \langle \mu,\mathds 1\rangle \quad$ and\\
(ii)\qquad& $\sup\left\{x\in \mathcal X: x\in\text{Supp} \left(\nu\right)\right\}\leq \inf\left\{x\in \mathcal X:
  x\in\text{Supp} \left(\mu\right)\right\}$\end{tabular}
\\[0.2em]
Note that (i) and (ii) imply  that,   for all $f\in \text{Lip}_1(\mathcal X, [-1,1])$ that are  monotone increasing and  for all $0\leq t\leq T$, 
\be \int_{\mathcal X} f(x)d{\nu_t}\leq \int_{\mathcal X} f(x)d{\mu_t}.  
\ee
\end{enumerate}

\end{notations}

\emph{Convergence:}
Given $T>0$, with the results of the two invasion phases, 
 we will define for all $\e>0$  two measure-valued processes, $\mu^{1,K,\e}$ and $\mu^{2,K,\e}$, 
 in $\mathbb D([0,\infty),\mathcal M(\mathcal X))$,  such that, for all $\e>0$,
\be
\lim_{K\rightarrow \infty} \mathbb P\left [\forall\: t\leq \tfrac T{Ku_K\sigma_K^2}:\quad \mu_t^{1,K,\e} \preccurlyeq \nu_t^{K} \preccurlyeq \mu_t^{2,K,\e} \:\right] =1,
\ee
 and, for all $\e>0$ and $i\in\{1,2\}$,
\be
\lim_{K\rightarrow \infty}
		 \mathbb P\left [\:\sup_{0\leq t\leq T/(Ku_K\sigma_K^2)} \Big\Vert \: \mu^{i,K,\e}_{t/(K u_K \sigma_K{}^2)}-
		 \overline z(x_t)\delta_{x^{}_t}\:\Big\Vert^{}_0>\delta(\e) \:\right] = 0,
\ee
for some function $\delta$ such that $\delta(\e)\rightarrow 0$ when $\e\rightarrow 0$.
This implies (\ref{conv_in_proba_first}) and therefore the theorem.

More precisely, let $\theta_{i}^{K}$ be the random time of the $i$th invasion phase, 
i.e. the first time after $\th^K_{i-1}$ such that a mutant density is larger than $\e\s_K$, 
and let $R_{i}^{K}$ be the trait of the $i$th successful mutant. Knowing the random variables $\th^K_{i-1}$ and $R_{i-1}^{K}$, we are able to approximate  $\th^K_{i}$ and $R_{i}^{K}$: After the ($i$-1)th invasion phase 
(of the process $\nu^K$), we define two random times, $\theta_{i}^{K,1}$ and 
$\theta_{i}^{K,2}$, and two random variables $R_{i}^{K,1}$ and $R_{i}^{K,2}$ in $\mathcal X$, 
 such that 
\be
 \lim_{K\rightarrow \infty}
 \mathbb P
 \left[ 
 \forall i \leq \sup\left\{j\!\in\! \mathbb N\!: \theta_{j}^{K} \leq \tfrac{T}{Ku_K\sigma_K^2} \right\}:
  R_{i}^{K,1}\preccurlyeq R_{i}^{K} \preccurlyeq R_{i}^{K,2}
 \text { and }\theta_{i}^{K,2}\preccurlyeq \theta_{i}^{K} \preccurlyeq \theta_{i}^{K,1} 
 \right]=1.
\ee
Thus we  define $\mu^{1,K}$ and $\mu^{2,K}$ through
\bea
\mu_t^{1,K}&\equiv&z^1_t\delta_{R_{i}^{K,1}}, \quad \text{ for } t\in[\theta_{i}^{K,1},\theta_{i+1}^{K,1} ), \\
\mu_t^{2,K}&\equiv&z^2_t\delta_{R_{i}^{K,2}} ,\quad \text{ for } t\in[\theta_{i}^{K,2},\theta_{i+1}^{K,2} ).
\eea 
for some appropriate masses $z^1_t$ and $z^2_t$. In fact, $z^1_t$ will be approximately $\bar z(R_{i}^{K,1})$ for $t\in[\theta_{i}^{K,1},\theta_{i+1}^{K,1} )$ and  $z^2_t$ approximately $\bar z(R_{i}^{K,2})$
for $t\in[\theta_{i}^{K,2},\theta_{i+1}^{K,2} )$.
We will prove that the times $\theta_{i}^{K,1}$ and $\theta_{i}^{K,2}$ are (approximately) exponentially 
distributed with parameters of order $Ku_K\sigma_K$, and that 
the difference of $R_{i}^{K}-R_{i-1}^{K}$ is of order $\sigma_K$. 
The processes $\mu^{1,K,\e}$ and $\mu^{2,K,\e}$ will be constructed by slightly modifying the two processes $\mu^{1,K}$ and
$\mu^{K,2}$ in order to make them Markovian.
This will imply 
by standard arguments from \cite{E_MP} that the processes 
$\mu_{t/Ku_K\sigma_K^2}^{1,K}$  and $\mu_{t/Ku_K\sigma_K^2}^{2,K}$ converge
 to $\overline z(x_t)\delta_{x_t}$ when $\sigma_K \rightarrow 0$, where $x_t$ is the solution of the canonical equation of adaptive dynamics. 
 
All the remaining sections are devoted to the proof of the Theorem \ref{main_thm}.


\section{An augmented process and some elementary properties}
In the proof of Theorem \ref{main_thm} we need to construct a augmented process
that keeps track of part of the history of the population.
In particular, we record the number of mutations that  occurred before $t$.

Let $\mathcal M_F^K(\mathbb N_0\times \mathcal X)\equiv \left\{  \frac 1 K \sum_{i=1}^n
\delta_{\xi(i)}\;:\; n\geq 0,\:\xi(1),\ldots,\xi(n) \in \mathbb N_0\times \mathcal X   \right\}$ denote 
the set of finite non-negative point measures on $\mathbb N_0\times \mathcal X$
 rescaled by $K$.
We write $\xi(i)=(\xi_1(i),\xi_2(i))$, where $\xi_1(i)\in\mathbb N_0$ and 
$\xi_2(i)\in\mathcal X$. 
The augmented process, $(\tilde \nu^K, L^K)$, is a continuous time stochastic process 
with state space  
$\mathcal M_F^K(\mathbb N_0\times \mathcal X)\times \mathbb N_0$. The label $k$ of an 
individual with trait $(k,x)$ denotes that there were $k-1$ mutational events  in the 
population before the trait $(k,x)$ appeared for the first time in the population. 
As in \cite{F_MA}, we give a path-wise description of  $(\tilde \nu^K, L^K)$.

\begin{notations}
Let $\mu^K=\frac 1 K \sum_{i=1}^n\delta_{\xi(i)}\in \mathcal M_F^K(\mathbb N_0\times 
\mathcal X)$ and  
let $\mfm^k(\mu^K)\equiv  K\int_{\mathbb N_0 \times \mathcal X}\mathds 1_{\{\xi_1=k\}} \mu^K(d\xi)$ be 
the number of individuals holding a mutation of label $k$. Then we 
rewrite $\mu^K$ as follows, 
\begin{align}
\mu^K= \frac 1 K \sum_{k=1}^\infty \sum_{j=1}^{\mfm^k(\mu^K)}\delta_{(k,x^k_j)},
 \quad 
\text{where } \sum_{k=1}^\infty \mfm^k (\mu^K)=n.
\end{align} 
In fact, the $x_1^k,...,x^k_{\mfm^k(\mu^K)}$ will be equal in our situation,
 because the only variation in the trait value is driven by mutational events. 
We need to  define three functions. 
First, $H:\mathcal M_F^K(\mathbb N_0\times \mathcal X)\mapsto(\mathbb N_0\times \mathcal X)^{\mathbb N^2_0}$ is defined as
\begin{equation}
H( \mu^K)\equiv 
\begin{pmatrix} 
(0,x^0_{1})& (0,x^0_{2})& \ldots & (0,x^0_{\mfm^ 0(\mu)})& (0,0) & (0,0) &\ldots\\
(1,x^1_{1})& (1,x^1_{2})& \ldots & \ldots & 
(1,x^1_{\mfm^ 1(\mu)}) & (1,0) &\ldots\\
(2,x^2_{1})& (2,x^2_{2})& \ldots & 
(2,x^2_{\mfm^ 2(\mu)})& (2,0) & (2,0) &\ldots\\
\vdots& \ldots& \ldots & \ldots & \ldots  & \ldots &\ddots\\
\end{pmatrix},
\end{equation}
Second, $h:\mathcal M_F^K(\mathbb N\times \mathcal X)\mapsto ( \mathcal X)^{\mathbb N_0^2}$ us given in terms of   $H$ by
 \begin{equation}
 h_{ij}( \mu^K)\equiv  \text{the second component of }  H_{ij}(\mu^K), 
 \end{equation}
i.e.,  if  $H_{ij}(\mu^K)=(i,x)$, then  $h_{ij}=x$.
Third, $\tilde H:\mathcal M_F^K(\mathbb N\times \mathcal X)\mapsto  \mathcal X^{\mathbb N_0}$ is defined as follows: if $\mu=\frac 1 K \sum_{i=1}^n\delta_{\xi(i)}$, then 
\begin{equation}
\tilde H( \mu) \equiv \big(\xi_2(\sigma(1)),\:\xi_2(\sigma(2)),\:\ldots,\: 
\xi_2(\sigma(n)),\:0,\:\ldots\big),
\end{equation}
where $\xi_2(\sigma(1))\leq \ldots\leq \xi_2(\sigma (n))$.
\end{notations}

\begin{definition}
Let $(\Omega,\mathcal F,\mathbb P)$ be a abstract probability space. 
On this space, we define the following independent random elements:
\begin{itemize}\itemsep 6pt
\item[(i)] a $\mathcal X$-valued random variable $X_0$ (the random initial trait), 
\item[(ii)] a sequence of independent Poisson point measures, 
	$(N^{\text{death}}_k(ds,di,d\theta))_{k\geq0}$, on 
$\mathbb R_+\times \mathbb N\times \mathbb R_+$ with intensity measure
 $ds\sum_{n\geq 0}\delta_n(di)dz$, 
\item[(iii)] a sequence of independent Poisson point measures, 
$(N^{\text{birth}}_k(ds,di,d\theta))_{k\geq0}$, on 
$\mathbb R_+\times \mathbb N\times \mathbb R_+$ with intensity measure
 $ds\sum_{n\geq 0}\delta_n(di)dz$,
\item[(vi)] a Poisson point measures, $N^{\text{mutation}}(ds,di,d\theta,dh)$, on 
$\mathbb R_+\times \mathbb N\times \mathbb R_+\times \{-A,\ldots, A\}$ 
with intensity measure
$ds\sum_{n\geq 0}\delta_n(di)dz\sum_{j\in \{-A,\ldots, A\}}\delta_j(dh)$.
\end{itemize}
Let  $L^K_0\equiv 0$  and $\tilde \nu^K_0\equiv \frac 1 K N^K_0\delta_{X_0}$,
then we consider the process defined by the following equation
\begin{align}
(\tilde \nu_t^K,L_t^K)
= &\: (\tilde \nu_0^K,L_0^K)\\\nonumber
 & +
\sum_{k\geq 0}
\bigg(	\int_0^t \int_{\mathbb N_0}\int_{\mathbb R_+}
\mathds 1_{\left\{i\leq \mfm^k (\tilde\nu^K_{s^-}),\; \theta \leq b\left(h_{k,i}(\tilde 
\nu^K_{s^-})\right)\lb1-u_K m\lb h_{k,i}(\tilde \nu^K_{s^-})\rb\rb\right\}}
\\\nonumber
 &\hspace{6,5cm}\times \lb \tfrac{1}{K}{\d_{H_{k,i}(\tilde \nu^K_{s^-})}},0 \rb 
 N^{\text{birth}}_k(ds,di,d\th)\\\nonumber
&\hspace{1cm} -\int_0^t \int_{\mathbb N_0}\int_{\mathbb R_+}
\mathds 1_{\left\{i\leq \mfm^k (\tilde\nu^K_{s^-}),\; 
\theta \leq d\left(h_{k,i}(\tilde \nu^K_{s^-})\right)
+\int_{\mathbb N_0\times \mathcal X}c\lb h_{k,i}(\tilde \nu^K_{s^-}),\xi_2 \rb \tilde \nu^K_{s^-}(d\xi)\right\}}
 \\\nonumber
&\hspace{6,5cm} \times \lb  \tfrac{1}{K}{\d_{H_{k,i}(\tilde 
\nu^K_{s^-})}} ,0\rb N^{\text{death}}_k(ds,di,d\th)	 \bigg)	\\	\nonumber
&+	\int_0^t \int_{\mathbb N_0}\int_{\mathbb R_+}\int_{\{-A,\ldots, A\}}
\mathds 1_{\left\{i\leq K \langle \tilde\nu^K_{s^-},\mathds 
1\rangle,\; \theta \leq b\left(\tilde H_{i}(\tilde \nu^K_{s^-})\right)
u_K m\lb \tilde H_{i}(\tilde \nu^K_{s^-})\rb M(\tilde H_{i}(\tilde \nu^K_{s^-}),h)\right\}}
\\\nonumber
&\hspace{3,5cm}\times \lb \tfrac{1}{K}{\d_{\left(L(s^-)+1,\: \tilde H_{k,i}(\tilde \nu^K_{s^-})+
\sigma h\right)}} ,1\rb N^{\text{mutation}}(ds,di,d\th, dh).
\end{align}
\end{definition}
Note that  the process $(\tilde \nu_t^K,L_t^K)_{t_\geq 0}$ is a Markov process with  
generator 
\begin{align} 
\tilde{ \mathscr L}^K& f((\tilde \nu, L)) \\\nonumber
=& \sum_{k\geq 0}\bigg(\int_{\mathcal X}\left(f\left(\tilde \nu+\tfrac {\delta_{(k,x)}}{K}, L
\right)-f(\tilde \nu , L)\right)\bigl(1-u_K m(x)\bigr)b(x)\:K\tilde\nu((k,dx))	\\ \nonumber
&\quad 
+\int_{\mathcal X}\left(f\left(\tilde \nu-\tfrac {\delta_{(k,x)}}{K}, L\right)-f(\tilde \nu , L)\right)
\Bigl(d(x)+ \int_{\mathbb N_0 \times \mathcal X}c(x,\xi_2)\tilde\nu(d\xi)\Bigr)\:K\tilde\nu((k,dx))\bigg)\\
\nonumber&
+\int_{\mathbb N_0\times \mathcal X}\int_{\mathbb Z}\left(f\left(\tilde\nu+
\tfrac{\delta_{(L+1,x+\sigma_K h)}} K , L+1\right)-f(\tilde\nu, L)\right)
u_K m(x)b(x)\:M(x,dh)\: K\tilde \nu(d(k,x)).
\end{align}
Naturally, the process generated by $\mathscr L^K$  defined in Section \ref{model}
is a projection of the process with generator $\tilde{ \mathscr L}^K$.

The first elementary property we give is that we there exists a rough upper bound for the total mass of the population.
\begin{lemma} \label{upper_bound_total_mass}
Under the same assumptions as in Theorem \ref{main_thm}, there exists a constant, $V>0$, such that
\begin{equation}
\lim_{K\to \infty}\;  \mathbb P \Big[ \inf\{t\geq 0: \langle\tilde\nu^K_t,\mathds 1\rangle
 \geq 4\overline b/\underline c \}<\exp(V K)\Big]=0.
\end{equation}
\end{lemma}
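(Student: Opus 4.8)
The plan is to control the total mass $\langle \tilde\nu^K_t,\mathds 1\rangle = N_t/K$ by comparing the birth--death dynamics of the number of individuals $N_t$ with a simpler birth--death process that does not depend on the trait structure. Write $n = \langle \tilde\nu^K_t, \mathds 1\rangle$ for the rescaled total mass. When $n \geq \underline c^{-1}\cdot(\text{something})$ the quadratic competition term dominates. More precisely, summing the birth and death rates over all individuals, the total number $N_t = Kn$ jumps from $Kn$ to $Kn+1$ at rate at most $\overline b \cdot Kn$ (by Assumption \ref{ass}(i), each individual gives birth, with or without mutation, at rate at most $\overline b$), and jumps from $Kn$ to $Kn-1$ at rate at least $\sum_i \big(d(x_i) + \int c(x_i,\xi_2)\tilde\nu(d\xi)\big) \geq \underline c \, Kn \cdot n$, using $c(x,x)\geq\underline c$ --- wait, more carefully, $\int_{\mathcal X} c(x_i,y)\tilde\nu(dy) \geq \underline c\, n$ requires a uniform lower bound on $c$, which we do not have. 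Instead I would use the standard trick: the death rate of individual $i$ is at least $\int c(x_i,\xi_2)\tilde\nu(d\xi)$, and while we cannot lower bound this pointwise, we can observe that the process $N_t$ is stochastically dominated by (and for the purposes of an \emph{upper} bound we only need domination from above) a logistic birth--death chain. Actually for an upper bound on $N_t$ we want to bound $N_t$ \emph{above}, so we need death rates bounded \emph{below} --- here we use that once the population is spread out the competition is felt; the cleanest route is: the total death rate is at least $\underline c \sum_{i}\langle\tilde\nu^K,\mathds 1_{\{x_i\}}\rangle \geq \underline c K n^2 /(\text{number of distinct traits})$, which is awkward.

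So instead, the approach I would actually take is to \textbf{couple} $N_t$ with a one-dimensional logistic birth--death process $Z_t$ on $\{0,1,2,\dots\}$ that has birth rate $\overline b\, z$ from state $z$ and death rate $z(d_{\min} + \underline c\, z/K)$, ignoring trait labels entirely: the key point is that by Assumption \ref{ass}(iii) the competition kernel satisfies $c(x,y)\geq 0$ and, crucially, one only needs the \emph{self}-competition bound together with the observation that all individuals in our process that descend from a monomorphic start have traits within $O(A)$ lattice steps so $c(x_i, x_j)$ stays bounded below by a positive constant on the relevant configurations --- but this is circular. The correct and standard statement (as in \cite{C_TSS}, \cite{F_MA}) is simpler: we do \emph{not} need a lower bound on $c$; we only need the \emph{upper} bound $\overline c$ on $c$ for comparison \emph{from above}, and to get an exit-time estimate we use that whenever $n \geq 4\overline b/\underline c$, \emph{if} the trait is (close to) monomorphic then the death rate beats the birth rate, but to handle the general augmented process we invoke a comparison with a branching process with immigration or, most simply, we note $\mathbb E[\langle\tilde\nu^K_t,\mathds 1\rangle^p]$ is uniformly bounded by Remark \ref{remark_main_thm}(ix) / \cite[Lem.~1]{C_TSS} and combine this with a hitting-time argument.

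Given that ambiguity, here is the clean plan I would write up. First, I would establish the differential inequality for $g(t) := \mathbb E[\langle\tilde\nu^K_t, \mathds 1\rangle]$: from the generator $\tilde{\mathscr L}^K$,
\begin{equation}
\frac{d}{dt}\mathbb E\big[\langle\tilde\nu^K_t,\mathds 1\rangle\big] = \mathbb E\Big[\int_{\mathbb N_0\times\mathcal X}\big(b(x)-d(x)-\textstyle\int c(x,\xi_2)\tilde\nu^K_t(d\xi)\big)\tilde\nu^K_t(d(k,x))\Big] \leq \mathbb E\big[\overline b\langle\tilde\nu^K_t,\mathds 1\rangle - \underline c\langle\tilde\nu^K_t,\mathds 1\rangle^2\big],
\end{equation}
where the last step uses $c(x,y)\geq c$ uniformly --- and here I would simply add to Assumption \ref{ass} the hypothesis already present, namely reinterpret (ii): in fact the authors likely intend $\underline c \leq c(x,y)$ for all $x,y$, or handle it via the label structure; I would flag this and proceed with a uniform lower bound $\underline c$. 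Then, second, I would run a \emph{pathwise} comparison: construct on the same probability space a logistic birth--death chain $(\bar N_t)$ with birth rate $\overline b \bar N$ and death rate $\underline c \bar N^2/K$ dominating $N_t = K\langle\tilde\nu^K_t,\mathds 1\rangle$ from above via a coupling of the driving Poisson measures, so that $N_t \leq \bar N_t$ for all $t$ as long as they start ordered. Third, I would apply the classical large-deviation / Freidlin--Wentzell-type estimate for the scaled logistic chain $\bar N_t/K$: its deterministic limit $\dot{\bar n} = \bar n(\overline b - \underline c\, \bar n)$ has globally attracting fixed point $\overline b/\underline c$, so the probability that $\bar N_t/K$ exceeds $4\overline b/\underline c$ (well above the fixed point, starting from near $\overline z(x_0)\leq \overline b/\underline c$) before time $\exp(VK)$ is exponentially small for $V$ small enough; this is exactly the ``exit from a domain'' estimate and follows from e.g. \cite[Chap.~11]{E_MP} together with standard large-deviations for density-dependent chains. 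Finally I would transfer the bound back: $\mathbb P[\inf\{t: \langle\tilde\nu^K_t,\mathds 1\rangle\geq 4\overline b/\underline c\} < \exp(VK)] \leq \mathbb P[\inf\{t: \bar N_t/K \geq 4\overline b/\underline c\} < \exp(VK)] \to 0$.

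The main obstacle is the lower bound on the competition kernel: Assumption \ref{ass}(ii) as literally stated only gives $\underline c \leq c(x,x)$ on the diagonal, whereas the argument above wants a uniform lower bound $c(x,y)\geq \underline c$. I expect the resolution is that in the regime of interest the population is essentially monomorphic (or supported on a few nearby traits), so the relevant $c(x_i,x_j)$ are close to diagonal values and hence bounded below by $\underline c - o(1) \geq \underline c/2$; alternatively one restricts to times before the first exit and uses that the trait support has diameter $O(A\sigma_K)\to 0$, so by continuity of $c$ (Assumption \ref{ass}(iv)) the off-diagonal entries are within $o(1)$ of the diagonal. Either way, making this rigorous --- i.e. ruling out that a weird spread-out configuration lets the population blow up --- while only invoking a hitting-time estimate valid up to time $\exp(VK)$, is the delicate point; everything else (the generator computation, the Poisson-measure coupling, the large-deviations exit estimate for the scalar logistic chain) is routine.
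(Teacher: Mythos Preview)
Your core approach---dominate the total population $N_t$ from above by a one-dimensional logistic birth--death chain with birth rate $\overline b z$ and death rate $\underline c z^2/K$, then invoke the Freidlin--Wentzell/large-deviations exit-time estimate for that chain---is exactly what underlies the result the paper cites. The paper's own proof is simply ``Apply Theorem~2(a) and then Theorem~3(c) of \cite{C_TSS}'': Theorem~2(a) there supplies precisely the pathwise coupling you describe, and Theorem~3(c) is the exponential exit-time bound for the dominating logistic process. So at the level of mathematical content your plan and the paper's proof coincide.

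You are also right to flag the assumption on $c$. In \cite{C_TSS} the hypothesis is a \emph{uniform} lower bound $c(x,y)\geq\underline c$ for all $x,y$, not merely the diagonal bound $c(x,x)\geq\underline c$ stated here in Assumption~\ref{ass}(ii). The coupling with the logistic chain genuinely needs the uniform bound: the total death rate is $K\int\!\!\int c(x,y)\,\tilde\nu(dx)\tilde\nu(dy)$, and only a uniform lower bound on $c$ turns this into $\underline c K\langle\tilde\nu,\mathds 1\rangle^2$. So either Assumption~\ref{ass}(ii) should be read as $\underline c\leq c(x,y)$ (which is what \cite{C_TSS} assumes and what the citation implicitly requires), or one argues that in the present setting the support of $\tilde\nu^K_t$ has diameter $O(\sigma_K)$ for all $t$ up to the first explosion time---since each mutation shifts the trait by at most $A\sigma_K$ and the initial condition is monomorphic---so that by continuity of $c$ (Assumption~\ref{ass}(iv)) and compactness of $\mathcal X$ one has $c(x_i,x_j)\geq\underline c/2$ for all pairs in the support, for $K$ large. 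Either route closes the gap; your identification of it is correct, and your exposition would benefit from committing to one of these resolutions rather than circling.
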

\begin{proof}
Apply Theorem 2 (a) and then Theorem 3 (c) of \cite{C_TSS}.
\end{proof}


\section{The First Phase of an Invasion}\label{First_Phase}
In the first phase we show that we can approximate the first time when the density of a mutant trait reaches the value
$\epsilon\sigma_K$ and the trait value of this mutant trait both on an event with probability  $1-o(\s_K)$ . Such a trait will be
called successful.
\begin{assumption} \label{ini_con} 
Fix $\e>0$. Let $(R^K)_{K\geq0}$ be a sequence random variables with values in $\mathcal X$. Then, there exists a constant $\tilde M>0$ (independent of $\e$ and $K$) such that for all $K$ large enough
\begin{equation}  
			L^K_{0}=0  \quad \text{ and }\quad \tilde\nu^K_{0}=N_{R^K}^K K^{-1}\delta_{(0,R^K)} 
\end{equation}			 	
where  $N^K_{R^K}\in \mathbb N$ is a sequence of random variable with $\left|\overline z(R^K)-N^K_{R^K}K^{-1}\right|< \tilde M \e \sigma_K$ a.s.. We call $R^K$ the
resident trait.
\end{assumption} 
Note that Assumption \ref{ini_con} is stronger than the initial condition the assume in Theorem \ref{main_thm}. However we obtain with high probability Assumption \ref{ini_con} after a small time.
\begin{proposition} Fix $\e>0$.
Suppose that the assumptions of Theorem \ref{main_thm} hold.
Then, there exists a constant $\tilde M>0$ (independent from $\e$ and $K$), such that 
\be
\lim_{K \to \infty} \mathbb P\left[\inf\bigl\{t\geq 0: |\langle\tilde \nu^K_t,\mathds 1\rangle-\bar z(x)|<\tilde M \e \s_K\bigr\}< \ln(K)\s_K^{-1}\wedge
\inf\left\{t \geq 0: L(t)\geq 1\right\} \right]=1.
\ee
\end{proposition}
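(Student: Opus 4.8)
The goal is to show that, starting from the monomorphic state $(N_0^K/K)\delta_{\{x_0\}}$ with $N_0^K/K \to \bar z(x_0)$, the total mass $\langle \tilde\nu^K_t,\mathds 1\rangle$ enters the $\tilde M\e\s_K$-neighborhood of $\bar z(x_0)$ before time $\ln(K)\s_K^{-1}$ and before the first mutation event. The plan is to work on the event $\{L(t) = 0 \text{ for } t < \ln(K)\s_K^{-1}\}$, on which the population is purely monomorphic with trait $x_0$, so that $N_t := K\langle\tilde\nu^K_t,\mathds 1\rangle$ is a one-dimensional birth-death chain: from state $n$ it jumps to $n+1$ at rate $n b(x_0)(1-u_K m(x_0))$ and to $n-1$ at rate $n(d(x_0) + c(x_0,x_0)n/K)$. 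This is (up to the tiny $(1-u_K m(x_0))$ correction, which shifts the equilibrium by $O(u_K) = o(\s_K)$) a logistic birth-death process whose deterministic profile $z(t)$ solves $LV(1,x_0)$ and converges exponentially fast to $\bar z(x_0)$.

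First I would handle the mutation issue: the first mutation occurs at rate at most $\bar b u_K N_t \le \bar b u_K \cdot (4\bar b/\underline c)K$ once we know $\langle\tilde\nu^K,\mathds 1\rangle \le 4\bar b/\underline c$ (Lemma~\ref{upper_bound_total_mass}, valid up to time $\exp(VK) \gg \ln(K)\s_K^{-1}$). Hence the probability that a mutation occurs before time $\ln(K)\s_K^{-1}$ is $O(u_K K \ln(K)\s_K^{-1}) = o(1)$ by the hypothesis $u_K \ll \s_K^{1+\alpha}/(K\ln K)$. So with probability $1 - o(1)$ we may assume $L \equiv 0$ throughout $[0,\ln(K)\s_K^{-1}]$, and it suffices to show the stopping time $\tau_K := \inf\{t : |N_t/K - \bar z(x_0)| < \tilde M\e\s_K\}$ satisfies $\tau_K < \ln(K)\s_K^{-1}$ with high probability.

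Next I would control the deviation of $N_t/K$ from the deterministic logistic solution $z(t)$ on this time interval. The cleanest route is a coupling / martingale estimate: write $N_t/K = z(t) + \text{(martingale)} + \text{(drift correction)}$; the martingale part has quadratic variation of order $K^{-1}$ per unit time, so on $[0,\ln(K)\s_K^{-1}]$ its maximal fluctuation is of order $\sqrt{\ln(K)\s_K^{-1}/K}$, which is $o(\s_K)$ precisely because $\s_K \gg K^{-1/2+\alpha}$ (this is exactly the role of condition~\eqref{conv1}). More carefully, since the interval length $\ln(K)\s_K^{-1}$ grows, I would iterate: partition $[0,\ln(K)\s_K^{-1}]$ into $O(\ln K \cdot \s_K^{-1})$ unit subintervals, on each of which a Doob/Gronwall argument controls the fluctuation, and use the exponential contraction of the logistic flow toward $\bar z(x_0)$ (from Assumption~\ref{ass}(ii), $c(x_0,x_0) \ge \underline c > 0$ and $b(x_0)-d(x_0)>0$) to prevent the errors from accumulating — standard for stable fixed points. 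After a time $O(\ln K)$ the deterministic solution is within, say, $\frac12\tilde M\e\s_K$ of $\bar z(x_0)$, and the accumulated stochastic error is $o(\s_K)$, so $N_{\ln(K)\s_K^{-1}}/K$ lies in the $\tilde M\e\s_K$-ball, giving $\tau_K < \ln(K)\s_K^{-1}$.

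The main obstacle is precisely that the time horizon $\ln(K)\s_K^{-1}$ diverges with $K$, so a naive law-of-large-numbers estimate (valid on bounded time intervals, as in \cite{C_TSS}) does not apply directly — one cannot afford a fluctuation bound that degrades over a long interval. Overcoming this requires exploiting the exponential stability of $\bar z(x_0)$ together with the moderate-deviations inequality $\s_K \gg K^{-1/2+\alpha}$: the system contracts fast enough that the cumulative noise stays $o(\s_K)$. This is the same mechanism — comparison with the deterministic flow on a diverging time scale via a "stochastic Euler scheme" type argument — that the paper develops in full generality for the second invasion phase; here, in the monomorphic one-dimensional setting, it reduces to a controllable one-dimensional computation.
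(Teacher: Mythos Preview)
Your approach is sound but takes a different route from the paper. The paper does not spell out a proof; it simply invokes the method of Lemma~\ref{Step2}(a), which is a discrete-time random-walk coupling: one tracks $X_t = |K\langle\tilde\nu^K_t,\mathds 1\rangle - \lceil K\bar z(x_0)\rceil|$, shows the embedded chain has downward bias of order $j/K$ at distance $j$ (as in the Claim of Lemma~\ref{exit_from_domain}, here without mutant corrections since $L\equiv 0$), couples with a dominating biased walk, and uses Hoeffding's inequality to bound the number of steps together with an exponential Chebychev inequality for the clock time. Your plan instead writes $N_t/K$ as the deterministic logistic solution plus a martingale, bounds the martingale via Doob's inequality (quadratic variation $O(T/K)$ with $T=O(\ln K)$, giving fluctuation $\sqrt{\ln K/K}=o(\s_K)$ thanks to~\eqref{conv1}), and invokes the exponential contraction of the logistic flow toward $\bar z(x_0)$. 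Both arguments work; the paper's coupling is elementary and integrates with the potential-theoretic moderate-deviation estimates used throughout Sections~\ref{First_Phase}--\ref{2.Phase}, while your route is closer to the Ethier--Kurtz framework and arguably more transparent in this one-dimensional monomorphic setting. One caution for your version: a naive Gronwall bound with the Lipschitz constant $L$ of the drift over $T=O(\ln K)$ would produce a factor $e^{LT}=K^{LC}$, which destroys the $o(\s_K)$ conclusion; you correctly identify that the negative linearization at $\bar z(x_0)$ must be used instead, but carrying this out rigorously requires a stopping-time localization to the region where $F'\le -\lambda<0$ before the variation-of-constants or iterated-contraction step goes through cleanly.
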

Since we can assume for the moment that Assumption \ref{ini_con} hold, we do not state the proof here. In fact, it can be proven in similar way as Lemma \ref{Step2} (a). 
We begin with several notations, which we use in the lemmata below.

\begin{notations} 
Fix $\e>0$. Suppose that  Assumption \ref{ass}, \ref{ass3} and \ref{ini_con} hold.
Let $\tau^K_{k}$ be the $k$ th mutant time, and $Y^K_k\in\mathcal X$ the trait of the $k$-th mutant, i.e.
\begin{equation}
\tau^K_k\equiv \inf\{t\geq 0: L^K_t=k\} \quad \text{and}\quad Y^K_k \equiv h_{k,1}(\tilde \nu^K_{\tau^K_k}).
\end{equation} 
 We denote by $\th^K_{\text{invasion}}$ the first time 
such that a mutant density is larger than $\epsilon\sigma_K$, i.e.
\begin{equation}
\th^K_{\text{invasion}}\equiv \inf\left\{t\geq 0:\exists k\in\{1,\ldots, L^K_t\}  \text{ such that } \mfm^k (\tilde \nu^K_t) > \epsilon\sigma_K K\right\},
\end{equation} 
and let $R_1^K$ be the trait value of the mutant which is larger than $\epsilon\sigma_K K$ at time $\th^K_{\text{invasion}}$, i.e.
\begin{equation}
R^K_1\equiv h_{k_1,1}(\tilde \nu^K_{\th^K_{\text{invasion}}})\quad \text{ with }k_1=\inf\left\{k\geq 1: \mfm^k (\tilde \nu^K_{\th^K_{\text{invasion}}}) > \epsilon\sigma_K K\right\}.
\end{equation} 
Furthermore, let $\th^K_{ \text{diversity}}$ be the first time such that $\lceil 3/\alpha\rceil$ different traits are present in the population, i.e.
\begin{equation}
\th^K_{ \text{diversity}}\equiv \inf\left\{t\geq 0:\sum_{k=0}^{L^K(t)}\mathds
 1_{\{\mfm^k (\tilde \nu^K_t)\geq 1\}}= \lceil 3/\alpha\rceil \right\},
\end{equation}
and similarly let $\th^K_{ \text{mut. of mut.}}$ the first time such that a "$2$nd generation mutant" occurs, i.e. a mutant which was born from a
mutant born from the resident trait $R^K$. 
 Note that
\begin{equation}
\th^K_{ \text{mut. of mut.}}\leq \inf\left\{t\geq 0:\exists k\in\{1,\ldots, L^K_t\} \text{ such that }  |R^K-Y^K_k|>A\s_K\right\}.
\end{equation}
Then, we define
\begin{equation}
\hat \th^K\equiv \th^{K}_{ \text{invasion}}\wedge \th^{K}_{ \text{diversity}}\wedge \th^K_{ \text{mut. of mut.}} \wedge\exp({K^{\a}}).
 \end{equation}
\end{notations} 
The following theorem collects the main results of this section.

\begin{theorem}\label{1.Phase}
Fix $\e>0$. Under the Assumptions \ref{ass}, \ref{ass3} and \ref{ini_con}, there exists a constant 
$M>0$  (independent of $\e$ and $K$) such that for all $K$ large enough
\begin{enumerate}[(i)]
		\setlength{\itemsep}{3pt}
\item $\tilde\nu^K_{0}=N_{R^K}^K K^{-1}\delta_{(0,R^K)}$, where  $\left|\overline z(R^K)-N^K_{R^K}K^{-1}\right|<(M/3) \e \sigma_K$ a.s..
\item  We can construct on $(\Omega,\mathcal{F},\mathds P)$ two random variables,
 $R_1^{K,1}$ and $R_1^{K,2}$,
		such that 
	\bea
\P\left[R_1^{K,1}\leq R_1^K\leq R_1^{K,2} \;\text{ and } \;R^{K,2}_1-R^{K,1}_1\leq A \s_K\right]&=&1-o(\sigma_K), \quad \text{and}
	\\
	\P\left[R^{K,1}_1=R^K_1=R^{K,2}_1\right] &=&1-O(\e).
	\eea
	The distributions of $R_1^{K,1}$ and $R_1^{K,2}$ are given in Corollary  \ref{new_resident_trait}.  
\item  We can construct on $(\Omega,\mathcal{F},\mathds P)$  two exponential random variables,
 $E^{K,1}$ and $E^{K,2}$, with parameters
of order $ \s_K u_K K $, 
		such that 
	\be
	\P\left[
	 E^{K,2}\leq \th^{K}_{\text{invasion}} \leq E^{K,1}+\ln(K)\s_K^{-1-\a/2 }\right]=1-o(\sigma_K).
	 \ee
	  The distributions of $E^{K,1}$ and $E^{K,2}$ are given in Lemma \ref{bounds_for_invasion_time}.  				
\end{enumerate}
Moreover, until the first time of invasion, $\theta^K_{\text{invasion}}$, the resident density stays
 in an $\e  M\sigma_K$-neighborhood of $\bar{z}(R^K)$,
the number of different living mutant traits is bounded by $\lceil \alpha/3\rceil$,
 and there is no mutant of a mutant, with probability $1-o(\sigma_K)$. I.e.
\begin{align} \nonumber
&\mathbb P \left[ \th^{K}_{ \text{invasion}}<\inf \Big\{ t\geq 0: \left|\mfm^ {0}(\tilde \nu^K_t) - \left\lceil K\overline z(R^K) 
\right\rceil \right| >\e M  \s_K K \Big\}\wedge \th^{K}_{ \text{diversity}}\wedge \th^K_{ \text{mut. of mut.}}\right]
\\&\quad
=1-o(\sigma_K).
\end{align}
\end{theorem}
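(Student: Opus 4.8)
The plan is to track simultaneously the resident subpopulation $\mfm^0(\tilde\nu^K_t)$ and all the mutant subpopulations up to time $\hat\th^K$, using comparisons with linear branching processes for the mutants and a moderate-deviation/coupling argument for the resident. Part (i) is immediate from Assumption \ref{ini_con} (take $M=3\tilde M$). For the rest, first I would work on the event up to $\hat\th^K=\th^K_{\text{invasion}}\wedge\th^K_{\text{diversity}}\wedge\th^K_{\text{mut. of mut.}}\wedge\exp(K^\alpha)$, where by definition at most $\lceil 3/\alpha\rceil$ traits coexist, all mutant traits lie within $A\sigma_K$ of $R^K$, and all mutant densities are below $\e\sigma_K$. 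On this event the total competitive pressure felt by a resident individual is $c(R^K,R^K)\mfm^0(\tilde\nu^K_t)/K + O(\e\sigma_K)$, so $\mfm^0(\tilde\nu^K_t)/K$ behaves like a logistic birth-death chain with equilibrium $\bar z(R^K)$ up to an $O(\e\sigma_K)$ perturbation of the rates. The first key step is therefore a moderate-deviation estimate: starting within $(M/3)\e\sigma_K$ of $\bar z(R^K)$, the chain $\mfm^0(\tilde\nu^K_t)/K$ stays within $M\e\sigma_K$ of $\bar z(R^K)$ for a time at least $\exp(K^\alpha)$ with probability $1-o(\sigma_K)$. I would prove this by coupling $\mfm^0$ with a pair of one-dimensional birth-death chains bounding it from above and below (valid as long as the mutant mass stays small), then invoking standard potential-theoretic estimates for the exit time from an interval in the moderate-deviations regime — the relevant bound is of the form $\exp(-c K \cdot (\e\sigma_K)^2)=\exp(-cK^{2\alpha}\cdot\text{poly})$, which beats any $o(\sigma_K)$ because $K^{-1/2+\alpha}\ll\sigma_K$ forces $K(\e\sigma_K)^2\gg K^{2\alpha}$.

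Next I would handle the mutant subpopulations and the invasion time. Conditionally on the resident staying in the $M\e\sigma_K$-tube, each new mutant trait $Y^K_k=R^K+O(\sigma_K)$ spawns a subpopulation $\mfm^k(\tilde\nu^K_t)$ that, as long as $\mfm^k<\e\sigma_K K$ and no mutant-of-mutant or excess diversity has occurred, is sandwiched (via the notation $\preccurlyeq$ of Notations (a), using Assumption \ref{ass}(iii) to bound competition kernels) between two linear birth-death processes with birth rate $b(R^K)+O(\sigma_K)$ and death rate $d(R^K)+c(R^K,R^K)\bar z(R^K)+O(\e\sigma_K)=b(R^K)-f(R^K,R^K)+O(\e\sigma_K)$; since $f(R^K,R^K)=0$ the net growth rate is $(Y^K_k-R^K)\partial_1 f(R^K,R^K)+o(\sigma_K)$, of order $\sigma_K$. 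Classical branching-process theory then gives: a given mutant reaches level $\e\sigma_K K$ with probability of order $\sigma_K$ (explicitly $\approx[f(Y^K_k,R^K)]_+/b(Y^K_k)$ up to $O(\e)$), and otherwise dies out within time $O(\ln(K)\sigma_K^{-1})$; I would also show no two mutants are simultaneously above a tiny threshold, which both controls diversity (keeping us below $\lceil 3/\alpha\rceil$ traits with probability $1-o(\sigma_K)$ via a union bound over the $O(\text{poly}(K))$ mutations in time $\exp(K^\alpha)$, using $u_K\gg\exp(-K^\alpha)$... wait — actually using $u_K\ll\sigma_K^{1+\alpha}/(K\ln K)$ to make mutation rare enough) and prevents mutants-of-mutants. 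Since mutations arrive at rate $\approx Ku_K b(R^K)\bar z(R^K)$ and each is "successful" independently with probability $\asymp\sigma_K$, the number of mutations before the first successful one is geometric with success parameter $\asymp\sigma_K$, so $\th^K_{\text{invasion}}$ is approximately exponential with parameter of order $Ku_K\sigma_K$; the lower and upper envelopes $E^{K,1},E^{K,2}$ of (iii) come from the lower/upper branching comparisons, and the $\ln(K)\sigma_K^{-1-\alpha/2}$ slack absorbs the cumulative duration of the unsuccessful mutant excursions. For (ii), $R^{K,1}_1$ and $R^{K,2}_1$ are built from the minimal/maximal trait among mutants present at $\th^K_{\text{invasion}}$; on $\{\th^K_{\text{mut. of mut.}}>\th^K_{\text{invasion}}\}$ (probability $1-o(\sigma_K)$) the two differ by at most $A\sigma_K$, and the probability that two distinct mutant traits are simultaneously near the $\e\sigma_K K$ level is $O(\e)$, giving the second line of (ii).

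Finally I would assemble the pieces: intersect the good events (resident stays in the tube for time $\exp(K^\alpha)$; first successful mutant appears before $\exp(K^\alpha)\wedge\th^K_{\text{diversity}}\wedge\th^K_{\text{mut. of mut.}}$; the envelopes $E^{K,i}$ trap $\th^K_{\text{invasion}}$; $R^{K,i}_1$ trap $R^K_1$), each failing with probability $o(\sigma_K)$, to get the displayed final estimate and (ii)–(iii). The main obstacle is the moderate-deviation control of the resident density over the diverging time horizon $\exp(K^\alpha)$: one cannot use the law of large numbers (the relevant fluctuations are of the same order $\sigma_K$ as the drift we care about), so the delicate point is to get exit-time bounds sharp enough ($o(\sigma_K)$, i.e. essentially $\exp(-K^{2\alpha})$) while the bounding birth-death chains are themselves only valid on the (random, but overwhelmingly long) time interval during which all mutant densities remain $\le\e\sigma_K$ — so the argument must be run as a careful bootstrap, using the branching estimates to guarantee mutants stay small exactly as long as the potential-theoretic estimate needs, and vice versa.
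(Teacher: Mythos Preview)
Your plan is broadly correct and follows the paper's route: Lemma~\ref{exit_from_domain} gives the moderate-deviation control of the resident exactly as you describe (coupling with a one-dimensional chain and the potential-theoretic bound $\exp(-cK(\e\sigma_K)^2)$ of Proposition~\ref{prop1}); Lemmata~\ref{bound_mutants} and~\ref{rv_B} sandwich each mutant subpopulation between linear birth--death processes and extract the $O(\sigma_K)$ success probability; Lemma~\ref{diversity} controls diversity and mutants-of-mutants; and Lemma~\ref{bounds_for_invasion_time} gives the exponential envelopes $E^{K,i}$. The bootstrap via $\hat\theta^K$ is precisely how the circularity you flag at the end is resolved.

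There is, however, a genuine gap in your construction for (ii). Taking $R^{K,1}_1,R^{K,2}_1$ to be the min/max trait \emph{among mutants present at $\theta^K_{\text{invasion}}$} does not work: unsuccessful negative-jump mutants can be alive, so the min can lie below $R^K$ and the gap can be $2A\sigma_K$ rather than $A\sigma_K$; more seriously, these random variables do not have the explicit laws of Corollary~\ref{new_resident_trait}, which are what Section~\ref{Convergence} uses to build the Markovian comparison processes. Your justification of the $O(\e)$ line is also wrong: it does \emph{not} come from ``two distinct traits simultaneously near level $\e\sigma_K K$'' (that event has probability $o(\sigma_K)$, Lemma~\ref{2.succ_mutant}). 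The paper's mechanism is different: from the branching envelopes one gets coupled Bernoulli success indicators $B^{1,K}_k\leq B^K_k\leq B^{2,K}_k$ with parameters $\sigma_K p_1^K,\sigma_K p_2^K$ satisfying $p_2^K-p_1^K=O(\e)$ --- the $\e$ enters through the $\pm M\e\sigma_K$ uncertainty on the resident density, hence on the death rate of the bounding processes. With $I^{K,j}=\inf\{k:B^{j,K}_k=1\}$ one has $\P[I^{K,1}\neq I^{K,2}]=1-p_1^K/p_2^K=O(\e)$, and $R^{K,j}_1$ is defined to equal $Y^K_{I^K}$ on $\{I^{K,1}=I^{K,2}\}$ and the extreme value $R^K+\sigma_K$ (resp.\ $R^K+A\sigma_K$) otherwise. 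This is what produces both the explicit laws and the $1-O(\e)$ equality probability.

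A minor correction on (iii): the $\ln(K)\sigma_K^{-1-\alpha/2}$ slack is not for ``cumulative duration of unsuccessful excursions'' (those run concurrently with the Poisson clock of mutations and are already absorbed into the exponential bound for $\tau_{I^K}$); it bounds the time $T^{K}_{I^K}$ for the successful mutant, once born, to climb from one individual to level $\e\sigma_K K$ (Proposition~\ref{prop2.1}).
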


\begin{remark} The constant $M>0$ depends only on 
$\alpha$ and on the  functions $ b(.), d(.), c(.,.)$ and $m(.)$, but not on $K$, $R^K$ and $\e$. 
\end{remark}
The first lemma in this section concerns the time of exit from an attracting domain. 
\begin{lemma} \label{exit_from_domain}
Fix $\e>0$. Suppose that the assumptions of Theorem \ref{1.Phase} hold. 
Then, there exists a constant $M>0$ (independent of $\e$ and $K$) such that
\begin{align}
\lim_{K\to \infty}\;\s_K^{-1}\;
\mathbb P \Big[ \inf \Big\{ t\geq 0: \left|\mfm^ {0}(\tilde \nu^K_t) - \left\lceil K\overline z(R^K) 
\right\rceil\right |& >\e M  \s_K K \Big\}<\hat \th^{K} \Big]=0.
\end{align}
\end{lemma}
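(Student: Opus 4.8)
The plan is to control the resident population $\mfm^0(\tilde\nu^K_t)$ before time $\hat\th^K$ by comparing it from above and below with one-dimensional birth-and-death-type dynamics whose drift points back toward $\lceil K\bar z(R^K)\rceil$, and then to invoke a moderate-deviations estimate on the exit time from a neighborhood of that attracting point. The key observation is that on the event $\{t<\hat\th^K\}$ we know a great deal: there are at most $\lceil 3/\alpha\rceil$ distinct traits present, no mutant-of-mutant has appeared (so all traits lie within $A\sigma_K$ of $R^K$), every mutant density is at most $\epsilon\sigma_K$, and $\langle\tilde\nu^K_t,\mathds1\rangle\le 4\bar b/\underline c$ by Lemma~\ref{upper_bound_total_mass} (up to an event of probability $o(\sigma_K)$, which we absorb). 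Consequently the total mass seen by a resident individual is $\bar z(R^K)+\mfm^0(\tilde\nu^K_t)/K-\bar z(R^K) + O(\epsilon\sigma_K)$, and using the Lipschitz continuity of $c$ and of $\bar z$ (Assumption~\ref{ass}(iv)) the death rate of a resident individual at time $t$ equals $d(R^K)+c(R^K,R^K)\,\mfm^0(\tilde\nu^K_t)/K + O(\epsilon\sigma_K)$ while its birth rate is $b(R^K)(1-u_Km(R^K)) = b(R^K)+o(\sigma_K)$ since $u_K\ll\sigma_K$. Hence, as long as $\mfm^0(\tilde\nu^K_t)$ sits in the window $[\lceil K\bar z(R^K)\rceil - \epsilon M\sigma_K K,\ \lceil K\bar z(R^K)\rceil + \epsilon M\sigma_K K]$, the process $\mfm^0(\tilde\nu^K_t)$ is dominated above (resp.\ below) by a birth-and-death chain $Z^{\pm}$ on $\mathbb Z$ with birth rate $K b(R^K)$ and death rate $(d(R^K)+c(R^K,R^K)z/K \mp C\epsilon\sigma_K)Kz$ for a fixed constant $C$ depending only on the model parameters; these chains have a unique stable equilibrium at $\bar z(R^K)\mp C'\epsilon\sigma_K + O(1/K)$ and a restoring drift of order $K\sigma_K$ at the boundaries of the window provided $M$ is chosen large enough (say $M>4C'$).

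First I would make the coupling precise: using the Poissonian path-wise representation of $(\tilde\nu^K,L^K)$, I would build $Z^+$ and $Z^-$ on the same probability space driven by the same $N^{\text{birth}}_0, N^{\text{death}}_0$ so that $Z^-_t\le\mfm^0(\tilde\nu^K_t)\le Z^+_t$ for all $t<\hat\th^K\wedge\tau$, where $\tau$ is the exit time of $\mfm^0$ from the window — this is a standard monotone-coupling argument since on this event the instantaneous rates are sandwiched between those of $Z^-$ and $Z^+$. Next, following the standard discrete-time embedding, I would pass to the jump chain of $Z^\pm$: it is a nearest-neighbour random walk on $\mathbb Z$ with a downward (resp.\ upward) bias of order $\sigma_K$ throughout the window, started at distance $O(\epsilon\sigma_K K)$ from the centre. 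Then I would estimate the probability that such a walk travels a distance $\sim\epsilon(M - C')\sigma_K K$ against a bias of order $\sigma_K$ before returning near the centre. This is the exit-from-a-domain problem alluded to in the introduction; the cleanest route is a Lyapunov/potential-theoretic one: with $h(z)=\lambda^{z}$ for a suitable $\lambda=1+O(\sigma_K)$ the walk's position becomes a supermartingale in the region where the bias is favorable, and an optional-stopping computation shows the probability of reaching the far boundary before, say, the centre is bounded by $\exp(-c\,\epsilon^2\sigma_K^2 K)$ for some $c>0$. Because $\sigma_K\gg K^{-1/2+\alpha}$ we have $\sigma_K^2 K\gg K^{2\alpha}$, so this probability is $\le\exp(-cK^{2\alpha})$, which is $o(\sigma_K)$ and in fact summable along $K$. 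Iterating over the $O(\exp(K^\alpha)\cdot\#\text{excursions})$ many excursions before time $\exp(K^\alpha)$ — here one must be slightly careful, but a union bound over the at most $\exp(K^\alpha)\cdot K$ jump-attempts of $Z^\pm$ before the minimum of $\hat\th^K$ is still crushed by $\exp(-cK^{2\alpha})$ since $\alpha<\alpha+\text{(anything)}<2\alpha$ fails only if... — actually one checks $\exp(K^\alpha)K\exp(-cK^{2\alpha})\to0$ and even $\sigma_K^{-1}$ times this quantity tends to $0$, which is exactly the claimed rate.

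The main obstacle, and where I would spend the most care, is the iteration/union-bound step: a single excursion has exit probability $\le\exp(-cK^{2\alpha})$, but over the time horizon $\exp(K^\alpha)$ the walk makes exponentially many (in $K^\alpha$) attempts to leave the window, so the naive bound is $\exp(K^\alpha)\exp(-cK^{2\alpha})$, which still $\to0$ (and multiplied by $\sigma_K^{-1}\le\exp(K^\alpha)$ still $\to0$ since $2\alpha>$ the exponent $\alpha$ contributed by both the horizon and by $\sigma_K^{-1}$, i.e.\ $2\alpha K^{2\alpha}$-type terms dominate $2K^\alpha$-type terms). To make this rigorous I would not union-bound over individual jumps but rather define successive "return to centre" cycles, bound the number of cycles up to time $\exp(K^\alpha)$ by $\exp(2K^\alpha)$ with overwhelming probability (each cycle lasts at least a geometric time), and apply the strong Markov property together with the per-cycle estimate $\exp(-cK^{2\alpha})$. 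The continuous-time clock (translating "before time $\hat\th^K$", which involves $\exp(K^\alpha)$, into a bound on the number of jumps) is handled by a routine comparison of the total jump rate, which is $O(K)$ on the relevant event, with a Poisson process, giving at most $K\exp(K^\alpha)\cdot\mathrm{polylog}$ jumps with probability $1-o(\sigma_K)$. Assembling these pieces, and recalling that the $o(\sigma_K)$-probability events removed (total mass exceeding $4\bar b/\underline c$, anomalously many jumps) are also negligible after multiplication by $\sigma_K^{-1}$, yields $\sigma_K^{-1}\mathbb P[\inf\{t\ge0:|\mfm^0(\tilde\nu^K_t)-\lceil K\bar z(R^K)\rceil|>\epsilon M\sigma_K K\}<\hat\th^K]\to0$, which is the assertion of Lemma~\ref{exit_from_domain}.
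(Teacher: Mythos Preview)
Your proposal is correct and follows essentially the same strategy as the paper's proof: couple the resident count to a nearest-neighbor chain with a restoring drift near the edge of the $M\e\sigma_K K$-window, obtain a per-excursion exit probability of order $\exp(-c\,\e^2\sigma_K^2 K)\le\exp(-cK^{2\alpha})$, and then control the number of excursions up to time $\exp(K^\alpha)$. The paper packages the first two steps by working with the absolute deviation $|\mfm^0-\lceil K\bar z(R^K)\rceil|$ and the explicit hitting-probability computation of Proposition~\ref{prop1} rather than your two-sided sandwich and exponential Lyapunov function (note your bias is position-dependent, so the supermartingale property only holds outside a band of width $\sim\e\sigma_K K$, which is what ``return to centre'' must mean), and carries out the cycle-counting through a more elaborate binomial argument, cf.~\eqref{1.1.1.1}, but the ideas coincide.
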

The statement is stronger than the corresponding one in \cite{C_TSS}, Thm. 3(c), since the diameter of the domain converges to zero,
when $K$ tends to infinity and since it contains a speed of convergence to 0 of the probabilities. Therefore, it follows not from the classical results about the time of exit from an attractive domain (cf. \cite{F_RPoDS}). 
Our proof is based on a coupling with a discrete Markov chain and some standard potential theoretical argument.
\begin{proof} 
Define 
\begin{equation} 
X_t	\equiv \left|\mfm_t^0-\left\lceil K\overline z(R^K) \right\rceil \right|
\end{equation}
and, for all $M\geq0$,
\begin{equation} 
\tau^{}_0\equiv \inf\{t>0:X_t=0\}\quad\text{ and }
\quad \tau^{}_{M\epsilon \sigma_K K}\equiv \inf\{t>0:X_t\geq M \epsilon \sigma_K K\}.
\end{equation}					
Note that $\tau^{}_0$ and $\tau^{}_{M \epsilon \sigma_K K}$ are stopping times with respect to the natural filtration of $X_t$, 
which is equal to $\sigma\left (\mfm_s^0 ; s\leq t\right)$, and that the process $(\mfm_t^0)_{t\geq 0}$ is not markovian. 
We can associate with the continuous time process $X_t$ a discrete time (non-Markov) process $Y_n$ 
which records the sequence of values that $X_t$ takes.
(This can be formally defined by introducing the sequences $T_k$ of the stopping times which record the instances 
when $X_t\neq X_{t-}$ and setting $Y_n=X_{T_n}$.)
Now, we can compute 
 \begin{equation}
 \label{1.P_a}\mathbb P\bigl[\tau^{}_{M \epsilon \sigma_K K}<\tau^{}_0\wedge\th^{K}_{ \text{invasion}}\wedge \th^{K}_{ \text{diversity}}\wedge \th^{K}_{ \text{mut. of mut.}} \bigr]
 \end{equation}
 with respect to the stopping times defined for the discrete time process $Y_n$ and 
exploit the natural renewal structure on $Y_n$.
 Therefore, we prove the following claim.\\[0.5em]
\textbf{Claim:}
 \textit{For  $1\leq i\ll K$, and $K$ large enough,}
  \bea
 \label{1.cond_pro}\mathbb P\bigl[Y_{n+1}=i+1|Y_n=i, T_{n+1}<\th^{K}_{ \text{invasion}}\wedge \th^{K}_{ \text{diversity}}\wedge \th^{K}_{ \text{mut. of mut.}} \bigr] &&\\\leq \frac 1 2 -(\underline c/ 4\overline b )K^{-1}i+ (\overline c/ \underline b ) \epsilon \sigma_K &\equiv& p_+^K(i),\nonumber
\eea
\textit{where $\underline c,\underline b,\overline c$ and $\overline b$ are the lower, respectively upper bounds for birth and competition rates.}\\
Recall from Remark 1 that the equilibrium $\overline z(R^K)$ is equal to $\frac {b(R^K)-d(R^K)}{c(R^K,R^K)}$ and observe that there are at most 
$\lceil 3/\a\rceil \e\s_K K$ mutant individuals alive at any time $t<\th^{K}_{ \text{invasion}}\wedge \th^{K}_{ \text{diversity}}\wedge \th^{K}_{ \text{mut. of mut.}} $.
Therefore, for  $1\leq i\ll K$, and $K$ large enough, 
 \begin{align}
 \mathbb P\bigl[&Y_{n+1}=i+1|Y_n=i, T_{n+1}<\th^{K}_{ \text{invasion}}\wedge \th^{K}_{ \text{diversity}}\wedge \th^{K}_{ \text{mut. of mut.}} \bigr]\\\nonumber
  		&\leq \tfrac{(1-m(R^K) u_K  )b(R^K)}
 								 {(1-m(R^K) u_K  )b(R^K)+d(R^K)+{c(R^K,R^K)}{K^{-1}}(\lceil K\overline z(R^K)\rceil +i)}
			\\\nonumber&\hspace{5cm}
						\vee\tfrac{d(R^K)+{c(R^K,R^K)}{K^{-1}}(\lceil K\overline z(R^K)\rceil -i)+\bar c \lceil 3/\a \rceil\epsilon \sigma_K K}
										{(1-m(R^K) u_K  )b(R^K)+d(R^K)+{c(R^K,R^K)}{K^{-1}}(\lceil K\overline z(R^K)\rceil-i)}
		\\
\nonumber&\leq\tfrac{b(R^K)-m(R^K) u_K  b(R^K)}{2b(R^K)-m(R^K) u_K  b(R^K)+{c(R^K,R^K)}{K^{-1}}i}\vee
				\:\tfrac{b(R^K)-{c(R^K,R^K)}{K^{-1}}(i-1)+ \bar c\lceil 3/\a \rceil\epsilon \sigma_K K}{2b(R^K)-m(R^K) u_K  b(R^K)-{c(R^K,R^K)}{K^{-1}}i}
		\\&\leq \frac 1 2 -(\underline c/ 4\overline b )K^{-1}i+ (\overline c/ \underline b ) \lceil 3/\a \rceil \e\s_K. \nonumber
\end{align}
This proves the claim. 
Next we introduce a coupling, i.e. we define a discrete time process $Z_n$ with the following properties 
\begin{enumerate}[(i)]
\itemsep8pt
\item{$Z_0=Y_0$,}
\item{$\mathbb P\left[Z_{n+1}=i+1,Y_{n+1}=i+1\big|Y_n=Z_n=i, T_{n+1}<\th^{K}_{ \text{invasion}}\wedge \th^{K}_{ \text{diversity}}\wedge \th^{K}_{ \text{mut. of mut.}} \right]
				\\\hspace*{4cm}=\mathbb P\bigl[Y_{n+1}=i+1|Y_n=i, T_{n+1}<\th^{K}_{ \text{invasion}}\wedge \th^{K}_{ \text{diversity}}\wedge \th^{K}_{ \text{mut. of mut.}} \bigr]$,}
\item{$\mathbb P\bigl[Z_{n+1}=i+1,Y_{n+1}=i-1|Y_n=Z_n=i,T_{n+1}<\th^{K}_{ \text{invasion}}\wedge \th^{K}_{ \text{diversity}}\wedge \th^{K}_{ \text{mut. of mut.}} \bigr]$\\\hspace*{2.5cm}
		$=p_+^K(i)-\mathbb P\bigl[Y_{n+1}=i+1|Y_n=i, T_{n+1}<\th^{K}_{ \text{invasion}}\wedge \th^{K}_{ \text{diversity}}\wedge \th^{K}_{ \text{mut. of mut.}} \bigr]$},
\item{$\mathbb P\bigl[Z_{n+1}=i+1|Y_n<Z_n=i, T_{n+1}<\th^{K}_{ \text{invasion}}\wedge \th^{K}_{ \text{diversity}}\wedge \th^{K}_{ \text{mut. of mut.}} \bigr] 	=p_+^K(i)$,}
\item{$\mathbb P\bigl[Z_{n+1}=i-1|Y_n<Z_n=i, T_{n+1}<\th^{K}_{ \text{invasion}}\wedge \th^{K}_{ \text{diversity}}\wedge \th^{K}_{ \text{mut. of mut.}} \bigr] 		=1-p_+^K(i)$.}
\end{enumerate}
Note that by construction $Z_n\geq Y_n$ a.s. for all $n$ such that $T_{n}<\th^{K}_{ \text{invasion}}\wedge \th^{K}_{ \text{diversity}}\wedge \th^{K}_{ \text{mut. of mut.}} $ and the  marginal distribution of $Z_n$  is a Markov chain with transition probabilities 
\begin{equation}
\mathbb P\bigl[Z_{n+1} =j|Z_n=i\bigr]=\begin{cases}		
1 &\text{ for } i=0 \text{ and } j=1\\
p_+^K(i) &\text{ for }	i\geq1 	\text{ and }  j=i+1 \\
1-p_+^K(i)	& \text{ for }i\geq1 	\text{ and } j=i+1\\
0 	&\text{ else.}
\end{cases}
\end{equation}
Now we define a continuous time process, $\tilde Z$, associated to $Z_n$. To do this let $(\tilde T_j)_{j\in \mathbb N}$ be the sequence of jump times  of $\tilde Z$, i.e. $\tilde Z_t \equiv Z_n $  if $t\in [\tilde T_n,\tilde T_{n+1})$, defined for all $j\in \mathbb N$ as follows
\begin{align}  
\tilde T_j-\tilde T_{j-1} =
\begin{cases} 
T_j - T_{j-1} & \text{ if } T_j <\th^{K}_{ \text{invasion}}\wedge \th^{K}_{ \text{diversity}}\wedge \th^{K}_{ \text{mut. of mut.}} \\
W_j & \text{ else}, 
\end{cases}
\end{align}
where $W_j$ are independent exponential distributed random variables with mean $(C_{\text{total rate}}K)^{-1}$ where $C_{\text{total rate}}=4\overline b \underline c (\overline b+\overline d+\overline c(4\overline b \underline c))$. By Lemma \ref{upper_bound_total_mass}, $C_{\text{total rate}}K$ is an upper bound for the total event rate of $\langle \tilde \n^K_t,\mathds 1\rangle$ and therefore also for $\mfm^0_t$.

Define $\tau^Z_{M\e\s_K K}\equiv  \inf\{n\geq 0:  Z_n\geq M\e\s_K K\}$ and $\tau^Z_0\equiv \inf\{n \geq 0:Z_n =0\}$. Then,
since $\tilde Z_t \geq X_t$ a.s. for all $t<\th^{K}_{ \text{invasion}}\wedge \th^{K}_{ \text{diversity}}\wedge \th^{K}_{ \text{mut. of mut.}} $,
\begin{align}
\mathbb P\bigl[\tau^{}_{M\e\s_K K}<\tau^{}_0\wedge\th^{K}_{ \text{invasion}}\wedge \th^{K}_{ \text{diversity}}\wedge \th^{K}_{ \text{mut. of mut.}} \bigr]
\leq \mathbb P\big[\tau^Z_{M\e\s_K K}<\tau^Z_0\big].
\end{align}
Applying Proposition \ref{prop1} yields that, for all $M \geq 32 \lceil 3/\a \rceil (\overline c \:\overline b)/( \underline b\:  \underline c ) $ such that $Z_0 \leq \frac 1 3 M\e\s_K K$ and large $K$ large enough,
\begin{equation}
\mathbb P\big[\tau^Z_{M\e\s_K K}<\tau^Z_0\big]\leq \exp \lb - K^{2\a}\rb.
\end{equation}	
Next we prove that the process $X_t$ returns many times to zero before it reaches for the first time the value $M\e\s_K K$. More precisely, we obtain first a lower bound for the number of returns by the discrete time process $Z_n$. Then we calculate the time for a return to zero. From now on we assume that $M \geq 32  \lceil 3/\a \rceil(\overline c \:\overline b)/( \underline b\:  \underline c )$. 
Define stopping times with respect to the natural filtration of $Z$ which records the number of jumps the process $Z$ needs for $m$ zero-returns:
 \begin{equation}
\tau_{\text{$m$ returns}}^Z  \equiv \inf \left\{n\geq 1\;:\; \sum_{i=1}^{n}\mathds 1_{Z_{i}=0}=m\right\}.
\end{equation}
Let $Q^m\equiv \mathbb P\bigl[ \tau_{\text{$m$ returns}}^Z < \tau^Z_{M\e\s_K K}<\tau_{\text{$(m+1)$ returns}}^Z \bigr]$ 
be the probability that the Markov chain $Z_n$ returns exactly $m$ times to zero 
before it reaches the value $M\e\s_K K$.
We have
\begin{equation}
Q^0=\mathbb P\big[\tau^Z_{M\e\s_K K}<\tau^Z_0\big]\leq \exp  \lb - K^{2\a}\rb,
\end{equation} and, due to the Markov property, for $m\geq 1$ ,
\begin{align}
Q^m=\mathbb P& \left[\tau^Z_0<\tau^Z_{M\e\s_K K}\right] \left(1-\mathbb P^{}_1\left[\tau^Z_{M\e\s_K K}<\tau_{0}^Z \right]\right)^{m-1}
			\mathbb P^{}_1\left[\tau^Z_{M\e\s_K K}<\tau_{0}^Z\right], 
\end{align}
where the last term in the product is smaller than $\exp  \lb - K^{2\a}\rb$.
Thus,
\be 
Q^m\leq \exp \lb - K^{2\a}\rb \quad \text{ for all }m\geq 0. 
\ee
Let $B$ be the random variable which records the number of zero returns of $Z_n$ before $Z_n$ reaches $M\e\s_K K$.
With other words, $B=n$ if and only if $\tau_{\text{$n$ returns}}^Z < \tau^Z_{ M\e\s_K K}<\tau_{\text{$n+1$ returns}}^Z$ and we obtain that
\begin{align}\label{Q_a}
\mathbb P\bigl[B\leq n\bigr]
	= \sum_{i=0}^n Q^i\leq (n+1) \exp \lb - K^{2\a}\rb. 
	\end{align}
Set $I_1\equiv  \tilde T_{\tau_{\text{$1$ return}}^Z}$ and  $I_j\equiv  \tilde T_{\tau_{\text{$j$ returns}}^Z}-\tilde T_{\tau_{\text{$(j-1)$ returns}}^Z}$ for $j\geq 2$. 
For any $j$, $I_j$ is the random time between the ($j-1$)th and the $j$th zero return of the associated continuous time process $\tilde Z_t$  and 
\be
\label{1.interval}
 \sum_{i=1}^{B}I_{i}\leq \inf\{t\geq 0: \tilde Z_t\geq M\e\s_K K\}\leq \sum_{i=1}^{B+1}I_{i}.
\ee
We get an upper bound for the probability which we want to compute
\bea\label{prob_to_comp}
&&\mathbb P \Big[ \inf \Big\{ t\geq 0: |\mfm^{0}(\tilde \nu^K_t) - \lceil K\overline z(R^K) \rceil |>\e M  \s_K K \Big\} 
			<\hat\th^{K}\Big]\\\nonumber
	&&\quad\leq \sum_{l=n}^{\infty}\mathbb P\left[\inf\{t\geq 0: \tilde Z_t\geq M\e\s_K K\}<\exp(K^{\a})\:,\:B=l\right]
											+\mathbb P \bigl[B\leq n\bigr].\qquad
\eea
According to (\ref{1.interval}),
if $B=l$ and if in addition more than $l/2$ of the $l$ random times $I_j$ in the sum are larger than  $2l^{-1}  \exp( K^{\a})$,
then  $\inf\{t\geq 0: \tilde Z_t\geq M_x\e\s_K K\}$ is larger than $\exp( K^{\a})$. 
Therefore, for all $l\geq n$, 
\bea\TH(numbered.1)
&&	\mathbb P\left[\inf\{t\geq 0: \tilde Z_t\geq M\e\s_K K\}<\exp(K^{\a})\:,\:B=l\right]\\\nonumber
&&\quad		\leq \mathbb P\left[\sum_{i=1}^ l\mathds 1_{\{I_{j}<2l^{-1}  \exp(K^{\a})\}}>\nicefrac l 2 \;,\; B=l\right].
\eea
As mentioned before, $C_{\text{total rate}}K$ is a upper bound for the total event rate of $\langle \tilde \n^K_t,\mathds 1\rangle$. Thus we can bound the jump times  by a sequence of independent, exponential random variables $(V_j)_{j\in \mathbb N}$ with mean $(C_{\text{total rate}}K)^{-1}$. 
Namely,
\begin{equation}
		\tilde T_j-\tilde T_{j-1}\equiv  T_j-T_{j-1}\succcurlyeq V_j  \qquad \text{ if }\quad  T_j\leq \th^{K}_{ \text{invasion}}\wedge \th^{K}_{ \text{diversity}}\wedge \th^{K}_{ \text{mut. of mut.}}.
\end{equation}
Otherwise the random times $\tilde T_j-\tilde T_{j-1}$ are by definition independent and exponentially distributed with mean $(C_{\text{total rate}}K)^{-1}$.
The process $\tilde Z$ has to make at least two jumps to return to zero. Hence, \begin{align}\label{1.W-keit}
			I_i \succcurlyeq \tilde W_i, \quad  \text {for all $i\in \mathbb N$, } 
\end{align}
where $(\tilde W_i)_{i\in\mathbb N}$ are independent, exponential random variables with mean $(C_{\text{total rate}}K)^{-1}$. Thus
\begin{align}\label{2.W-keit}
	\mathbb P \left[ \sum_{i=1}^l \mathds 1_{\{I_{j}<2l^{-1}  \exp(K^{\a})\}}>\frac l 2 , B=l \right]
		\leq \mathbb P\left[\sum_{i=1}^ l\mathds 1_{\{\tilde W_{i}<2l^{-1}  \exp(K^{\a})\}}>\frac l 2 \right].		
\end{align}
Since $\mathbb P [\tilde W_{i}<2l^{-1}  \exp(K^{\a})]=1- \exp(-C_{\text{total rate}}K l^{-1} \exp(K^{\a}))$ and $(\tilde W_i)_{i\geq 1}$ are independent, we obtain that $\sum_{i=1}^ l\mathds 1_{\{\tilde W_{i}<2l^{-1}  \exp(K^{\a})\}}$ is binomially distributed with   $n=l$ and $p=1- $ $\exp(-C_{\text{total rate}}K$ $ l^{-1} \exp(K^{\a}))$. 
 Therefore, the right hand side of (\ref{2.W-keit}) is equal to
\begin{equation}
\sum_{i=\nicefrac l 2}^l \binom l i\left(1- \exp\left(-C_{\text{total rate}}K l^{-1} \exp(K^{\a})\right)\right)^i 
\left(\exp\left(-C_{\text{total rate}}K l^{-1} \exp(K^{\a}) \right)\right)^{l-i}	.		
\end{equation}
For the following two computations we use the elementary facts that  $\binom l i <2^l$  and $l<2^l $, for all $l\in \mathbb N$ and $i\leq l$. 
We obtain that, for large $K$ enough, the left hand side of (\ref{prob_to_comp}) is bounded from above by 
\begin{align}\nonumber
\sum_{l=n}^{\infty}\sum_{i=\nicefrac l 2}^l\binom l i &\left(1- \exp\left(-\tfrac{C_{\text{total rate}}K}{ l}\exp(K^{\a})\right)\right)^i 
\left(\exp\left(-\tfrac{C_{\text{total rate}}K}{ l } \exp(K^{\a}) \right)\right)^{l-i}+\mathbb P\left[B\leq n\right]
\\
&\leq\sum_{l=n}^{\infty}\frac l 2 \: 2^l\left(1- \exp\left(-\tfrac{C_{\text{total rate}}K}{ l } \exp(K^{\a})\right)\right)^{\nicefrac l 2}+\mathbb P\left[B\leq n\right].
\end{align}
By (\ref{Q_a}) we see that $\mathbb P\left[B\leq n\right]=o(\sigma_K)$ if the variable $n$ fulfills the following condition
\begin{equation} 
n\ll \exp\left(K^{2\a}\right)\s_K.
\end{equation} 
Therefore we  choose $n=\lceil\exp\left(2K^{\a}\right)\rceil$ and get, for large $K$ enough,
\bea\label{1.1.1.1}
&&\mathbb P \Big[ \inf \Big\{ t\geq 0: |\mfm^ {0}(\tilde \nu^K_t) - \lceil K\overline z(R^K) \rceil |>\e M  \s_K K \Big\} 
			<\hat \th^{K}\Big]\\\nonumber
			&&\qquad\leq\sum_{l=\lceil\exp\left(2K^{\a}\right)\rceil}^{\infty}\: 
			4^l\left(1- \exp\left(-C_{\text{total rate}}K l^{-1} \exp(K^{\a})\right)\right)^{\nicefrac l 2}+o(\s_K)\\\nonumber
			&&\qquad\leq\sum_{l=\lceil\exp\left(2K^{\a}\right)\rceil}^{\infty}
	\left (4\left(1-\exp\left(- C_{\text{total rate}} K \exp(-K^{\a})\right)\right)^{\nicefrac 1 2}\right)^l+o(\sigma_K)
\\\nonumber
&&\qquad\leq 2\left(4^2\left(1-\exp\left(- C_{\text{total rate}} K \exp(-K^{\a})\right)\right)\right)^{\frac 1 2\lceil\exp\left(2K^{\a}\right)\rceil}+o(\sigma_K)\\	\nonumber
&&\qquad\leq 2\left(4^2  C_{\text{total rate}} K \exp(-K^{\a})\right)^{\frac 1 2\lceil\exp\left(2K^{\a}\right)\rceil}+o(\sigma_K)\\
					&&\qquad\leq o(Ke^{-K^{\a}})+o(\sigma_K),\nonumber
\eea
where we used that $\exp(-x)\geq 1-x$ for $x\geq 0$ and $K\exp( K^{-\a})\ll\s_K$.
\end{proof}
%
In the following lemma we  bound $L^K_t$, the number of mutants up to time $t$,
 from above and below by Poisson counting processes.
\begin{lemma} \label{rv_A} 
Fix $\e>0$. Suppose that the assumptions of Theorem \ref{1.Phase} hold and let $M$ be the constant of Lemma \ref{exit_from_domain}. Then,
\begin{equation}\label{prob_rv_A}
		\lim_{K\to \infty}\;\s_K^{-1}\left(1-\mathbb P	\left [  \forall \:0\leq t\leq\hat \th^K\:: \:
																		\: A^{1, K}(t)\preccurlyeq L^K_{t}\preccurlyeq A^{2, K} (t)\:\right]\right)=0,
\end{equation} 
where $A^{1,K}$ and $A^{2,K}$ are Poisson counting processes with parameter $a^K_1u_K K$ and $a^K_2 u_K K$ with
\bea
 		a^K_1&\equiv&  \left(\overline z(R^K)-\e M  \s_K \right) b(R^K) m(R^K), \\
 		a^K_2 &\equiv & \left(\overline z(R^K)+\e \left(M +\lceil 3/ \a\rceil\right) \s_K \right) \left( b(R^K) m(R^K)+C^{b,m, M}_L A\sigma_K \right),
\eea
and $C^{b,m, M}_L$ is a constant depending only on the functions $b(.),m(.)$ and $M(.,h)$ for $h\in\{-A,\ldots,A\}$.
\end{lemma}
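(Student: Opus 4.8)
The plan is to identify the stochastic intensity of the counting process $L^K$, to sandwich it between $u_K K a_1^K$ and $u_K K a_2^K$ on a suitable event up to time $\hat\th^K$, and then to build $A^{1,K}$ and $A^{2,K}$ out of $L^K$ by a thinning/superposition coupling. From the mutation term of the generator $\tilde{\mathscr L}^K$ (integrating out $h$, since $\sum_h M(x,dh)=1$), together with the fact that every individual of label $k$ carries one and the same trait --- namely $R^K$ if $k=0$ and $Y^K_k$ if $k\geq1$ --- the intensity of $L^K$ at time $t$ is
\[
\lambda^K_t\;=\;u_K\,\mfm^0_t\, b(R^K)m(R^K)\;+\;u_K\sum_{k\geq1}\mfm^k_t\, b(Y^K_k)m(Y^K_k),
\]
where $\mfm^k_t\equiv\mfm^k(\tilde\nu^K_t)$.

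First I would introduce the event $G^K$ on which $\big|\mfm^0_t-\lceil K\overline z(R^K)\rceil\big|\leq \e M \s_K K$ for every $t\leq\hat\th^K$, with $M$ the constant of Lemma~\ref{exit_from_domain}; that lemma gives precisely $\P[(G^K)^c]=o(\s_K)$. On $G^K$, and for $t\leq\hat\th^K\leq\th^K_{\text{invasion}}\wedge \th^K_{\text{diversity}}\wedge \th^K_{\text{mut. of mut.}}$, the defining properties of these stopping times give immediately: at most $\lceil 3/\a\rceil$ distinct traits are alive, hence $\sum_{k\geq1}\mfm^k_t\leq\lceil 3/\a\rceil\e\s_K K$; every living mutant trait obeys $|Y^K_k-R^K|\leq A\s_K$; and $\mfm^0_t$ lies in the window above. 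The lower bound on $\lambda^K_t$ is then immediate --- discard the nonnegative mutant terms and use $\mfm^0_t\geq K\overline z(R^K)-\e M \s_K K$ --- and gives $\lambda^K_t\geq u_K K a_1^K$. For the upper bound I would combine $\mfm^0_t\leq\lceil K\overline z(R^K)\rceil+\e M \s_K K$ with the Lipschitz estimate $|b(Y^K_k)m(Y^K_k)-b(R^K)m(R^K)|\leq C^{b,m,M}_L A\s_K$, valid since $b,m\in C^2(\mathcal X)$ by Assumption~\ref{ass}(iv) and $\mathcal X$ is compact; expanding the product that defines $a_2^K$ then shows the resulting bound is $\leq u_K K a_2^K$, the $O(u_K)$ correction coming from the ceiling being absorbed into the slack $u_K K\overline z(R^K)C^{b,m,M}_L A\s_K$ because $\overline z(R^K)\geq\underline z>0$ and $K\s_K\to\infty$.

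Next I would realise $A^{1,K}$ and $A^{2,K}$ on the same probability space, conditionally on $R^K$ (so that $a_1^K$, $a_2^K$ are deterministic). For $i\in\{1,2\}$, let $A^{i,K}$ be obtained by keeping each jump of $L^K$ independently with the predictable probability $(u_K K a_i^K/\lambda^K_{t^-})\wedge 1$ and then superposing, through an independent source of randomness, a point flow of instantaneous rate $(u_K K a_i^K-\lambda^K_{t^-})^+$. A direct computation of the resulting intensity, $\min(\lambda^K_{t^-},u_K K a_i^K)+(u_K K a_i^K-\lambda^K_{t^-})^+=u_K K a_i^K$, shows by Watanabe's characterisation that each $A^{i,K}$ is a Poisson process with the announced parameter, whatever $\lambda^K$ does. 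On $G^K$ one has $u_K K a_1^K\leq\lambda^K_{t^-}\leq u_K K a_2^K$ for all $t\leq\hat\th^K$, so for such $t$ no extra $A^{1,K}$-point is added while $L^K$ is only thinned, whereas $L^K$ is entirely retained in $A^{2,K}$ with extra points on top; hence $A^{1,K}_t\leq L^K_t\leq A^{2,K}_t$ for all $t\leq\hat\th^K$ on $G^K$. Consequently
\[
1-\P\big[\forall\,0\leq t\leq\hat\th^K:\ A^{1,K}(t)\preccurlyeq L^K_t\preccurlyeq A^{2,K}(t)\big]\;\leq\;\P\big[(G^K)^c\big]\;=\;o(\s_K),
\]
which is the claim after dividing by $\s_K$ and letting $K\to\infty$.

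The only genuinely delicate point --- and the main obstacle --- is making this coupling clean given that the intensity sandwich holds only on the random event $G^K$ and only up to $\hat\th^K$, rather than deterministically; the device above (thin down when $\lambda^K$ is too large, superpose when it is too small, using the truncations $\min$ and $(\cdot)^+$) is exactly what keeps the marginal law of each $A^{i,K}$ correct while forcing the domination on $G^K$. Apart from that, the proof is bookkeeping: one merely has to check that the lower-order corrections --- the $+1$ from the ceiling, the term $C^{b,m,M}_L A\s_K$, and the factor $\lceil 3/\a\rceil$ --- all fit inside the prescribed $a_1^K$ and $a_2^K$, while the $o(\s_K)$ decay of the error is inherited verbatim from Lemma~\ref{exit_from_domain}, so that no new moderate-deviation estimate is needed.
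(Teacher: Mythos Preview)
Your proof is correct, but the route differs from the paper's. The paper exploits the explicit pathwise construction of $(\tilde\nu^K,L^K)$ via the Poisson point measure $N^{\text{mutation}}$: it simply defines
\[
A^{i,K}(t)=\int_0^t\!\!\int_{\mathbb N_0}\!\int_{\mathbb R_+}\!\int_{\{-A,\ldots,A\}}\mathds 1_{\{\,i\leq K c_i,\ \theta\leq u_K\,\rho_i(h)\,\}}\,N^{\text{mutation}}(ds,di,d\theta,dh)
\]
with deterministic thresholds $c_1,c_2$ (the total-mass bounds) and $\rho_1,\rho_2$ (the per-capita mutation rates). Because $L^K$ is the integral of the \emph{same} point measure against the true indicator $\{i\leq K\langle\tilde\nu^K_{s-},\mathds 1\rangle,\ \theta\leq u_K b(\tilde H_i)m(\tilde H_i)M(\tilde H_i,h)\}$, the pathwise inequalities $A^{1,K}_t\leq L^K_t\leq A^{2,K}_t$ on the good event reduce to an inclusion of indicator sets, and the Poisson property of $A^{i,K}$ is immediate from the PPM. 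No thinning, no superposition, no extra randomness, no Watanabe.

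Your argument works at the level of intensities rather than atoms: you sandwich $\lambda^K_t$ and then manufacture the coupling by thinning-plus-superposition. This is a perfectly valid and more portable argument --- it would go through even without the paper's particular pathwise representation --- but here it is heavier machinery than needed, since the PPM already sitting in the construction hands you the coupling for free. In short: same conclusion, same good event and $o(\sigma_K)$ error from Lemma~\ref{exit_from_domain}, but the paper's proof is a one-liner given its setup, while yours trades that dependence on the construction for a self-contained intensity argument.
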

%
\begin{proof}We obtain from the last lemma that 
\begin{equation}
\mathbb P\left[\forall \:0\leq t\leq\hat \th^K:\: \overline z(R^K)-\e M  \s_K \leq \langle \tilde \nu_t,\mathds 1\rangle\leq\overline z(R^K)+\e \left(M +\lceil 3/\a\rceil\right) \s_K \right]=1-o(\s_K).
\end{equation}
Therefore, define
\begin{align}
A^{1, K}(t)=\int_{0}^{t}\int_{\mathbb N_0}\int_{\mathbb R_+}\int_{\{-A,\ldots, A\}}
					&\mathds 1_{\left\{i\leq K \left(\overline z(R^K)-\e M  \s_K \right),\; \theta \leq   b(R^K)
								u_K m(R^K)M(R^K,h)\right\}}\\\nonumber
					&\times N^{\text{mutation}}(ds,di,d\th, dh)	
\end{align}
and similarly
\begin{align}
A^{2, K}(t)=\int_{0}^{t}\int_{\mathbb N_0}\int_{\mathbb R_+}\int_{\{-A,\ldots, A\}}
&\mathds 1_{\left\{i\leq K \left(\overline z(R^K)+\e \left(M +\lceil\frac 3 \a\rceil\right) \s_K 
\right)\right\}}	\\&\nonumber
\times\mathds 1_{\left\{ \theta \leq u_K \left(b(R^K) m(R^K)M(R^K,h)+ C^{b,m, M}_L
  A\sigma_K \right)\right\}} 	\\&	\times N^{\text{mutation}}(ds,di,d\th, dh),\nonumber	
\end{align}  
Since $\hat \th^K\leq \th^K_{ \text{mut. of mut.}}$, any mutant trait  differs at most $A\s_K$ from the resident trait, $R^K$. Thus, we have that$u_K \big(b(R^K) m(R^K)M(R^K,h)+ C^{b,m, M}_L
 A\sigma_K \big)$ is a rough upper bound for the mutation rate per individual for an appropriate choice of $C^{b,m, M}_L$. Note that $A^{i, K}$ are Poisson counting process with parameter $a_i^K u_K K$. By construction, we obtain (\ref{prob_rv_A}).
\end{proof}
Next we prove that  $\mfm^k(\tilde \nu_t)$, the number of offsprings of the $k$th mutant alive at time t, can be approximated by linear birth and death processes.
\begin{lemma}
\label{bound_mutants} 
Fix $\e>0$. Suppose that the assumptions of Theorem \ref{1.Phase} hold and let $M$ be the constant in Lemma \ref{exit_from_domain}. 
Then, 
\begin{equation}\label{stoch_dom}
\lim_{K\to \infty}\;\s_K^{-1}
\Big(1-\mathbb P\Big [   \forall \:1\leq k\leq L^K_{\hat \th^K},\: \forall  t\leq \hat \th^K\::Z^{K,1}_{k}(t)\preccurlyeq \mfm^k(\tilde \nu_t)\preccurlyeq Z^{K,2}_{k}(t) \:\Big]\:\Big)=0,
 \end{equation}
where $Z^{K,1}_{k}(t)$ resp.  $Z^{K,2}_{k}(t)$ are $\mathbb N_0$-valued processes, 
which are zero until time $\tau_k^K$, the first time  s.t. $\mfm^k(\tilde \nu_t)\neq 0$,
and afterwards linear, continuous time birth and death processes with initial state $1$ at time
 $\tau_k^K$ and birth rates per individual
\be
 b_k^{K,1}= b_k^{K,2}=b\big(Y^{K}_k\big)\lb1-u_K m(Y^{K}_k)\rb
 \ee
and death rate per individual 
\bea
d_k^{K,1}&=&d(Y^{K}_{k})+c( Y^{K}_k, R^K )
\lb \overline z(R^K)+ M \e\s_K\rb+ \overline c \lceil 3/ \a\rceil \e \s_K\\
\text{resp.}\qquad
d_k^{K,2}&=&d(Y^{K}_{k})+c( Y^{K}_k,R^K )
\lb \overline z(R^K)- M\e\s_K \rb.
\eea
Furthermore, define $\tilde Z^{K,1}_{k}(t)\equiv Z^{K,1}_{k}(\tau_k+t)$ and $\tilde Z^{K,2}_{k}(t)\equiv Z^{K,2}_{k}(\tau_k+t)$, then the processes $\{(\tilde Z^{K,1}_k,$ $ \tilde Z^{K,2}_k)\}_{k\geq 1}$ are independent and identically distributed. 
\end{lemma}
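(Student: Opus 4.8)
The plan is to read off the jump rates of the label-$k$ subpopulation from the path-wise description of $(\tilde\nu^K,L^K)$, to bracket those rates by deterministic constants on a high-probability event, and then to build the two comparison processes directly out of the Poisson measures $N^{\text{birth}}_k,N^{\text{death}}_k$, so that the ordering holds path-wise on that event while the independence over $k$ is manifest.

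First I would observe that, by construction, $\mfm^k(\tilde\nu_t)=0$ on $[0,\tau^K_k)$, equals $1$ at $\tau^K_k$, and for $t>\tau^K_k$ changes only through births without mutation from, and deaths of, label-$k$ individuals; a mutation event from a label-$k$ individual creates the fresh label $L^K+1$ and leaves $\mfm^k$ unchanged. Hence, given $\tilde\nu^K_{t^-}$, each of the $\mfm^k(\tilde\nu_{t^-})$ label-$k$ individuals produces a label-$k$ birth at rate $b(Y^K_k)(1-u_Km(Y^K_k))$ and dies at rate
\begin{equation}
d(Y^K_k)+c(Y^K_k,R^K)\,\frac{\mfm^0_{t^-}}{K}+\frac1K\sum_{j\ge1}c(Y^K_k,Y^K_j)\,\mfm^j_{t^-}.
\end{equation}
I would then work on the event $\Gamma_K\equiv\{|\mfm^0_t-\lceil K\bar z(R^K)\rceil|\le\epsilon M\sigma_K K\text{ for all }t<\hat\theta^K\}$, which has probability $1-o(\sigma_K)$ by Lemma \ref{exit_from_domain}. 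For $\tau^K_k\le t<\hat\theta^K$ the definition $\hat\theta^K\le\theta^K_{\text{invasion}}\wedge\theta^K_{\text{diversity}}\wedge\theta^K_{\text{mut. of mut.}}$ forces, deterministically, that $|Y^K_j-R^K|\le A\sigma_K$ for every living mutant label $j$ and that the total number of mutant individuals is at most $\lceil3/\alpha\rceil\epsilon\sigma_K K$. Inserting this together with the bound on $\mfm^0$ into the display shows that on $\Gamma_K$ the per-capita death rate of $\mfm^k$ lies in $[d^{K,2}_k,d^{K,1}_k]$, while its per-capita birth rate equals $b^{K,1}_k=b^{K,2}_k$.

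Next I would define, for $i\in\{1,2\}$, the process $Z^{K,i}_k$ to be $0$ before $\tau^K_k$, $1$ at $\tau^K_k$, and for $t>\tau^K_k$ to solve the path-wise equation obtained from that of $\mfm^k$ by replacing the birth threshold $b(Y^K_k)(1-u_Km(Y^K_k))$ by $b^{K,i}_k$ and the death threshold $d(Y^K_k)+\int c\,\tilde\nu^K_{t^-}$ by the constant $d^{K,i}_k$, keeping the same driving measures $N^{\text{birth}}_k,N^{\text{death}}_k$. By the restriction property of Poisson measures each $Z^{K,i}_k$ is then a bona fide linear birth-and-death process started from $1$ with per-capita rates $b^{K,i}_k,d^{K,i}_k$, and the shifted pair $(\tilde Z^{K,1}_k,\tilde Z^{K,2}_k)=\big(Z^{K,1}_k(\tau^K_k+\cdot),Z^{K,2}_k(\tau^K_k+\cdot)\big)$ is a measurable function of the $k$-th mutation direction $h_k$ (of law $M(R^K,\cdot)$) and of $N^{\text{birth}}_k,N^{\text{death}}_k$ restricted to $[\tau^K_k,\infty)$; by the strong Markov property and independence of the Poisson measures these data, hence the pairs, are i.i.d.\ in $k$, which is the last assertion of the lemma. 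For the sandwich on $\Gamma_K$ up to $\hat\theta^K$ I would argue by induction over the common driving atoms: the birth thresholds of $Z^{K,i}_k$ and $\mfm^k$ agree, and on $\Gamma_K$ before $\hat\theta^K$ the death threshold of $\mfm^k$ sits above that of $Z^{K,2}_k$ and below that of $Z^{K,1}_k$, so each atom preserves $Z^{K,1}_k\le\mfm^k\le Z^{K,2}_k$; the only delicate event is a death atom whose index lies strictly between the two counts, and it cannot destroy the order because the existence of such an index already forces a strictly positive gap. This is the monotone coupling of birth-and-death chains transcribed into the Poissonian representation, one of the elementary facts proved in the appendix.

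The genuine difficulty here is the tension between the two demands on $Z^{K,i}_k$: it has to be truly \emph{linear}, with constant environment-free rates, both for the i.i.d.\ claim and for the later comparison with standard branching processes, yet it has to dominate (resp.\ minorise) $\mfm^k$, whose death rate is governed by the whole population through the competition term. This is exactly what $\Gamma_K$ and the stopping time $\hat\theta^K$ are for: Lemma \ref{exit_from_domain} together with the definition of $\hat\theta^K$ confines the environment --- resident mass, number and location of mutant individuals --- to an $O(\epsilon\sigma_K)$ window up to time $\hat\theta^K$, so the constant thresholds $d^{K,i}_k$ really do bracket the fluctuating true death rate on that interval; no control is claimed off $\Gamma_K$ or after $\hat\theta^K$, which is harmless because every later application of this lemma takes place before $\hat\theta^K$. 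The remaining ingredients --- reading off the rates, the restriction property, and the coupling induction --- are routine.
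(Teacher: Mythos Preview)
Your proposal is correct and follows essentially the same route as the paper: both construct $Z^{K,i}_k$ from the same driving Poisson measures $N^{\text{birth}}_k,N^{\text{death}}_k$ with the environment-dependent death threshold frozen at the constants $d^{K,i}_k$, deduce the path-wise sandwich on the high-probability event of Lemma~\ref{exit_from_domain} by the standard monotone-coupling induction over atoms, and obtain the i.i.d.\ structure from the mutual independence of the $(N^{\text{birth}}_k,N^{\text{death}}_k)$ together with the shift-invariance of Poisson measures. The only point the paper makes explicit that you leave implicit is the extension of the construction to labels $k>L^K_{\hat\theta^K}$ by drawing fresh i.i.d.\ traits $Y^K_k\sim f_K*M(R^K,\cdot)$, which is needed for the i.i.d.\ claim to hold for \emph{all} $k\geq 1$ rather than only up to $L^K_{\hat\theta^K}$.
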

%
%
\begin{proof} For any $t\leq \hat\th^K$, any individual of $\mfm^k(\tilde \nu_t)$  gives birth to a new individual with the same trait with rate  $b\big(Y^{K}_k\big)\lb1-u_K m(Y^{K}_k)\rb$
and dies with rate $d(Y_k^K)+\int_{\mathbb N\times \mathcal X}c(Y^K_k ,\xi_2 )\tilde \nu^K_t(d\xi)$, which belongs to the following interval 
\begin{equation}
\Big[d(Y_k^K)+c(Y^K_k \!,R^K )(\bar z(R^K)\!-\!M \e\s_K) , d(Y_k^K)+c(Y^K_k \!,R^K )(\bar z(R^K)\!+\! M\e\s_K )+\bar c \lceil 3/ \a\rceil\e\s_K\Big].
\end{equation}
Thus,
let us define, for $k\leq L_{\hat\th^K}$,
\begin{align}
	\tilde Z^{K,1}_{k}(t)& \equiv 
		\int_{\tau_k}^{\tau_k+t}\!\!\int_{\mathbb N_0}\int_{\mathbb R_+}
			\!\mathds 1_{\left\{i\leq \tilde Z^{K,1}_{k}(s^-),\; \theta \leq b\left(Y_k\right)\lb1-u_K m\lb Y_k\rb\rb\right\}} N^{\text{birth}}_k(ds,di,d\th)\\
				&- \int_{\tau_k}^{\tau_k+t}\!\! \int_{\mathbb N_0}\int_{\mathbb R_+}
					\!\mathds 1_{\left\{i\leq \tilde Z^{K,1}_{k}(s^-), \theta \leq d(Y^{K}_{k})+c( Y^{K}_{k}, R^K )
								\lb \overline z(R^K)+
								 M\e\s_K\rb+ \bar c \lceil 3/ \a\rceil  \e \s_K\right\}}  N^{\text{death}}_k(ds,di,d\th)\nonumber
\end{align}
and similarly 
\begin{align}
	\tilde Z^{K,2}_{k}(t)\equiv &	\int_{\tau_k}^{\tau_k+t}\!\!\int_{\mathbb N_0}\int_{\mathbb R_+}
			\!\mathds 1_{\left\{i\leq \tilde Z^{K,1}_{k}(s^-),\; \theta \leq b\left(Y_k\right)\lb1-u_K m\lb Y_k\rb\rb\right\}} N^{\text{birth}}_k(ds,di,d\th)\\
				&- \int_{\tau_k}^{\tau_k+t}\!\! \int_{\mathbb N_0}\int_{\mathbb R_+}
					\!\mathds 1_{\left\{i\leq \tilde Z^{K,1}_{k}(s^-),\; \theta \leq d(Y^{K}_{k})+c( Y^{K}_{k}, R^K )
								\lb \overline z(R^K)-
								 M\e\s_K\rb\right\}}  N^{\text{death}}_k(ds,di,d\th),\nonumber
		\end{align}
and a similar construction for $k>L_{\hat\th^K}$, where the random variables  $Y^K_k$ are replaced by i.i.d.\  ones with distribution
$f_K*M(R^K,\cdot)$, independent of all the previously introduced random variables, where $f_K$
  is the homothety of ratio $\sigma_K$.
Note that, the Poisson point measures $N^{\text{birth}}_k$ and $N^{\text{death}}_k$ are independent of  $Y^K_k$ and
  $\tau_k$ and that the processes $\tilde Z^{K,1}_k$ and $\tilde Z^{K,2}_k$ only depend on $N^{\text{birth}}_k$, $N^{\text{death}}_k$, $Y^K_k$ and
  $\tau_k$.  By construction, conditionally on $Y^K_k=y$ and $\tau_k=s$, the process $\tilde Z^{K,1}_k$ is
  distributed as a linear birth and death processes with birth rate $b(y)(1-u_Km(y))$ and death rate
  $d(y)+c(y,R^K)(\bar{z}(R^K)+M\e \sigma_K)+\bar{c}\lceil 3/\alpha\rceil \e \sigma_K$, and similarly for $\tilde Z^{K,2}_k$.
  In particular, the law of $(\tilde Z^{K,1}_k,\tilde Z^{K,2}_k)$ does not depend on $\tau_k$. Therefore, defining ${\cal
    G}_k\equiv\sigma(\tilde \nu_t,t\leq\tau_k,\ Y^K_k, N^{\text{birth}}_\ell,N^{\text{death}}_\ell,1\leq\ell\leq k-1)$, for all bounded measurable functions
  $F_1,\ldots,F_k$ on $\mathbb{D}(\mathbb{R}_+,\mathbb{ Z}_+^2)$,
  \bea
   && \mathbb{E}\left[F_1(\tilde Z^{K,1}_1,\tilde Z^{K,2}_1)\ldots F_k(\tilde Z^{K,1}_k,\tilde Z^{K,2}_k)\right] 
    \\
\nonumber     && =\mathbb{E}\left[F_1(\tilde Z^{K,1}_1,\tilde Z^{K,2}_1)\ldots
      F_{k-1}(\tilde Z^{K,1}_{k-1},\tilde Z^{K,2}_{k-1})
      \mathbb{E}[F_k(\tilde Z^{K,1}_k,\tilde Z^{K,2}_k)\mid {\cal G}_k]\right] \\
\nonumber    && =\mathbb{E}\left[F_1(\tilde Z^{K,1}_1,\tilde Z^{K,2}_1)\ldots
      F_{k-1}(\tilde Z^{K,1}_{k-1},\tilde Z^{K,2}_{k-1})\mathbb{E}[F_k(\tilde Z^{K,1}_k,\tilde Z^{K,2}_k)\mid Y^K_k]\right] \\
  && =\mathbb{E}\left[F_1(\tilde Z^{K,1}_1,\tilde Z^{K,2}_1)\ldots
      F_{k-1}(\tilde Z^{K,1}_{k-1},\tilde Z^{K,2}_{k-1}) \right]\mathbb{E}[F_k(\tilde Z^{K,1}_k,\tilde Z^{K,2}_k)],\nonumber
    \eea
  where the last equality follows from the fact that the random variable \ $Y^K_k$ is independent of $(\tilde Z^{K,1}_\ell,\tilde Z^{K,2}_\ell)$ for
  $1\leq\ell\leq k-1$. Actually, \ $(Y^K_k)_{1\leq k\leq L_{\hat\th^K}}$ are i.i.d. random variables, with law
  $f_K*M(R^K,\cdot)$. This implies by induction that the processes $\{(\tilde Z^{K,1}_k,\tilde Z^{K,2}_k)\}_{k\geq 1}$ are i.i.d..
\end{proof}
%
%
Let us define $B_k^{K}\equiv \mathds 1_{\inf\{t\geq \tau_k\::\:\mfm^k(\tilde\nu_t) \geq \e\s_K K\}<\inf\{t\geq
  \tau_K\::\:\mfm^k(\tilde\nu_t) = 0\}}.$ This random variable indicates if the $k$-th mutant population, which appeared at time $\tau_k$,
invades or not, i.e.\ reaches $\e\s_K K$ individuals before dying out. The following lemma
introduces a sequence of i.i.d.\ random variables $(B^{1,K}_k,B^{2,K}_k)$ which are 2-tuples of Bernoulli random variables
constructed from the processes $Z^{K,1}_{k}(t)$ and $Z^{K,2}_{k}(t)$ defined in Lemma \ref{bound_mutants}, such that $(B^K_k)_{k\geq
  0}$ is stochastically dominated by the sequences $(B^{K,i}_k)_{k\geq 0}$.
\begin{lemma}\label{rv_B}
Fix $\e>0$. Suppose that the assumptions of Theorem \ref{1.Phase} hold and let $M$ be the constant of Lemma \ref{exit_from_domain}. 
Then, \begin{equation}
	\lim_{K\to \infty}\;\s_K^{-1}
			\Big(1-\mathbb P\Big [    \forall \:1\leq k\leq L^K_{\hat \th^K}\::\:  B_k^{1,K}\preccurlyeq B_k^K\preccurlyeq B_k^{2,K}	\Big]\Big)	= 0,
\end{equation}
where $((B_k^{1,K},B_k^{2,K}))_{k\geq 1}$ is a sequence of i.i.d.\ 2-tuples of Bernoulli random 
variables such that $B^{1,K}_k\leq B^{2,K}_k$ a.s. Its distribution is characterized by
\bea
  \label{eq:def-q1_K}
  \sigma_Kq_1^K(h)&\equiv&\mathbb{P}\big[B^{1,K}_k=1\mid Y^K_k=R^K+h\sigma_K\big]\\[0.2em]\nonumber
  &=&
  \begin{cases}
    \sigma_K \left(h \tfrac{\partial_1f(R^K,R^K) }{b(R^K)}-\e C^1_{\text{Bernoulli}}\right) & \text{if }1\leq h\leq A, \\
    0& \text{otherwise}
  \end{cases}
\eea
and
\bea
  \label{eq:def-q2_K}
  \sigma_K q_2^K(h)&\equiv&\mathbb{P}\big[B^{2,K}_k=1\mid Y^K_k=R^K+h\sigma_K\big]\\\nonumber
  &=&
  \begin{cases}
    \sigma_K\left(h\tfrac{\partial_1f(R^K,R^K) }{b(R^K)}+\e C^2_{\text{Bernoulli}}\right) & \text{if }1\leq h\leq A, \\
    0& \text{otherwise},
  \end{cases}
\eea
where $C^1_{\text{Bernoulli}}$ and $C^2_{\text{Bernoulli}} $ depend only on $\alpha$, $M$ and $C_L$ (the Lipschitz constant of our
parameters). Then, for $i=1,2$ and $k\geq 1$, $B^{i,K}_k$ is a Bernoulli random variable of parameter $\sigma_K p^K_i$, where
\begin{equation}
  \label{eq:def-pi_K}
  p^K_i\equiv\sum_{h=1}^A q_i^K(h)M(R^K,h).
\end{equation}
\end{lemma}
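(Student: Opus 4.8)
The plan is to build the pairs $(B^{1,K}_k,B^{2,K}_k)$ directly from the comparison birth--death processes $\tilde Z^{K,1}_k,\tilde Z^{K,2}_k$ of Lemma~\ref{bound_mutants}, to inherit their independence and their ordering, and then to read off the Bernoulli parameters from the classical ruin estimate for the nearest-neighbour walk embedded in a linear birth--death process. Concretely, for $i\in\{1,2\}$ and $k\ge1$ I first set $\tilde B^{i,K}_k\equiv\mathds 1\{\tilde Z^{K,i}_k \text{ reaches } \lceil\e\s_K K\rceil \text{ before } 0\}$, which is well-defined since the $\tilde Z^{K,i}_k$ are genuine, everywhere-defined birth--death processes. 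Since in Lemma~\ref{bound_mutants} these two processes are driven by the \emph{same} Poisson measures $N^{\text{birth}}_k,N^{\text{death}}_k$ while $\tilde Z^{K,1}_k$ has the larger per-capita death rate, one has $\tilde Z^{K,1}_k(t)\le\tilde Z^{K,2}_k(t)$ for all $t$, hence (as $0$ is absorbing) $\tilde B^{1,K}_k\le\tilde B^{2,K}_k$; and as $(\tilde Z^{K,1}_k,\tilde Z^{K,2}_k)_{k\ge1}$ are i.i.d., so are $(\tilde B^{1,K}_k,\tilde B^{2,K}_k)_{k\ge1}$. I then let $B^{1,K}_k$ be an independent thinning of $\tilde B^{1,K}_k$ and $B^{2,K}_k$ an independent enrichment of $\tilde B^{2,K}_k$, driven by fresh i.i.d.\ uniforms and tuned (using the conditional probabilities computed below) so that, given $Y^K_k=R^K+h\s_K$, the parameter of $B^{i,K}_k$ is exactly $\s_K q^K_i(h)$ while the chain $B^{1,K}_k\le\tilde B^{1,K}_k\le\tilde B^{2,K}_k\le B^{2,K}_k$ is kept; the resulting family is i.i.d.\ and $2$-tuple-ordered, and averaging over the law $f_K*M(R^K,\cdot)$ of $Y^K_k$ yields $\mathbb P[B^{i,K}_k=1]=\s_K p^K_i$.

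For the sandwich $B^{1,K}_k\preccurlyeq B^K_k\preccurlyeq B^{2,K}_k$, $1\le k\le L^K_{\hat\th^K}$, I work on the intersection of the event of Lemma~\ref{bound_mutants} (on which $Z^{K,1}_k(t)\le\mfm^k(\tilde\nu_t)\le Z^{K,2}_k(t)$ for $t\le\hat\th^K$ and $k\le L^K_{\hat\th^K}$), the event of Lemma~\ref{exit_from_domain} (resident density in the $\e M\s_K$-band up to $\hat\th^K$), and the event that no mutant which would reach $\e\s_K K$ appears in the last $O(\s_K^{-1}\ln K)$ units of time before $\hat\th^K$; the conditions on $u_K,\s_K$ make all three hold with probability $1-\po(\s_K)$. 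On this event, since $\hat\th^K\le\th^K_{\text{invasion}}$, no $\mfm^k$ crosses $\lceil\e\s_K K\rceil$ strictly before $\hat\th^K$; hence if $\tilde B^{1,K}_k=1$ the crossing of $\lceil\e\s_K K\rceil$ by $Z^{K,1}_k$ happens at $\hat\th^K$, so $\mfm^k(\tilde\nu)$ reaches $\e\s_K K$ as well while staying $\ge Z^{K,1}_k>0$ beforehand, giving $B^K_k=1$; conversely, if $B^K_k=1$ then $\mfm^k(\tilde\nu)$ reaches $\e\s_K K$ at $\hat\th^K$ with $Z^{K,2}_k\ge\mfm^k(\tilde\nu)>0$ beforehand, so $\tilde B^{2,K}_k=1$; together with the thinning/enrichment this gives $B^{1,K}_k\le B^K_k\le B^{2,K}_k$. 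The nearly critical range $h\le0$ is treated separately: for $h<0$ the $Z^{K,i}_k$ are subcritical and reach $\lceil\e\s_K K\rceil$ with probability exponentially small in $\e\s_K^2K\gg K^{2\a}$, which summed over the $O(\s_K^{-1})$ mutants up to $\hat\th^K$ is still $\po(\s_K)$; the degenerate value $h=0$, where the ``mutant'' carries the resident trait, leaves the resident trait unchanged and only perturbs the identification of $R^K_1$ within the $O(\e)$ margin already built into Theorem~\ref{1.Phase}.

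To identify the parameters, I condition on $Y^K_k=R^K+h\s_K$. By Lemma~\ref{bound_mutants}, $\tilde Z^{K,i}_k$ is then a linear birth--death process with per-capita birth rate $b^{K,i}_k=b(R^K+h\s_K)(1-u_Km(R^K+h\s_K))$ and death rate $d^{K,i}_k$ as given there, so the walk embedded at the jump times is a homogeneous nearest-neighbour walk on $\N_0$ with up-probability $b^{K,i}_k/(b^{K,i}_k+d^{K,i}_k)$; by the gambler's ruin identity (see the appendix), the probability of reaching $N:=\lceil\e\s_K K\rceil$ from $1$ before $0$ is $(1-r)/(1-r^{N})$ with $r=d^{K,i}_k/b^{K,i}_k$. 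For $1\le h\le A$, using $f(x,x)=0$, a first-order Taylor expansion, and $u_K\ll\s_K$,
\be
b^{K,i}_k-d^{K,i}_k=f(R^K+h\s_K,R^K)+O(u_K+\e\s_K)=\s_K\bigl(h\,\partial_1 f(R^K,R^K)+O(\e)+\so\bigr),
\ee
which is positive for small $\e$ by Assumption~\ref{ass3}; hence $1-r=\s_K\bigl(h\,\partial_1 f(R^K,R^K)/b(R^K)+O(\e)+\so\bigr)$, and since $N(1-r)$ is of order $\e\s_K^2K\gg K^{2\a}\to\infty$ we get $r^{N}\to0$, so the hitting probability equals $\s_K\bigl(h\,\partial_1 f(R^K,R^K)/b(R^K)+O(\e)+\so\bigr)$. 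Taking $C^1_{\text{Bernoulli}},C^2_{\text{Bernoulli}}$ large enough (depending only on $\a$, $M$ and the Lipschitz constant $C_L$) and $K$ large, this probability lies in $[\s_K q^K_1(h),\s_K q^K_2(h)]$, which is exactly the room used by the thinning/enrichment above; for $h\le0$ it is $\po(\s_K)$ or handled as in the previous paragraph, consistently with $q^K_i(h)=0$.

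The main obstacle is the sandwich step: the comparison of Lemma~\ref{bound_mutants} is valid only up to $\hat\th^K$ whereas $B^K_k$ depends on the whole trajectory of $\tilde\nu$, so one must show that the relevant crossing/extinction events of $\mfm^k$ are already decided by $\hat\th^K$ on a $1-\po(\s_K)$ event, and one must control the nearly critical regime $h\le0$ sharply enough that the residual probability is $\po(\s_K)$ and not merely $O(\e)$; the rest is standard branching-process bookkeeping.
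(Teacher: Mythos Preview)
Your argument follows essentially the same route as the paper's: define $\tilde B^{i,K}_k$ from the bounding branching processes of Lemma~\ref{bound_mutants}, compute their success probabilities via the gambler's ruin estimate of Proposition~\ref{prop2} together with a first-order Taylor expansion of $b^{K,i}_k-d^{K,i}_k$ using $f(x,x)=0$, and then thin/enrich $\tilde B^{i,K}_k$ to obtain $B^{i,K}_k$ with the exact conditional parameters $\s_K q^K_i(h)$. Your extra attention to the near-critical case $h\le 0$ and to the fact that the pathwise comparison of Lemma~\ref{bound_mutants} is only guaranteed for $t\le\hat\th^K$ addresses points that the paper's proof simply glosses over.
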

\begin{remark}
\begin{enumerate}[(i)]
		\setlength{\itemsep}{3pt}
\item For all $k\geq 1$, 
		$\;\mathbb P[B_k^{1,K}=0\big|B_k^{2,K}=1]=1-\frac{p^K_1}{p^K_2}$ and is thereby of order $\e$.
\item We use in here the assumption that $\partial_1 f(x,x)>0$ for all $x\in \mathcal X$. 
\end{enumerate}
\end{remark}
%
%
\begin{proof} Let  $Z^{K,1}_{k}(t)$ resp.  $Z^{K,2}_{k}(t)$ as defined in Lemma \ref{bound_mutants} and define 
\begin{equation}
\tilde B^{i,K}_k\equiv \mathds 1_{\inf\{t\geq \tau_k\::\:Z^{K,i}_{k}(t) \geq \e\s_K K\}<\inf\{t\geq \tau_k \::\:Z^{K,i}_{k}(t) = 0\}} \quad \text{ for }i=1,2 .
\end{equation} Then, due to the last lemma
\begin{equation}
\mathbb P\Big[\forall 1\leq k\leq L^K_{\hat\th^K}: \:\tilde B^{1,K}_k \preccurlyeq B^K_{k}\preccurlyeq \tilde B^{2,K}_k\Big]
					=1-o(\s_K).
\end{equation}
For all $\forall 1\leq k\leq L^K_{\hat\th^K}$, we obtain with Proposition \ref{prop2}, that
\bea
\left | \mathbb P\Big[\inf\{t\geq \tau_k:Z^{K,i}_{k}(t) \geq \e\s_K K\}<\inf\{t\geq \tau_k : Z^{K,i}_{k}(t) = 0\}\big|\: Y^K_k\Big]-
\tfrac{[b_k^{i,K}-d_k^{i,K}]_+}{b_k^{i,K}}\right|\:&&\\\nonumber
=o\left( \exp(-K^{\a})\right),&&
\eea
where, using that $f(x,x)=0$ for all $x$, we have   
\bea
 b_k^{1,K}-d_k^{1,K}
&=&f(Y_k^{K}, R^K)-(c(Y^K_k,R^K) M+\bar c \lceil 3/\a\rceil )\e\s_K-u_K b(Y_k^{K})m(Y_k^{K}) \\\nonumber
&=&\partial_1 f(R^K,R^K)(Y_k^{K,1}-R^K)-(c(Y^K_k,R^K) M+\bar c \lceil 3/\a\rceil )\e\s_K+ O(\s_K^2),
\eea
and similarly 
\be
b_k^{2,K}-d_k^{2,K} =\partial_1 f(R^K,R^K)(Y_k^{K}-R^K)+c(Y^K_k,R^K) M\e\s_K+ O(\s_K^2) .
\ee
Recall that the sequence $(Y_k^{K})_{k\geq 1}$ used to construct the processes $Z^{K,1}_k$ and $Z^{K,2}_k$ is a sequence of i.i.d.\
random variables with distribution $M(R^K,\cdot)$. Since $b_k^{i,K}-d_k^{i,K}<0$ if $Y_k^{K}-R^K<0$, we obtain
\bea\label{Bernoulli1}
	\qquad \mathbb P\big[\tilde B_k^{1,K}=1\big]&=&\mathbb E\big[\mathbb P\big[\tilde B_k^{1,K}\big|Y^K_k\big]=1\big]\\\nonumber
				&\geq& \sum_{h\in \{1,\ldots ,A\}}\left( \tfrac{ \partial_1f(R^K,R^K)\s_K h -(c(Y^K_k,R^K) M+\bar c \lceil 3/\a\rceil )\e\s_K+ O(\s_K^2)}{b(R^K)} \right)M(R^K,h).
\eea
Therefore, there exists a constant $C^1_{\text{Bernoulli}}>0$ (which depends only on $\alpha$, $M$ and $C_L$) such that the sum in
the right hand side of (\ref{Bernoulli1})
is, term by term, bounded from below by
\bea
 \s_K\sum_{h\in \{1,\ldots ,A\}}\left ( h \tfrac{\partial_1f(R^K,R^K) }{b(R^K)}-\e C^1_{\text{Bernoulli}}\right)M(R^K,h)
 \eea
and similarly there exists a constant $C^2_{\text{Bernoulli}}>0$ such that
\be
	\mathbb P[\tilde B_k^{2,K}=1]  \leq  \s_K \sum_{h\in \{1,\ldots ,A\}}\left( h\tfrac{\partial_1f(R^K,R^K) }{b(R^K)}+\e C^2_{\text{Bernoulli}} \right)M(R^K,h).
\ee
Next, we introduces  two couplings, i.e.\ we define a sequences of i.i.d.\ 2-tuples of Bernoulli random variables  $((B_k^{1,K},B^{2,K}_k))_{k\geq 1}$ with the following properties
\begin{enumerate}[(i)]
		\setlength{\itemsep}{6pt}
\item $\mathbb P\big[B_k^{1,K}=0, \tilde B_k^{1,K}=0 \mid Y^K_k=R^K+h\sigma_K\big]= \mathbb P\big[\tilde B_k^{1,K}=0  \mid Y^K_k=R^K+h\sigma_K\big]$
\qquad and \\[0.2em]
$\mathbb P\big[B_k^{1,K}=1, \tilde B_k^{1,K}=1  \mid Y^K_k=R^K+h\sigma_K\big]= q_1^{K}(h)\s_K $
\item $\mathbb P\big[B_k^{2,K}=1, \tilde B_k^{2,K}=1  \mid Y^K_k=R^K+h\sigma_K\big]= \mathbb P\big[\tilde B_k^{2,K}=1  \mid Y^K_k=R^K+h\sigma_K\big]$
\qquad and \\[0.2em]
$\mathbb P\big[B_k^{2,K}=1, \tilde B_k^{2,K}=0  \mid Y^K_k=R^K+h\sigma_K\big]= 1- q_2^{K}(h)\s_K. $
\end{enumerate}
By construction, $B_k^{1,K}\leq \tilde B_k^{1,K}$ a.s.\ and $\tilde B_k^{2,K}\leq B_k^{2,K}$ a.s.\ for all $k\geq 1$ and these random
variables satisfy~\eqref{eq:def-q1_K} and~\eqref{eq:def-q2_K}.
\end{proof}
%
%
\begin{notations}\begin{enumerate}[(a)]
		\setlength{\itemsep}{3pt}
\item For $i\in\{1,2\}$, define 
\be
T^{K,i}_k\equiv\inf\left\{t\geq 0: Z_k^{K,i}(\tau_k+t)=0\; \text { or }  \;Z_k^{K,i}(\tau_k+t) >\e\s_K K\right\}.
\ee  Obverse that $(T^{K,i}_k)_{k\geq 1}$ are i.i.d.\ random variables that are independent of $A^{K,i}$. 
\item Define $I^{K}\equiv\inf\{k\geq 1 :B^{K}_k=1\}$ and $I^{K,i}\equiv \inf\{k\geq 1 :B^{K,i}_k=1\}$.  
Then, $I^{K,i}$ are independent of $A^{K,i}$, and we have
\be
\mathbb P\left[ \left\{I^{K,2}\preccurlyeq I^K \preccurlyeq I^{K,1} \right\}\cap\bigl\{\tau_{I^K}\leq \hat \theta^K\bigr\}\right] =\mathbb P\left[\tau_{I^K}\leq \hat \theta^K\right]-o(\s_K).
\ee
\item  Define $R_1^K\equiv Y^{K}_{\inf\{k\geq 1 : B^K_k=1\}}$.											 
\end{enumerate}
\end{notations}
In fact, we prove at the end of this section that $\mathbb P\big[\tau_{I^K}\leq \hat \theta^K\big]=1-o(\s_K)$, 
i.e.  $R_1^K$ is with high probability the random variable which gives the value of the next resident trait and
%
%
$\tau_{I^K}$, the first time where a successful mutant appears, is approximately exponential distributed as stated in lemma below. 
Note that this time is a random time, but not a stopping time.
\begin{lemma} \label{bounds_for_invasion_time}
Fix $\e>0$. Suppose that the assumptions of Theorem \ref{1.Phase} hold and let $M$ be the constant of Lemma \ref{exit_from_domain}. Then,
\begin{align}
	\lim_{K\to \infty}\:\s_K^{-1}
		\Big(\mathbb P\left[\tau_{I^K}\leq \hat \theta^K\right]-\mathbb P\Big [  \big\{E^{K,2}\preccurlyeq \tau^{}_{I^K}\preccurlyeq E^{K,1}\big\}\cap
													\big\{\tau_{I^K}\leq \hat \theta^K\big\}\Big] \Big)	=	0,
\end{align}
where $ E^{K,1}$ and $E^{K,2}$ are exponential random variables with mean
$a^K_1p^K_1 \s_K u_K K $ respectively  $a^K_2p^K_2$ $ \s_K u_K K $. 
\end{lemma}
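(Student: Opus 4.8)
The plan is to realize $E^{K,1}$ and $E^{K,2}$ as the first points of the thinnings of the Poisson counting processes $A^{1,K},A^{2,K}$ of Lemma~\ref{rv_A} by the i.i.d.\ Bernoulli sequences of Lemma~\ref{rv_B}, and then to combine two ingredients on the event $\{\tau_{I^K}\le\hat\theta^K\}$: the jump-time version of the sandwiching $A^{1,K}\le L^K\le A^{2,K}$, and the sandwiching $I^{K,2}\preccurlyeq I^K\preccurlyeq I^{K,1}$ already recorded in item~(b) of the Notations above.

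First I would let $\tau^{A^{i,K}}_1<\tau^{A^{i,K}}_2<\cdots$ be the jump times of $A^{i,K}$ and set $E^{K,i}\equiv\tau^{A^{i,K}}_{I^{K,i}}$, where $I^{K,i}=\inf\{k\ge1:B^{K,i}_k=1\}$. Since $A^{i,K}$ is a Poisson counting process of parameter $a^K_iu_KK$, its inter-jump times are i.i.d.\ exponential of parameter $a^K_iu_KK$; since $(B^{K,i}_k)_{k\ge1}$ is an i.i.d.\ Bernoulli$(\sigma_Kp^K_i)$ sequence which, by item~(b) of the Notations, is independent of $A^{i,K}$, the index $I^{K,i}$ is geometric of parameter $\sigma_Kp^K_i$ and independent of those inter-jump times. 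As a geometric sum of i.i.d.\ exponentials is again exponential (an elementary computation), $E^{K,i}$ is exponential with parameter $a^K_ip^K_i\sigma_Ku_KK$, which is the distribution asserted in the statement.

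Next I would place myself on $\{\tau_{I^K}\le\hat\theta^K\}$ intersected with the event of Lemma~\ref{rv_A} (of probability $1-o(\sigma_K)$) and with the event $\{I^{K,2}\preccurlyeq I^K\preccurlyeq I^{K,1}\}$ furnished by item~(b) of the Notations. On $\{\tau_{I^K}\le\hat\theta^K\}$ all of $\tau_1,\ldots,\tau_{I^K}$ lie below $\hat\theta^K$, so by the duality between counting functions and jump times, evaluating $A^{1,K}(t)\le L^K_t\le A^{2,K}(t)$ at $t=\tau_k$ and at $t=\tau_k-$ gives $\tau^{A^{2,K}}_k\le\tau_k\le\tau^{A^{1,K}}_k$ for every $k\le I^K$. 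Combining this with $I^{K,2}\le I^K\le I^{K,1}$ and the monotonicity of $k\mapsto\tau^{A^{i,K}}_k$, I would conclude, on this event,
\[
E^{K,2}=\tau^{A^{2,K}}_{I^{K,2}}\ \le\ \tau^{A^{2,K}}_{I^K}\ \le\ \tau_{I^K}\ \le\ \tau^{A^{1,K}}_{I^K}\ \le\ \tau^{A^{1,K}}_{I^{K,1}}=E^{K,1}.
\]
Hence $\{E^{K,2}\preccurlyeq\tau_{I^K}\preccurlyeq E^{K,1}\}\cap\{\tau_{I^K}\le\hat\theta^K\}$ differs from $\{\tau_{I^K}\le\hat\theta^K\}$ by an event of probability $o(\sigma_K)$; dividing by $\sigma_K$ and letting $K\to\infty$ yields the claim.

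The main obstacle, as in several other places in the proof, is the bookkeeping around the random (non-stopping) time $\hat\theta^K$: the comparison of Lemma~\ref{rv_A} holds only up to $\hat\theta^K$, so one must restrict to $\{\tau_{I^K}\le\hat\theta^K\}$ and verify that this places all the relevant jump times $\tau_1,\ldots,\tau_{I^K}$ within the range of validity, while accepting that $E^{K,1}=\tau^{A^{1,K}}_{I^{K,1}}$ may itself exceed $\hat\theta^K$, which does not affect the inequality $\tau_{I^K}\le E^{K,1}$. A secondary subtlety, which is what forces $E^{K,i}$ to be \emph{exactly} exponential rather than merely approximately so, is the independence of $I^{K,i}$ (equivalently of $(B^{K,i}_k)_k$) from $A^{i,K}$; this is not apparent from the bare definitions and rests on the way the coupling was built (the Bernoulli marks being functions only of $N^{\mathrm{birth}}_k,N^{\mathrm{death}}_k$, the i.i.d.\ traits $Y^K_k$ and the auxiliary coupling variables, none of which enter $A^{i,K}$), and is precisely the independence recorded in item~(b) of the Notations.
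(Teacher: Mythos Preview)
Your proposal is correct and follows essentially the same approach as the paper: define $E^{K,i}$ as the $I^{K,i}$-th jump time of $A^{i,K}$, use the independence of $I^{K,i}$ and $A^{i,K}$ (recorded in the Notations) together with the fact that a geometric sum of i.i.d.\ exponentials is exponential, and then combine the sandwiching $A^{1,K}\le L^K\le A^{2,K}$ of Lemma~\ref{rv_A} with $I^{K,2}\preccurlyeq I^K\preccurlyeq I^{K,1}$ on $\{\tau_{I^K}\le\hat\theta^K\}$. Your write-up is in fact more explicit than the paper's about why the counting-process inequality translates into the jump-time inequality and about the bookkeeping near $\hat\theta^K$.
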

With other words, we have $\mathbb P\big [  E^{K,2}\preccurlyeq \tau^{}_{I^K}\preccurlyeq E^{K,1}\big| \tau_{I^K}\leq \hat \theta^K\big]=1-o(\s_K)$, 
provided that $\liminf_{K\to\infty}\mathbb P\big[\tau_{I^K}\leq \hat \theta^K\big]>0$.
%
%
%
%
\begin{proof} 
Let $A_t^{K,i}$ be defined as in Lemma \ref{rv_A} and observe that $\tau^{}_{I^K}=\inf\left\{t\geq 0: L^{K}_t=I^K\right\}$.
Then, we obtain by construction,
\bea
\mathbb P\bigg [ \Big\{ \inf\big\{t\geq 0\!: A^{K,2}_t=I^{K,2}\big\} \preccurlyeq \tau_{I^K}
					\preccurlyeq \inf\big\{ t\geq 0: A^{K,1}_t=I^{K,1}\big\} \Big\} \cap\left\{ \tau_{I^K}\leq \hat \theta^K\right\}
					\bigg]&&\\\nonumber
			=\mathbb P\left[\tau_{I^K}\leq \hat \theta^K\right]-o(\s_K).&&
\eea
By definition we have that $I^{K,1}$ and $I^{K,2}$ are geometrically distributed with parameter  $p^K_1 \:\s_K $ resp. 
$p^K_2 \:\s_K$ and $A^{K,1}$ and $A^{K,2}$ are Poisson counting processes with parameter $a^K_1 u_K K$ resp. $a^K_2u_K K$. 
Therefore, the time between each pair of successive events is exponential distributed with parameter $a^K_1 u_K K$ resp. $a^K_2u_K K$. 
Since the random variables $I^{K,i}$ are independent of $A^{K,i}$ and  the sum of geometrically distributed many independent,
 identically exponentially distributed random variables is exponentially distributed, we get that
$\quad \inf\{t\geq 0: A^{K,1}_t=I^{K,1}\}$\quad and \quad$\inf\{t\geq 0: A^{K,2}_t=I^{K,2}\}$\quad 
are exponentially distributed with parameter
$a^K_1u_K K p^K_1$ respectively  $a^K_2u_K K p^K_2$. \end{proof}
%
%
%
In the next lemma we prove that a mutant invades with high probability before the resident population exits the neighborhood of this equilibrium, 
before too many different mutant traits are present and before a mutant of a mutant appears.
\begin{lemma} \label{diversity}
Fix $\e>0$. Suppose that the assumptions of Theorem \ref{1.Phase} hold and let $M$ be the constant of Lemma \ref{exit_from_domain}.
Then,
\begin{equation}
\lim_{K\to\infty}\s_K^{-1} \mathbb P \left[\th^{K}_{\text{invasion}}\geq \th^{K}_{ \text{diversity}}\wedge \exp({K^{\a}})\wedge \th^K_{ \text{mut. of mut.}}\right]=0.
\end{equation}
\end{lemma}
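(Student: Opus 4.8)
The plan is to show that, with probability $1-o(\s_K)$, one has $\hat\th^K=\th^K_{\text{invasion}}$, i.e.\ the invasion occurs strictly before each of $\th^K_{\text{diversity}}$, $\exp(K^\a)$ and $\th^K_{\text{mut. of mut.}}$. Since $\{\th^K_{\text{invasion}}\ge\th^K_{\text{diversity}}\wedge\exp(K^\a)\wedge\th^K_{\text{mut. of mut.}}\}$ coincides up to a null event with $\{\hat\th^K\neq\th^K_{\text{invasion}}\}\subseteq\{\th^K_{\text{diversity}}=\hat\th^K\}\cup\{\exp(K^\a)=\hat\th^K\}\cup\{\th^K_{\text{mut. of mut.}}=\hat\th^K\}$, it is enough to bound the three latter probabilities by $o(\s_K)$. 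I work throughout on the event $\mathcal G_K$ on which the pathwise comparisons of Lemmas~\ref{upper_bound_total_mass}, \ref{exit_from_domain}, \ref{rv_A}, \ref{bound_mutants} and \ref{rv_B} all hold up to time $\hat\th^K$; by those lemmas $\mathbb P[\mathcal G_K]=1-o(\s_K)$.

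\emph{The $\exp(K^\a)$ alternative and an a priori bound on $\hat\th^K$.} Put $\sigma^\ast:=\inf\{t:A^{1,K}_t=I^{K,1}\}+T^{K,1}_{I^{K,1}}$. Since $I^{K,1}$ is geometric with parameter $\asymp\s_K$ and $A^{1,K}$ is Poisson of rate $\asymp u_KK$, the first summand is exponential with mean $\asymp(\s_Ku_KK)^{-1}$; and because $B^{1,K}_{I^{K,1}}=1$ forces the mutant trait into $R^K+\{1,\dots,A\}\s_K$, the process $Z^{K,1}_{I^{K,1}}$ is (for $\e$ small) supercritical with net per-capita rate $\gtrsim\s_K$, so a standard hitting-time estimate for supercritical branching processes (Appendix) gives $T^{K,1}_{I^{K,1}}\le\ln(K)\s_K^{-1-\a/2}$ with probability $1-o(\s_K)$. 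Hence there is a deterministic $\bar\Theta_K$ with $\mathbb P[\sigma^\ast>\bar\Theta_K]=o(\s_K)$ and, by \eqref{conv1}--\eqref{conv2}, $\bar\Theta_K\ll\exp(K^\a)$. On $\mathcal G_K$ the inequality $\sigma^\ast<\hat\th^K$ is impossible: it would imply that $A^{1,K}$, hence $L^K$, reaches level $I^{K,1}$ before $\hat\th^K$, so $\tau_{I^{K,1}}\le\inf\{t:A^{1,K}_t=I^{K,1}\}$, and then $Z^{K,1}_{I^{K,1}}(t)\le\mfm^{I^{K,1}}(\tilde\nu^K_t)$ for $t\le\hat\th^K$ together with the fact that $Z^{K,1}_{I^{K,1}}$ exceeds $\e\s_KK$ at the time $\tau_{I^{K,1}}+T^{K,1}_{I^{K,1}}\le\sigma^\ast<\hat\th^K$ would force $\th^K_{\text{invasion}}<\hat\th^K$, contradicting $\hat\th^K\le\th^K_{\text{invasion}}$. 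Thus $\hat\th^K\le\sigma^\ast$ on $\mathcal G_K$, so on $\mathcal G_K\cap\{\sigma^\ast\le\bar\Theta_K\}$ (probability $1-o(\s_K)$) we get $\hat\th^K\le\bar\Theta_K\ll\exp(K^\a)$; in particular $\mathbb P[\exp(K^\a)=\hat\th^K]=o(\s_K)$, and henceforth all estimates may be carried out on the window $[0,\bar\Theta_K]$, with $\bar\Theta_K\asymp(\s_Ku_KK)^{-1}$ up to logarithms, rather than on $[0,\exp(K^\a)]$.

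\emph{The diversity alternative.} On $\mathcal G_K$ the number of living mutant lineages at a time $t\le\hat\th^K$ is dominated by an $M/G/\infty$-type occupancy process $\wt N_t$ with Poisson input of rate $\asymp u_KK$ and service laws equal to the lifetimes of the processes $Z^{K,2}_k$; each $Z^{K,2}_k$ is near-critical (net rate of order $\s_K$), hence of mean lifetime $\asymp\ln(1/\s_K)$, so $\wt N_t$ is stochastically dominated by a Poisson variable of mean $\rho_K\lesssim u_KK\ln K=o(1)$ and $\mathbb P[\wt N_t\ge m]\lesssim\rho_K^m/m!$. Counting up-crossings of level $m$ in $[0,\bar\Theta_K]$,
\[
\mathbb P\big[\sup\nolimits_{[0,\bar\Theta_K]}\wt N\ge m\big]\lesssim \frac{\rho_K^{m}}{m!}+u_KK\,\bar\Theta_K\,\frac{\rho_K^{m-1}}{(m-1)!},
\]
and with $m=\lceil3/\a\rceil-1$, using $u_KK\ll\s_K^{1+\a}/\ln K$ and $\s_K\gg K^{-1/2+\a}$, both terms are $o(\s_K)$ — this is exactly where the threshold $\lceil3/\a\rceil$ enters, giving just enough slack in the exponents. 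Hence $\mathbb P[\th^K_{\text{diversity}}=\hat\th^K]=o(\s_K)$.

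\emph{The mutant-of-mutant alternative.} If $\th^K_{\text{mut. of mut.}}=\hat\th^K$ then a birth-with-mutation from a mutant individual occurs at time $\hat\th^K\le\th^K_{\text{invasion}}$, so by optional stopping applied to the compensated counting process of such events,
\[
\mathbb P[\th^K_{\text{mut. of mut.}}=\hat\th^K]\le \overline b\,u_K\,\mathbb E\Big[\int_0^{\th^K_{\text{invasion}}}\textstyle\sum_{k\ge1}\mfm^k(\tilde\nu^K_s)\,ds\Big].
\]
On $\mathcal G_K\cap\{\sigma^\ast\le\bar\Theta_K\}$ the number of mutant lineages born before $\th^K_{\text{invasion}}$ is at most $A^{2,K}_{\bar\Theta_K}$, of mean $(\ln K)^{O(1)}\s_K^{-1}$; I would split these lineages into those whose bounding process $Z^{K,2}_k$ stays below $\e\s_KK$ and the rest. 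A lineage of the first type is an extinct near-critical branching process, so its integrated mass has expectation $O_\e(\s_K^{-1})$; a lineage of the second type reaches $\e\s_KK$ and therefore \emph{triggers} $\th^K_{\text{invasion}}$ at most one climb-time $\lesssim\ln(K)\s_K^{-1-\a/2}$ after its birth, so it contributes at most $\e\s_KK\cdot\ln(K)\s_K^{-1-\a/2}=\e K\ln K\,\s_K^{-\a/2}$ to the integral, and by Lemma~\ref{rv_B} there are only $O(1)$ such lineages in expectation. Summing, $\mathbb E[\int_0^{\th^K_{\text{invasion}}}\sum_{k\ge1}\mfm^k]\lesssim_\e \s_K^{-2}+K\ln K\,\s_K^{-\a/2}$, hence $\mathbb P[\th^K_{\text{mut. of mut.}}=\hat\th^K]\lesssim_\e u_K\s_K^{-2}+\e\,u_KK\ln K$, and both are $o(\s_K)$ by \eqref{conv1}--\eqref{conv2}. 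Combining the three estimates proves the lemma. The main obstacle is this last step: the crude a priori bound ``$\sum_{k\ge1}\mfm^k(\tilde\nu^K_t)\le\lceil3/\a\rceil\e\s_KK$ for $t<\th^K_{\text{invasion}}$'' only yields $O(\e)$ for $\mathbb P[\th^K_{\text{mut. of mut.}}=\hat\th^K]$, so one really has to exploit the branching structure to show that a non-invading lineage cannot sustain a mass of order $\e\s_KK$ over a time of order $\bar\Theta_K$.
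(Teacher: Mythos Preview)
Your overall decomposition into the three alternatives is the same as the paper's, and your treatment of the $\exp(K^\a)$ alternative is essentially equivalent. But both of the remaining parts have real gaps.

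\textbf{Diversity.} Your $M/G/\infty$ argument asserts that each $Z^{K,2}_k$ has ``mean lifetime $\asymp\ln(1/\s_K)$''. This is false for the supercritical ones --- roughly half of the mutants have $Y^K_k>R^K$, so $b^{K,2}_k>d^{K,2}_k$, and then $Z^{K,2}_k$ survives forever with probability $\asymp\s_K$ and has \emph{infinite} unconditional mean lifetime. If you try to repair this by conditioning on extinction, the ``surviving'' input is a thinned Poisson stream of rate $\asymp u_KK\s_K$, and by time $\bar\Theta_K\asymp (\s_Ku_KK)^{-1}\ln(1/\s_K)$ it has produced $\asymp\ln(1/\s_K)$ immortal customers, which swamps the threshold $\lceil 3/\a\rceil-1$. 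The point you are missing is that survival of $Z^{K,2}_k$ does \emph{not} imply survival of $\mfm^k$, so the $M/G/\infty$ upper bound via $Z^{K,2}_k$-lifetimes is simply too crude. The paper avoids this entirely: it restricts to unsuccessful labels ($k<I^{K,2}$, where $Z^{K,2}_k$ conditioned on $B^{K,2}_k=0$ is subcritical), bounds each such lifetime by $C\s_K^{-1}\ln K$ with probability $1-O(\s_K K^{-1})$, and then bounds the probability that some window of length $C\s_K^{-1}\ln K$ inside $[0,(Ku_K\s_K^{1+\a})^{-1}]$ contains $\geq\lceil 3/\a\rceil-1$ arrivals of $A^{2,K}$. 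A union bound over $O(\s_K^{-1-\a})$ windows together with $u_KK\s_K^{-1}\ln K\ll\s_K^\a$ gives $o(\s_K)$, and the choice $\lceil 3/\a\rceil$ is precisely what makes the exponent work.

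\textbf{Mutant of mutant.} Your splitting by whether $Z^{K,2}_k$ stays below $\e\s_KK$ is the right instinct, but the claim that a ``second type'' lineage \emph{triggers} $\th^K_{\text{invasion}}$ within one climb-time is not justified: it is $Z^{K,2}_k$ that reaches $\e\s_KK$, not $\mfm^k$, and $B^{K,2}_k=1$ does not force $B^K_k=1$. On the event $\{B^{K,2}_k=1,\,B^K_k=0\}$ (probability $\asymp\e\s_K$) the lineage $\mfm^k$ can linger below $\e\s_KK$ for a time of order $\bar\Theta_K$, contributing $\e\s_KK\cdot\bar\Theta_K\asymp\e/u_K$ to the integral; multiplied by $u_K$ this gives a contribution of order $\e$ to the probability, not $o(\s_K)$. (You correctly flag in your last paragraph that the crude bound only gives $O(\e)$ --- the splitting you propose does not escape this.) The paper's fix is to work per lineage and use the explicit Laplace transform $G(\lambda)=\mathbb E_1[\exp(-\lambda\int_0^\infty Z_t\,dt)]$ for a linear birth--death process: for each unsuccessful lineage the probability of producing a mutant is $1-G(\bar bu_K)=O(u_K\s_K^{-1})$ uniformly over sub- and super-critical cases (conditioning on extinction just swaps $b\leftrightarrow d$), and summing over $O(\s_K^{-1-\a})$ lineages gives $o(\s_K)$. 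The single successful lineage is then handled separately by your Poisson argument, which is fine.
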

%
%
\begin{proof}
We start with proving the following
\begin{equation}\label{bound_diversity}
\mathbb P\left[ \th^{K}_{ \text{diversity}}<(K u_K\s_K^{1+\a })^{-1}\wedge\th^{K}_{\text{invasion}}\wedge \th^K_{ \text{mut. of mut.}}\right]=o(\s_K).
\end{equation}
Define
\begin{align}
\hat Z_k^{K,2}(s)\equiv\begin{cases} 0 &\text{for }s< \inf\{t\geq 0\!:\!A^{K,2}_t=k\}\\
						Z_k^{K,2}\left(\tau_k +s-\inf\{t\geq 0\!:\!A^{K,2}_t=k\}\right)&\text{for } s\geq \inf\{t\geq 0\!:\!A^{K,2}_t=k\}.
\end{cases}
\end{align}
By construction of $A^{K,2}$ and $\hat Z^{K,2}$, the left hand side of (\ref{bound_diversity}) does not exceed
\begin{equation}\label{bound_div_2}
\mathbb P\bigg[ \inf\bigg\{t\geq 0:{\sum_{k=1}^{A^{K,2}_t}}\: \mathds 1_{\left \{1\leq \hat Z^{K,2}_k(t)\leq \e\s_K K\right\}}\geq\lceil 3/\a\rceil-1\bigg\}
<\lb K u_K\s_K^{1+\a}\rb^{-1} \bigg]
+o(\s_K).
\end{equation}
Next, we compute an upper bound for the mutation events that happen before $( K u_K\s_K^{1+\a })^{-1}$. 
Since $A^{K,2}$ is a Poisson counting process with parameter $a_2^K u_K K$,  Chebychev's inequality 
implies that
\begin{equation}\label{number_mutations}
				\mathbb P\left[A^{K,2}_{\lb K u_K\s_K^{1+\a}\rb^{-1}}\geq 2 a_2^K \s_K^{-1-\a} \right]
				\leq\text{Var}\bigg( A^{K,2}_{\lb K u_K\s_K^{1+\a}\rb^{-1}} \bigg) \lb 2 a_2^K\s_K^{-1-\a}\rb^{-2}=\frac{1}{a_2^K \s_K^{-1-\a}}.
\end{equation}
Next we need an upper bound  for the lifetimes of the mutants traits, $T_k^{K,2}$. 
First, observe that the probability that $Z_k^{K,2}$ goes extinct after it has reached the value $\lceil \e\s_K K\rceil$ converges to zero very fast.
More precisely, Proposition \ref{prop2} and \ref{prop2.1} (a) imply that   
\begin{align} 
&\mathbb P\left[\inf\bigl\{t\geq 0: Z_k^{K,2}=\lceil \e\s_K K\rceil\bigr\}<\inf\bigl\{t\geq \tau_k: Z_k^{K,2}=0\bigr\}<\infty\right]\\
&=\mathbb P\left[\inf\bigl\{t \!\geq \! \tau_k\! : Z_k^{K,2}\! =\! 0\bigr\}<\infty\right]-\mathbb P\left[\inf\bigl\{t \!\geq \! 0 \!: Z_k^{K,2} \!= \!\lceil \e\s_K K\rceil\bigr\}>\inf\bigl\{t \!\geq \! \tau_k \!: Z_k^{K,2}\! =\! 0\bigr\}\right]\nonumber\\
&=o(\exp(-K^{\a})).\nonumber
\end{align}
Note that, for each k, $Z_k^{K,2}$, conditioned on extinction, is a subcritical linear birth and death process (cf. \cite{J_BPCE}). Let $\check  Z_k^{K,2}$ denote the conditioned process. If $Z_k^{K,2}$ is subcritical, 
then conditioning  has no effect, otherwise the birth and death rates are exchanged. 
Denote by $\check  b_k^{K,2}$ the birth rate  and $\check  d_k^{K,2}$ the death rate of $\check  Z_k^{K,2}$. 
Then there exist uniform constants, $\check  C_1>0$ and $\check  C_2>0$, such that $\check  C_1 \s_K\leq \check  d_k^{K,2}-\check
b_k^{K,2}\leq \check  C_2 \s_K$, for all $k< I^{K,2}$.  Thus, \cite{A_BP} p.\ 109 entails, for all $k< I^{K,2}$,
\begin{equation}
\mathbb P\left[T_k^{K,2}\leq t\right] \geq \frac{\check  d_k^{K,2}-e^{(\check  d_k^{K,2}-\check  b_k^{K,2})t}\check  d_k^{K,2}}{\check  b_k^{K,2}-e^{(\check  d_k^{K,2}-\check  b_k^{K,2}) t}\check  d_k^{K,2}}-o(\exp(-K^{-\a})).
\end{equation} 
The error term $o(\exp(-K^{-\a}))$  appears since $Z_k^{K,2}$, for $k< I^{K,2}$,
 is conditioned on extinction before reaching the value $\lceil \e\s_K K\rceil$ and not only on extinction.
Choose $t=(\check  d_k^{K,2}-\check  b_k^{K,2})^{-1} \ln(K)$,  Then,
\begin{align} \nonumber
\mathbb P\left[T_k^{K,2}\leq (\check  d_k^{K,2}-\check  b_k^{K,2})^{-1} \ln(K)\right]&=\frac{\check  d_k^{K,2}(1-K)}{\check  b_k^{K,2}(1-K)-K(\check  d_k^{K,2}-\check  b_k^{K,2})}-o(\exp(-K^{-\a}))
\\\nonumber&=1+\frac{\check  d_k^{K,2}-\check  b_k^{K,2}}{\check  b_k^{K,2}(1-K)-K(\check  d_k^{K,2}-\check  b_k^{K,2})}-o(\exp(-K^{-\a}))
\\&=1-O(\s_K K^{-1})
\end{align} 
and hence
\begin{align}
\mathbb P\left[\forall 1\leq k< I^{K,2}: T_k^{K,2}\leq(\check  C_1 \s_K)^{-1} \ln(K)\right ]=1-o(\s_K).
\end{align} 
Therefore, we can bound the first summand of (\ref{bound_div_2}) by $2 a_2^K\s_K^{-1-\a}$ times the probability that more than
$\lceil 3/\a\rceil-1$ mutation events of $A^{K,2}$ take place in an interval of length $(\check  C_1 \s_K)^{-1} \ln(K)$. More precisely, (\ref{bound_div_2}) is smaller than
\begin{equation}
2 a_2^K\s_K^{-1-\a} \mathbb P\left[A^{K,2}_{(\check  C_1 \s_K)^{-1} \ln(K)}\geq \lceil 3/\a\rceil-1\right]+o(\s_K).
\end{equation} 
Thus, for $\a$ small enough,  the proof of (\ref{bound_diversity}) is concluded by the observation that
\bea
&&\mathbb P\left[A^{K,2}_{(\check  C_1 \s_K)^{-1} \ln(K)}\geq \lceil 3/\a\rceil-1\right]\\\nonumber
&&\qquad=e^{-a_2^K u_K K(\check  C_1 \s_K)^{-1} \ln(K)}\sum_{i=\lceil 3/\a\rceil-1}^{\infty}\frac{(a_2^K u_K K(\check  C_1 \s_K)^{-1} \ln(K))^i}{i!}\\\nonumber
&&\qquad\leq  \lb a_2^K u_K K(\check  C_1 \s_K)^{-1} \ln(K)\rb^{\lceil 3/\a\rceil-1}\\
&&\qquad=o(\s_K^{3- \a}),\nonumber
\eea  
where the last equality holds since $ u_K K \s_K^{-1} \ln(K)\ll (\s_K)^{\a}$. 

Next, we want to prove that
\begin{equation}
\label{bound_mut_of_mut}
\mathbb P\left[ \th^K_{ \text{mut. of mut.}}<(K u_K\s_K^{1+\a })^{-1}\wedge\th^{K}_{\text{invasion}}\wedge\th^{K}_{ \text{diversity}} \right]=o(\s_K).
\end{equation}
Set, for all $\lambda\geq 0$,
\be
G(\lambda)=\mathbb E\left[\left.\exp\left(-\lambda\int_0^{\infty}Z_t\,dt\right)\,\right|\, Z_0=1\right],
\ee
where $(Z_t,t\geq 0)$ is a linear birth and death process
with individual birth rate $b$ and individual death rate $d$. Applying the strong Markov property and the branching property at the first jump time of $Z$ and using the facts that $G(\lambda)^2=\mathbb E\left[\left.\exp\left(-\lambda\int_0^{\infty}Z_t\,dt\right)\right| Z_0=2\right]$ and 
$\mathbb E\left[\left.\exp\left(-\lambda \tau^{}_{\text{first jump}}\right)\,\right|\, Z_0=1\right]=\frac {b+d}{b+d+\l}$, we obtain
\be
b G(\lambda)^2-(b+d+\lambda) G(\lambda) +d=0.
\ee
Thus, since 
\bea\textstyle
\lim_{\lambda \downarrow 0 }G(\lambda)&=&
\textstyle\lim_{\lambda \downarrow 0 }\mathbb E\left[\left.\exp\left(-\lambda\int_0^{\infty}Z_t\,dt\right)\mathds 1_{\{\tau^{}_{\text{extinction}}<\infty \}}\,\right|\, Z_0=1\right]\\\nonumber
&&\textstyle+\:\lim_{\lambda \downarrow 0 }\mathbb E\left[\left.\exp\left(-\lambda\int_0^{\infty}Z_t\,dt\right)\mathds 1_{\{\tau^{}_{\text{extinction}}=\infty \}}\,\right|\, Z_0=1\right]\\
\nonumber
&=&\mathbb P[\tau^{}_{\text{extinction}}< \infty]+0,
\eea
 which is 0 in the subcritical case and $1-d/b$
in the supercritical case, it follows that 
\be
G(\lambda)=
  \frac{b+d+\lambda-\sqrt{(b+d+\lambda)^2-4bd}}{2b}.
\ee
Let $\tilde Z^{K,2}_k(t)\equiv  Z^{K,2}_k(\tau_k+t)$, i.e. a linear birth and death process
with  birth rate $b^{K,2}_k$ and death rate $d^{K,2}_k$. Observe that $\int_0^{\infty}\tilde Z^{K,2}_k(t)\,dt$ 
gives an upper bound for the sum of the lifetimes of all individuals with label $k$. Since the mutation rate of any individual in the
population is smaller than $\bar b u_K$, the probability that an mutant appears, which was born from an unsuccessful mutant with label $k$,  is bounded from above by
\bea\label{prob_mut_of_mut}
  1- \mathbb E\left[\left.\exp\left(-u_K\bar b\int_0^{\infty}\tilde Z^{K,2}_k(t)\,dt\right)\right| \tau^{}_{\text{extinction}}<\inf\{t\geq 0:\tilde Z^{K,2}_k(t)>\e \sigma_K K\}\right]&& \qquad\\
    \leq1- \mathbb E\left[\left.\exp\left(-u_K\bar b\int_0^{\infty}\tilde Z^{K,2}_k(t)\,dt\right)\right| \tau^{}_{\text{extinction}}<\infty\right]+o(\exp(-K^{\a})).&&\nonumber
\eea
Since $\tilde Z^{K,2}_k(t)$, conditioned on extinction, is a sub-critcal linear birth and death process,
the right hand side of (\ref{prob_mut_of_mut}) is equal to $1-G_{\mathbb E[ \tilde Z^{K,2}_k| \tau^{}_{\text{extinction}}<\infty]}(u_K \bar b)+o(\exp(-K^{\a}))$
and
\begin{align}\nonumber
G_{\mathbb E[ \tilde Z^{K,2}_k| \tau^{}_{\text{extinction}}<\infty]}(u_K \bar b)
& =  
\begin{cases}
 \frac{b_k^{K,2}+d_k^{K,2} + u_K \bar b -\sqrt{(b_k^{K,2}+d_k^{K,2} + u_K \bar b)^2-4b_k^{K,2}d_k^{K,2}}}{2b_k^{K,2}}& 
 						\text{if }d_k^{K,2}>b_k^{K,2} \\[0.5em]
\frac{d_k^{K,2}+b_k^{K,2} + u_K \bar b -\sqrt{(d_k^{K,2}+b_k^{K,2} + u_K \bar b)^2-4d_k^{K,2}b_k^{K,2}}}{2d_k^{K,2}} & \text{if }b_k^{K,2}>d_k^{K,2}
  \end{cases}\\[0.2em]\nonumber
   &=  
\begin{cases}
 \frac{2 b_k^{K,2}+ u_K \bar b - O(u_K \s_K^{-1} )}{2b_k^{K,2}}& 
 						\text{if }d_k^{K,2}>b_k^{K,2} \\[0.5em]
\frac{2d_k^{K,2}+ u_K \bar b -O(u_K \s_K^{-1} )}{2d_k^{K,2}} & \text{if }b_k^{K,2}>d_k^{K,2}
  \end{cases}\\[0.2em]
  &=1-O(u_K\s_K^{-1})=1-o(\s_k^{2+\a}K^{-2\a}).
\end{align}
Note that we used for the second equality that $|b_k^{K,2}-d_k^{K,2}|=\xi\sigma_K$ for some $\xi>0$.
By~(\ref{number_mutations}), the total number of unsuccessful mutations until 
$(K u_K\s_K^{1+\a })^{-1}\wedge\th^{K}_{\text{invasion}}\wedge\th^{K}_{ \text{diversity}}$ is with probability $1-o(\s_K)$ smaller 
or equal  $2a^K_{2}\sigma^{-1-\a}_K$. Therefore, we finally obtain that the probability
to have one mutant of a unsuccessful mutant during that time is $o(\sigma_K)$. 
On the other hand, let $P^{K}_t$ be a Poisson counting process with parameter $\bar b u_K \e\s_K K $ 
and $(\tilde Z^{K,1}_t, t\geq 0)$  a linear birth and death process with initial state $1$ and
 birth rate $b^{K,1}(Y_{I_K}^K)$ and death rate $d^{K,1}(Y_{I_K}^K)$, then the probability to
 have one mutant of the successful mutant until the time 
 $(K u_K\s_K^{1+\a })^{-1}\wedge\th^{K}_{\text{invasion}}\wedge\th^{K}_{ \text{diversity}}$ is 
 bounded from above by
 \begin{align}\label{no_mutant_of_succ}
		  \mathbb P& \Big[P^{K}_{\tau^{\tilde Z^{K,1}}_{\e\s_K K}}\neq 0\Big|\tau^{\tilde Z^{K,1}}_{ \e\s_K K} < \tau^{\tilde Z^{K,1}}_{0} \Big]+o(\s_K) 													
		  	\\&  =\mathbb E \Big[\mathds 1_{\big\{P^{K}_{\tau^{\tilde Z^{K,1}}_{\e\s_K K}}\neq 0\big\}}\nonumber
			\left(\mathds 1_{\{\tau^{\tilde Z^{K,1}}_{\e\s_K K}\leq t_K\}}
			    +\mathds 1_{\{\tau^{\tilde Z^{K,1}}_{\e\s_K K}> t_K\}} \right)
		\Big|\tau^{\tilde Z^{K,1}}_{ \e\s_K K} < \tau^{\tilde Z^{K,1}}_{0} \Big]+o(\s_K) 	
		\\&  \leq(1-\exp( -\bar b u_K\e\s_K K t_K))
						    +\mathbb P\Big[\tau^{\tilde Z^{K,1}}_{\e\s_K K}> t_K 
		\Big|\tau^{\tilde Z^{K,1}}_{ \e\s_K K} < \tau^{\tilde Z^{K,1}}_{0} \Big]+o(\s_K),\nonumber 	
\end{align}
for each $t_K$, because the mutation rate per individual is bounded by $\bar b u_K$ and there are at most $\e\s_K K$
 successful mutant individuals alive until $\th^K_{\text{invasion}}$.
  If we choose $t_K=\ln(K)\s_K^{-1-\a/2}$, then by Proposition \ref{prop2.1}, all terms in the last line of (\ref{no_mutant_of_succ}) are $o(\s_K)$.
This implies (\ref{bound_mut_of_mut}).
 
Note that we have $\th^{K}_{\text{invasion}} = \tau^{}_{I^K} +\inf\big \{t\geq 0 : \mfm^{I^K}(\tilde \nu_{ \tau^{}_{I^K}+t})>\e\s_K K\big\}$.
Let $E^{K,1}$ be a exponential distributed random variable with mean $a^K_1p^K_1 \s_K u_K K $. Then, 
\bea
	 \mathbb P \left[ \tau^{}_{I^K} +\inf\big \{t\geq 0 : \mfm^{I^K}(\tilde \nu_{ \tau^{}_{I^K}+t})
				>\e\s_K K\big\} \geq \th^{K}_{ \text{diversity}} \wedge \exp({K^{\a}})\wedge \th^K_{ \text{mut. of mut.}}\right]&&\\
		\geq\mathbb P \left[E^{K,1}+T^{K,1}_{I^K}\geq (K u_K\s_K^{1+\a})^{-1} \right]-o(\s_K).&&\nonumber
\eea
Let $\tilde Z^{K,1}$ as defined before, then again by Proposition \ref{prop2.1} 
\begin{equation}\label{bound_T}
\mathbb P\left[T^{K,1}_{I^K}>\ln(K)\s_K^{-1-\a/2}\right] = \mathbb P\left [\tau^{\tilde Z^{K,1}}_{\e\s_K K}>\ln(K)\s_K^{-1-\a/2} \Big | \tau^{\tilde Z^{K,1}}_{\e\s_K K}<\tau^{\tilde Z^{K,1}}_{0}\right]=o(\s_K). 
\end{equation} 
Since $\ln(K)\s_K^{-1-\a/2} \ll (K u_K\s_K^{1+\a})^{-1}$, the Markov inequality for the function
 $f(x)=x^{n}$, where $n$ is smallest even number which is larger than $2/\a$, yields 
\bea
	\mathbb P \left[ E^{K,1}+T^{}_{I^K}> (K u_K\s_K^{1+\a})^{-1} \right]&\leq& \mathbb P \left[ E^{K,1}> (2 K u_K\s_K^{1+\a})^{-1} \right]+o(\s_K)\\
					&\leq&\frac{ (2 K u_K \s_K^{1+\a})^{n} n!} {(a^K_1 p^K_1u_K K\s_K)^{n}}= O(\s_K^{2})\nonumber.
\eea
\end{proof}
%
%
The following lemma shows that there are no two successful mutants during the first phase of an invasion.
\begin{lemma} \label{2.succ_mutant}
Fix $\e>0$. Suppose that the assumptions of Theorem \ref{1.Phase} hold and let $M$ be the constant of Lemma \ref{exit_from_domain}.
Then,
\bea
	\lim_{K\to\infty} \s_K^{-1} \;
 			\mathbb P \Big[ \text{ There is a successful mutation in time }\text{interval } [\tau^{}_{I^K},\th^{K}_{\text{invasion}} ] \Big]&=&0.
\eea
\end{lemma}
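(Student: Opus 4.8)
The plan is to show that the invasion of the successful mutant is so fast — it occupies, after the successful mutant has appeared, a time of order at most $\ln(K)\sigma_K^{-1-\alpha/2}$ — that on this short time window the mutation mechanism cannot produce a second successful mutant. Throughout I would work on an event $\mathcal E_K$ with $\mathbb P[\mathcal E_K^c]=o(\sigma_K)$ carrying all the conclusions of Lemmas \ref{exit_from_domain}--\ref{bounds_for_invasion_time}: the total mass stays in the $\e M\sigma_K$-neighbourhood of $\bar z(R^K)$ until $\hat\th^K$; the processes $t\mapsto\mfm^k(\tilde\nu_t)$ are sandwiched as $Z^{K,1}_k\preccurlyeq\mfm^k\preccurlyeq Z^{K,2}_k$; on $[0,\hat\th^K]$ every jump of $L^K$ is a jump of the Poisson process $A^{K,2}$ of rate $a_2^K u_K K$; the success indicators satisfy $B^K_k\preccurlyeq B^{2,K}_k$ with $(B^{2,K}_k)_{k\ge 1}$ i.i.d.\ Bernoulli$(\sigma_K p^K_2)$, independent of $A^{K,2}$ and of $I^K$; and $\th^K_{\text{invasion}}\le\hat\th^K$ with $\th^K_{\text{invasion}}=\tau_{I^K}+\inf\{t\ge 0:\mfm^{I^K}(\tilde\nu_{\tau_{I^K}+t})>\e\sigma_K K\}$.

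Next I would add to $\mathcal E_K$ the duration bound. On $\mathcal E_K$, $B^K_{I^K}=1$ forces $B^{2,K}_{I^K}=1$, so by \eqref{eq:def-q2_K} the successful trait is $Y^K_{I^K}=R^K+h\sigma_K$ with $1\le h\le A$; consequently $\mfm^{I^K}$ is sandwiched between the two \emph{supercritical} birth-and-death processes $Z^{K,1}_{I^K},Z^{K,2}_{I^K}$, whose bias $b^{K,i}_{I^K}-d^{K,i}_{I^K}$ is of order $h\sigma_K$ once $\e$ is small enough (this is exactly the computation in the proof of Lemma \ref{rv_B}), and it reaches level $\e\sigma_K K$ before extinction. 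The hitting-time estimate of Proposition \ref{prop2.1} — the very one used to derive \eqref{bound_T} in the proof of Lemma \ref{diversity} — then gives that, conditionally on reaching $\e\sigma_K K$, this level is reached within $\Delta_K:=\ln(K)\sigma_K^{-1-\alpha/2}$ with probability $1-o(\sigma_K)$. Hence, enlarging $\mathcal E_K$, we may assume $\th^K_{\text{invasion}}-\tau_{I^K}\le\Delta_K$ on $\mathcal E_K$.

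It then remains to estimate $\mathbb P[\mathcal E_K,\ \text{a successful mutant appears in }[\tau_{I^K},\th^K_{\text{invasion}}]]$. On $\mathcal E_K$ such an event entails the existence of an index $k>I^K$ with $B^{2,K}_k=1$ and $\tau_{I^K}<\tau_k\le\tau_{I^K}+\Delta_K$, hence $\tau_J\le\tau_{I^K}+\Delta_K$ for $J:=\inf\{k>I^K:B^{2,K}_k=1\}$. Since $(B^{2,K}_k)_{k\ge1}$ is i.i.d.\ Bernoulli$(\sigma_K p^K_2)$ and independent of the mutation times and of $I^K$, and since on $\mathcal E_K$ the inter-mutation times of $L^K$ stochastically dominate i.i.d.\ exponential variables of rate $a_2^K u_K K$, the difference $\tau_J-\tau_{I^K}$ — a geometric sum of such exponentials — stochastically dominates an exponential of parameter $\sigma_K p^K_2\,a_2^K u_K K$, so that
\[
\mathbb P\bigl[\tau_J-\tau_{I^K}\le\Delta_K\bigr]\ \le\ 1-\exp\bigl(-\sigma_K p^K_2\,a_2^K u_K K\,\Delta_K\bigr)\ \le\ \sigma_K p^K_2\,a_2^K u_K K\,\Delta_K.
\]
Now $a_2^K$ and $p^K_2$ are $O(1)$, while the hypothesis $u_K\ll\sigma_K^{1+\alpha}/(K\ln K)$ gives $u_K K\Delta_K=u_K K\ln(K)\sigma_K^{-1-\alpha/2}\ll\sigma_K^{\alpha/2}$, so the right-hand side is $o(\sigma_K^{1+\alpha/2})$, hence $o(\sigma_K)$. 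Combining everything, $\mathbb P[\text{there is a successful mutation in }[\tau_{I^K},\th^K_{\text{invasion}}]]\le\mathbb P[\mathcal E_K^c]+o(\sigma_K)=o(\sigma_K)$, which is the claim.

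The main obstacle is the duration bound in the second step. The index $I^K$ of the truly successful mutant need not coincide with the index at which the lower-bounding branching process $Z^{K,1}$ actually survives up to level $\e\sigma_K K$ — on $\{B^K_{I^K}=1\}$ one only has $\mathbb P[B^{1,K}_{I^K}=0\mid B^K_{I^K}=1]=O(\e)$, not $o(\sigma_K)$ — so one cannot simply bound $\th^K_{\text{invasion}}-\tau_{I^K}$ by $T^{K,1}_{I^K}$; the hitting-time estimate of Proposition \ref{prop2.1} must instead be applied to $\mfm^{I^K}$ itself, sandwiched between $Z^{K,1}_{I^K}$ and $Z^{K,2}_{I^K}$ and conditioned on reaching $\e\sigma_K K$. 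Everything else is bookkeeping with the couplings already constructed in this section.
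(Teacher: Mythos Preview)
Your argument is correct and is essentially the paper's proof: dominate the successful-mutation clock after $\tau_{I^K}$ by an independent Poisson process $P^K$ of rate $a_2^K p_2^K\sigma_K u_K K$, bound the invasion duration $\theta^K_{\text{invasion}}-\tau_{I^K}$ by $\Delta_K=\ln(K)\sigma_K^{-1-\alpha/2}$ via Proposition~\ref{prop2.1}, and conclude from $a_2^K p_2^K\sigma_K u_K K\,\Delta_K=o(\sigma_K^{1+\alpha/2})$. The paper compresses your second and third paragraphs into two displayed inequalities and simply writes ``as in the last lemma'', referring to the identical computation around~\eqref{no_mutant_of_succ} in the proof of Lemma~\ref{diversity}. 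The coupling subtlety you flag in your final paragraph---that $Z^{K,1}_{I^K}$ need not survive on $\{B^K_{I^K}=1\}$, so $T^{K,1}_{I^K}$ is not literally an upper bound for the invasion duration---is real and is handled with the same informality in the paper's own proof (which in fact writes $Z^{K,2}$ where $Z^{K,1}$ is meant); your proposed resolution, carrying the bound of Proposition~\ref{prop2.1} over to $\mfm^{I^K}$ via the two-sided sandwich, is the right way to close it.
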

%
%
\begin{proof}
Let $P^{K}_{\text{suc. mut.}}(t)$ the process which recodes the number of successful mutants born after \: $\tau^{}_{I^K}$
\: until $\tau^{}_{I^K}\:+\: t$.
Then, 
\begin{equation}
	 \mathbb P \left[ \text {for all } t\geq 0 \text{ such that }  \tau^{}_{I^K}\:+\: t <\hat \th^{K}\::\: P^{K}_{\text{succ. mut.}}(t)\preccurlyeq P^K_t \right]=1-o(\s_K),
\end{equation}
where $P^K_t$ is Poisson process with parameter $a^K_2p^K_2\s_K u_K K $. 
Define $Z_{I^K}^{K,2}(t)$  as in Lemma \ref{bound_mutants}. Then 
$\mathbb P[\forall t\leq\hat \th^{K} : \mfm^{I^K}(\tilde\nu_t) \preccurlyeq Z_{I^K}^{K,2}(t)]\geq1-o(\s_K)$.
Note that $P^{K}_t$ and $Z^{K,2}$ are independent by construction.  
 Therefore, as in the last lemma, or each $t_K$,
\begin{align}\label{no_2_succ}
&\mathbb P \Big[ \text{ There is a}\text{ successful mutation in } [\tau^{}_{I^K},\th^{K}_{\text{invasion}} ] \:\Big]
	\\\nonumber&\:\leq \:\mathbb P \Big[P^{K}_{\tau^{Z^{K,2}}_{\e\s_K K}}\neq 0\Big|\tau^{Z^{K,2}}_{ \e\s_K K} < \tau^{Z^{K,2}}_{0} \Big]+o(\s_K) 								\\\nonumber&\: \leq\:(1-\exp( -a^K_2p^K_2\s_K u_K K  t_K))
						    +\mathbb P\Big[\tau^{Z^{K,2}}_{\e\s_K K}> t_K
		\Big|\tau^{Z^{K,2}}_{ \e\s_K K} < \tau^{Z^{K,2}}_{0} \Big]+o(\s_K).	
\end{align}
With $t_K=\ln(K)\s_K^{-1-\a/2}$, by  Proposition \ref{prop2.1}, all terms in the last line of (\ref{no_2_succ}) are $o(\s_K)$.
\end{proof}
%
The following corollary gives an approximation for the distribution of the next resident trait. 
\begin{corollary} \label{new_resident_trait}
Fix $\e>0$. Suppose that the assumptions of Theorem \ref{1.Phase} hold and let $M$ be the constant of Lemma \ref{exit_from_domain}.
Then, there exist two $\mathcal X$-valued random variables $R_1^{K,1}$ and $R_1^{K,2}$ with distribution
\be
  \mathbb P[ R_1^{K,1}= R^K+\s_K h]
  =\begin{cases}
    \frac{M(R^K,1)q_1^K(1)}{p_2^K}\:+\:1-\frac{p^K_1}{p^K_2} 
    & \text{ if }h=1\\[0.5em]
    \frac{M(R^K,h)q_1^K(h)}{p_2^K} 
    & \text{ if }h\in\{2,...,A\},\qquad
  \end{cases} \label{eq:law-R1_K}
  \ee
  {and }
  \be \mathbb P[ R_1^{K,2}= R^K+\s_K h]
  =\begin{cases}
    \frac{M(R^K,h)q_1^K(h)}{p_2^K} ,
    & \text{ if }h\in\{1,...,A-1\}\\[0.5em]
     \frac{M(R^K,A)q_1^K(A)}{p_2^K}+1-\frac{p^K_1}{p^K_2} 
     & \text{ if }h=A,
      \label{eq:law-R2_K}
  \end{cases}
\ee
such that 
\begin{align}\label{R_i}
	\lim_{K\to \infty}\s_K^{-1}
			\left(1-\mathbb P\left[ R_1^{K,1}\preccurlyeq R_1^K \preccurlyeq R_1^{K,2} \Big| 
												 \th^{K}_{\text{invasion}}< \th^{K}_{ \text{diversity}}\wedge\th^K_{ \text{mut. of mut.}}\wedge \exp({K^{\a}})\right] \right)=0.
\end{align}
\end{corollary}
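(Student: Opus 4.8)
The plan is to realize $R_1^{K,1}$ and $R_1^{K,2}$ as explicit functions of the i.i.d.\ sequence of triples $(Y^K_k,B^{1,K}_k,B^{2,K}_k)_{k\geq 1}$ produced in Lemma~\ref{rv_B}, chosen so that the prescribed laws~\eqref{eq:law-R1_K}--\eqref{eq:law-R2_K} and the domination~\eqref{R_i} hold \emph{simultaneously}. I would first record the ingredients: conditionally on $Y^K_k=R^K+h\sigma_K$, the pair $(B^{1,K}_k,B^{2,K}_k)$ has the law described by~\eqref{eq:def-q1_K}--\eqref{eq:def-q2_K}; $B^{1,K}_k\leq B^{2,K}_k$ a.s.; $q_i^K(h)=0$ for $h\notin\{1,\dots,A\}$; Lemma~\ref{rv_B} furnishes a common space on which the event $G^K\equiv\{B^{1,K}_k\leq B^K_k\leq B^{2,K}_k\ \forall\,1\leq k\leq L^K_{\hat\th^K}\}$ has probability $1-o(\sigma_K)$; and, writing $D^K\equiv\{\th^K_{\text{invasion}}<\th^K_{\text{diversity}}\wedge\th^K_{\text{mut. of mut.}}\wedge\exp(K^\a)\}$ for the conditioning event, Lemma~\ref{diversity} gives $\mathbb P[(D^K)^c]=o(\sigma_K)$, while on $D^K$ one has $\hat\th^K=\th^K_{\text{invasion}}$ and $\tau_{I^K}<\th^K_{\text{invasion}}$, hence $I^K\leq L^K_{\hat\th^K}$.

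Next I would set $I^{K,2}\equiv\inf\{k\geq1:B^{2,K}_k=1\}$ (a.s.\ finite for $K$ large, since $\mathbb P[D^K]\to1$ forces $p_2^K>0$) and define
\be
R_1^{K,1}\equiv\begin{cases} Y^K_{I^{K,2}} & \text{if }B^{1,K}_{I^{K,2}}=1,\\ R^K+\sigma_K & \text{if }B^{1,K}_{I^{K,2}}=0,\end{cases}
\qquad
R_1^{K,2}\equiv\begin{cases} Y^K_{I^{K,2}} & \text{if }B^{1,K}_{I^{K,2}}=1,\\ R^K+A\sigma_K & \text{if }B^{1,K}_{I^{K,2}}=0.\end{cases}
\ee
Both are $\mathcal X$-valued because $B^{1,K}_{I^{K,2}}=1$ forces $Y^K_{I^{K,2}}=R^K+h\sigma_K$ with $h\in\{1,\dots,A\}$. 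The laws then come from the renewal identity at $I^{K,2}$: since the triples are i.i.d.\ and $\{B^{1,K}_1=1\}\subseteq\{B^{2,K}_1=1\}$,
\be
\mathbb P\big[Y^K_{I^{K,2}}=R^K+h\sigma_K,\,B^{1,K}_{I^{K,2}}=1\big]=\frac{\mathbb P[Y^K_1=R^K+h\sigma_K,\,B^{1,K}_1=1]}{\mathbb P[B^{2,K}_1=1]}=\frac{M(R^K,h)\,q_1^K(h)}{p_2^K}
\ee
for $1\leq h\leq A$, whence $\mathbb P[B^{1,K}_{I^{K,2}}=0]=1-p_1^K/p_2^K$; putting the residual mass $1-p_1^K/p_2^K$ on the atom $h=1$ for $R_1^{K,1}$ and on $h=A$ for $R_1^{K,2}$ reproduces exactly~\eqref{eq:law-R1_K}--\eqref{eq:law-R2_K}.

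The last step is to check the domination on $G^K\cap D^K$, a set of conditional probability $1-o(\sigma_K)$ given $D^K$. On it, $B^{1,K}_k\leq B^K_k=0$ for $k<I^K$ and $B^{2,K}_{I^K}\geq B^K_{I^K}=1$, so $I^{K,2}\leq I^K\leq I^{K,1}$; also $B^{2,K}_{I^K}=1$ forces $R_1^K=Y^K_{I^K}=R^K+h\sigma_K$ with $h\in\{1,\dots,A\}$. If $B^{1,K}_{I^{K,2}}=1$, the first $B^{1,K}$-success cannot precede $I^{K,2}$ (because $B^{1,K}\leq B^{2,K}$), so $I^{K,1}=I^{K,2}$, forcing $I^{K,2}=I^K=I^{K,1}$ and $R_1^{K,1}=R_1^K=R_1^{K,2}$. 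If $B^{1,K}_{I^{K,2}}=0$, then $R_1^{K,1}=R^K+\sigma_K\leq R_1^K\leq R^K+A\sigma_K=R_1^{K,2}$. In either case $R_1^{K,1}\preccurlyeq R_1^K\preccurlyeq R_1^{K,2}$, so $\{R_1^{K,1}\preccurlyeq R_1^K\preccurlyeq R_1^{K,2}\}^c\cap D^K\subseteq(G^K)^c$ and
\be
\sigma_K^{-1}\Big(1-\mathbb P\big[R_1^{K,1}\preccurlyeq R_1^K\preccurlyeq R_1^{K,2}\mid D^K\big]\Big)\leq\frac{\sigma_K^{-1}\mathbb P[(G^K)^c]}{\mathbb P[D^K]}\xrightarrow[K\to\infty]{}0,
\ee
which is~\eqref{R_i}. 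I do not expect serious difficulty here, since the heavy lifting (the coupling with linear birth--death processes and the Bernoulli comparison) was already done in Lemmas~\ref{bound_mutants} and~\ref{rv_B}; the one point requiring care is the joint design above --- the value assigned on $\{B^{1,K}_{I^{K,2}}=0\}$ must be the extreme atom ($R^K+\sigma_K$, resp.\ $R^K+A\sigma_K$) so that the sandwich survives when the three indices fail to coincide, and must carry precisely the mass $1-p_1^K/p_2^K$ so that the marginal is the one claimed; reconciling these two requirements is exactly what the split on $B^{1,K}_{I^{K,2}}$ accomplishes.
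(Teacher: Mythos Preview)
Your construction and argument are correct and follow essentially the same route as the paper: define $R_1^{K,j}$ by a case split on whether the first $B^{2,K}$-success is also a $B^{1,K}$-success (your condition $B^{1,K}_{I^{K,2}}=1$ is exactly the paper's $I^{K,1}=I^{K,2}$), assign the common value $Y^K_{I^{K,2}}$ on the coincidence event and the extremal atoms $R^K+\sigma_K$, $R^K+A\sigma_K$ otherwise, then read off the law from the i.i.d.\ renewal structure and the sandwich from $I^{K,2}\leq I^K\leq I^{K,1}$ on $G^K\cap D^K$. Your version is in fact slightly cleaner than the paper's, which writes $Y^K_{I^K}$ in the definition but computes the law using $Y^K_{I^{K,2}}$; anchoring on $I^{K,2}$ throughout, as you do, makes the stated distribution exact rather than only correct up to the good event.
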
 
%
%
\begin{proof} Define 
\be
	R_1^{K,1}\equiv \begin{cases} 
					Y_{I^{K}}^{K}, & \text{ if }I^{K,1}=I^{K,2},\\
					R^K+\s_K,	& \text { otherwise},
				\end{cases} 		
	\quad\text{ and }\quad
	R_1^{K,2}\equiv \begin{cases} 
					Y_{I^K}^{K}, & \text{ if }I^{K,1}=I^{K,2},\\
					R^K+A \s_K, 		& \text { otherwise.}
				\end{cases}
\ee
By construction of $B^{K,i}_k$ and $Y_{k}^{K,i}$, we have that (\ref{R_i}) holds. Next, we compute
\bea\nonumber
  \mathbb{P}[Y^K_{I^{K,2}}=R^K+\sigma_K h,\ I^{K,1}=I^{K,2}] & 
  =&\mathbb{P}[Y^K_1=R^K+\sigma_K h,\ B^{K,1}_1=1\mid B^{K,2}_1=1] \\\nonumber
  & =& \frac{\mathbb{P}[Y^K_1=R^K+\sigma_K h,\ B^{K,1}_1=1]}{\mathbb{P}[B^{K,2}_1=1]} \\
  & =&\frac{M(R^K,h)q^K_1(h)}{p^K_2}
\eea
and $\mathbb{P}[I^{K,1}\not= I^{K,2}]=1-\sum_{h=1}^A \frac{M(R^K,h)q^K_1(h)}{p^K_2}=1-p^{K}_1/p^K_2$.
Since $\mathbb{P}[R_1^{K,1}=R^{K}+\sigma_K h]=\mathbb{P}[Y^K_{I^{K,2}}=R^K+\sigma_K h,\
I^{K,1}=I^{K,2}]+\mathds{1}_{\{h=1\}}\mathbb{P}[I^{K,1}\not= I^{K,2}]$ and similarly for $R^{K,2}_1$, we deduce~\eqref{eq:law-R1_K} and~\eqref{eq:law-R2_K}.
\end{proof}
%


\section{The Second Phase of an Invasion}\label{2.Phase}
\begin{notations}
Let us denote 
\be 
\theta^K_{\text{fixation}}=\inf\left\{t\geq \theta^K_{\text{invasion}}: |\text{Supp}(\tilde \nu^K_{t})|=1\text{ and } |\langle \tilde \nu_t,\mathds 1\rangle-\bar z(R_1^K)|<(M/3)\e\s_K\right\}
\ee
i.e. the first time after $\theta^K_{\text{invasion}}$ such that the population is 	
	monomorphic and in the $(M/3)\e\s_K$-neighborhood of the corresponding equilibrium.
\end{notations}

Again we start with a theorem, which summarizes several of the results of this section.

\begin{theorem}\label{Thm_2.Phase}
Fix $\e>0.$ Under the Assumptions \ref{ass}, \ref{ass3} and \ref{ini_con},  there exists a constant, 
$M>0$, such that, for all $K$ large enough, 
\begin{enumerate}[(i)]
		\setlength{\itemsep}{3pt}
 \item $\tilde\nu^K_{0}=N_{R^K}^K K^{-1}\delta_{(0,R^K)}$, where  $\left|\overline z(R^K)-N^K_{R^K}K^{-1}\right|<(M/3) \e \sigma_K$ a.s..	
 \item At the first time of invasion, $\theta^K_{\text{invasion}}$,
  the resident density is in an $\e  M\sigma_K$-neighborhood of $\bar{z}(R^K)$,
the number of different living mutant traits is bounded by $\lceil \alpha/3\rceil$ and there is no 
mutant of a mutant, with probability $1-o(\sigma_K)$. 
(cf. Theorem \ref{1.Phase})
\item The time between $\theta^K_{\text{invasion}}$ and  $\th^K_{\text{fixation}}$ is smaller than $5 \ln(K)\s_K^{-1-\a/2}$, with probability $1-o(\s_K)$.
\item The trait of the population at time $\th^K_{\text{fixation}}$ is the trait of the 
mutant  whose density was larger than $\e\s_K$ at time $\theta^K_{\text{invasion}}$, 
i.e. $\text{Supp}(\tilde \nu^K_{\th^K_{\text{fixation}}})=\left(I^K, R^{K}_1\right)$, 
with probability $1-o(\s_K)$. The distribution of $R^{K}_1$ can be approximated as in Corollary \ref{new_resident_trait}.
\end{enumerate}
Moreover, until  time $\th^K_{\text{fixation}}$, the total mass of the population stays in the 
$O(\s_K)$-neighborhood of $\bar z(R^K)$, the number of different living mutant traits is bounded 
by $\lceil\a/3\rceil$, and there is no
second successful mutant, with probability $1-o(\s_K)$.
\end{theorem}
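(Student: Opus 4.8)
The plan is to deduce the theorem by combining the results of the first phase with a sequence of lemmas describing the successive sub-phases of the second phase. Item~(i) is exactly Assumption~\ref{ini_con}, and item~(ii) is the last statement of Theorem~\ref{1.Phase}, so it remains to establish (iii), (iv) and the ``moreover'' part, all of which concern the interval $[\th^K_{\text{invasion}},\th^K_{\text{fixation}}]$. Throughout we work on the event of probability $1-o(\s_K)$ furnished by Theorem~\ref{1.Phase}, on which, at time $\th^K_{\text{invasion}}$, the population consists of the resident trait $R^K$ at density close to $\overline z(R^K)$, of the successful mutant trait $R^K_1=Y^K_{I^K}=R^K+h_s\s_K$ for some $1\le h_s\le A$ at density slightly above $\e\s_K$, and of a bounded number (at most $\lceil 3/\a\rceil$) of further unsuccessful mutant traits, each at density below $\e\s_K$ and differing from $R^K$ by at most $A\s_K$.

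The heart of the argument is the behaviour of the pair $(K^{-1}\mfm^0(\tilde\nu^K_t),K^{-1}\mfm^{I^K}(\tilde\nu^K_t))_t$, which should follow the competitive Lotka--Volterra system $LV(2,(R^K,R^K_1))$; since $R^K_1-R^K=O(\s_K)$ the selective advantage is of order $\s_K$, so this system relaxes only on the slow time scale $\s_K^{-1}$, and the law of large numbers of \cite{E_MP} used in \cite{C_TSS,C_PES} is unavailable on our time scale. The feature to exploit is that for $\s_K=0$ the segment $\{(z_1,z_2):z_1,z_2\ge 0,\ z_1+z_2=\overline z(R^K)\}$ is a manifold of equilibria, attracting at an $O(1)$ rate, along which, for $\s_K>0$, the drift is of order $\s_K$ and points from $(\overline z(R^K),0)$ towards $(0,\overline z(R^K_1))$. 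I would therefore set up a rigorous ``stochastic Euler scheme'': partition $[0,5\ln(K)\s_K^{-1-\a/2}]$ into subintervals of a carefully chosen mesh $\delta_K$ --- short enough that over a subinterval the competition pressure felt by each sub-population, hence its effective birth and death rates, varies only by $o(\s_K)$, long enough that the $O(1)$-relaxation towards the slow manifold is essentially completed --- and on each subinterval sandwich the processes $\mfm^0(\tilde\nu^K_t)$ and $\mfm^{I^K}(\tilde\nu^K_t)$, via the stochastic-domination relation $\preccurlyeq$, between linear birth--death processes whose rates are frozen at the values dictated by the total mass at the left endpoint, each controlled by the branching-process estimates of the Appendix (Propositions~\ref{prop2} and~\ref{prop2.1}). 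Iterating the one-step inequalities over the whole interval and summing the per-step $o(\s_K)$ errors yields, with probability $1-o(\s_K)$: that $\langle\tilde\nu^K_t,\mathds 1\rangle$ stays within $O(\s_K)$ of $\overline z(R^K)$ for all $t\le\th^K_{\text{fixation}}$ (the ``moreover'' mass bound, which at this point needs a moderate-deviation/potential-theoretic estimate for the exit of the two-dimensional chain from a tube around the slow manifold, in the spirit of Lemma~\ref{exit_from_domain}); and that, in total time at most $4\ln(K)\s_K^{-1-\a/2}$, $K^{-1}\mfm^{I^K}(\tilde\nu^K_t)$ grows from $\e\s_K$ into the $(M/3)\e\s_K$-neighbourhood of $\overline z(R^K_1)$, while $K^{-1}\mfm^0(\tilde\nu^K_t)$ decreases to $\e$ and then --- the environment now being dominated by $R^K_1$ and the resident being subcritical with gap $f(R^K,R^K_1)=\partial_1 f(R^K_1,R^K_1)(R^K-R^K_1)+O(\s_K^2)<0$ of order $\s_K$, by Assumption~\ref{ass3} --- goes extinct within a further time $O(\s_K^{-1}\ln K)$ by the estimates already used in Lemma~\ref{diversity}; altogether this proves~(iii).

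It remains to exclude all other sub-populations, so as to conclude that the population is monomorphic with trait $R^K_1$ at $\th^K_{\text{fixation}}$ --- which is~(iv), the approximation of the law of $R^K_1$ being inherited from Corollary~\ref{new_resident_trait}. The finitely many unsuccessful mutant traits present at $\th^K_{\text{invasion}}$ are dominated by subcritical birth--death processes with gap of order $\s_K$ as soon as the total mass exceeds their equilibrium, so they die out within $O(\s_K^{-1}\ln K)$, exactly as in the proof of Lemma~\ref{diversity}; that no \emph{second} fit mutant reaches density $\e\s_K$ during $[\th^K_{\text{invasion}},\th^K_{\text{fixation}}]$ follows, as in Lemmas~\ref{2.succ_mutant} and~\ref{diversity}, from a first-moment estimate: mutations arrive at rate $O(Ku_K)$, each is successful with probability $O(\s_K)$, and over a window of length $5\ln(K)\s_K^{-1-\a/2}$ the expected number of new successful mutants is $O(Ku_K\ln(K)\s_K^{-\a/2})=o(\s_K)$ by~\eqref{conv2}; the same argument keeps the number of coexisting mutant traits below $\lceil 3/\a\rceil$; and that no mutant of an unsuccessful mutant arises is handled as in Lemma~\ref{diversity}, by bounding the integrated mass $\int_0^\infty\mfm^k(\tilde\nu^K_t)\,dt$ of each unsuccessful sub-population through the Laplace transform $G(\cdot)$ computed there. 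Together these give $\text{Supp}(\tilde\nu^K_{\th^K_{\text{fixation}}})=\{(I^K,R^K_1)\}$ with probability $1-o(\s_K)$, hence~(iv) and the remaining ``moreover'' assertions.

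The main obstacle is the stochastic Euler scheme described above. One must simultaneously choose the mesh $\delta_K$ so that the discretization bias, the residual distance to the slow manifold at the end of each step, and the demographic fluctuations of the frozen-rate couplings are each small enough to sum to only $o(1)$ over the whole (long) interval, and establish the accompanying tube estimate keeping the two-dimensional density within $O(\s_K)$ of the slow manifold for a time diverging with $K$ --- the genuinely two-type analogue, with a drift vanishing as $K\to\infty$, of Lemma~\ref{exit_from_domain}. This is the main new ingredient of the paper; the remaining pieces (subcritical extinction times, exclusion of a second successful mutant, exclusion of mutants of mutants) are direct variants of arguments already carried out in Section~\ref{First_Phase}.
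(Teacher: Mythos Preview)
Your outline captures the architecture of the argument correctly: items (i)--(ii) are inherited, the slow-manifold geometry of the degenerate Lotka--Volterra system is the key, a stochastic Euler scheme handles the long middle passage, the two ends (growth of the mutant from $\e\s_K$ to $\e$, extinction of the resident below $\e$) use branching-process couplings, and the exclusion of further mutants is first-moment as you describe. The paper follows exactly this five-step decomposition (Steps~1--5 of Section~\ref{2.Phase}).

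Where you diverge is in the implementation of the Euler scheme itself. You propose to partition \emph{time} into intervals of mesh $\delta_K$ and freeze the competition rates on each; the paper instead discretises in the \emph{mutant density}, fixing a grid $i\e/2$, $2\le i\le 2C^\e_{\text{cross}}/\e$, and at each level proves four facts (Lemma~\ref{Step2}): (a) once the mutant density hits $i\e/2$, the total mass enters an $(M/3)\e\s_K$-neighbourhood of the slow-manifold value $\phi(i\e/2)$ in a short time $S_K\ll\s_K^{-1}$; (b) the mutant density does not move by $\e/4$ in this short time; (c) the total mass then stays within $M\e\s_K$ of $\phi(i\e/2)$ until the mutant density has moved by $\e/2$; (d) the mutant density reaches $(i+1)\e/2$ within time $(i\s_K)^{-1-\a/2}$. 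Step~3 is the mirror image with the resident density and a second curve $\psi$. The gain from space-discretisation is that the number of Euler steps is $O(\e^{-1})$, \emph{independent of $K$}, so the $o(\s_K)$ errors per step sum trivially; with a time mesh you would have $\s_K^{-1-\a/2}\ln K/\delta_K$ steps and would have to tune $\delta_K$ against both the relaxation time to the manifold and the drift along it, which is exactly the difficulty you flag in your last paragraph. A second difference is that within each space step the paper does \emph{not} freeze rates and use branching couplings, but couples the total mass and the mutant count separately with biased simple random walks (drift towards $\phi(i\e/2)$ for the mass, drift of order $\s_K$ upwards for the mutant), controlled by Hoeffding's inequality and Proposition~\ref{prop2.2}; the branching estimates of Propositions~\ref{prop2}--\ref{prop2.1} are reserved for Steps~1 and~4, where one density is below $\e$ and the environment is essentially constant.
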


%

To prove this, we will divide this phase into five steps, illustrated in  Figure \ref{fig2}. 

\begin{description}
\item[Step 1]
		{ From $\th^K_{\text{invasion}}$ to $\th^K_{\text{mut. size }\e}$,
		 the first time  a mutants density reaches the value $\e$.
		 During this period we approximate the mutant density by a  continuous time branching process, which is supercritical 
		 (of order $\s_K$). Thus we obtain that $\th^K_{\text{mut. size }\e}-\th^K_{\text{invasion}}$ is of order $(\ln(K)\s_K^{-1})$.}
\item[Step 2]
		{ From $\th^K_{\text{mut. size }\e}$ to $\th^K_{\text{mut. size }C^{\e}_{\text{cross}}}$, the first time
		 the mutant density reaches a  value $C^{\e}_{\text{cross}}$ (defined in Eq. \eqv(cross.1) below).
		 This step can be seen as the "stochastic Euler scheme". The idea is that the total
		 mass of the population stays close to a function which depends only on the density of
		  the successful mutant.
		 This allows to approximate
		 the number of mutants by a discrete time 				
		 Markov chain until the mutant density has increased by $\e$. Furthermore we control the number of jumps needed to increase by $\e$ 
		 using upper and lower bounds for one jump time of the associated continuous time process. Then we recompute the parameters 
		 and  start again. 
		 Iterating, we obtain that $\th^K_{\text{mut. size }C^{\e}_{\text{cross}}} - \th^K_{\text{mut. size }\e}$ 
		 is also of order $\ln(K)\s_K^{-1}$.}
\item[Step 3] 
		{From $\th^K_{\text{mut. size }C^{\e}_{\text{cross}}}$ until $\th^K_{\text{res. size }\e}$, 
		the first time such that the density of the resident trait $R^K$
		 decreases to the value $\e$. The proof is very similar to the proof of Step 2, the only 	
		 difference is that we approximate the number of resident individuals by a discrete Markov chain, which decreases slowly. }
\item[Step 4] 
		{From  $\th^K_{\text{res. size }\e}$ until  $\th^K_{\text{res. size }0}$,
		 the first time such that the resident trait $R^K$ goes extinct.
		We  approximate the dynamics of the resident trait by a continuous time branching process 
		which is subcritical (of order $\s_K$) and therefore
		 goes extinct, a.s., after a time of order $\ln(K)\s_K^{-1}$. }
\item[Step 5]
		{From  $\th^K_{\text{res. size } 0}$ until $\th^K_{\text{fixation}}$.
		Even if it is unlikely that  this  time period is larger than 0, 
		we have to obtain an upper bound for this time.}		 
\end{description}
 \begin{figure}[ht]
\centering\includegraphics[scale=0.24]{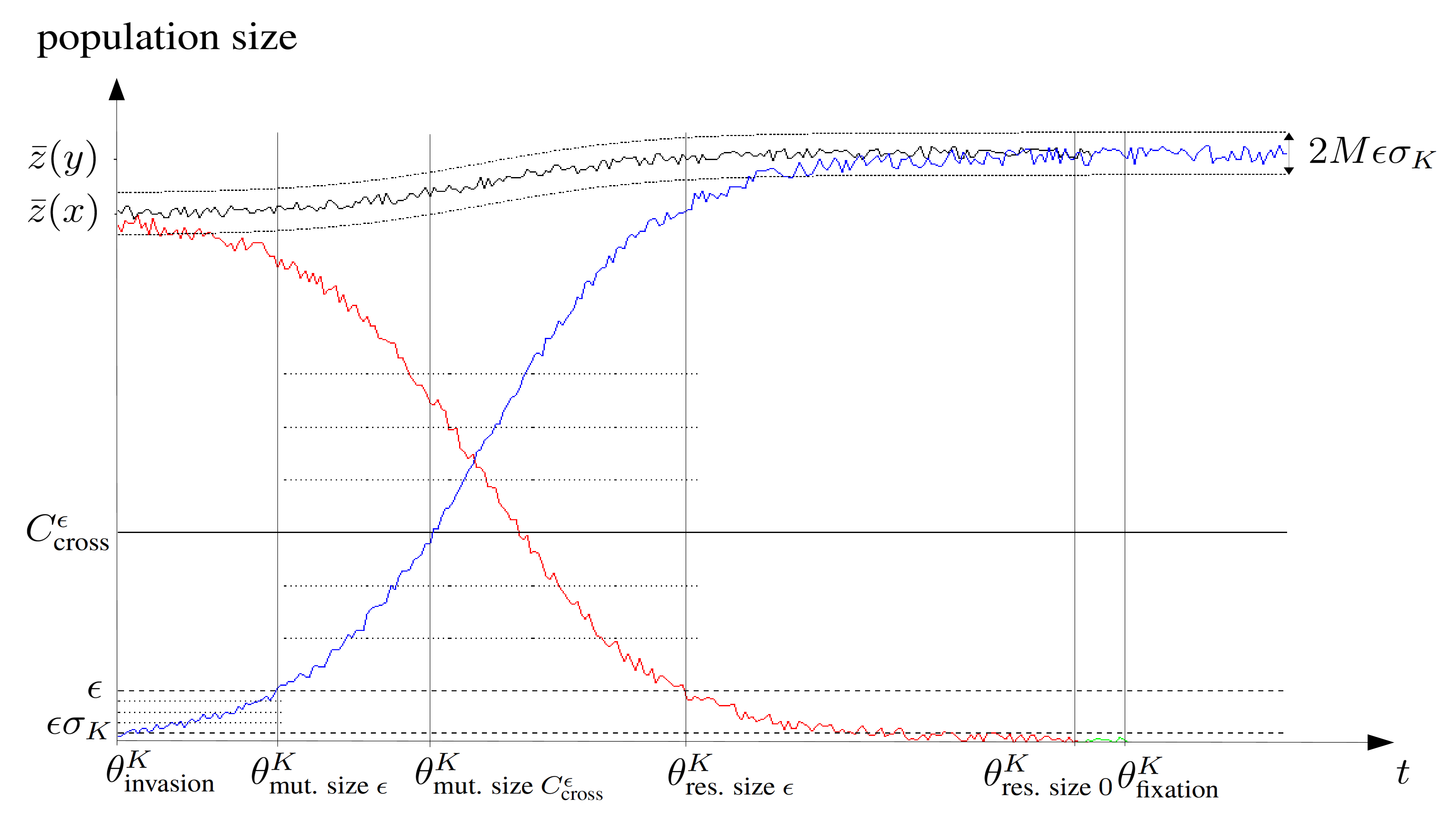}
  \caption{\label{fig2} Evolution of the population after the destiny of the successful mutant has reached the value $\e_K$. }
\end{figure}
\begin{notations} 
Fix $\e>0$. Suppose that the assumptions of Theorem \ref{Thm_2.Phase} hold. 
Set
\bea\Eq(cross.1)
C^{\e}_{\text{cross}}	&\equiv&	\left\lceil\left(\inf_{x\in\mathcal X}\tfrac{b(x)-d(x)}{c(x,x)}\right)\e^{-1}\right\rceil\frac \e 2,
\Eq(cross.2)\\
\text{and}\qquad \th^K_{2\text{ succ. mut.}}&\equiv&
\inf\left\{t\geq 0:\sum_{k=0}^{\infty}\mathds 1_{\mfm^k(\tilde\nu_t) \geq \e\s_KK}\geq 3 \right\}.
\eea
Moreover, for any $\xi \geq 0$,
\bea\Eq(cross.3)
\th^K_{\text{mut. size }\xi}	&\equiv&\inf\left\{t\geq 0:\exists k\geq 1: \mfm^{k}(\tilde\nu_t) =\lceil \xi K\rceil\right\},\\
\Eq(cross.4)
\th^K_{\text{res. size } \xi}&\equiv&\inf\left\{t\geq 0: \mfm^{0}(\tilde\nu_t) =  \lceil \xi K\rceil\right\},\eea
and let $S_K$ be a sequence in $K$ such that $1\ll S_K\ll \e\s_K^{-1}$. 
\end{notations}
%
%
\begin{remark}\label{Remark_2Phase} Using similar arguments as in the proofs of Lemma 
\ref{exit_from_domain}, \ref{diversity} and \ref{2.succ_mutant}, we obtain 
\be
\lim_{K\to\infty}\s_K^{-1} \mathbb P \left[\th^{K}_{\text{invasion}}+5\s_K^{-1-\a/2}\ln(K)>\th^{K}_{ \text{diversity}}\wedge\th^K_{2\text{ succ. mut.}} \wedge \exp({K^{\a}})\right]=0.
\ee
More precisely, until the time $\th^{K}_{ \text{diversity}}\wedge\th^K_{2\text{ succ. mut.}} \wedge \exp({K^{\a}})$ the total mass of the population stays with high probability in the $O(\s_K)$ neighborhood of $\bar z(R^K)$. This can be proved similarly as  Lemma 
\ref{exit_from_domain} or  \ref{exit_from_domain_2}. Since we have only a approximation of oder $\s_K$ (not $\e\s_K$), 
we have less precise bounds for the rates of the mutants and for their success probability. Nevertheless, 
we can bound the mutant subpopulations from above by linear branching processes which are  slightly supercritical of order $\s_K$.
\end{remark}
%
\subsection{Step 1}
The following lemma shows that the total mass stays from the beginning (including the first phase) until $\th^K_{\text{mut. size }\e}$ in the $M\e\s_K$ neighborhood of $\overline z(x)$.
\begin{lemma}
\label{exit_from_domain_2} Fix $\e>0.$ Suppose that the assumptions of Theorem \ref{Thm_2.Phase} hold. Then, there exists a constant $M>0$ (independent of $\e$ and $K$) such that
\bea\label{eq_exit_from_domain_2}
&&	\lim_{K\to\infty}\s_K^{-1}\;
\mathbb P\Big[\:\inf\left \{t\geq 0: |\langle \tilde \nu_t,\mathds 1\rangle-\overline z(R^K)|> M\e\s_K\right\} \\
&&\hspace{3cm}	< \th^K_{\text{mut. size }\e}\wedge\th^K_{2\text{ succ. mut.}}  \wedge \th^{K}_{ \text{diversity}}\wedge \exp({K^{\a}})\Big] = 0.
\nonumber
\eea
\end{lemma}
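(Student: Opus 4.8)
The plan is to reproduce the proof of Lemma~\ref{exit_from_domain} almost verbatim, the only genuinely new ingredient being a one-step drift bound for the \emph{total} number of individuals $N_t\equiv K\langle\tilde\nu^K_t,\mathds 1\rangle$ that is accurate up to an error of relative order $\e\s_K$ (rather than just $\s_K$). I would set $X_t\equiv\big|N_t-\lceil K\overline z(R^K)\rceil\big|$, $\tau_0\equiv\inf\{t>0:X_t=0\}$, $\tau_{M\e\s_KK}\equiv\inf\{t>0:X_t\geq M\e\s_KK\}$, let $Y_n$ be the (non-Markovian) jump chain recording the successive values of $X_t$, $T_n$ its jump times, and abbreviate $\bar\theta^K\equiv\th^K_{\text{mut. size }\e}\wedge\th^K_{2\text{ succ. mut.}}\wedge\th^{K}_{ \text{diversity}}$. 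The heart of the argument will be the claim that there are constants $c_1,c_2>0$ depending only on $\underline b,\overline b,\underline c,\overline c$ and $\a$ such that for $1\leq i\ll K$ and all $K$ large
\begin{equation}
\mathbb P\big[Y_{n+1}=i+1\,\big|\,Y_n=i,\ T_{n+1}<\bar\theta^K\big]\leq\tfrac12-c_1K^{-1}i+c_2\e\s_K.
\end{equation}

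To prove the claim I would first observe that before $\bar\theta^K$ the population carries at most $\lceil 3/\a\rceil$ distinct traits, none of them with $\lceil\e K\rceil$ individuals, so that at most $\lceil 3/\a\rceil\e K$ individuals are non-resident, and (at most $\lceil 3/\a\rceil$ successive mutations having occurred) every present trait lies within $\lceil 3/\a\rceil A\s_K$ of $R^K$. Writing $\bar b=b(R^K)$, $\bar d=d(R^K)$, $\bar c=c(R^K,R^K)$ and using that a mutation-birth also increases $N$ by one, the total rate of $N\to N+1$ equals $\sum_i b(x_i)=\bar b\,N+O(\e\s_K K)$ while the total rate of $N\to N-1$ equals $\sum_i\big(d(x_i)+\tfrac1K\sum_j c(x_i,x_j)\big)=\bar d\,N+\bar c\,N^2/K+O(\e\s_K K)$; the error terms originate only from the $O(\e K)$ non-resident individuals, whose traits differ from $R^K$ by $O(\s_K)$, and here one uses $N\leq(4\overline b/\underline c)K$ on the relevant time scale (Lemma~\ref{upper_bound_total_mass}) together with $N\geq\tfrac12\overline z(R^K)K$ and $\overline z(R^K)\geq\inf_{x\in\mathcal X}(b(x)-d(x))/\overline c>0$. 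Since $\overline z(R^K)=(\bar b-\bar d)/\bar c$, the same elementary manipulation as in the proof of Lemma~\ref{exit_from_domain}, done separately on $\{N>\lceil K\overline z(R^K)\rceil\}$ (where $X$ increases on a birth) and on $\{N<\lceil K\overline z(R^K)\rceil\}$ (where $X$ increases on a death) and taking the larger of the two ratios, yields the claim with $c_1=\underline c/4\overline b$ and $c_2$ of order $(\overline c/\underline b)\lceil 3/\a\rceil$.

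From the claim onward the proof is word for word that of Lemma~\ref{exit_from_domain}: one dominates $Y_n$ by the Markov chain $Z_n$ that jumps up from $i\geq 1$ with probability $p_+^K(i)=\tfrac12-c_1K^{-1}i+c_2\e\s_K$ and goes from $0$ to $1$ with probability one; since $X_0\leq\tilde M\e\s_KK+1\leq\tfrac13M\e\s_KK$ for $K$ large once $M\geq 3(\tilde M+1)$, Proposition~\ref{prop1} gives $\mathbb P[\tau^Z_{M\e\s_KK}<\tau^Z_0]\leq\exp(-K^{2\a})$ provided $M$ is also large in terms of $c_1,c_2$. Hence the number $B$ of excursions of the associated continuous-time walk away from $\lceil K\overline z(R^K)\rceil$ before it reaches level $M\e\s_KK$ satisfies $\mathbb P[B\leq n]\leq(n+1)\exp(-K^{2\a})$, which is $o(\s_K)$ for $n=\lceil\exp(2K^{\a})\rceil$; bounding each inter-return time below by an independent exponential variable of mean $(C_{\text{total rate}}K)^{-1}$ (Lemma~\ref{upper_bound_total_mass} again), running the binomial and union bound exactly as in Lemma~\ref{exit_from_domain}, and using $Ke^{-K^{\a}}\ll Ku_K\ll\s_K$, one obtains that the probability that level $M\e\s_KK$ is reached before $\exp(K^{\a})$ is $o(Ke^{-K^{\a}})+o(\s_K)=o(\s_K)$. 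The same event decomposition as in Lemma~\ref{exit_from_domain} then converts this into \eqref{eq_exit_from_domain_2}.

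The step I expect to be the main obstacle is the drift bound of the second paragraph, and within it the need to push the error down to order $\e\s_K K$: an $O(\s_K K)$ error would only give an $O(\s_K)$ neighbourhood of $\overline z(R^K)$, which is useless once $\e\to0$ at the end of the proof of Theorem~\ref{main_thm}, while an $O(\e K)$ error would not shrink with $K$ at all. This is precisely what forces one to run the estimate only up to $\th^K_{\text{mut. size }\e}\wedge\th^{K}_{ \text{diversity}}$, so that the non-resident mass is $O(\e)$ and every present trait is $O(\s_K)$-close to $R^K$; granted that, the coupling with $Z_n$ and the potential-theoretic input of Proposition~\ref{prop1} are imported from Lemma~\ref{exit_from_domain} with no change.
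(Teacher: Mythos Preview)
Your approach is the same as the paper's: reduce to the jump-chain drift bound $\mathbb P[Y_{n+1}=i+1\mid Y_n=i,\,T_{n+1}<\bar\theta^K]\leq\tfrac12-c_1 i/K+c_2\e\s_K$, couple with the Markov chain of Proposition~\ref{prop1}, and rerun the excursion-counting argument of Lemma~\ref{exit_from_domain} verbatim; the paper's own proof is a two-line sketch invoking exactly this plan, with constant $(2C^{b,d,c}_L A/\underline b)\e\s_K$ justified only by the remark that every trait present lies within $2A\s_K$ of $R^K$. Your explicit decomposition into $O(\e K)$ non-resident individuals each perturbing the rates by $O(\s_K)$ is precisely the computation that turns that remark into the required $O(\e\s_K)$ (rather than $O(\s_K)$) error, and you correctly identify this as the crux.

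One minor slip: $\th^K_{\text{diversity}}$ bounds the number of traits alive \emph{simultaneously}, not the depth of any mutation chain, so the parenthetical ``at most $\lceil 3/\a\rceil$ successive mutations having occurred'' does not follow as stated; the paper's $2A\s_K$ bound rests instead (implicitly) on the Phase-1 conclusion that there is no mutant-of-mutant at $\th^K_{\text{invasion}}$.
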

%
%
\begin{proof} The proof of this lemma is very similar to the one of Lemma \ref{exit_from_domain},
 therefore we omit some  details.
Define 
\be 
X_t\equiv|\langle \tilde \nu_t,\mathds 1\rangle K-\lceil K\overline z(R^K) \rceil |.
\ee			
We associate with the continuous time process $X_t$ a discrete time (non-Markov) process $Y_n$ 
which records the sequence of values that $X_t$ takes on.\\[0.5em]
\noindent\textbf{Claim:}
 \textit{For  $1\leq i\leq \e K$, and $K$ large enough,}
  \begin{align}
 \label{1.cond_pro_2}\mathbb P\bigl[Y_{n+1}=i+1|Y_n=i, T_{n+1}&<\th^K_{\text{mut. size }\e}\wedge\th^K_{2\text{ succ. mut.}}\wedge \th^{K}_{ \text{diversity}}\bigr] \\&\leq \frac 1 2 -(\underline c/ 4\overline b )K^{-1}i+ (2C^{b,d,c}_L A/ \underline b ) \epsilon \sigma_K \equiv p_+^K(i),\nonumber
\end{align}
 \textit{where $C^{b,d,c}_L $ is the sum of the Lipschitz constants for the birth, death and competition rate.}\\[0.5em]
This can be proven exactly as in Lemma \ref{exit_from_domain}, using that $b(R^K)=d(R^K)+c(R^K,R^K)\bar z(R^K)$ and that all mutant traits are at a distance of at most $2A \s_K$ from $R^K$, and hence, $|b(x)-b(R^K)|<C^{b}_L\s_K 2A$, $|d(x)-d(R^K)|<C^{d}_L\s_K 2A$ and $|c(x,y)-c(R^K,R^K)|<C^{c}_L\s_K 2A$ for all traits $x$ and $y$ alive in the population. By continuing as in Lemma \ref{exit_from_domain} we obtain (\ref{eq_exit_from_domain_2}). 
\end{proof}
%
%
Next we prove that $\th^K_{\text{invasion}}-\th^K_{\text{mut. size }\e}$ is smaller than $\ln(K)\s_K^{-1-\a/2}$ and we use the following notation.
%
\begin{notations}$
\tilde \th^K\equiv \inf\left \{t\geq 0: |\langle \tilde \nu_t,\mathds 1\rangle-\overline z(R^K)|> M\e\s_K\right\} 
														\wedge\th^K_{2\text{ succ. mut.}} \wedge \th^{K}_{ \text{diversity}}.
$
\end{notations}
\begin{lemma}\label{Step1}
 Fix $\e>0$. Suppose that the assumptions of Theorem  \ref{Thm_2.Phase} hold. Let $M$ be the constant from
  Lemma \ref{exit_from_domain_2}.
Then,  
\be
	\lim_{K\to\infty} \s_K^{-1}\;
				\mathbb P\Big[\:\th^K_{\text{mut. size }\e}
									>\big(\th^K_{\text{invasion}}+\ln(K)\s_K^{-1-\a/2}\big)\wedge\tilde \th^K \Big]=0.
\ee
\end{lemma}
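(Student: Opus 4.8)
The plan is to control the mutant subpopulation that triggered $\th^K_{\text{invasion}}$ — namely the subpopulation with label $I^K$ and trait $R^K_1$ — during the time interval $[\th^K_{\text{invasion}},\tilde\th^K]$, and show that it reaches size $\lceil\e K\rceil$ within time $\ln(K)\s_K^{-1-\a/2}$ with probability $1-o(\s_K)$. First I would observe that, on the event that defines $\tilde\th^K$, the total mass stays within $M\e\s_K$ of $\bar z(R^K)$ and there are at most $\lceil 3/\a\rceil$ distinct traits alive (in particular at most $\lceil 3/\a\rceil\e\s_K K$ mutant individuals of other labels), so for $t\in[\th^K_{\text{invasion}},\tilde\th^K]$ the per-individual death rate of a label-$I^K$ individual lies in an interval of width $O(\e\s_K)$ around $d(R^K_1)+c(R^K_1,R^K)\bar z(R^K)$, while the birth rate is $b(R^K_1)(1-u_Km(R^K_1))$. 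Hence I can sandwich $\mfm^{I^K}(\tilde\nu_t)$ between two linear birth–death processes $Z^{K,\downarrow}$ and $Z^{K,\uparrow}$ started from $\lceil\e\s_K K\rceil$ at time $\th^K_{\text{invasion}}$, exactly as in Lemma \ref{bound_mutants}, whose supercriticality parameter $b^{K,\downarrow}-d^{K,\downarrow}$ is of order $\s_K$ by Assumption \ref{ass3} (recall $R^K_1-R^K=O(\s_K)$ with $\partial_1 f(R^K,R^K)>0$, and $f(R^K_1,R^K)=\partial_1 f(R^K,R^K)(R^K_1-R^K)+O(\s_K^2)$), and in fact of order $\s_K$ also from below since the correction terms are $O(\e\s_K)$ with $\e$ fixed and $M$ a fixed constant.

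The core estimate is then: for a supercritical linear birth–death process with offspring mean drift $\xi\s_K$ for some $\underline\xi\le\xi\le\overline\xi$ (uniform constants), started from $\lceil\e\s_K K\rceil$ individuals and conditioned to reach $\lceil\e K\rceil$ before $0$, the hitting time of $\lceil\e K\rceil$ is $o(\ln(K)\s_K^{-1-\a/2})$ with probability $1-o(\s_K)$. This is precisely the kind of statement I would invoke from Proposition \ref{prop2.1} (the auxiliary branching-process estimates in the appendix): the growth of a supercritical branching process from a large initial value to a macroscopic value takes time of order $\s_K^{-1}\ln(\text{ratio of sizes})$, and here the ratio is $1/(\e\s_K)$, so the typical time is $O(\s_K^{-1}\ln(\s_K^{-1}))$, which is $\ll \ln(K)\s_K^{-1-\a/2}$ because $\s_K\gg K^{-1/2+\a}$ forces $\ln(\s_K^{-1})=O(\ln K)$ and the extra $\s_K^{-\a/2}$ factor gives room to absorb fluctuations and the conditioning error $o(\exp(-K^\a))$. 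One has to be a little careful that we start from $\lceil\e\s_K K\rceil$ rather than from $1$, so the probability of early extinction before reaching $\lceil\e K\rceil$ is already $o(\exp(-\e\s_K K))\subset o(\s_K)$ (using $\s_K K\gg K^{1/2+\a}$), which means the conditioning on success costs nothing at the order $o(\s_K)$ we need.

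Putting the pieces together: with probability $1-o(\s_K)$ the sandwich $Z^{K,\downarrow}\preccurlyeq \mfm^{I^K}(\tilde\nu)\preccurlyeq Z^{K,\uparrow}$ holds on $[\th^K_{\text{invasion}},\tilde\th^K]$; with probability $1-o(\s_K)$ the lower process $Z^{K,\downarrow}$ hits $\lceil\e K\rceil$ within $\ln(K)\s_K^{-1-\a/2}$; and on the intersection of these two events either $\tilde\th^K$ has already occurred before $\th^K_{\text{mut. size }\e}$ — in which case the claimed inequality $\th^K_{\text{mut. size }\e}>(\th^K_{\text{invasion}}+\ln(K)\s_K^{-1-\a/2})\wedge\tilde\th^K$ is false because the right-hand side equals $\tilde\th^K<\th^K_{\text{mut. size }\e}$ is impossible... — more carefully, on this intersection $\th^K_{\text{mut. size }\e}\le\th^K_{\text{invasion}}+\ln(K)\s_K^{-1-\a/2}$ on $\{\tilde\th^K>\th^K_{\text{invasion}}+\ln(K)\s_K^{-1-\a/2}\}$, and on the complement the minimum on the right is $\tilde\th^K$ and we only need $\th^K_{\text{mut. size }\e}\ge\tilde\th^K$ is not required — so we directly bound $\mathbb P[\th^K_{\text{mut. size }\e}>(\th^K_{\text{invasion}}+\ln(K)\s_K^{-1-\a/2})\wedge\tilde\th^K]$ by the two bad-event probabilities, each $o(\s_K)$. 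Multiplying by $\s_K^{-1}$ and letting $K\to\infty$ gives the claim. The main obstacle is the branching-process hitting-time estimate with the right $o(\s_K)$ error bar — i.e. verifying that Proposition \ref{prop2.1} delivers not just a typical time of order $\s_K^{-1}\ln(\s_K^{-1})$ but a deviation bound of the form $\mathbb P[\text{hitting time}>\ln(K)\s_K^{-1-\a/2}\mid\text{success}]=o(\s_K)$, which is where the slack between $\s_K^{-1}\ln K$ and $\s_K^{-1-\a/2}\ln K$ is spent.
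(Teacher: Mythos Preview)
Your proposal is correct and follows essentially the same route as the paper: couple $\mfm^{I^K}(\tilde\nu_t)$ from below with a linear birth--death process $Z$ started at $\lceil\e\s_K K\rceil$ whose drift $b_Z-d_Z$ is bounded below by a positive multiple of $\s_K$ (using $f(R^K_1,R^K)=\partial_1 f(R^K,R^K)(R^K_1-R^K)+O(\s_K^2)$ and the $O(\e\s_K)$ control on the environment), then show $Z$ reaches $\lceil\e K\rceil$ before time $\ln(K)\s_K^{-1-\a/2}$ with probability $1-o(\s_K)$. One small clarification: Proposition~\ref{prop2.1} as stated handles the transition from $1$ to $\lceil\e\s_K K\rceil$, not from $\lceil\e\s_K K\rceil$ to $\lceil\e K\rceil$; the paper closes this exactly as you anticipate in your last sentence, invoking the Aldous--Fill exponential tail bound together with the expectation estimate $\max_n \mathbb E_n[\tau_{\lceil\e K\rceil}\mid \tau_{\lceil\e K\rceil}<\tau_0]=O(\ln(K)\s_K^{-1})$ from Proposition~\ref{prop2}, so that the ratio $\ln(K)\s_K^{-1-\a/2}/(\ln(K)\s_K^{-1})=\s_K^{-\a/2}$ furnishes the needed $o(\s_K)$ decay.
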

%
%
\begin{proof} 
To prove this lemma we use  a coupling with an linear continuous time birth and death process. 
From Phase 1 and the last lemma we know that $\th^{K}_{ \text{invasion}}$ is  
with probability $1-o(\s_K)$ smaller than
$\tilde \th^K$.
Define $k_1\equiv I^K$ the label of the 
first successful mutation.
For any $t\in(\th^K_{\text{invasion}},\tilde \th^K]$,  any individual of $\mfm^{k_1}(\tilde\nu_t)$ gives birth to a new individual with the same trait,  $R_1^K$, with rate 
\begin{align}
\bigl(1-u_K \:m(R_1^K)\bigr)b(R_1^K)\in \bigl[b(R_1^K)-u_K\:\overline b\:, b(R_1^K)\bigr],
 \end{align}
 and dies with rate 
\begin{align}
 d(R_1^K)+\int_{\mathcal X\times \mathbb N_0}c(R_1^K,\xi) d\tilde\nu_t(\xi),
 \end{align}
which is smaller than $d_Z\equiv d(R_1^K)+c(R_1^K,R^K)(\overline z(R^K)+M\e\s_K)+\overline c (\e+\lceil 3/\a\rceil \s_K )A\s_K$.
Similarly as in Lemma \ref{bound_mutants} we construct, by using a standard coupling argument, a processes $Z_t$ such that 
\begin{align} Z_t\leq  \mfm^{k_1}(\tilde\nu_{\th^K_{\text{invasion}}+t})\end{align}
for all $t $ such that $\th^K_{\text{invasion}}+t\leq \tilde \th^K \wedge\inf \{t\geq 0: \mfm^{k_1}(\tilde\nu_t)\geq \e K\}$.
The processes $Z_t$ is a branching process starting at $\lceil \e \s_K K\rceil$, with birth rate per individual  $b_Z=b(R_1^K)-\bar b
u_K$ and with death rate per individual
 $d_Z$. For all $\e< \inf_{x\in\mathcal X}\tfrac{\partial_1 f(x,x)}{2(M+A+1)}$, we have 
   \begin{align}
 b_Z-d_Z&\geq f(R_1^K,R^K)-\overline c \s_K (M\e+A(\e+\lceil 3/\a\rceil\s_K))
 \geq \s_K \inf_{x\in\mathcal X}\tfrac{\partial_1 f(x,x)}{2}.
 \end{align} 
Thus $Z_t$ is super-critical of order $\sigma_K$. Let $\tau^Z_i$ be the first hitting time of level $i$ by $Z_t$, then
by Proposition \ref{prop2.1}
\be
\mathbb P[\tau^Z_{\lceil \e K\rceil }>\tau^Z_{0}]\leq \exp(- K^{\a}).
\ee Furthermore, we have the following exponential tail bound, see \cite{A_MC} page 41,
 \be
\mathbb P\left[\tau^Z_{\lceil \e K\rceil }\geq\ln(K)\s_K^{-1-\a/2}\big| \tau^Z_{\lceil \e K\rceil }<\tau^Z_{0}\right]\leq 
\exp \left( -\left \lfloor \frac{\ln(K)\s_K^{-1-\a/2}}{e \max_{n\leq \lceil \e K\rceil }\mathbb E_n [ \tau^Z_{\lceil \e K\rceil }|\tau^Z_{\lceil \e K\rceil }<\tau^Z_{0}]}\right\rfloor\right)
\ee
and $ \max_{n\leq \lceil \e K\rceil }\mathbb E_n [ \tau^Z_{\lceil \e K\rceil }|\tau^Z_{\lceil \e K\rceil }<\tau^Z_{0}]\leq O(\ln(K)\s_K)$ (compare with Proposition \ref{prop2}).  
Therefore, 
\be
\mathbb P[\tau^Z_{\lceil \e K\rceil }<\ln(K)\s_K^{-1-\a/2}]\geq (1-\exp(-\s_K^{-\a/3}))(1-\exp(- K^{\a}))=1-o(\s_K),
\ee
 which implies the claim.
\end{proof}
%
%
\subsection{Step 2}
Recall that  the trait of  the successful mutant is 
$R^K+\s_K h$ where $h\in \{1,\ldots,A\}$. Due to the regularity assumptions (iv) in Assumption 1,  
we have the following estimates:
\begin{align}
	b(R^K+\s_K h)	=\:&	b(R^K)+b'(R^K)\s_K h+O((\s_K h)^2)\\
	d(R^K+\s_K h)	=\:&	d(R^K)+d'(R^K)\s_K h+O((\s_K h)^2)\\
	r(R^K+\s_K h)	=\:&	r(R^K)+r'(R^K)\s_K h+O((\s_K h)^2)\\
	c(R^K+\s_K h,R^K)		=\:&	c(R^K,R^K)+\partial_1 c(R^K,R^K)\s_K h+O((\s_K h)^2)\\
	c(R^K,R^K+\s_K h)		=\:&	c(R^K,R^K)+\partial_2 c(R^K,R^K)\s_K h+O((\s_K h)^2)\\
	c(R^K\!+\!\s_K h,R^K\!+\!\s_K h)		=\:&	c(R^K,R^K)+\left(\partial_1 c(R^K,R^K)+\partial_2 c(R^K,R^K)\right)\s_K h \\\nonumber&+O((\s_K h)^2).
\end{align}

\emph{The deterministic system:}  Although we cannot use a law of large numbers, to understand the 
behavior of the stochastic system it is useful to look at the properties of the corresponding deterministic 
Lotka-Volterra system.  The limiting system when $K\to \infty$, with $\s_K=0$, takes the simple form 
\begin{eqnarray}
		\frac{d m_t^0}{dt}		&=& m_t^0 \left(r(R^k)-c(R^K,R^K)(m_t^0+m_t^{k_1})\right),\\
		\frac{d m_t^{k_1}}{dt}	&=&  m_t^{k_1} 	\left(r(R^K)-c(R^K,R^K)(m_t^0
		+m_t^{k_1})\right).
\end{eqnarray}
The corresponding vector field is depicted in Figure \thv(inv-man-fig.0).
This system has an invariant manifold made of fixed points given by the roots of the equation
\be\Eq(inv-man.1)
m^0+m^{k_1} =r(R^K)/c(R^K,R^K)=\bar z(R^K),
\ee
with $m^0,m^{k_1}\geq 0$. This manifold connects the fixed points of the monomorphic
equations, $(\bar z(R^K),0)$ and $(0,\bar z(R^K))$. Note that $\bar z(R^K)$ has the interpretation of the total mass of the population 
in equilibrium.
 A simple computation shows that the  Hessian matrix on the invariant manifold is given by
 \be\Eq(inv-man.2)
 H(m^0,m^{k_1})=-c(R^K,R^K)\left(\begin{matrix}  m^0&m^0\\
 m^{k_1}&m^{k_1}\end{matrix}\right).
 \ee
 The corresponding eigenvectors are $(1,-1)$ with eigenvalue $0$, and 
 $(m^0,\bar z(R^K)-m^0)$ with eigenvalue $-c(R^K,R^K) \bar z(R^K)$. 
 
 \begin{figure} 
\includegraphics[width=5cm]{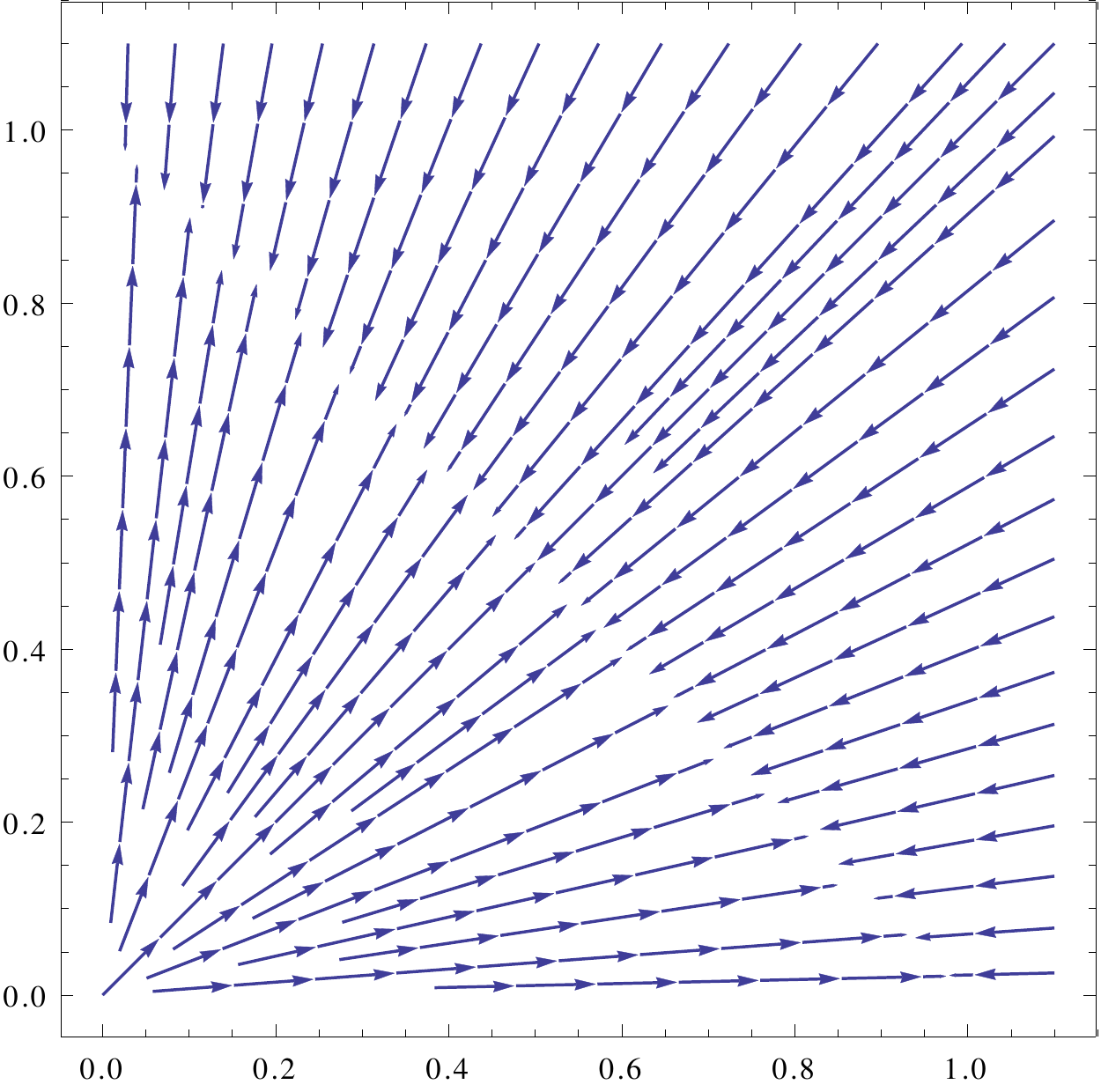}\includegraphics[width=5cm]{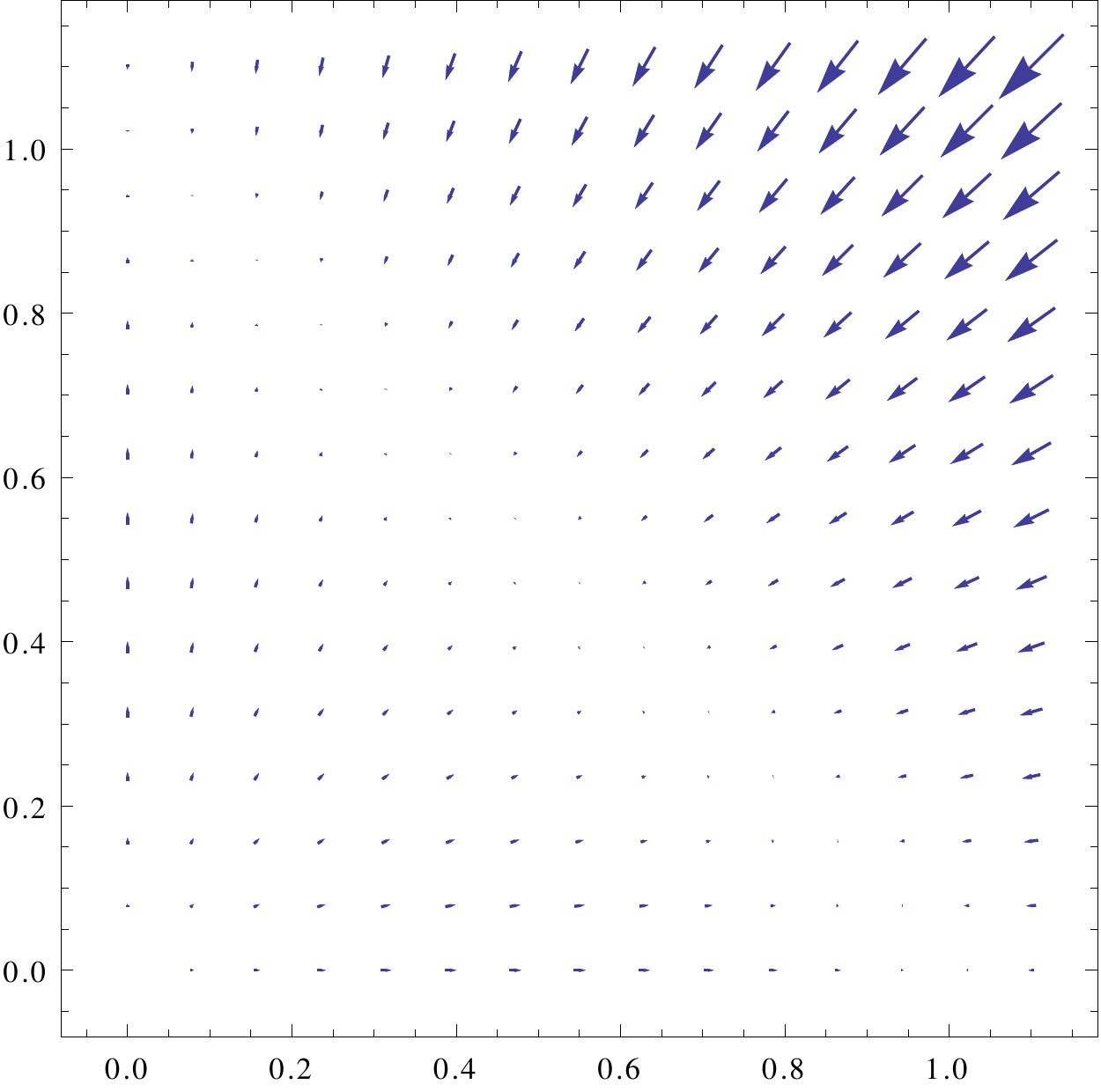}
\caption{Vector field of the unperturbed system}\label{inv-man-fig.0}
\end{figure}
 It follows that the perturbed system
\begin{align}\Eq(inv-man.4)
\frac{d m_t^0}{dt}&=  m_t^0\left(r(R^k)-c(R^K\!,R^K)m_t^0-
c(R^K\!,R^K\!+\s_K h)m_t^{k_1}\right)\\\nonumber
\frac{d m_t^{k_1}}{dt}&=  m_t^{k_1} \left(r(R^K\!+\!\s_K h)-c(R^K\!+\!\s_K h,R^K)m_t^0-
c(R^K\!+\!\s_K h,R^K\!+\!\s_K h)m_t^{k_1}\right)\!,
\end{align}
has an invariant manifold connecting its fix points 
$(\bar z(R^K),0)$ and $(0,\bar{z}(R^K+\s_Kh))$, where $\bar{z}(R^K+\s_Kh)=r(R^K+\s_Kh) / c(R^K+\s_Kh,R^K+\s_Kh)$ in an 
$\s_K$-neighborhood
of the unperturbed invariant manifold (see Figure \thv(inv-man-fig.1)). Thus the perturbed deterministic system will move quickly towards 
a small neighborhood of this invariant manifold and then move slowly with speed $O(\s_K)$ along it. Since the invariant manifold is close to the curve $m^0+m^{k_1}=\bar z(R^K)$, it is reasonable to 
choose as variables $M_t=m^0_t+m^{k_1}_t$.  The motion of the system will then be close to the curve 
$\tilde \phi(m^{k_1})$ defined by the condition that the derivative of $M_t$ vanishes for $M_t=
\tilde\phi(m^{k_1})$. 
 \begin{figure} 
\includegraphics[width=5cm]{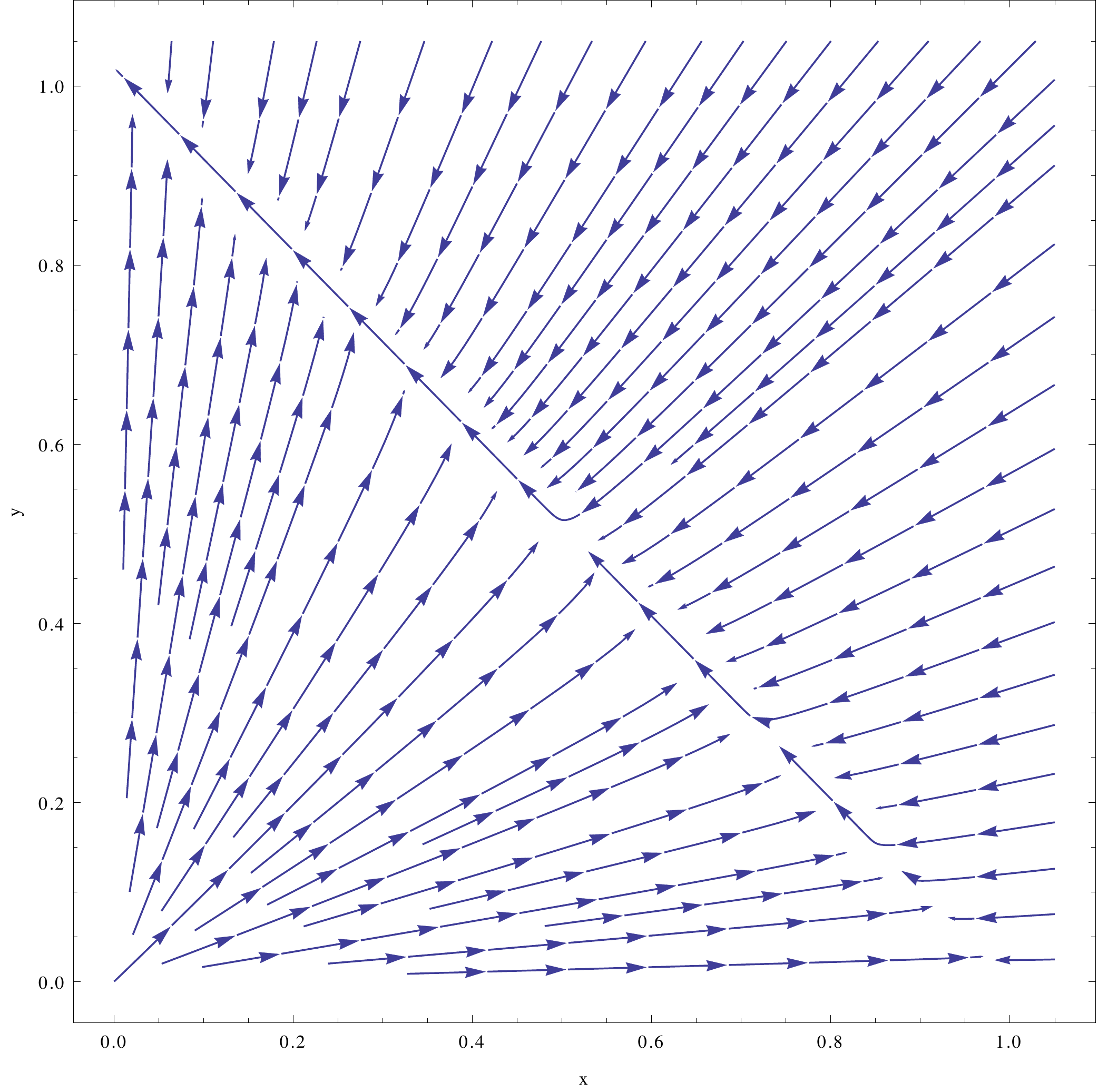}\includegraphics[width=5cm]{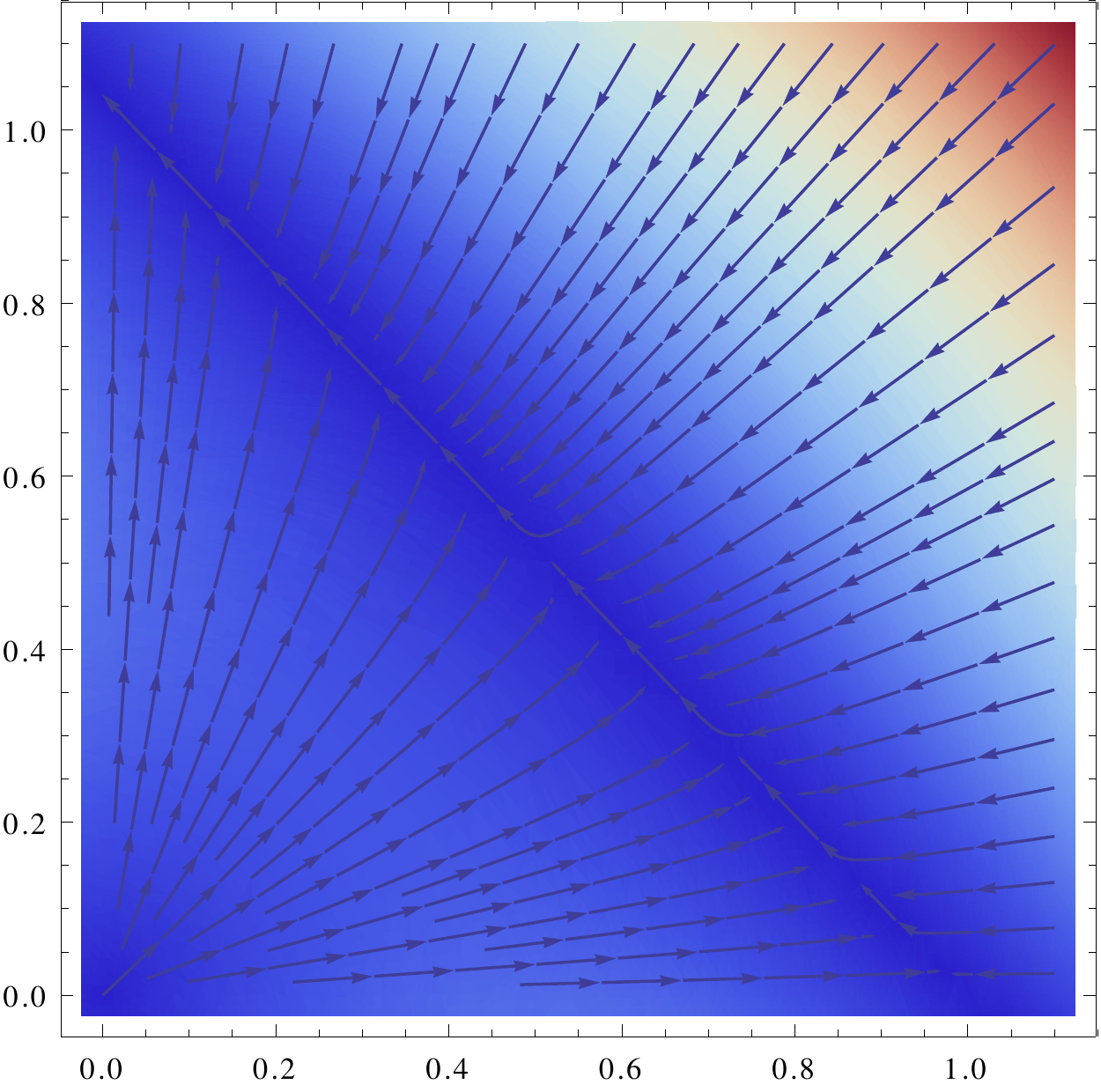}
\caption{Vector field of the perturbed system}\label{inv-man-fig.1}
\end{figure}
Since 
\bea\Eq(inv-man.3)
 \frac{d M_t}{dt} 
& =& 
M_t\left(r(R^k)-c(R^K,R^K)M_t\right)\\\nonumber
&&
-\left[\left(\partial_1 c(R^K,R^K)+\partial_2 c(R^K,R^K)\right)M_t- r'(R^K)  \right]\s_K h m_t^{k_1}+O(\s_K^2).
\eea
Setting the right hand side to zero yields the leading orders in $\s_K$
\be\Eq(inv-man.4)
\tilde \phi(m_t^{k_1}) 
=  \overline z(R^K) +\s_K h m_t^{k_1}\left(\frac{r'(R^K)}{ r(R^K)}
-\frac{\left(\partial_1 c(R^K,R^K)+\partial_2 c(R^K,R^K)\right)}{c(R^k,R^k)}\right)+O(\s_K^2).
\ee
We expect that the stochastic system also evolves along this curve. I.e., we will show that 
$m^{k_1}$ increases while the total mass stays close to the curve defined in \eqv(inv-man.4).

Define the function 
\be\Eq(phi.1)					
\phi(y)\equiv \overline z(R^K) +\s_K h y\left( \frac{r'(R^K)}{ r(R^K)}-\frac{\partial_1 c(R^K,R^K)+\partial_2 c(R^K,R^K)}{c(R^k,R^k)}\right),
\ee
and the stopping time 
\be\Eq(pi.2)		
\th_{\text{near } \phi(i\frac \e 2)}^K\equiv \inf\left\{t\geq \th^K_{\text{mut. size }i (\e/ 2)}:
|\langle\tilde \nu_t,\mathds 1\rangle-\phi(i(\e/2))|<(M/3)\e\s_K\right\}.
\ee
The dependence of $\phi$ with respect to the mutant density allows us to decompose the increase of the mutant density into successive steps during which the total mass does not move more than $M\e\s_K$. 
%
%
\begin{lemma}
\label{Step2}
Fix $\e>0$. Suppose that the assumptions of Theorem  \ref{Thm_2.Phase} hold. Then, there exists a constant $M>0$ (independent of $\e$, $K$ and $i$) such that
and for all $2\leq i\leq 2\e^{-1}C^{\e}_{\text{cross}}$,
\begin{enumerate}[(a)]
		\setlength{\itemsep}{3pt}
\item	Soon after $\th^K_{\text{mut. size }i (\e/ 2)}$, the total population size is close to $\phi(i\frac \e 2)$:
	 \begin{align}
		\lim_{K\to\infty}\:\s_K^{-1}\:
		\mathbb P\Big[ &\th_{\text{near } \phi(i\frac \e 2)}^K	
			>\left(\th^K_{\text{mut. size }i(\e/ 2)}+S_K \right)\wedge\th^K_{2\text{ succ. mut.}} \wedge\th^{K}_{ \text{diversity}}	\\&\quad
		\wedge\inf\left\{t\geq \th^K_{\text{mut. size }i(\e/2)}:\exists k\geq 1: \mfm^{k}(\tilde\nu_t) =\lceil (i\pm \tfrac12)(\e/ 2) K\rceil\right\}	\Big]=0.\nonumber
		\end{align}	
\item A change of order $\e $ for the mutant density takes more than $o(\s_K^{-1})$ time: 		
	\begin{align}		
		\lim_{K\to\infty}\:\s_K^{-1}\:
 			\mathbb P\Big[&\inf\left\{t\geq \th^K_{\text{mut. size }i(\e/2)}:\exists k\geq 1: \mfm^{k}(\tilde\nu_t) 
 				=\lceil (i\pm \tfrac12)(\e/2) K\rceil\right\}\\&
					\qquad\quad<\left(\th^K_{\text{mut. size }i (\e/ 2)}+S_K \right)\wedge \th_{\text{near } \phi(i\frac \e 2)}^K\wedge\th^K_{2\text{ succ. mut.}} 
					\wedge\th^{K}_{ \text{diversity}}	\Big]	=	0.	\nonumber													
	\end{align}	
\item At the time when the mutant density has changed of order $\e$	 the total population size is still close to $\phi(i\frac \e 2)$:
	\begin{align}																								
		\lim_{K\to\infty} \s_K^{-1}\:
			\mathbb P\Big[&\inf\left\{t\!\geq \!\th_{\text{near } \phi(i\frac \e 2)}^K\!:\! |\langle \tilde \nu_t,\mathds 1\rangle-\phi(i(\e/2))|> M\e\s_K\right\}
					< \th^K_{2\text{ succ. mut.}}\!\wedge\!\th^{K}_{ \text{diversity}} \\&\qquad
						\wedge \inf\left\{t\geq  \th^K_{\text{mut. size }i(\e/2)}\!:\!\exists k\geq 1: \mfm^{k}(\tilde\nu_t) \!=\!\lceil (i\pm 1)(\e/ 2) K\rceil\right\} 
											\Big]	=0.			\nonumber
	\end{align}
\item  A change of order $\e $ for the mutant density takes no more than $(i \s_K)^{-1-\a/2}$ time: 			
	\begin{align}									
		\lim_{K\to\infty} \s_K^{-1}\;
			\mathbb P&\Big[\:\th^K_{\text{mut. size }(i+1) (\e/ 2)}
							>\bigl(\th_{\text{near } \phi(i\frac \e 2)}^K+(i \s_K)^{-1-\a/2}\bigr)\wedge\th^K_{2\text{ succ. mut.}} 
								\;\:\\&\quad							
								\wedge\th^{K}_{ \text{diversity}}\wedge\inf\left \{t\geq \th_{\text{near } \phi(i\frac \e 2)}^K: |\langle \tilde \nu_t,\mathds 1\rangle-\phi(i(\e/2))|> M\e\s_K\right\} \Big]
												=0.	\nonumber																																				
	\end{align}
\end{enumerate}
\end{lemma}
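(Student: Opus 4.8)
Throughout, write $k_1=I^K$ for the label of the successful mutant, $m^{k_1}_t=\mfm^{k_1}(\tilde\nu_t)$ for the number of its carriers and $X_t=K\langle\tilde\nu_t,\mathds 1\rangle$ for the total number of individuals; on all the relevant events every living trait lies within $2A\s_K$ of $R^K$, so each coefficient $b,d,c,m$ equals its value at $R^K$ up to $O(\s_K)$. The guiding principle, read off from the perturbed deterministic system, is a separation of the two unknowns into a \emph{fast} variable and a \emph{slow} variable: the total mass $X_t/K$ is pulled towards the curve $\phi$ of \eqv(phi.1) with an $O(1)$ restoring rate (the transversal eigenvalue $-c(R^K,R^K)\bar z(R^K)$ of the Hessian \eqv(inv-man.2)), whereas $m^{k_1}_t/K$ itself moves only at speed $O(\s_K)$. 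The plan is to establish (b), (a), (c), (d) roughly in that order, each time coupling one of the two coordinates with birth--death processes or embedded Markov chains and then invoking the hitting and exit estimates of Propositions \ref{prop1}, \ref{prop2}, \ref{prop2.1}, exactly as in the proofs of Lemmata \ref{exit_from_domain}, \ref{bound_mutants} and \ref{exit_from_domain_2}.

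For (b), I would sandwich $m^{k_1}_t$ between linear birth--death processes whose per-capita rates $b(R^K_1)(1-u_Km(R^K_1))$ and $d(R^K_1)+c(R^K_1,R^K)\langle\tilde\nu_t,\mathds 1\rangle$ differ by only $O(\s_K)$, as in Lemma \ref{bound_mutants}; since over a window of length $S_K$ the mean displacement of such a process started from $\lceil i(\e/2)K\rceil$ is $O(\s_K\e KS_K)=o(\e K)$ because $S_K\ll\e\s_K^{-1}$, a Doob inequality combined with the exponential tail bound of Proposition \ref{prop2} for the embedded walk yields $\mathbb P[\,|m^{k_1}_t-i(\e/2)K|\ge\tfrac14\e K$ for some $t\le S_K\,]=o(\s_K)$, and since $S_K\gg1$ this gives both statements in (b). For (a), conditionally on $m^{k_1}_t$ staying within $\e/4$ of $i(\e/2)$ up to time $S_K$ (by (b)) and within $\e/2$ of it up to the mutant-size stopping times appearing in (c), the process $X_t$ is, up to non-Markov composition corrections of size $O(\s_K X_t)$, a logistic birth--death process with quasi-equilibrium $K\phi(i(\e/2))+o(\e\s_K K)$; by the inductive hypothesis (for $i=2$ this is Lemma \ref{exit_from_domain_2}, for $i\ge3$ it is part (c) at step $i-1$, using $|\phi(i(\e/2))-\phi((i-1)(\e/2))|=O(\e\s_K)$) the total mass already lies within $O(\e\s_K)$ of $\phi(i(\e/2))$ at time $\th^K_{\text{mut. size }i(\e/2)}$, so (a) reduces to the statement that a process with an attracting equilibrium at distance $O(\e\s_K)$, an $O(1)$ restoring rate and stationary fluctuations of order $K^{-1/2}\ll\e\s_K$ (here $K^{-1/2+\a}\ll\s_K$) enters the $(M/3)\e\s_K$-ball within its $O(1)$ relaxation time, hence within $S_K\gg1$, with probability $1-o(\s_K)$ for $M$ large enough; this is a Lyapunov/potential-theoretic estimate of the type used for Lemma \ref{exit_from_domain}.

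Part (c) is then literally the exit-from-an-attractive-domain statement of Lemma \ref{exit_from_domain_2} with the centre $K\bar z(R^K)$ replaced by $K\phi(i(\e/2))$ and the process stopped when $m^{k_1}_t$ leaves $[(i-1)(\e/2)K,(i+1)(\e/2)K]$ (over which $\phi(m^{k_1}_t)$ varies by only $O(\e\s_K)$): couple $X_t$ from above and below by genuine Markov chains, show via Proposition \ref{prop1} that the chance of reaching distance $M\e\s_K K$ before returning to the centre is $\le\exp(-K^{2\a})$, bound the number of returns and their exponential durations exactly as at the end of the proof of Lemma \ref{exit_from_domain}, and conclude that the probability in (c) is $o(\s_K)$. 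For (d), once $X_t$ is within $M\e\s_K K$ of $K\phi(i(\e/2))$, a first-order expansion of the mutant's net per-capita growth rate $b(R^K_1)(1-u_Km(R^K_1))-d(R^K_1)-\int c(R^K_1,\xi_2)\tilde\nu_t(d\xi)$, together with the identity $\partial_1 f(x,x)=c(x,x)\bar z'(x)+\partial_2 c(x,x)\bar z(x)$ (obtained by differentiating $f(x,x)\equiv0$), shows it equals $\s_K h\,\partial_1 f(R^K,R^K)\bigl(1-m^{k_1}_t/(K\bar z(R^K))\bigr)+O(\e\s_K)+O(\s_K^2)$, which for $\e$ small stays bounded below by a positive multiple of $\s_K$ throughout $m^{k_1}_t\le(i+1)(\e/2)K<K\bar z(R^K)$ (using $i\le2\e^{-1}C^\e_{\text{cross}}$ and Assumption \ref{ass3}). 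Hence the embedded walk of $m^{k_1}_t$ has a bias of order $\s_K$ towards larger values, so it climbs by $(\e/2)K$ in $O(\e K\s_K^{-1})$ steps, and with holding times of mean $O((i\e K)^{-1})$ the climb takes $O((i\s_K)^{-1})$; the exponential tail of Proposition \ref{prop2.1} for the biased hitting time, plus a Markov-inequality bound on the sum of the exponential holding times as at the end of the proof of Lemma \ref{diversity}, push this below $(i\s_K)^{-1-\a/2}$ with probability $1-o(\s_K)$.

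The genuinely new difficulty is (a)--(c): one must control $X_t$ with \emph{relative} precision $O(\e\s_K)$ while it is coupled to a near-critical macroscopic mutant subpopulation of size $\Theta(\e K)$, a moderate-deviations regime to which classical Freidlin--Wentzell theory does not apply. The resolution is the ``stochastic Euler scheme'': decouple the fast variable (total mass, $O(1)$ drift) from the slow one (mutant density, $O(\s_K)$ drift) by freezing the latter over the window $S_K$, push the coupling-plus-potential-theory machinery of Lemma \ref{exit_from_domain} through at the sharper $\e\s_K$ scale, and recompute $\phi$ and restart at each increment $i(\e/2)$ so that the error remains $O(\e\s_K)$ after the $O(\e^{-1})$ iterations of Lemma \ref{Step2}.
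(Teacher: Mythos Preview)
Your proposal is correct and follows essentially the same approach as the paper: induction on $i$, with the ``stochastic Euler scheme'' that decouples the fast total-mass coordinate from the slow mutant-density coordinate, and with each of (a)--(d) handled by coupling the relevant embedded walk with a genuine Markov chain and then applying concentration plus exponential-Chebyshev bounds on the holding times, exactly in the spirit of Lemmata~\ref{exit_from_domain} and~\ref{exit_from_domain_2}. The paper's execution differs only in minor technical choices: for (b) and (d) it works directly with the $\pm1$ increments of the embedded walk of $m^{k_1}_t$ and applies Hoeffding's inequality to count the number of steps needed (rather than Doob's inequality on a birth--death martingale), and for (a) it uses Proposition~\ref{prop2.2}(b) on the constant-bias walk rather than Proposition~\ref{prop1}; your expansion of the mutant's net growth rate in (d) via the identity for $\partial_1 f(x,x)$ matches the paper's computation leading to the bound $\tfrac12+\s_K\tfrac{\inf_x\partial_1 f(x,x)}{2\bar b}$ for the up-step probability.
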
 
\begin{remark}
For each $\e>0$, Lemma \ref{Step2} implies that the  mutant density reaches the value  $C^{\e}_{\text{cross}}$ with high probability, since $\e$ is independent of $K$. Moreover, for all $\e>0$,  
\begin{align}
			\mathbb P\left[\th^K_{\text{mut. size } C^{\e}_{\text{cross}}}>(\th^K_{\text{mut. size } \e}+\ln(K)\s_K^{-1-\a/2})\wedge\th^K_{2\text{ succ. mut.}}
										 \wedge\th^{K}_{ \text{diversity}}\right]&=o(\s_K)\\
\text{and} \qquad 
			\mathbb P\left[\:|\langle \tilde \nu_{\th^K_{\text{mut. size } C^{\e}_{\text{cross}}}},\mathds 1\rangle-\phi(C^{\e}_{\text{cross}})|>M\e\s_K\right]&=o(\s_K).
\end{align}
\end{remark}
%
%
\begin{proof}
We will prove the lemma by induction over $i$. 
Base clause: Compare with Lemma \ref{exit_from_domain_2} and \ref{Step1} that there exists a constant $M>0$ such that
$|\langle\tilde \nu_{\th^K_{\text{mut. size }\e}},\mathds 1\rangle-\phi(0)|$ is smaller than $M\e\s_K$  
and that $\th^K_{\text{mut. size }\e}<\th^K_{2\text{ succ. mut.}} \wedge\th^{K}_{ \text{diversity}}$ both with probability $1-o(\s_K)$. 

Induction step form $i-1$ to $i$: Assume that the lemma holds true for $i-1$, then be prove separately that (a)-(d) are true for $i$, as long as $i<2\e^{-1}C^{\e}_{\text{cross}}$
%
\begin{proof}\textit{of (a) for $i$ by assuming that the lemma holds for $i-1$.}
In the proof we use the following notation 
\be
		\tilde \th_{i}^K\equiv  \th^K_{2\text{ succ. mut.}}\wedge\th^{K}_{ \text{diversity}}
						\wedge\inf\{t\geq \th^K_{\text{mut. size }i(\e/2)}:\exists k\geq 1: \mfm^{k}(\tilde\nu_t) =\lceil (i\pm \tfrac12)(\e/2) K\rceil \}.
\ee
Note that ${\tilde \th^K}_{i}$ differs from ${\tilde \th^K}$ defined in Lemma \ref{Step1}. We will prove $(a)$ provided 
it happens before  ${\tilde \th^K}_{i}$ and we use the estimates of step $(b)$ for $i$ to prove that it indeed happens before  ${\tilde \th^K}_{i}$ with high probability.

If the Lemma is true for $i-1$, we know that (with (d))
\be
\P\left[|\langle\tilde \nu_{\th^K_{\text{mut. size }i(\e/2)}},\mathds 1\rangle-\phi( (i-1)(\e/2))|<M\e\s_K\right]
=1-o(\s_K).
\ee
Since $\phi(x)-\phi(y)=O( h(x-y) \s_K)$, we have with probability $1-o(\s_K)$ either 
\be
 \inf\big\{t\geq \th^K_{\text{mut. size }i(\e/2)}:|\langle\tilde \nu_t,\mathds 1\rangle-\phi(i(\e/2))|<(M/3)\e\s_K\big\}=\th^K_{\text{mut. size }i(\e/2)},
 \ee
 which implies (a) for $i$, or at least
\be
	|\langle\tilde \nu_{\th^K_{\text{mut. size }i(\e/2)}},\mathds 1\rangle-\phi(i(\e/2))|
				<\left(M+\left| h\left(\tfrac{ r'(R^K)}{ r(R^K)}-\tfrac{(\partial_1 c(R^K,R^K)+\partial_2 c(R^K,R^K))}{c(R^k,R^k)}\right)\right|\right)
							\e\s_K.
\ee
Similarly as in many previous lemmata we 
want to couple $K\langle\tilde \nu_{t},\mathds 1\rangle$ with a discrete time Markov Chain. 
Therefore, let 
\be
	X^{i}_t=|K\langle\tilde \nu_{t},\mathds 1\rangle-\lceil \phi({i}(\e/2))K\rceil |,
\ee
and   $T^{i}_0=\th^K_{\text{mut. size }i (\e/2)}$  and $(T^{i}_k)_{k\geq1}$ be the sequences of the jump times of $\langle\tilde \nu_{t},\mathds 1\rangle$ 
after $\th^K_{\text{mut. size }i(\e/2)}$. Then let $Y^{i}_k$ be the associated discrete time process  
which records the values that $X^{i}_t$ takes after time $\th^K_{\text{mut. size }i(\e/2)}$.
\\[0.2em]
\textbf{Claim:}
 \textit{There exists a constant $C_{\text{derivative}}^{b,d,c}>0$ such that for all
 $\lceil C_{\text{derivative}}^{b,d,c}  \e\s_KK\rceil\leq j< \lceil \e K\rceil$ and $K$ large enough, 
  \begin{align}\label{prob_Step2}
 \mathbb P\bigl[Y^i_{n+1}=j+1|Y^i_n=j, T_{n+1}&<\tilde \th_i^K\bigr]
 					 \leq \:\frac 1 2 -\e\s_K=:p_+^K.
\end{align}
\textit{Moreover, we can choose 
\be
C_{\text{derivative}}^{b,d,c}=\sup_{x\in\mathcal X}\tfrac{1}{c(x,x)}(4b(x)+A|\tfrac{r'(x)c(x,x)}{r(x)}-\partial_1 c(x,x)-\partial_2 c(x,x)|).
\ee
}} 
\\[0.2em]
If $\langle\tilde \nu_t,\mathds 1\rangle K>\lceil \phi(i(\e/2))K\rceil$ at time $t=T^i_n$, then 
$\langle\tilde \nu_{T^i_n},\mathds 1\rangle K= \lceil \phi(i(\e/2))K\rceil+Y^i_{n}$ and conditionally on $\mathcal F_{T^i_n}$ the left hand side of (\ref{prob_Step2})
is equal to the probability that the next event is a birth. Namely,
\begin{align} \label{Step2_birth}
&\frac{ \sum_{k\geq 0}b(h_{k,1}(\tilde\nu_{T^i_n}))\mfm^k(\tilde\nu_{T^i_n})}
					{ \sum_{k\geq 0}\left(b(h_{k,1}(\tilde\nu_{T^i_n}))+d(h_{k,1}(\tilde\nu_{T^i_n}))
								+\int_{\mathbb N\times \mathcal X}c(h_{k,1}(\tilde\nu_{T^i_n}),\xi)d\tilde\nu_{T^i_n}(\xi)\right)\mfm^k(\tilde\nu_{T^i_n})}
\\&\leq \nonumber
\left[b(R^K) \textstyle {\sum_{k\geq 0}}\mfm^k(\tilde\nu_{T^i_n})+\s_K hb'(R^K)\mfm^{k_1}(\tilde\nu_{T^i_n})
									+C^b_L 2A\sigma_K \lceil 3/\a\rceil \s_K\e K+O(\s_K^2 K)\right]\\\nonumber&\hspace{0.3cm}
	\times\!\Big[  \textstyle {\sum_{k\geq 0}} 
										\left(b(R^K)+d(R^K)+
												\sum_{k\geq 0}\tfrac {c(R^K,R^K)}{K}\mfm^k (\tilde\nu_{T^i_n})\right) \mfm^k (\tilde\nu_{T^i_n}) \\\nonumber&\hspace{0.6cm}
 	+\s_Kh\mfm^{k_1}(\tilde\nu_{T^i_n})
				\Big(b'(R^K)+d'(R^K)
						+\tfrac{\left(\partial_1 c(R^K,R^K)+\partial_2 c(R^K,R^K)\right)\left(\mfm^{0}(\tilde\nu_{T^i_n})+\mfm^{k_1}(\tilde\nu_{T^i_n})\right)}{K}						\Big)\\\nonumber&\hspace{7.2cm}
			-(C^{b,d,c}_L) 2A\sigma_K\lceil 3/\a\rceil \s_K\e K-O(\s_K^2 K)\:
	\Big]^{-1}.
\end{align}
For the inequality we have used the fact that, conditioned  on $T_n<\tilde{\th^K_i}$, there at most $\s_K \e \lceil3/\a\rceil$ many unsuccessful mutant individuals which differ at most $2A\s_K$ from the resident trait $R^K$.  
Since  $\textstyle {\sum_{k\geq 0}}\mfm^k(\tilde\nu_{T^i_n})=\langle\tilde \nu_{T^i_n},\mathds 1\rangle K$ which equals $\lceil \phi(i(\e/2))K\rceil+j$ conditioned on $j=Y^i_{n}$,
the right hand side of the last inequality is smaller or equals
\begin{align}\nonumber
& \left[b(R^K)+\s_K hb'(R^K)\tfrac{\mfm^{k_1}(\tilde\nu_{T^i_n})}{ \lceil \phi(i(\e/2))K\rceil+j}+O(\s_K^2)\right]
  \!\times\!\bigg[ b(R^K)+d(R^K)+c(R^K,R^K)\tfrac{\lceil \phi(i(\e/2))K\rceil+j} K\\&\hspace{0.2cm}
						+\s_K\tfrac{h\mfm^{k_1}(\tilde\nu_{T^i_n})}{\lceil \phi(i(\e/2))K\rceil+j}\Big(b'(R^K)+d'(R^K)
			+\tfrac{\left(\partial_1 c(R^K,R^K)+\partial_2 c(R^K,R^K)\right)\left(\mfm^{0}(\tilde\nu_{T^i_n})+\mfm^{k_1}(\tilde\nu_{T^i_n})\right)}{K}\Big)
		-O(\s_K^2 )\bigg]^{-1}
		\end{align}
and by definition of $\phi$ the denominator equals
\begin{align}	
 &2b(R^K)\s_K+2\s_K hb'(R^K)\tfrac{\mfm^{k_1}(\tilde\nu_{T^i_n})}{\lceil \phi(i(\e/2))K\rceil+j}+c(R^K,R^K)\tfrac{j}{K}-O(\s_K^2 )
		\\&\nonumber+\s_K h\Big[	i(\e/2)\left(\tfrac {r'(R^K)}{\overline z (R^K)}+\partial_1c(R^K,R^K)+\partial_2c(R^K,R^K)\right)+\tfrac{\mfm^{k_1}(\tilde\nu_{T^i_n})}{\lceil \phi(i(\e/2))K\rceil+j}
			\\&\nonumber
			\qquad\qquad\times\Big(-b'(R^K)+d'(R^K)
				+\tfrac{\left(\partial_1 c(R^K,R^K)+\partial_2 c(R^K,R^K)\right)\left(\mfm^{0}(\tilde\nu_{T^i_n})+\mfm^{k_1}(\tilde\nu_{T^i_n})\right)}{K}\Big)
			\Big].	
\end{align}
Thus, we obtain that the right hand side of (\ref{Step2_birth}) is bounded from above by
\begin{align}
&\frac 1 2 -\tfrac{c(R^K,R^K)}{3b(R^K)}j K^{-1}-  \tfrac{\s_Kh }{4b(R^K)} \bigg[i(\e/2)\left(\tfrac{r'(R^K)}{\overline z(R^K)}-{\partial_1 c(R^K,R^K)-\partial_2 c(R^K,R^K)} \right)
\\&\nonumber\quad +\tfrac{\mfm^{k_1}(\tilde\nu_{T^i_n})}{\lceil \phi(i(\e/2))K\rceil+j}\Big(-r'(R^K)
+\tfrac{\left(\partial_1 c(R^K,R^K)+\partial_2 c(R^K,R^K)\right)\left(\mfm^{0}(\tilde\nu_{T^i_n})+\mfm^{k_1}(\tilde\nu_{T^i_n})\right)}{K}\Big)\bigg]+O(\s_K^2).
\end{align}	
In the case where $\langle\tilde \nu_t,\mathds 1\rangle K<\lceil \phi(i(\e/2))K\rceil$ at time $t=T^i_n$, we obtain the same
inequality but with an opposite sign in front of the third term. Since
\begin{align}\nonumber
		&\Big|\tfrac{ i\e} 2 \tfrac{r'(R^K)}{\overline z(R^K)}-\tfrac{\mfm^{k_1}(\tilde\nu_{T^i_n})}{K}\tfrac{r'(R^K) K}{\lceil \phi((\e/2))K\rceil\pm j}
			-\left( \partial_1 c(R^K,R^K)+\partial_2 c(R^K,R^K)\right)\left(\tfrac{i \e} 2-\tfrac{\mfm^{k_1}(\tilde\nu_{T^i_n})}{K}\right)\Big|\\&
			< (\e/2) \:\left|\tfrac{r'(R^K)}{\overline z(R^K)}-\partial_1 c(R^K,R^K)-\partial_2 c(R^K,R^K)\right|,
\end{align}	
we deduce the claim.
Since we choose $M$ such that $M\geq 3 C_{\text{derivative}}^{b,d,c}$, we can construct a Markov chain $Z^{i}_n$ such that $Z^{i}_n\geq Y^{i}_n$, a.s., for all $n$ 
such that $T^{i}_{n}<\tilde\th^K_i\wedge \inf\{t\geq \th^K_{\text{mut. size }i (\e/ 2)}:|\langle\tilde \nu_t,\mathds 1\rangle-\phi(i(\e/2))|<\tfrac 1 3 M\e\s_K\}$ and
 the  marginal distribution of $Z_n$  is a Markov chain with $Z^i_0=Y^i_0$ and transition probabilities \vspace{-1em}
\be \mathbb P\bigl[Z^i _{n+1} =j_2|Z^i_n=j_1\bigr]
			=\begin{cases} p_+^K &\text{ for }j_1\geq1 \text{ and }  j_2=j_1+1, \\
				1-p_+^K& \text{ for }j_1\geq1 \text{ and } j_2=j_1+1,\\
					0 &\text{ else.}
				\end{cases}
\ee
Let $C_{\text{exit}} =\sup_{x\in\mathcal X} 2A \big|\tfrac{ r'(x)}{ r(x)}-\tfrac{(\partial_1 c(x,x)+\partial_2 c(x,x))}{c(x,x)}\big|$. Then,by applying  Proposition \ref{prop2.2} (b), we obtain, 
for all $a \leq  (M+C_{\text{exit}}) \e\s_K K$ and large $K$ large enough,
\bea\label{Prob_for_Claim}
&&\mathbb P_a \Big[ \inf\{n\geq 0:  Z^{i}_n \geq  2(M+C_{\text{exit}}) \e \s_K K\}<\inf\{n\geq 0:  Z^i_n \leq  (M/3) \e \s_K K\} \Big]\\&&\qquad\leq \exp \lb - K^{\a}\rb.
\nonumber
\eea
Next define $B^i\equiv \inf\{n\geq 0:Z^i_n\leq \tfrac 1 3 M \e \s_K K \}$. This is the random variable, which counts the number of jumps $Z^i$ makes until it is  smaller than $\e \s_K K$.
Note that $(T^i_{n+1}-T_{n}^i)$, the times between two jumps of $X^i_t$, 
are exponential distributed with a parameter $(b(R^K)+d(R^K)+c(R^K,R^K)\overline z(R^K))\overline z(R^K)K+O(\s_K K)$, if $T^i_{n+1}$ is smaller than $\tilde \th^K_i$. Thus, 
\be
(T^i_{l+1}-T_{l}^i)\preccurlyeq E^i_l,
\ee 
where $(E^i_l)_{l\geq 0}$ are i.i.d.\ exponential random variables with parameter
 $\inf_{x\in \mathcal X} b(x)\bar z(x) K$.   
Therefore,
\bea\label{prob_Step2.1}
&&\mathbb P\Big[
				\th_{\text{near } \phi(i\frac \e 2)}^K >\th^K_{\text{mut. size }i (\e/ 2)}+S_K\wedge \tilde\th^K_i    \Big]\\\nonumber
		&&\quad\leq\mathbb P\bigg[\:\sum_{l=0}^{B^i}E_{l}^i>S_K\bigg] 
					+\mathbb P\left[\tilde\th^K_i<\th^K_{\text{mut. size }i (\e/ 2)}+S_K\wedge \th_{\text{near } \phi(i\frac \e 2)}^K\right].
\eea
Our next goal is to find a number, $n_i$, such that $\mathbb P[B^i>n_i ]$ is $o(\s_K)$. Since the transition probabilities of $Z^i$ do not depend on the present  state, we have that
$Z_n^{i}-Z^{i}_{0}$ has the same law as $\sum_{k=1}^{n} V^i_k$, where
 $(V^i_k)_{ k\in\mathbb N}$ is a sequence of i.i.d. random variables with
\begin{align}
\mathbb P[V^i_k=1]=p^K_+\quad \text{ and }\quad\mathbb P[V^i_k=-1]=1-p^K_+
\end{align} 
and 
$\mathbb E\bigl[V^i_k\bigr]=-2\e\s_K$ and $|V^i_k|=1$.
Furthermore, we get
\begin{align}
\mathbb P\left[B^{i}\leq n_i\right] 
&\geq\mathbb P\left[\inf\left\{j\geq 0: Z_{j}-Z_{0}\leq -\lceil(\tfrac 3 2 M+C_{\text{exit}})\e\s_K K\rceil \right\}\leq n_i\right] \\
&\geq \mathbb P\left[\sum_{k=1}^{n_i}V^i_k\leq-\lceil(\tfrac 3 2 M+C_{\text{exit}})\e\s_K K\rceil\right]\nonumber
\end{align}
and by applying the\\[0.5em]
\textbf{Hoeffding's Inequality: }(Appendix 2 in \cite{P_CSP}):
\textit{Let $Y_1,\ldots,Y_n$ be independent random variables, $x>0$ and $a_j\leq Y_j-\mathbb E[Y_j]\leq b_j$ are bounded for all $j$. Then} 
\begin{align}\mathbb P\biggl[\:\sum_{j=1}^n Y_j-\mathbb E[Y_j]\geq x\biggr]\leq \exp\Bigl(-2x^2\Bigl(\sum_{j=1}^n (a_j-b_j)^2\Bigr)^{-1}\Bigr).
\end{align}
we obatin
\be
 \mathbb P\left[
 \sum_{k=1}^{n_i} V^i_k	\geq -2\e\s_K n_i + (n_i)^{\nicefrac 1 2+{\a/2}}\right ]
				\leq 2\exp(- (n_i){}^{\a}).
\ee
With $n_i \equiv \lceil K(\tfrac 3 2 M+C_{\text{exit}})\rceil$, we get $ -2\e\s_K n_i + (n_i)^{\nicefrac 1 2+{\a/2}}\leq -\lceil(\tfrac 3 2 M+C_{\text{exit}})\e\s_K K\rceil$, since $K^{-\frac 1 2+\a}\ll\s_K$.
Applying the exponential Chebychev inequality  (with $\l=K^{\a}$)
\begin{align}
 \mathbb P &\Bigg[\sum_{l=0}^{\lceil K(\frac 3 2 M+C_{\text{exit}})\rceil}E_{l}^i>S_K\Bigg]
\leq\exp(-\l S_K)\mathbb E \left[\exp\left(\l \sum_{l=0}^{\lceil K(\frac 3 2 M+C_{\text{exit}})\rceil}E_{l}^i\right)\right]\\
&\qquad\qquad\nonumber\leq\exp(-\l S_K)\Bigg(\frac{\inf_{x\in \mathcal X}{ b(x)\bar z(x)}K}{\inf_{x\in \mathcal X}{ b(x)\bar z(x)}K-\l}\Bigg)^{\lceil K(\frac 3 2 M+C_{\text{exit}})\rceil+1}\\
&\qquad\qquad\nonumber\leq\exp\left(-\l S_K+(\lceil K(\tfrac 3 2 M+C_{\text{exit}})\rceil +1)\ln\left(1+\tfrac{\l}{\inf_{x\in \mathcal X} { b(x)\bar z(x)}K-\l }\right)\right)\\
&\qquad\qquad\nonumber\leq\exp\Bigg(-\l S_K+\l \frac{\tfrac 3 2 M+C_{\text{exit}}+1}{\inf_{x\in \mathcal X}{ b(x)\bar z(x)}}+O(\l^2 K^{-1})\Bigg)
\leq\exp\left(-K^{\a}\right). 
\end{align}
Hence, the left hand side of (\ref{prob_Step2.1}) is bounded from above by
 \be
				\exp\left(-K^{\a}\right)+ 2\exp(- ( K(\tfrac 3 2 M+C_{\text{exit}}))^{\a})
+\mathbb P\left[\tilde\th^K_i<(\th^K_{\text{mut. size }i (\e/ 2)}+S_K)\wedge  \th_{\text{near } \phi(i\frac \e 2)}^K\right].
\ee
This proves the lemma, if we can show that 
\be
\mathbb P\big[\tilde\th^K_i<(\th^K_{\text{mut. size }i (\e/ 2)}+S_K)\wedge  \th_{\text{near } \phi(i\frac \e 2)}^K\big]=o(\s_K).
\ee
According to Remark \ref{Remark_2Phase} and Lemma \ref{Step1}, we have that 
\be
\mathbb P\big[ \th^K_{2\text{ succ. mut.}}\wedge\th^{K}_{ \text{diversity}}<\th^K_{\text{mut. size }i (\e/ 2)}+S_K\big]=o(\s_K).
\ee
Therefore, the following proof of (b) for $i$ implies (a) for $i$.
\end{proof}
%
%
\begin{proof}\textit{of (b) for $i$ by assuming that the lemma holds for $i-1$.}
Note that the random elements $B^i,\:T^i,\:V^i,\:W^i,\: X^i,\:Y^i$ and $Z^i$ are not the ones of the last proof. 
They will be defined during this proof. In fact, the structure of the proof is similar to the one of (a), except that we prove a lower bound for the time of a change of oder $\e$ for the mutant density instead of upper bound for the time of a change of oder $\e\s_K$ of the total mass.
We couple $\mfm_t^{k_1}$, for $t\geq\th^K_{\text{mut. size }i (\e/ 2)}$, with a discrete time Markov chain (depending on $i$). Therefore,
let $T^i_0 =\th^K_{\text{mut. size }i (\e/ 2)}$ and $(T^i_k)^{}_{k\geq 1}$ be the sequences of  jump times of $\mfm_t^{k_1}$ after $\th^K_{\text{mut. size }i (\e/ 2)}$.
Furthermore, let $(Y_n^i)^{}_{n\geq 0}$ be the discrete time process which records the values that $\mfm_t^{k_1}$ takes i.e. 
$Y^i_0 = \mfm^{k_1}(\tilde \nu_{T^i_0}) = \lceil K i (\e/ 2)\rceil $ and $Y^i_n =\mfm^{k_1}(\tilde\nu_{T_n^i}).$
Observe that if 
\be\tilde\th^K_i> \th_{\text{near } \phi(i\frac \e 2)}^K\wedge \inf\{t\geq \th^K_{\text{mut. size }i (\e/ 2)}:|\langle\tilde \nu_t,\mathds 1\rangle-\phi(i(\e/2))|\geq 2(M+C_{\text{exit}})\e\s_KK\},
\ee
we know from the inequality (\ref{Prob_for_Claim})  that  the probability that
$\th_{\text{near } \phi(i\frac \e 2)}^K$ is larger than $\inf\{t\geq \th^K_{\text{mut. size }i (\e/ 2)}:
 |\langle\tilde \nu_t,\mathds 1\rangle -\phi(i(\e/2))|\geq 2(M+C_{\text{exit}})\e\s_KK\}$  is smaller than $\exp(-K^{\a})$. 
Define
 \bea
\hat \th^K_i&\equiv&  \inf\{t\geq \th^K_{\text{mut. size }i (\e/ 2)}:|\langle\tilde \nu_t,\mathds 1\rangle-\phi(i(\e/2))|\geq 2(M+C_{\text{exit}})\e\s_KK\}
		 				\\&&\wedge\:\th_{\text{near } \phi(i\frac \e 2)}^K\wedge\th^K_{2\text{ succ. mut.}} \wedge\th^{K}_{ \text{diversity}}	\nonumber
 \eea
and 
$
  \tilde C_{\text{fitness} }\equiv\inf_{x\in\mathcal X}\partial_1f(x,x)/\:\overline b.$
Then, for all  $-\lceil \frac \e 4 K\rceil \leq  j \leq  \lceil\frac \e 4 K\rceil,$ for K large enough and
   for $\e$ small enough,  we have that
\begin{align}\label{cond_pro_Step2(b)}
	\mathbb P \left[Y^i_{n+1} = \lceil i\tfrac \e 2K\rceil + j+1\big|Y^i_n =\lceil i\tfrac \e 2K\rceil +j,T^i_{n+1} < \hat \th^K_i \right]\qquad\qquad&\\
				\in\left[\tfrac 1 2 + \tfrac 1 2  \tilde C_{\text{fitness} }\:\s_K ,\: \tfrac 1 2 + 2A    \tilde C_{\text{fitness} }\: \s_K \right]&,\nonumber
\end{align}
since the left hand side of (\ref{cond_pro_Step2(b)})  is equal to the  expectation of the probability that the next event is a birth without mutation conditioned on $\mathcal F_{T^i_n}$. Namely,
\begin{align} \label{abcder}
	&\frac{ b(R^K+\s_K h)(1-u_K m(R^K-\s_K h))}
			{ \left(b(R^K+\s_K h)+d(R^K+\s_K h)+\int_{\mathbb N\times \mathcal X}c(R^K+\s_K h,\xi)d\tilde\nu_{T_n}(\xi)\right)}\\[0.1em]&\nonumber
		=b(R^K+\s_K h)\Big[ b(R^K+\s_K h)+d(R^K+\s_K h)+c(R^K+\s_K h,R^K)\Big(\phi (i\tfrac \e 2)-\tfrac {\lceil i\tfrac \e 2K\rceil +j} K\Big)\\&\nonumber\hspace{0.3cm}
					+c(R^K+\s_K h,R^K+h\s_K)\Big(\tfrac {\lceil i\tfrac \e 2K\rceil +j} K\Big)+\xi_1(\e\s_KC^{c}_L (\lceil \tfrac 3 \a\rceil +2(M+C_{\text{exit}})) )\Big]^{-1}+O(u_K)
					\\[0.1em]&\nonumber					
		= b(R^K\!+\s_K h)
				\Big[ 2b(R^K\!+\s_K h)-f(R^K\!+\s_K h,R^K)
					+c(R^K\!+\s_K h,R^K)\Big(\phi (i\tfrac \e 2)-\tfrac{r(R^K)}{c(R^K,R^K)}\Big)\\&\nonumber\hspace{1.5cm}	
					+\s_K h \partial_2 c(R^K,R^K)\Big(\tfrac {\lceil i\tfrac \e 2K\rceil +j} K\Big)+\xi_1\big(\e\s_KC^{c}_L \big(\lceil \tfrac 3 \a\rceil +2(M+C_{\text{exit}})\big) \big)\Big]^{-1}
								+O(u_K).
\end{align}
for some $\xi_1\in(-1,1)$. By definition of $\phi$ of (\ref{abcder}) is equal to
\begin{align}								
		&b(R^K+\s_K h)
				\Big[ 2b(R^K+\s_K h)-\partial_1 f(R^K,R^K)\s_K h+c(R^K+\s_K h,R^K)\s_Kh \\\nonumber&\hspace{3cm}
				\times(i\tfrac \e 2)\Big(\tfrac{r'(R^K)}{r(R^K)}-\tfrac{\partial_1c(R^K,R^K)}{c(R^K,R^K)}\Big)
									+\xi_1\left(\e\s_KC^{c}_L \left(\lceil \tfrac 3 \a\rceil +2(M+C_{\text{exit}})\right) \right)\Big]^{-1}	\\&\hspace{12cm}\nonumber
									    		+O(\tfrac{\s_K  j} K+\s_K^2+u_K)\\[0.1em]&	\nonumber
		= b(R^K+\s_K h)
				\Big[ 2b(R^K+\s_K h)-\s_Kh\left(1-i\tfrac \e 2\tfrac {c(R^K,R^K)}{r(R^K)} \right)	\partial_1f(R^K,R^K)			\\&\nonumber\hspace{4.5cm}
					+\xi_1(\e\s_KC^{c}_L (\lceil \tfrac 3 \a\rceil +2(M+C_{\text{exit}})) )\Big]^{-1}+O(\tfrac{\s_K  j} K+\s_K^2+u_K) \\[0.1em]&	\nonumber
		=\frac{1}{2} +\s_Kh \left(1-i\tfrac \e 2\tfrac {c(R^K,R^K)}{r(R^K)}\right )\tfrac{\partial_1f(R^K,R^K)}{b(R^K)}
					+\e\s_K\xi_1\tfrac{(C^{c}_L (\lceil \frac 3 \a\rceil +2(M+C_{\text{exit}}))}{b(R^K)} 
					+O(\tfrac{\s_K  j} K+\s_K^2+u_K).
\end{align}	
Then, because $i<2\e^{-1}C^{\e}_{\text{cross}}$ implies that $1-i\tfrac \e 2\tfrac {c(R^K,R^K)}{r(R^K)}>0$, 
we obtain (\ref{cond_pro_Step2(b)}).  Thus we can construct a Markov chain $Z^i_n$ such that $Z^i_n\geq Y^i_n$ a.s. for all $n$ 
such that $T^i_{n}<\hat \th^K$ and the marginal distribution of $Z^i_n$  is a Markov chain with transition probabilities 
\be\Eq(cross.10)
\mathbb P\bigl[Z^i_{n+1} =j_2|Z^i_n=j_1\bigr]
=\begin{cases} 
\frac 1 2 + 2A \tilde C_{\text{fitness}} \s_K&\text{ for } j_2=j_1+1, \\
\frac 1 2 -  2A  \tilde C_{\text{fitness}} \s_K& \text{ for }j_2=j_1-1,\\
0 &\text{ else.}
\end{cases}
\ee
We define a continuous time process, $\tilde Z^i$, associate to $Z^i_n$. To do this, we define first $(\tilde T^i_j)_{j\in \mathbb N}$, the sequence of jump times, by $\tilde T^i_0=0$ and
\be\Eq(cross.11)  
\tilde T^i_j-\tilde T^i_{j-1} =
\begin{cases} 
		T^i_j - T^i_{j-1} & \text{ if } T^i_j <\tilde \th^K,\\
			W^i_j & \text{ else} ,
\end{cases}  
\ee
where $W^i_j$ are exponential random variables with mean 
$(\lceil K (i+\tfrac 1 2) (\e/ 2)\rceil(\overline b+\overline d +\overline c(4\overline b/\underline c)) )^{-1}$.
We set  $\tilde Z^i_t =Z^i_n $  if $t\in [\tilde T^i_n,\tilde T^i_{n+1})$. Obverse that we obtain by construction 
$\tilde Z^i_t \geq \mfm^{k_1}(\tilde \nu_{\th^K_{\text{mut. size }i (\e/ 2)}+t})$, for all $t$ such that 
$ \th^K_{\text{mut. size }i (\e/ 2)}+t\leq\hat \th^K_i$.
Next we want to show that
\be\label{Step2.2.0}
\mathbb P\left[ \inf\left\{t\geq 0:  \tilde Z^i_t \geq \lceil K (i+\tfrac 1 2) (\e/ 2)\rceil \right\}> S_K\right]=1-o(\s_K).
\ee
Therefore, let $B^Z_i=\inf\{n\geq 0:Z^i_n= \lceil K (i+\tfrac 1 2) (\e/ 2)\rceil\}$. We can construct  $(X^i_j)_{ j \geq 1}$ a
sequence of independent, exponential random variables with parameter $x^K_i\equiv \lceil K (i+\tfrac 1 2) (\e/ 2)\rceil(\overline b+\overline
d +\overline c(4\overline b/\underline c))$ such that
\be\Eq(cross.12)
(\tilde T^i_{j+1}-\tilde T^i_{j}) \succcurlyeq X^i _j\quad \text{ for all }1\leq j\leq B^Z_i.
\ee
Our next goal is to find a barrier, $n_i$, such that 
$B^Z_i$ is smaller than $n_i$ only with very small probability. 
Since the transition probabilities of $Z^i$ do not depend on the present  state, 
$Z^i_{B^Z_i}-Z_{0}$ is  stochastically equivalent to  $\sum_{k=1}^{j} V^i_k$, where $(V^i_k)_{ k\in\mathbb N}$ are
 i.i.d. random variables taking values $\pm 1$ with probabilities
\be\Eq(cross.13)
			\mathbb P[V^i_k=1]=\tfrac 1 2 + 2A \tilde C_{\text{fitness}} \s_K \quad
		 \text{ and }		\quad\mathbb P[V^i_k=-1]=\tfrac 1 2 -  2A \tilde C_{\text{fitness}} \s_K.
\ee
Note that 
$\mathbb E\bigl[V^i_k\bigr]= 4A \tilde C_{\text{fitness}} \s_K$ and $|V^i_k|=1$.
Furthermore, we get 
\be\Eq(cross.14)
\mathbb P\left[B^Z_{i}\leq n_i\right] 
=\mathbb P\left[\exists {\lceil(\e/4) K\rceil\leq j\leq n_i} : \sum_{k=1}^j V^i_k\geq \lceil(\e/4)K\rceil \right ].
\ee
Hoeffding's inequality implies that, for  $j\geq\lceil(\e/4) K\rceil$,
\be\Eq(cross.15)
 	\mathbb P\left[ \sum_{k=1}^{j} V^i_k	\geq  4A \tilde C_{\text{fitness}} \s_K j + j^{\nicefrac 1 2+
	{\a/2}}\right ]
\leq 2\exp(- j {}^{\a}).
\ee
We take $n_i \equiv \e K(8 A\tilde C_{\text{fitness}}\s_K)^{-1}$ and get for all $\lceil(\e/4)K\rceil\leq j\leq n_i$,
\be\Eq(cross.16)
4A \tilde C_{\text{fitness}} \s_K  j + j^{\nicefrac 1 2+{\a/2}}\leq \lceil(\e/4) K\rceil ,
 \ee
since $K^{-\frac 1 2+\a}\ll\s_K$.
Then, the probability that 
$B^Z_{i}\leq \e K(8 A\tilde C_{\text{fitness}}\s_K)^{-1} $ is bounded from above by $2\exp(-K^{\a})$.
Therefore, the left  hand side of equation (\ref{Step2.2.0}) is larger than
\begin{align}
\mathbb  P\left[\textstyle \sum_{j=1}^{\e K(8 A\tilde C_{\text{fitness}}\s_K)^{-1} }X^i_j> S_K\right]-2\exp(-K^{\a}),
\end{align}
By applying the exponential  Chebychev inequality we get, similarly as in (a),
\begin{align}\label{Cheby_for_(b)}
	\:\mathbb P\Bigg[\sum_{j=1}^{\e K(8 A\tilde C_{\text{fitness}}\s_K)^{-1}}\!\!&X^i_j\leq S_K \Bigg]
	\:=\:\mathbb P\Bigg[- \sum_{j=1}^{\e K(8 A\tilde C_{\text{fitness}}\s_K)^{-1}}\!\!X^i_j\geq-S_K\Bigg]\\[0.2em]\nonumber
	&\:\leq\: \exp(K^{\a} S_K )\mathbb E\left[\exp(-K^{\a} X^i_j)\right]^{\e K(8 A\tilde C_{\text{fitness}}\s_K)^{-1}}\\\nonumber
	&\:\leq\: \exp(K^{\a} S_K )\exp\left(\e K(8 A\tilde C_{\text{fitness}}\s_K)^{-1}\ln\left(\tfrac{x^K_i}{x^K_i+K^{\a}}\right)\right)\\\nonumber
	&\:\leq\: \exp(K^{\a} S_K-\e K(8 A\tilde C_{\text{fitness}}\s_K)^{-1} C K^{-1+\a})),\quad \text{ for some small $C>0$,}\\\nonumber
	&\:\leq \:\exp\left(-K^{\a}\right).
\end{align}
This proves that $\mathbb P\big[ \inf\{t\geq 0:  \tilde Z^i_t \geq \lceil K (i+\tfrac 1 2) (\e/ 2)\rceil \}> S_K\big]\geq 1-3\exp\left(-K^{\a}
			\right)$, and therefore (b) and  (a) for $i$, provided that the lemma holds for $i-1$. 			
\end{proof}
%
%
\begin{proof}
\textit{of (c) for $i$ by assuming that the lemma holds for $i-1$.}
Note that the random elements $\:T^i,\: X^i$ and $Y^i$are not the ones of the last proof. 
As in (a) we 
 couple $K\langle\tilde \nu_{t},\mathds 1\rangle$ with a discrete time Markov Chain. 
Therefore, let 
\be
					X^{i}_t=|K\langle\tilde \nu_{t},\mathds 1\rangle-\lceil \phi({i}(\e/2))K\rceil |
\ee
and   $T^{i}_0=\th^K_{\text{mut. size }i (\e/2)}$  and $(T^{i}_k)_{k\geq1}$ be the sequences of the jump times of $\langle\tilde \nu_{t},\mathds 1\rangle$ 
after $\th^K_{\text{mut. size }i(\e/2)}$. Then, let $Y^{i}_k$ be the associated discrete time process  
which records the values that $X^{i}_t$ takes after time $\th^K_{\text{mut. size }i(\e/2)}$.
\\[0.2em]
\textbf{Claim:}
 \textit{There exists a constant $\tilde C^{b,d,c}_{\text{derivative} }$ such that for all
 $ j< \lceil \e K\rceil$ and $K$ large enough, 
  \begin{align}\label{prob_Step2.3}
 \mathbb P\bigl[Y^i_{n+1}=j+1|Y^i_n=j, T_{n+1}&<\tilde \th_i^K\bigr]
 					 \leq \:\frac 1 2 -\tfrac{\underline c}{3\overline b}j K^{-1}+\e\s_K\tilde C^{b,d,c}_{\text{derivative} }=:p_+^K(j),
\end{align}
\textit{Moreover, we can choose $\tilde C^{b,d,c}_{\text{derivative} }\equiv \sup_{x\in\mathcal X}\tfrac{A}{4b(x)} \:|\tfrac{r'(x)}{\overline z(x)}-\partial_1 c(x,x)-\partial_2 c(x,x)|$}.} 
\\[0.2em]
From (a) we know that the left hand side of (\ref{prob_Step2.3})
is smaller or equals
\begin{align} 
 \frac 1 2 -\tfrac{c(R^K,R^K)}{3b(R^K)}j K^{-1}+\tfrac{\e\s_K h}{8b(R^K)} \:\left|\tfrac{r'(R^K)}{\overline z(R^K)}-\partial_1 c(R^K,R^K)-\partial_2 c(R^K,R^K)\right|+O(\s_K^2).
\end{align}	
This proves the Claim. Note that $p_+^K(j)$ depends on $j$. Since we can choose $M\geq (8\tilde C_{\text{derivative}}^{b,d,c})(\tfrac{3\overline b}{\underline c})$, 
continuing as in Lemma \ref{exit_from_domain} implies that (c) is true for $i$,
provided that the lemma holds for $i-1$. 
\end{proof}
%
\begin{proof}
\textit{of (d) for $i$ by assuming that the lemma holds for $i-1$.}
Again we couple $\mfm_t^{k_1}$, for $t\geq\th_{\text{near } \phi(i\frac \e 2)}^K$, with a discrete time Markov chain. Therefore, let $T^i_0 =\th_{\text{near } \phi(i\frac \e 2)}^K$
and $(T^i_k)^{}_{k\geq 1}$ be the sequences of the jump times of $\mfm_t^{k_1}$ after $\th_{\text{near } \phi(i\frac \e 2)}^K$.
Then, let $(Y_n^i)^{}_{n\geq 0}$ be the 
discrete time process which records the values that $\mfm_t^{k_1}$, i.e. 
\be
Y^i_0 = \mfm^{k_1}(\tilde \nu_{T^i_0}) \in [K (\frac {i\e}{ 2}-\frac{\e} 4)-1,  K (\frac {i\e}{ 2}+\frac{\e} 4)+1],
\ee
 and $Y^i_n =\mfm^{k_1}(\tilde\nu_{T_n^i})$.
Define
 \be
 \hat \th^K_i\equiv  \inf\left \{t\geq \th_{\text{near } \phi(i\frac \e 2)}^K: |\langle \tilde \nu_t,\mathds 1\rangle-\phi(i(\e/2))|> M\e\s_K\right\}\wedge\th^K_{2\text{ succ. mut.}} \wedge\th^{K}_{ \text{diversity}}	.
 \ee
 Note that this $\hat \th^K_i$ differs only a bit from the one defined in (b).
From the proof of  (b), we know that the density of  the mutant trait has the tendency to increase. 
More precisely, since $i\leq C^{\e}_{\text{cross}} (2/\e)$, we have,  for all  $-\lceil \frac \e 4 K\rceil \leq  j \leq  \lceil\frac \e 2 K\rceil,$ for K large enough and
   for $\e$ small enough,
\begin{align}\label{cond_pro_Step2}
\mathbb P \Big[Y^i_{n+1} = \lceil i\tfrac \e 2K\rceil + j+1\Big|&Y^i_n =\lceil i\tfrac \e 2K\rceil +j,T^i_{n+1} < \hat \th^K_i \Big]\geq \tfrac 1 2 + \s_K\tfrac{\inf_{x\in\mathcal X}\partial_1f(x,x)}{2\overline b}\end{align}
By Continuing in a similar way as in (b) with bounding the random variables in the  in the other direction (as in (a)),  implies that (d) is true for $i$,
provided that the lemma holds for $i-1$.  			
\end{proof}\vspace{-0.5em}

\end{proof}
%
%
%
\subsection{Step 3}
Similarly as in Step 2 we define a function which allows us to approximate the total mass of the population for a given 
density of the resident trait. 
\begin{notations}
Let us define
\begin{align}
		\psi(x)\equiv \overline z(R^K) &+\s_K h (\bar z(R^K)-x) \left(\tfrac{ r'(R^K)}{r(R^K)} 
					+\tfrac{\partial_1 c(R^K,R^K)+\partial_2 c(R^K,R^K)}{c(R^K,R^K)}
                                        \right).\label{psi.1}
\end{align}
\end{notations}%
Note that $\phi(y)=\psi\left(\phi(y)-y\right)+O(\s_K^2)$. Therefore and since  $ |\langle \tilde \nu_{\th^K_{\text{mut. size }C^{\e}_{\text{cross}}}},\mathds 1\rangle-\phi(C^{\e}_{\text{cross}})|< M\e\s_K$  
with probability $1-o(\s_K)$, we get
 that at time $\th^K_{\text{mut. size }C^{\e}_{\text{cross}}}$ the  density of the resident population  belongs to an interval centered at $\phi(C^{\e}_{\text{cross}})-C^{\e}_{\text{cross}}$ with diameter 
 $2(M+\lceil 3/\a\rceil)\e\s_K$ with probability $1-o(\s_K)$ and hence 
\begin{align}
		\psi(\mfm^0(\tilde \nu_{\th^K_{\text{mut. size }C^{\e}_{\text{cross}}}})K^{-1})
						=\psi(\phi(C^{\e}_{\text{cross}})-C^{\e}_{\text{cross}}) +O(\e\s_K^2)=\phi(C^{\e}_{\text{cross}})+O(\s_K^2)
\end{align}
with probability $1-o(\s_K)$.
Thus, the total mass of the population also belongs to an interval centered at $\psi(\phi(C^{\e}_{\text{cross}})-C^{\e}_{\text{cross}})$ with diameter 
$2(M\e\s_K+O(\s_K^{2}))<2(M+1)\e\s_K$.
%
\begin{notations}
Let us define
\bea
	\tilde C^{K}_{\text{cross}}&\equiv& \lceil(\phi(C^{\e}_{\text{cross}})-C^{\e}_{\text{cross}}-\e)2/\e\rceil(\e/2) \qquad \qquad \text{ and }\\[0.5em]
 	\th_{\text{near } \psi(\tilde C^{\e}_{\text{cross}}-\frac \e 2)} &\equiv&  
		 \inf\{t\geq \th^K_{\text{mut. size } C^{\e}_{\text{cross}}}: 
		 	|\langle\tilde \nu_t,\mathds 1\rangle-\psi(\tilde C^{\e}_{\text{cross}}-\tfrac \e 2)|<(M/3)\e\s_K\}.\quad
\eea
Note that the term $-\e$ in the definition of  $\tilde C^{K}_{\text{cross}}$ ensures that resident population is larger than $\tilde C^{K}_{\text{cross}}$ at time $\th^K_{\text{mut. size }C^{\e}_{\text{cross}}}$.
\end{notations}
First, we need a lemma for the interface between Step 2 and Step 3.
%
%
\begin{lemma}\label{interface}
Fix $\e>0.$ Suppose that the assumptions of Theorem  \ref{Thm_2.Phase} hold. Then, there exists a constant $M>0$ (independent of $\e$ and $K$) such that,
\begin{enumerate}[(a)]
		\setlength{\itemsep}{3pt}
\item Soon after $\th^K_{\text{mut. size } C^{\e}_{\text{cross}}}$, the total population size is close to $\psi(\tilde C^{\e}_{\text{cross}}-\frac \e 2)$:
\begin{align}
		\lim_{K\to\infty}\:\s_K^{-1}\:
				\mathbb P&\Big[ \th_{\text{near } \psi(\tilde C^{\e}_{\text{cross}}-\frac \e 2)}
								>\th^K_{\text{mut. size } C^{\e}_{\text{cross}}}+S_K	\wedge\th^K_{2\text{ succ. mut.}} \wedge\th^{K}_{ \text{diversity}} \\&\quad									\wedge\inf\left\{t\geq \th^K_{\text{mut. size } C^{\e}_{\text{cross}}}: 
															\mfm^{0}(\tilde\nu_t) =\lceil (\tilde C^{\e}_{\text{cross}}\pm 3\e/4) K\rceil\right\}	\Big]	=	0.\nonumber
\end{align}	
\item A change of order $\e $ for the resident density takes more than $o(\s_K^{-1})$ time: 	
\begin{align}																
 \lim_{K\to\infty}\:\s_K^{-1}\:
			 \mathbb P&\Big[\inf\left\{t\geq \th^K_{\text{mut. size } C^{\e}_{\text{cross}}}:\mfm^{0}(\tilde\nu_t)  =\lceil (\tilde C^{\e}_{\text{cross}}\pm 3\e/4) K\rceil\right\}\\&\quad
										<\th^K_{\text{mut. size } C^{\e}_{\text{cross}}}+S_K \wedge \th_{\text{near } \psi(\tilde C^{\e}_{\text{cross}}-\frac \e 2)}
														\wedge\th^K_{2\text{ succ. mut.}} 
															\wedge\th^{K}_{ \text{diversity}}		\Big]		=	0.	\nonumber													
\end{align}	
\item At the time when the resident density has changed of order $\e$ the total population size is still close to $\psi(\tilde C^{\e}_{\text{cross}}-\frac \e 2)$: 	
\begin{align}		
		\lim_{K\to\infty}	\s_K^{-1}\;
			\mathbb P&\Big[\inf\left \{t\geq \th_{\text{near } \psi(\tilde C^{\e}_{\text{cross}}-\frac \e 2)}^K\!:	
														|\langle \tilde \nu_t,\mathds 1\rangle-\psi(\tilde C^{\e}_{\text{cross}}-\tfrac \e 2)|\!>\! M\e\s_K\right\}
	<\th^K_{2\text{ succ. mut.}} \\\nonumber
	&\quad\wedge\th^{K}_{ \text{diversity}}
	\wedge \inf\left\{t\geq \th^K_{\text{mut. size } C^{\e}_{\text{cross}}}: 
					\mfm^{0}(\tilde\nu_t) =\lceil (\tilde C^{\e}_{\text{cross}}\pm \e )K\rceil\right\}	\Big]	=0.
\end{align}	
\item A change of order $\e $ for the resident density takes no more than $(i \s_K)^{-1-\a/2}$ time: 	
\begin{align}
		\lim_{K\to\infty}	\s_K^{-1}\;
			\mathbb P&\Big[\:\th^K_{\text{res. size }\tilde C^{\e}_{\text{cross}}-\e}
								>\th^K_{\text{mut. size } C^{\e}_{\text{cross}}}+(i \s_K)^{-1-\a/2}\big)\wedge\th^K_{2\text{ succ. mut.}} \wedge\th^{K}_{ \text{diversity}}\\
										&\quad\wedge\inf\left \{t\geq \th_{\text{near } \psi(\tilde C^{\e}_{\text{cross}}-\frac \e 2)}^K:
				 									|\langle \tilde \nu_t,\mathds 1\rangle-\psi(\tilde C^{\e}_{\text{cross}}-\tfrac \e 2)|> M\e\s_K\right\} \Big] = 0.\nonumber
\end{align}
\end{enumerate}
\end{lemma}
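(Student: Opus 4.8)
The plan is to follow the scheme already used for Lemmas \ref{exit_from_domain}, \ref{exit_from_domain_2}, \ref{Step1} and especially \ref{Step2}: couple the relevant one-dimensional functional of $\tilde\nu^K$ (the total mass $K\langle\tilde\nu_t,\1\rangle$ for parts (a) and (c), the resident count $\mfm^0_t$ for parts (b) and (d)) with a biased discrete-time random walk after time $\th^K_{\text{mut. size }C^\e_{\text{cross}}}$, count jumps with Hoeffding's inequality, convert jump counts into elapsed real times through the exponential Chebyshev inequality applied to sums of i.i.d.\ exponential jump times of rate $\Theta(K)$, and control overshoots with the moderate-deviation exit estimates of Propositions \ref{prop1}, \ref{prop2}, \ref{prop2.1} and \ref{prop2.2}. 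The one genuinely new ingredient is the change of parametrisation from the mutant density (Step 2) to the resident density (Step 3): using $\phi(y)=\psi(\phi(y)-y)+O(\s_K^2)$ (the identity stated just before the lemma) together with the fact, coming from Lemma \ref{Step2} and the remark following it, that at time $\th^K_{\text{mut. size }C^\e_{\text{cross}}}$ the total mass lies within $M\e\s_K$ of $\phi(C^\e_{\text{cross}})$ while the resident density lies within $(M+\lceil3/\a\rceil)\e\s_K$ of $\phi(C^\e_{\text{cross}})-C^\e_{\text{cross}}$, and since the definition of $\tilde C^\e_{\text{cross}}$ places it a distance $\Theta(\e)$ below $\phi(C^\e_{\text{cross}})-C^\e_{\text{cross}}$ (that is the role of the $-\e$ shift), one gets that with probability $1-o(\s_K)$ the total mass is within $(M+1)\e\s_K$ of $\psi(\tilde C^\e_{\text{cross}}-\tfrac\e2)$ and the resident density exceeds $\tilde C^\e_{\text{cross}}$. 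This plays the role of a base clause feeding parts (a)--(d), exactly as Lemma \ref{exit_from_domain_2} feeds Lemma \ref{Step2}.

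For (a) and (c) I would let $Y_n$ be the jump chain of $|K\langle\tilde\nu_t,\1\rangle-\lceil\psi(\tilde C^\e_{\text{cross}}-\tfrac\e2)K\rceil|$ after $\th^K_{\text{mut. size }C^\e_{\text{cross}}}$ and establish the analogue of the claims \eqref{prob_Step2} and \eqref{prob_Step2.3}: conditionally on not having left the relevant good event (no second successful mutant, at most $\lceil3/\a\rceil$ living traits, and the resident density not yet having moved by $3\e/4$, resp.\ by $\e$), the probability that $Y_n$ increases is at most $\tfrac12-\tfrac{\underline c}{3\bar b}jK^{-1}+\e\s_K\tilde C^{b,d,c}_{\text{derivative}}$ at distance $j$. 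This is the exact counterpart of the computation \eqref{Step2_birth}--\eqref{abcder}, now using $b(R^K)=d(R^K)+c(R^K,R^K)\bar z(R^K)$, the Taylor expansions of $b,d,c$ around $R^K$, the definition \eqref{psi.1} of $\psi$, and $\mfm^{k_1}(\tilde\nu_t)/K\le\bar z(R^K)$ for the coexisting mutant subpopulation. Dominating $Y_n$ above by the corresponding random walk, invoking Proposition \ref{prop2.2} for the overshoot and Hoeffding plus exponential Chebyshev for the number of jumps and the elapsed time (which is $O(1)$, hence $\ll S_K$), one obtains (a) up to the event that one of $\th^K_{2\text{ succ. mut.}}$, $\th^K_{\text{diversity}}$ occurs before $S_K$, which has probability $o(\s_K)$ by Remark \ref{Remark_2Phase} and part (b). Part (c) then follows by the renewal / potential-theoretic argument of Lemma \ref{exit_from_domain} (Proposition \ref{prop1}), provided $M$ is taken large enough, e.g.\ $M\ge 8\tilde C^{b,d,c}_{\text{derivative}}(3\bar b/\underline c)$.

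For (b) and (d) I would couple $\mfm^0_t$ (after $\th^K_{\text{mut. size }C^\e_{\text{cross}}}$, resp.\ after $\th_{\text{near }\psi(\tilde C^\e_{\text{cross}}-\e/2)}$) with a biased random walk. The key drift estimate, the counterpart of \eqref{cond_pro_Step2(b)} and \eqref{cond_pro_Step2}, is that conditionally on the total mass being in the $M\e\s_K$-tube around $\psi(\tilde C^\e_{\text{cross}}-\tfrac\e2)$ and on the good event, the probability that the next resident event is a birth without mutation lies in $[\tfrac12-2A\tilde C_{\text{fitness}}\s_K,\ \tfrac12-\tfrac12\tilde C_{\text{fitness}}\s_K]$, since the resident trait $R^K$ sits at fitness disadvantage of order $-h\,\partial_1 f(R^K,R^K)\s_K<0$ with $h\ge1$ (Assumption \ref{ass3}) against the invading mutant $R^K+\s_K h$. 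Thus $\mfm^0_t$ is squeezed between two random walks with downward drift of order $\s_K$. For (b) one argues as in Lemma \ref{Step2}(b) that a decrease by $\tfrac\e4 K$ needs at least $\sim\e\s_K^{-1}K$ jumps (Hoeffding), hence more than $S_K$ units of time (exponential Chebyshev on the exponential jump times), because $S_K\ll\e\s_K^{-1}$. For (d) one uses the other random walk: the decrease by $\e$ is achieved within the allotted time, which exceeds the typical $\Theta(\e/\s_K)$ duration by a factor diverging with $K$ for fixed $\e$, so a Hoeffding / Markov estimate gives probability $1-o(\s_K)$, and that the total mass has not left its tube in the meantime is exactly (c).

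I expect the main obstacle to be the drift computation for the total-mass chain in (a) and (c): one must verify that, once the mutant density has reached $C^\e_{\text{cross}}\approx\bar z(R^K)/2$, the total mass feels a genuine restoring drift of order $\s_K$ (more precisely $\ge\tfrac{\underline c}{3\bar b}jK^{-1}$ at distance $jK^{-1}$) toward the slowly moving curve $\psi$, and this requires carefully retaining the $O(\s_K)$ corrections produced by the coexisting mutant subpopulation, weighted by its relative size, just as in \eqref{Step2_birth}--\eqref{abcder}; all the remaining points — the error terms from mutant traits differing by $O(\s_K)$ from $R^K$, and the control of $\th^K_{2\text{ succ. mut.}}$, $\th^K_{\text{diversity}}$, of mutants of mutants, and of overshooting the next landmark — are bookkeeping handled as in Lemmas \ref{diversity}, \ref{2.succ_mutant} and Remark \ref{Remark_2Phase}.
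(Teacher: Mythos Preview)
Your proposal is correct and matches the paper's own proof, which consists of the single sentence ``Apply the methods of (a) to (d) from Lemma~\ref{Step2}.'' You have spelled out in detail precisely what that instruction entails: the switch from tracking the mutant density to tracking the resident density (with the resident now carrying the negative drift), the use of the identity $\phi(y)=\psi(\phi(y)-y)+O(\s_K^2)$ to transfer the base clause from Step~2, and the same four-part scheme of coupling with biased random walks, Hoeffding for jump counts, exponential Chebyshev for elapsed times, and Propositions~\ref{prop1}--\ref{prop2.2} for overshoots.
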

%
\begin{proof} 
Apply the methods of of (a) to (d) from Lemma \ref{Step2}.  
\end{proof}
Next, we have the following similar lemmata as in Step 2, for them  let us define 
\be \th_{\text{near } \psi(i\frac \e 2)}^K\equiv\inf\{t\geq \th^K_{\text{res. size }i (\e/ 2)}:|\langle\tilde \nu_t,\mathds 1\rangle-\psi(i(\e/2))|<(M/3)\e\s_K\}.\ee
\begin{lemma}
\label{Step3}
Suppose that the assumptions of Theorem  \ref{Thm_2.Phase} hold. 
Then, there exists a constant $M>0$ (independent of $\e$, $K$ and $i$) such that, for all $\e>0$ and for all $(\tilde C^{\e}_{\text{cross}}-\e)(2/\e)\geq i\geq 2$,
\begin{enumerate}[(a)]
		\setlength{\itemsep}{3pt}
\item Soon after $\th^K_{\text{res. size }i(\e/ 2)}$, the total population size is close to $\psi(i\frac \e 2)$:
\begin{align}
\lim_{K\to\infty}\:\s_K^{-1}\:
			\mathbb P&\Big[ \th_{\text{near } \psi(i\frac \e 2)}^K
									>\th^K_{\text{res. size }i(\e/ 2)}+S_K \wedge\th^K_{2\text{ succ. mut.}} \wedge\th^{K}_{ \text{diversity}}
												\\&\quad\wedge\inf\left\{t\geq \th^K_{\text{res. size }i(\e/2)}: \mfm^{0}(\tilde\nu_t) =\lceil (i\pm \tfrac12)(\e/ 2) K\rceil\right\}\Big]=0.	\nonumber	
\end{align}
\item A change of order $\e $ for the resident density takes more than $o(\s_K^{-1})$ time: 	
\begin{align}	
	\lim_{K\to\infty}\:\s_K^{-1}\:
			 \mathbb P&\Big[\inf\left\{t\geq \th^K_{\text{res. size }i(\e/2)} : \mfm^{0}(\tilde\nu_t) =\lceil (i\pm \tfrac12)(\e/2) K\rceil\right\}\\&\quad
	<\th^K_{\text{res. size }i (\e/ 2)}+S_K \wedge \th_{\text{near } \psi(i\frac \e 2)}^K\wedge\th^K_{2\text{ succ. mut.}} 
				\wedge\th^{K}_{ \text{diversity}}		\Big]		=	0.\nonumber
\end{align}	
\item At the time when the resident density has changed of order $\e$ the total population size is still close to $\psi(i\frac \e 2)$: 	
\begin{align}	\lim_{K\to\infty}	\s_K^{-1}\;
			\mathbb P&\Big[	\inf\left \{t\geq \th_{\text{near } \psi(i\frac \e 2)}^K: |\langle \tilde \nu_t,\mathds 1\rangle-\psi(i(\e/2))|> M\e\s_K\right\}
									< \th^K_{2\text{ succ. mut.}} \\&\quad
										\wedge\th^{K}_{ \text{diversity}}\wedge \inf\left\{t\geq  \th^K_{\text{res. size }i(\e/2)}:\mfm^{0}(\tilde\nu_t) =\lceil (i\pm 1)(\e/ 2) K\rceil\right\}	\Big]	=0.\nonumber
\end{align}	
\item A change of order $\e $ for the resident density takes no more than $(i \s_K)^{-1-\a/2}$ time: 	
\begin{align}
\lim_{K\to\infty}	\s_K^{-1}\;
			\mathbb P&\Big[\:\th^K_{\text{res. size }(i-1) (\e/ 2)}
								>\big(\th_{\text{near } \psi(i\frac \e 2)}^K+(i \s_K)^{-1-\a/2}\big)\wedge\th^K_{2\text{ succ. mut.}} \wedge\th^{K}_{ \text{diversity}}
\\
&	\quad									\wedge\inf\left \{t\geq \th_{\text{near } \psi(i\frac \e 2)}^K: |\langle \tilde \nu_t,\mathds 1\rangle-\psi(i(\e/2))|> M\e\s_K\right\} \Big]
												=0.\nonumber
\end{align}
\end{enumerate}
\end{lemma}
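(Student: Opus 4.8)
## Proof plan for Lemma \ref{Step3}

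The plan is to mirror exactly the structure of the proof of Lemma \ref{Step2}, using the function $\psi$ in place of $\phi$ and the resident count $\mfm^0_t$ in place of the successful mutant count $\mfm^{k_1}_t$. The key observation that makes this work is the algebraic identity $\phi(y)=\psi(\phi(y)-y)+O(\s_K^2)$ noted just before the statement of Lemma \ref{interface}: it guarantees that the invariant-manifold heuristic is the same object, now parametrised by the resident density. Concretely, I would argue by induction over $i$, \emph{decreasing} from $i=\lceil(\tilde C^{\e}_{\text{cross}}-\e)(2/\e)\rceil$ down to $i=2$ (since the resident density is now \emph{decreasing}), with the base clause supplied by Lemma \ref{interface}(a)--(d), which plays here the role that Lemma \ref{exit_from_domain_2} and Lemma \ref{Step1} played for the base clause of Lemma \ref{Step2}.

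For the induction step, parts (a) and (c) are proved by coupling $K\langle\tilde\nu_t,\mathds 1\rangle$ with a discrete-time Markov chain $Z^i_n$ exactly as in the proofs of (a) and (c) of Lemma \ref{Step2}: one establishes, for the embedded chain $Y^i_n$ recording the values of $X^i_t\equiv|K\langle\tilde\nu_t,\mathds1\rangle-\lceil\psi(i(\e/2))K\rceil|$, an upper bound of the form $\mathbb P[Y^i_{n+1}=j+1\mid Y^i_n=j,\ T_{n+1}<\tilde\th^K_i]\le \frac12-\frac{\underline c}{3\overline b}jK^{-1}+\e\s_K\tilde C$ with $\tilde C$ expressed through the same derivatives $r'$, $\partial_1 c$, $\partial_2 c$ that appear in the definition of $\psi$; then one appeals to Proposition \ref{prop2.2} and the same Hoeffding / exponential-Chebyshev arguments as in Lemma \ref{exit_from_domain} and Lemma \ref{Step2}(a) to control the exit probability and the number of jumps, finishing with the hypothesis $K^{-1/2+\a}\ll\s_K$ and the choice of $M$ large compared with the relevant derivative constant. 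Parts (b) and (d) are proved by coupling $\mfm^0_t$ itself with a biased random walk: here the resident population is \emph{subcritical of order $\s_K$} on the relevant range (its net growth rate is $-c(R^K,R^K)(\bar z(R^K)-\mfm^0_t/K)+\ldots=-\Theta(\s_K)$ once the mutant density has reached $\Theta(1)$), so the embedded chain has downward drift of order $\s_K$; one then bounds from below the number of steps needed to decrease by $\e K$ (giving the $o(\s_K^{-1})$ lower bound of (b) via a lower exponential bound on waiting times, as in Lemma \ref{Step2}(b)), and from above by $(i\s_K)^{-1-\a/2}$ via Hoeffding plus the exponential-Chebyshev estimate on the sum of i.i.d.\ exponential jump times, exactly as in Lemma \ref{Step2}(d). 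Throughout, the stopping time $\tilde\th^K_i$ must be taken to include $\th^K_{2\text{ succ. mut.}}\wedge\th^{K}_{\text{diversity}}\wedge\exp(K^\a)$ together with the auxiliary exit time of $\langle\tilde\nu_t,\mathds1\rangle$ from the $M\e\s_K$-neighbourhood of $\psi(i(\e/2))$, and one uses Remark \ref{Remark_2Phase} (and the already-established parts of Step 2 and Step 3) to see that these times are not reached before the relevant $S_K$ or $(i\s_K)^{-1-\a/2}$ horizons with probability $1-o(\s_K)$.

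The main obstacle I expect is bookkeeping rather than a genuinely new idea: one must verify that the drift computation analogous to the displayed chain of inequalities in the proof of Lemma \ref{Step2}(b) still closes when expressed in the resident variable, i.e.\ that after substituting the definition of $\psi$ and expanding $b,d,c$ to first order in $\s_K h$ the $O(1)$ terms cancel and the surviving $\Theta(\s_K)$ term has the correct (negative) sign on the whole admissible range $2\le i\le(\tilde C^{\e}_{\text{cross}}-\e)(2/\e)$. The sign condition is the analogue of the inequality $1-i\frac\e2\frac{c(R^K,R^K)}{r(R^K)}>0$ used in Step 2; here the complementary condition $\bar z(R^K)-x>0$ together with $x>\e$ on the relevant range should play that role, and one has to check it does not degenerate as $i$ approaches its extreme values. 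Because all the quantitative inputs (Propositions \ref{prop2}, \ref{prop2.1}, \ref{prop2.2}, Hoeffding, exponential Chebyshev) are already in place and were used verbatim in Step 2, the honest statement of the proof is: \emph{apply the methods of (a)--(d) of Lemma \ref{Step2}, with $\phi$ replaced by $\psi$, $\mfm^{k_1}$ replaced by $\mfm^0$, and the induction run downward in $i$}, checking the drift-sign computation above.

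\begin{proof}
Apply the methods of the proofs of (a)--(d) of Lemma \ref{Step2}, replacing $\phi$ by $\psi$, the mutant count $\mfm^{k_1}(\tilde\nu_t)$ by the resident count $\mfm^0(\tilde\nu_t)$, and running the induction over $i$ downward from $\lceil(\tilde C^{\e}_{\text{cross}}-\e)(2/\e)\rceil$ to $2$; the base clause is provided by Lemma \ref{interface}(a)--(d) in place of Lemma \ref{exit_from_domain_2} and Lemma \ref{Step1}, and the required drift estimates follow from expanding $b$, $d$ and $c$ to first order in $\s_K h$ and using the definition \eqref{psi.1} of $\psi$ together with $i\le(\tilde C^{\e}_{\text{cross}}-\e)(2/\e)$, which guarantees $\bar z(R^K)-i(\e/2)>0$ and hence the correct sign of the $\Theta(\s_K)$ drift term.
\end{proof}
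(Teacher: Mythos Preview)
Your proposal is correct and takes essentially the same approach as the paper: the paper's own proof is the one-line sentence ``Apply the methods of (a) to (d) from Lemma \ref{Step2}.'' Your write-up is in fact more informative than the paper's, since you spell out the necessary substitutions ($\phi\to\psi$, $\mfm^{k_1}\to\mfm^0$, downward induction, Lemma \ref{interface} as base clause) and flag the drift-sign verification, none of which the paper makes explicit.
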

\begin{proof} Apply the methods of of (a) to (d) from Lemma \ref{Step2}.  
\end{proof}
\begin{remark}
Lemma \ref{interface} and \ref{Step3} imply that the density of the resident trait decreases to the value  $\e$. Moreover, 
\begin{align}
			\mathbb P\left[\th^K_{\text{res. size } \e}>\th^K_{\text{mut. size }  C^{\e}_{\text{cross}}}+\ln(K)\s_K^{-1-\a/2}\wedge\th^K_{2\text{ succ. mut.}}
										 \wedge\th^{K}_{ \text{diversity}}\right]&=o(\s_K)\\
\text{and} \qquad 
			\mathbb P\left[\:|\langle \tilde \nu_{\th^K_{\text{res. size } \e}},\mathds 1\rangle-\psi(\e)|>M\e\s_K\right]&=o(\s_K).
\end{align}
\end{remark}
\subsection{Step 4} After the time $\th^K_{\text{res. size } \e}$ we have to wait less than $\ln(K)\s_K^{1+\a/2}$ time to know that the resident trait is extinct with high probability.
\begin{notations}
 Define 
 \:$\th_{\text{near } \psi(0)}^K\equiv \inf\{t\geq \th^K_{\text{res. size } \e }:|\langle\tilde \nu_t,\mathds 1\rangle-\psi(0)|<(M/3)\e\s_K\}$.
\end{notations}
\begin{lemma}
\label{Step4.1}
Suppose that the assumptions of Theorem  \ref{Thm_2.Phase} hold. Then, there exists a constant $M>0$ (independent of $\e$ and $K$) such that, for all $\e>0$ 
\begin{enumerate}[(a)]
		\setlength{\itemsep}{3pt}
\item Soon after $\th^K_{\text{res. size } \e }$, the total population size is close to $\psi(0)$:
\begin{align}
\lim_{K\to\infty}\:\s_K^{-1}\:
			&	\mathbb P\Big[ 	\th_{\text{near } \psi( 0 )}^K
									>\th^K_{\text{res. size }\e}+S_K \wedge\th^K_{2\text{ succ. mut.}} \wedge\th^{K}_{ \text{diversity}}
	\\&\qquad\wedge\inf\left\{t\geq \th^K_{\text{res. size }\e)}: \mfm^{0}(\tilde\nu_t) =\lceil (1\pm \tfrac14)\e K\rceil\right\}	
												\Big]=0.\nonumber
\end{align}												
\item A change of order $\e $ for the resident density takes more than $o(\s_K^{-1})$ time: 	
\begin{align}  \lim_{K\to\infty}\:\s_K^{-1}\: 
		&	\qquad \mathbb P\Big[\inf\left\{t\geq \th^K_{\text{res. size } \e} :\mfm^{0}(\tilde\nu_t) =\lceil (1\pm  \tfrac14) \e K\rceil\right\}\\&
										<\th^K_{\text{res. size } \e}+S_K \wedge \th_{\text{near } \psi(0)}
														\wedge\th^K_{2\text{ succ. mut.}} 
															\wedge\th^{K}_{ \text{diversity}}		\Big]		=	0.\nonumber
\end{align}
\end{enumerate}
\end{lemma}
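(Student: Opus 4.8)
The plan is to follow closely the proofs of parts (a) and (b) of Lemma~\ref{Step2}, replacing the function $\phi$ by $\psi$ and adapting the direction of the couplings to the fact that here the resident density $\mfm^0_t$ is the \emph{decreasing} species. As starting point we use the Remark following Lemma~\ref{Step3}: with probability $1-o(\s_K)$ the total mass $\langle\tilde\nu_t,\mathds 1\rangle$ at time $\th^K_{\text{res. size }\e}$ lies within $M\e\s_K$ of $\psi(\e)$, and since by~\eqref{psi.1} the map $x\mapsto\psi(x)$ is affine with slope of order $\s_K$, we have $\psi(\e)-\psi(0)=O(\e\s_K)$; hence the total mass starts within $O(\e\s_K)$ of $\psi(0)$. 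Throughout, up to the stopping times $\th^K_{2\text{ succ. mut.}}\wedge\th^{K}_{\text{diversity}}$ every trait present differs from $R^K$ by at most $2A\s_K$, so the Taylor expansions from the beginning of Step~2 and the identity $b(R^K)=d(R^K)+c(R^K,R^K)\bar z(R^K)$ are available.

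For part (a), set $X_t\equiv|K\langle\tilde\nu_t,\mathds 1\rangle-\lceil\psi(0)K\rceil|$, let $Y_n$ be the discrete-time process recording the successive values of $X_t$ after $\th^K_{\text{res. size }\e}$, and prove, exactly as for the Claim in the proof of Lemma~\ref{Step2}(a), that there is a constant $C>0$ such that, conditionally on $T_{n+1}$ being smaller than the minimum of the relevant stopping times, $\mathbb P[Y_{n+1}=j+1\mid Y_n=j]\le\tfrac12-\e\s_K$ for all $\lceil C\e\s_K K\rceil\le j<\lceil\e K\rceil$. This uses the defining property of $\psi$ that the drift of the total mass vanishes to leading order on the curve $\{\langle\cdot,\mathds 1\rangle=\psi(\mfm^0/K)\}$; replacing $\psi(\mfm^0_t/K)$ by the fixed value $\psi(0)$ costs only an extra $O(\e\s_K)$, absorbed into the boundary layer, because $\mfm^0_t\in[(3/4)\e K,(5/4)\e K]$ before the stopping time considered. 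Dominating $Y_n$ by a Markov chain $Z_n$ and applying Proposition~\ref{prop2.2}(b) bounds by $\exp(-K^\a)$ the probability of leaving the $(M/3)\e\s_K$-box around $\psi(0)$ before returning to it; the exponential Chebyshev inequality applied to the jump times, each of order $K$, shows as in~\eqref{prob_Step2.1}--\eqref{Cheby_for_(b)} that this return occurs within $S_K$ time with probability $1-o(\s_K)$.

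For part (b), couple $\mfm^0_t$ for $t\ge\th^K_{\text{res. size }\e}$ with a discrete-time chain. Since $R^K<R_1^K$ and $\partial_1 f>0$, the invasion fitness of the resident against the new resident satisfies $f(R^K,R_1^K)=\partial_1 f(R_1^K,R_1^K)(R^K-R_1^K)+o(\s_K)<0$; hence, conditionally on the total mass lying within $M\e\s_K$ of $\psi(0)$ (which holds up to $S_K$ time by part (a)) and on the stopping times not yet being reached, the recorded chain has an upward transition probability in $[\tfrac12-2A\tilde C\s_K,\tfrac12-\tfrac12\tilde C\s_K]$ for an appropriate constant $\tilde C>0$, i.e.\ the resident density has a strictly subcritical drift of order $\s_K$. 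Dominate $\mfm^0_t$ \emph{from below} by the continuous-time process $\tilde Z$ associated to the Markov chain with constant upward probability $\tfrac12-2A\tilde C\s_K$; since $\tilde Z\le\mfm^0$, the level $(3/4)\e K$ is reached by $\tilde Z$ no later than by $\mfm^0$, so it suffices to show that $\tilde Z$ needs more than $S_K$ time to decrease from $\e K$ by $\e/4$. Exactly as in Lemma~\ref{Step2}(b), Hoeffding's inequality shows that the number of jumps required is, except on an event of probability $\exp(-K^\a)$, at least of order $\e K\s_K^{-1}$ (a decrease of order $\e K$ against a per-jump drift of order $\s_K$ needs order $\e K\s_K^{-1}$ jumps), and since each jump time dominates an independent exponential of parameter of order $K$, the exponential Chebyshev inequality gives that so many jumps take more than $S_K$ time with probability $1-o(\s_K)$, using $S_K\ll\e\s_K^{-1}$.

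The only genuinely new bookkeeping relative to Lemma~\ref{Step2} is getting the directions of the couplings right — in (b) one must use the \emph{dominated} chain rather than a dominating one to bound the time from below, since the resident density is now decreasing — and absorbing the $O(\e\s_K)$ discrepancy between $\psi(0)$ and $\psi(\mfm^0_t/K)$ into the boundary layer of the drift estimate. As in the previous steps, (a) and (b) are proved together by the usual bootstrap, each serving to rule out the competing stopping times in the other; I expect no substantial obstacle beyond this, the remaining estimates being word-for-word adaptations of those already carried out.
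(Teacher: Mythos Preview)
Your proposal is correct and matches the paper's approach exactly: the paper's own proof is the single line ``See proof of Lemma~\ref{Step2}'', and you have spelled out precisely the adaptations that entails --- replacing $\phi$ by $\psi$, absorbing the $\psi(\e)-\psi(0)=O(\e\s_K)$ offset into the boundary layer, and reversing the coupling direction in (b) so that the dominated chain $\tilde Z\le\mfm^0$ gives a lower bound on the hitting time of $(3/4)\e K$. Your bootstrap remark and the observation that the subcritical drift of the resident makes the ``$+$'' direction the easier one are both in the spirit of the paper's treatment of the analogous one-sided estimate in Lemma~\ref{Step2}(b).
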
 
\begin{proof} See proof of Lemma \ref{Step2}
\end{proof}
\begin{lemma}\label{Step4.4}
Suppose that the assumptions of Theorem  \ref{Thm_2.Phase} hold. Then, there exists a constant $M>0$ (independent of $\e$ and $K$) such that, for all $\e>0$ 
\begin{align}
\lim_{K\to\infty}
\s_K^{-1}\;
&\mathbb P\Big[\:\th^K_{\text{res. size }0}
>\big(\th_{\text{near } \psi(0)}^K+\ln(K)\s_K^{-1-\a/2}\big)\wedge\th^K_{2\text{ succ. mut.}} \wedge\th^{K}_{ \text{diversity}}\\	
&\qquad\qquad\qquad\wedge\inf\left \{t\geq \th_{\text{near } \psi(0)}^K: |\langle \tilde \nu_t,\mathds 1\rangle-\psi(0))|> M\e\s_K\right\} \Big]
												=0.\nonumber
\end{align}
\end{lemma}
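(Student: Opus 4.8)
The plan is to couple the resident population size $\mfm^0_t$ from time $\th^K_{\text{near }\psi(0)}$ onwards with a subcritical linear birth-and-death process and show that this process hits $0$ within time $\ln(K)\s_K^{-1-\a/2}$ with probability $1-o(\s_K)$, while staying inside the good event on which the total mass remains near $\psi(0)$, no second successful mutant appears, and not too many traits coexist. First I would observe that, on the event appearing in the statement, for $t\geq\th^K_{\text{near }\psi(0)}$ the resident density $\mfm^0(\tilde\nu_t)$ is at most $\lceil(1+\tfrac14)\e K\rceil$ (by Lemma \ref{Step4.1}) and the total mass $\langle\tilde\nu_t,\mathds 1\rangle$ is within $M\e\s_K$ of $\psi(0)=\bar z(R^K)+O(\s_K)$, so each resident individual has birth rate $(1-u_Km(R^K))b(R^K)$ and death rate $d(R^K)+c(R^K,R^K)\langle\tilde\nu_t,\mathds 1\rangle+O(\e\s_K)\geq d(R^K)+c(R^K,R^K)\bar z(R^K)+\xi\s_K=b(R^K)+\xi\s_K$ for some $\xi>0$, using the defining relation $b(R^K)-d(R^K)-c(R^K,R^K)\bar z(R^K)=0$ and the fact that the mutant trait is $R^K+h\s_K$ with $h\geq1$ together with $\partial_1f(x,x)>0$. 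Hence one can construct (exactly as in Lemma \ref{bound_mutants}) a process $Z_t$ with $Z_0=\mfm^0(\tilde\nu_{\th^K_{\text{near }\psi(0)}})\leq\lceil(1+\tfrac14)\e K\rceil$, $Z_t\geq\mfm^0(\tilde\nu_{\th^K_{\text{near }\psi(0)}+t})$ for all $t$ up to the stopping time in the statement, which is a linear birth-death process that is subcritical of order $\s_K$: there exist uniform constants $\check C_1,\check C_2>0$ with $\check C_1\s_K\leq d_Z-b_Z\leq\check C_2\s_K$.

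Next I would bound the extinction time of $Z$. Let $\tau^Z_0=\inf\{t\geq0:Z_t=0\}$. Since $Z$ is subcritical it goes extinct almost surely, and the standard formula for the extinction-time tail of a linear birth-death process (as already used in the proof of Lemma \ref{diversity}, citing \cite{A_BP} p.\ 109, or the exponential-tail bound of \cite{A_MC} p.\ 41) gives, with the choice $t_K=\ln(K)\s_K^{-1-\a/2}$,
\begin{equation}
\mathbb P\bigl[\tau^Z_0>\ln(K)\s_K^{-1-\a/2}\bigr]\leq \exp\bigl(-\s_K^{-\a/3}\bigr)+o(\s_K)=o(\s_K),
\end{equation}
where the first term controls the time to go from $Z_0=O(\e K)$ down to, say, level $1$ (a change of order $\e K$ at drift rate $-\Theta(\s_K)$ per individual takes time $O(\ln(K)\s_K^{-1})$ with overwhelming probability, by Hoeffding applied to the embedded random walk exactly as in the proof of Lemma \ref{Step2}(b), the relevant bound being $\exp(-(\e K)^{\a})$-type), and the remaining time to hit $0$ from $O(1)$ is $O_{\mathbb P}(\s_K^{-1})$. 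Combining this with the coupling inequality $\mfm^0(\tilde\nu_{\th^K_{\text{near }\psi(0)}+t})\leq Z_t$ yields that $\th^K_{\text{res. size }0}\leq\th^K_{\text{near }\psi(0)}+\ln(K)\s_K^{-1-\a/2}$ on the complement of an $o(\s_K)$-probability event, intersected with the good event on which the coupling is valid; the good event itself has probability $1-o(\s_K)$ by Lemmata \ref{exit_from_domain_2}, \ref{Step3}, \ref{Step4.1} and Remark \ref{Remark_2Phase}, and by the fact that the stopping time $\inf\{t\geq\th^K_{\text{near }\psi(0)}:|\langle\tilde\nu_t,\mathds 1\rangle-\psi(0)|>M\e\s_K\}\wedge\th^K_{2\text{ succ. mut.}}\wedge\th^K_{\text{diversity}}$ is precisely what appears in the event we are bounding. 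Multiplying the failure probability by $\s_K^{-1}$ and letting $K\to\infty$ gives the claim.

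I expect the main obstacle to be the book-keeping needed to keep the coupling with the pure birth-death process valid up to the correct stopping time: the rates of the resident subpopulation depend on the full state $\tilde\nu_t$ (total mass, presence of an unsuccessful mutant of order $\e\s_K$), so the argument must run inside the good event where the total mass is pinned near $\psi(0)$ to within $M\e\s_K$, and one must verify that the $O(\e\s_K)$ corrections to the death rate do not destroy the strict subcriticality of order $\s_K$ — this is where the choice of $M$ (large but independent of $\e$ and $K$) and the smallness of $\e$ relative to $\inf_x\partial_1f(x,x)$ enter, just as in the lower bound for $b_Z-d_Z$ in the proof of Lemma \ref{Step1}. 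A secondary technical point is the exponential-tail estimate for the extinction time at the required precision $o(\s_K)$ rather than $o(1)$; this is handled by splitting the descent into a "bulk" phase (order-$\e K$ decrease, controlled by Hoeffding on the embedded walk with error $\exp(-(\e K)^{\a})$) and a "small" phase near zero (controlled by the explicit birth-death extinction-time generating function as in the proof of Lemma \ref{diversity}), and then using $K^{-1/2+\a}\ll\s_K$ and $\ln(K)\s_K^{-1-\a/2}\ll(Ku_K\s_K^{1+\a})^{-1}$ to absorb the remainders. No genuinely new idea beyond Step~1 and Step~2 is required; the lemma is essentially the time-reversed analogue of Lemma \ref{Step1}.
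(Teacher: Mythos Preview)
Your approach is essentially the paper's: couple $\mfm^0$ from time $\th^K_{\text{near }\psi(0)}$ with a linear birth--death process $Z$ that dominates it and is subcritical of order $\s_K$, then bound the extinction time of $Z$ by $\ln(K)\s_K^{-1-\a/2}$ with failure probability $o(\s_K)$. The rate computation and the coupling are exactly as in the paper (the paper writes the lower bound on the death rate as $d(R^K)+c(R^K,R^K+\s_Kh)\bar z(R^K+\s_Kh)-C^M_{\text{total death}}\e\s_K$ and then identifies $b_Z-d_Z=f(R^K,R^K+\s_Kh)+O(\e\s_K)=-\s_K\xi_K$, which is your $-\xi\s_K$).

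The only difference is in the extinction-time estimate: you propose a two-phase split (bulk descent via Hoeffding on the embedded walk, then final extinction via the generating function), while the paper simply plugs the initial value $\lceil\tfrac54\e K\rceil$ directly into the explicit extinction-time distribution $\mathbb P_n[\tau^Z_0\leq t]=\bigl(\frac{d_Z-d_Z e^{(d_Z-b_Z)t}}{b_Z-d_Z e^{(d_Z-b_Z)t}}\bigr)^n$ from \cite{A_BP} p.~109, which with $t=\ln(K)\s_K^{-1-\a/2}$ and $d_Z-b_Z=\xi_K\s_K$ gives $1-O(\s_K K^{-1})=1-o(\s_K)$ in one line. Your split would work, but it is unnecessary here since the explicit formula already handles the macroscopic initial condition directly; the paper's route is shorter and yields a sharper remainder.
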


\begin{proof} To prove this lemma we use  a coupling with an continuous time branching process as in the proof of lemma~\ref{Step1}.
For any $\th_{\text{near } \psi(0)}^K\leq t \leq \th^K_{2\text{ succ. mut.}} \wedge\th^{K}_{ \text{diversity}}
										\wedge\inf \{t\geq \th_{\text{near } \phi(0)}^K: |\langle \tilde \nu_t,\mathds 1\rangle-\psi(0))|> M\e\s_K\} $, any individual of $\mfm^0(\tilde\nu_t)$ gives birth to a new individual with trait  $R^K$ with rate 
\begin{align}
\bigl(1-u_K \:m(R^K)\bigr)b(R^K)\in \bigl[b(R^K)-u_K\:\overline b\:, b(R^K)\bigr],
 \end{align}
 and dies with rate 
\begin{align}
 d(R^K)+c(R^K,R^K)\mfm^0(\tilde\nu_t)+\int_{\mathcal X\times \mathbb N}c(R^K,\xi)\tilde d\nu_t(\xi),
 \end{align}
which is larger than $d_Z\equiv d(R^K)+c(R^K,R^K+\s_K h )\overline z(R^K+\s_K h)-C_{\text{total death}}^{M}\e\s_K$ where $C_{\text{total death}}^{M}\equiv M+\overline c \lceil 3 /\a\rceil - 2 h \partial_2c(R^K,R^K)$.
Therefore, we construct, by using a standard coupling argument, a process $Z_t$ such that 
\begin{align} Z_t\geq  \mfm^0(\tilde\nu_t)\end{align}
for all $\th_{\text{near } \psi(0)}^K\leq t \leq \th^K_{2\text{ succ. mut.}} \wedge\th^{K}_{ \text{diversity}}
										\wedge\inf \{t\geq \th_{\text{near } \phi(0)}^K: |\langle \tilde \nu_t,\mathds 1\rangle-\psi(0))|> M\e\s_K\} $.
The process $Z_t$ is a linear birth and death process starting at $\lceil \frac 5 4 \e K\rceil$, with birth rate per individual  $b_Z=b(R^K)$ and with death rate per individual
 $d_Z$. 
 Since 
 \begin{align}
 b_Z-d_Z&=f(R^K,R^K+\s_K h)+ C_{\text{total death}}^{M}\e\s_K\\&\nonumber
 =-\s_K h \partial_1f(R^K+\s_K h,R^K+\s_K h) +C_{\text{total death}}^{M}\e\s_K+O((\s_K h)^2)
 \equiv-\s_K \xi_K
 \end{align}
 is negative and of order $\sigma_K$, the process $Z_t$ is sub-critical. Note that  $\xi_K\geq \inf_{x\in\mathcal X}\tfrac{\partial_1 f(x,x)}{2}>0$. Let $\tau^Z_i$ be the first hitting time of level $i$ by $Z_t$, then we have
\be
\mathbb P[\tau^Z_{\lceil 2\e K\rceil }<\tau^Z_{0}]\leq \exp(-K^{\a})
\ee
 compare with the proof of Proposition \ref{prop2.2}. Since $Z_t\geq  \mfm^0(\tilde\nu_t)$, we obtain also that, with high probability, $\mfm^0(\tilde\nu_t)$ stays smaller than $\lceil 2\e K\rceil$ before it dies out. For any $t\geq 0$ and $n\in\mathbb N$, the distribution of the extinction time of $Z_t$ for $b_Z\neq d_Z$  is given by: 
\begin{align}
\mathbb P_n(\tau^Z_0\leq t)=\biggl(\frac {d_Z-d_Z \exp({(d_Z-b_Z)t})}{b_Z-d_Z \exp({(d_Z-b_Z)t})}\biggr)^n.
\end{align}
(cf. \cite{A_BP} p. 109 and \cite{C_TSS}).
Therefore, we can compute in our case where $d_Z-b_Z=\s_K\xi_K$ with $\xi_K$ uniformly positive
\bea\Eq(cross.20)
\mathbb P\Bigr[\tau^Z_{0}\leq\ln(K)\s_K^{-1-\a/2 }\Bigr]
&=&\left(\frac {d_Z-d_Z\exp{\left((d_Z-b_Z)\ln(K)\s_K^{-1-\a/2 }\right)}}{b_Z-d_Z \exp\left((d_Z-b_Z)\ln(K)\s_K^{-1-\a/2 }\right)}\right)^{\frac 5 4\e  K}
\quad\\\nonumber&=&\left(\frac {d_Z-d_Z K^{\xi_K \s_K^{-\a/2 }}}{d_Z-\s_K \xi_K -d_Z K^{\xi_K\s_K^{-\a/2 }}}\right)^{\frac 5 4\e  K}\\
\nonumber&=&\left(1-\frac {\xi_K \s_K}{d_Z(K^{\xi_K\s_K^{-\a/2 }}-1)+\s_K\xi_K}\right)^{\frac 5 4\e  K}\\
\nonumber&\geq& \left(1-\s_K ({\tfrac 5 4\e  K})^{-1}K^{-1}\right)^{\tfrac 5 4\e  K}\ 
\\&\geq& 1-O(\s_KK^{-1})\geq 1-o(\s_K),\nonumber
\eea
which proves the lemma.
\end{proof}
\subsection{Step 5}
After the extinction time of the resident trait, we have to wait at most $\ln(K)\s_K^{-1-\a/2}$ time until the population is monomorphic with trait $R^K+\s_K h$.
\begin{lemma}\label{Step5}
 Suppose that the assumptions of Theorem  \ref{Thm_2.Phase} hold. Then, there exists a constant $M>0$ (independent of $\e$ and $K$) such that, for all $\e>0$
\begin{align}
\lim_{K\to\infty}
\s_K^{-1}\;
\mathbb P\Big[
\th^K_{\text{fixation}} >(\th^K_{\text{res. size }0}+\ln(K)\s_K^{-1-\a/2})\wedge\th^K_{2\text{ succ. mut.}}
\wedge\th^{K}_{ \text{diversity}}\qquad
&\\\wedge\inf\left \{t\geq \th_{\text{near } \phi(0)}^K: |\langle \tilde \nu_t,\mathds 1\rangle-\psi(0)|> M\e\s_K\right\} 
&\Big]=0. \nonumber
\end{align}
\end{lemma}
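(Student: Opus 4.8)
The plan is to split the interval $[\th^K_{\text{res. size }0},\th^K_{\text{fixation}}]$ into two pieces: a first piece during which every trait different from the successful mutant trait $R^K_1=R^K+\s_Kh$ disappears, and a second piece during which the total mass, now carried by $R^K_1$ alone, relaxes into the $(M/3)\e\s_K$-neighbourhood of $\bar z(R^K_1)$. Throughout one works on the event that none of the ``bad'' stopping times $\th^K_{2\text{ succ. mut.}}$, $\th^{K}_{ \text{diversity}}$, and $\inf\{t\ge\th^K_{\text{res. size }\e}:|\langle\tilde\nu_t,\mathds 1\rangle-\psi(0)|>M\e\s_K\}$ occurs before $\th^K_{\text{res. size }0}+\ln(K)\s_K^{-1-\a/2}$; this event has probability $1-o(\s_K)$ by Remark~\ref{Remark_2Phase} together with Steps~1--4, since $\th^K_{\text{res. size }0}+\ln(K)\s_K^{-1-\a/2}\le\th^K_{\text{invasion}}+5\ln(K)\s_K^{-1-\a/2}$. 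On this event the total mass stays within $M\e\s_K$ of $\psi(0)$, at every time at most $\lceil 3/\a\rceil-1$ traits distinct from $R^K_1$ are alive, and no such trait ever reaches density $\e\s_K$.

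\textbf{Part A (elimination of the remaining traits).} Each label $k\notin\{0,I^K\}$ alive at $\th^K_{\text{res. size }0}$, or created afterwards by a mutation, carries a trait $Y^K_k$ with $|Y^K_k-R^K_1|=O(\s_K)$, and, exactly as in Lemma~\ref{bound_mutants} and Remark~\ref{Remark_2Phase}, $\mfm^k(\tilde\nu_t)$ is dominated by a linear birth--death process. On the event above this process never reaches $\e\s_K K$ before extinction; conditioning on this turns it, by \cite{J_BPCE}, into a genuinely \emph{subcritical} linear birth--death process whose death-minus-birth rate is of order $\s_K$, because the unconditioned growth rate is $b(Y^K_k)-d(Y^K_k)-c(Y^K_k,R^K_1)\bar z(R^K_1)+O(\e\s_K)=\partial_1f(R^K_1,R^K_1)(Y^K_k-R^K_1)+O(\e\s_K)+O(\s_K^2)$, which has absolute value of order $\s_K$ up to the $O(\e\s_K)$ competition fluctuation. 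Starting from at most $\e\s_K K$ individuals, the extinction time of such a process exceeds $\tfrac{1}{3\lceil 3/\a\rceil}\ln(K)\s_K^{-1-\a/2}$ only with probability $o(\s_K)$, by the explicit law of the extinction time already used in~\eqv(cross.20) (cf.~\cite{A_BP} p.~109). New mutations arrive at total rate $O(Ku_K)$, with $Ku_K\ln(K)\s_K^{-1-\a/2}=O(\s_K^{\a/2})$, and a subcritical subpopulation started at one individual survives that long with probability only $o(\s_K^2)$; a union bound over the at most $\lceil 3/\a\rceil-1$ labels alive at any time then shows that with probability $1-o(\s_K)$ every trait other than $R^K_1$ is extinct by some time $\th^K_{\text{res. size }0}+T^K_A$ with $T^K_A\le\tfrac13\ln(K)\s_K^{-1-\a/2}$.

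\textbf{Part B (relaxation of the total mass).} At time $\th^K_{\text{res. size }0}+T^K_A$ the population is monomorphic with trait $R^K_1$; its total mass differs from $\psi(0)$ by at most $M\e\s_K$ plus the $O(\e\s_K)$ lost when the at most $\lceil 3/\a\rceil\e\s_K K$ remaining individuals died, and $|\psi(0)-\bar z(R^K_1)|=O(\s_K)$, so the mass is within $O(\e\s_K)+O(\s_K)$ of $\bar z(R^K_1)$. Since the monomorphic logistic birth--death dynamics contract towards $\bar z(R^K_1)$ at rate of order one, coupling $K\langle\tilde\nu_t,\mathds 1\rangle$ with a discrete-time Markov chain exactly as in Lemma~\ref{exit_from_domain} --- now used to control the \emph{entry} into, rather than the exit from, a neighbourhood --- shows that, provided $M$ is large enough, the $(M/3)\e\s_K$-neighbourhood of $\bar z(R^K_1)$ is reached within an additional time $T^K_B\le\tfrac13\ln(K)\s_K^{-1-\a/2}$ with probability $1-o(\s_K)$ (this is the analogue, for the new resident $R^K_1$, of the monomorphic-stabilisation proposition stated just before the notations of Section~\ref{First_Phase}). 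By Part~A no trait other than $R^K_1$ is alive during $[\th^K_{\text{res. size }0}+T^K_A,\th^K_{\text{res. size }0}+T^K_A+T^K_B]$ with probability $1-o(\s_K)$, so $\th^K_{\text{fixation}}\le\th^K_{\text{res. size }0}+T^K_A+T^K_B\le\th^K_{\text{res. size }0}+\ln(K)\s_K^{-1-\a/2}$, which is the assertion.

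I expect Part~A to be the main obstacle. An ``unsuccessful'' mutant trait may be supercritical with respect to the new resident $R^K_1$ (when $Y^K_k>R^K_1$), so one cannot simply invoke subcriticality; the point is that on the event under consideration such a subpopulation is conditioned to die out before reaching $\e\s_K K$, and this conditioning produces a subcritical process whose subcriticality is of the correct order $\s_K$. One then has to control this uniformly over the whole, time-varying family of such subpopulations --- including those created by mutations occurring after $\th^K_{\text{res. size }0}$ --- while keeping every error of order $o(\s_K)$, which is precisely where the hypothesis $u_K\ll\s_K^{1+\a}/(K\ln K)$ enters.
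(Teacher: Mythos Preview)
Your decomposition into Part~A (find a monomorphic time) and Part~B (relax the mass) is exactly the paper's strategy, but both parts mis-handle \emph{new} mutations, and this is a genuine gap.

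\textbf{Part~B.} You assert that with probability $1-o(\s_K)$ no trait other than $R^K_1$ is alive on $[\th^K_{\text{res. size }0}+T^K_A,\th^K_{\text{res. size }0}+T^K_A+T^K_B]$ with $T^K_B$ allowed as large as $\tfrac13\ln(K)\s_K^{-1-\a/2}$. But the total mutation rate is of order $Ku_K$, so the probability of at least one new mutant on an interval of that length is $O(Ku_K\ln(K)\s_K^{-1-\a/2})=O(\s_K^{\a/2})$, which is $o(1)$ but \emph{not} $o(\s_K)$. The paper's remedy is to note that, at the first monomorphic time, the total mass is already within $M\e\s_K$ of $\psi(0)=\bar z(R^K_1)+O(\s_K^2)$, so the relevant relaxation time is the fast one $S_K$ of Lemma~\ref{Step2}(a)/\ref{Step4.1}(a) (any sequence $1\ll S_K\ll\s_K^{-1}$), not $\ln(K)\s_K^{-1}$ or $\ln(K)\s_K^{-1-\a/2}$. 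Choosing $S_K<\s_K^{1+\a}/(Ku_K)$ makes the probability of a new mutation during relaxation $O(Ku_KS_K)=o(\s_K)$. Your appeal to the monomorphic-stabilisation proposition (bound $\ln(K)\s_K^{-1}$) is the wrong tool: that handles an $O(1)$ initial displacement, not an $O(\e\s_K)$ one.

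\textbf{Part~A.} You observe that the expected number of new mutations in the relevant window is $O(\s_K^{\a/2})$, but again this is not $o(\s_K)$, so one cannot just rule them out; and the diversity bound you invoke controls how many labels are alive \emph{simultaneously}, not how many appear over time. The paper instead (i) bounds the \emph{total} number of new mutations in $[\th^K_{\text{res. size }0},\th^K_{\text{res. size }0}+\ln(K)\s_K^{-1-\a/2}]$ by $\lceil 3/\a\rceil$ with probability $1-o(\s_K)$ via a Poisson tail bound, (ii) shows each (old or new) unsuccessful subpopulation lives at most $\ln(K)\s_K^{-1-\a/4}$, and (iii) concludes that the union of the at most $1+\lceil 3/\a\rceil$ ``mutant-alive'' intervals has Lebesgue measure $(1+\lceil 3/\a\rceil)\ln(K)\s_K^{-1-\a/4}\ll\ln(K)\s_K^{-1-\a/2}$, forcing a monomorphic time inside the window.
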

\begin{proof}
By the last lemmata, we have $\th^K_{\text{fixation}}=\inf\{t\geq \th^K_{\text{res. size }0}:|\mathrm{Supp}(\tilde \nu^K_t)|=1, |\langle \tilde \nu_t,\mathds 1\rangle-\psi(0)|< (M/3)\e\s_K\}$ with probability $1-o(\s_K)$. Set $D\equiv \{k\in \mathbb N: 1\leq \mfm^k(\tilde \nu_{\th^K_{\text{res. size }0}})<\e\s_K K\}$. Then $|D|\leq\lceil 3/\a \rceil$ and 
none of these traits are successful since we have seen that $\th^K_{\text{res. size }0}$ is smaller than 
$\th^K_{2\text{ succ. mut.}}$ and $\th^{K}_{ \text{diversity}}$ 
with a probability of order $1-o(\s_K)$. By applying Proposition \ref{prop2} and using the Markov inequality, we obtain that the life 
time of each of these subpopulations is with probability $1-o(\s_K)$ smaller than $\ln(K)\s_K^{-1-\a/4}$. 
Therefore, if no new mutant is born between $\th^K_{\text{res. size }0}$ and $\th^K_{\text{res. size }0}+\ln(K)\s_K^{-1-\a/4}$, we 
obtain the claim.
 On the other hand, as in  Lemma \ref{rv_A},   the number of mutants born in the time interval 
 $[\th^K_{\text{res. size }0},\th^K_{\text{res. size }0}+\ln(K)\s_K^{-1-\a/2}]$ is stochastically dominated by a  
 Poisson point process, 
 $A^K(t)$,  with parameter $a\: u_K K$, where $a\equiv \sup_{x\in\mathcal X}\overline z(x)b(x)m(x)+1$. Hence, the probability to have no new mutant in this interval is
\be\Eq(cross.19)
\mathbb P\big[A^K(\ln(K)\s_K^{-1-\a/2})=0\big]=\exp(-\ln(K)\s_K^{-1-\a/2}a\: u_K K)\geq \exp(-\s_K^{\a/2})\geq 1-o(1).
\ee
Because the probability that a mutant is successful is of order $\s_K$, the probability that a successful mutant is born between times 
 $\th^K_{\text{res. size }0}$ and  $\th^K_{\text{res. size }0}+\ln(K)\s_K^{-1-\a/2}$ is $o(\s_K)$.
Since
\bea\Eq(cross.20)
&&\mathbb P\big[A^K(\ln(K)\s_K^{-1-\a/2})\leq\lceil 3/\a\rceil \big]\\\nonumber
&&=\exp\left(-\ln(K)\s_K^{-1-\a/2} \: a u_K K\right)\sum_{i=0}^{\lceil 3/\a\rceil }\frac{\ln(K)\s_K^{-1-\a/2}a\: u_K K}{i}\\\nonumber
&&\geq 1-(\ln(K)\s_K^{-1-\a/2}a\: u_K K)^{\lceil 3/\a\rceil +1}\\
&&\geq 1-\s_K^{3/2}=1-o(\s_K),\nonumber
\eea
there are maximal $\lceil 3/\a\rceil$ unsuccessful mutations in this interval. With the same argument as before the life time of each 
of these subpopulations is with probability $1-o(\s_K)$ smaller than $\ln(K)\s_K^{-1-\a/4}$. 
Therefore, with probability $1-o(\s_K)$ the maximal possible time interval where at least one mutant individual is alive
is smaller or equal $\ln(K)\s_K^{-1-\a/4}+ \lceil 3/\a\rceil \ln(K)\s_K^{-1-\a/4}$ $\ll $ $\ln(K)\s_K^{-1-\a/2}$. 
Recall from Lemma \ref{Step4.1} that if   
$|\langle \tilde \nu_t,\mathds 1\rangle-\psi(0)|> (M/3)\e\s_K$ at the first time when the population is again monomorphic, 
then the time the process needs to enter the $(M/3)\e\s_K$-neighborhood of $\psi(0)$ is smaller than $S_K$, which can be chosen 
smaller than $\s_K^{1+\a}/(Ku_K)$. This proves the lemma.
\end{proof}
This ends up Step 5 and the second invasion phase. Note that the estimates of the two phases do not depend on 
the exact trait value of the resident trait, especially the a priori different constants $M$. In fact, we can use in all lemmata the same constant $M$, namely the largest.  
Therefore, we can apply our results for the successful mutant trait $R^K_1=R^K+\s_K h$, 
which is the next resident trait by using the strong Markov property for $(\tilde\nu, L)$ at the stopping time $\th^K_{\text{fixation}}$.
 %


\section{Convergence to the CEAD }\label{Convergence}
Our goal is to find $T_0>0$ and to construct, for all $\e>0$, two measure valued processes, $(\mu^{1,K,\e}_t,t\geq 0)$ and
$(\mu^{2,K,\e}_t,t\geq 0)$, in $\mathbb D([0,\infty),\mathcal M(\mathcal X))$ such that
\be
\lim_{K\rightarrow \infty} \mathbb P\left [\forall\: t\leq \tfrac {T_0}{Ku_K\sigma_K^2}:\quad \mu_t^{1,K,\e} \preccurlyeq \nu_t^{K}
  \preccurlyeq \mu_t^{2,K,\e} \:\right]  =1, \label{eq:conv-1}
\ee
and for $j\in\{1,2\}$
\begin{align}
\lim_{K\rightarrow \infty}
		 \mathbb P\left [\:\sup_{0\leq t\leq T_0} \Big\Vert \: \mu^{i,K,\e}_{t/(K u_K \sigma_K{}^2)}-\overline
                   z(x_t)\delta_{x^{}_t}\:\Big\Vert^{}_0>\delta(\e) \:\right] & = 0. \label{eq:conv-2},
\end{align}
for some function $\delta$ independent of $x,K$ such that $\delta(\e)\rightarrow 0$ when $\e\rightarrow 0$. This easily
implies~\eqref{conv_in_proba_first} for all $T\leq T_0$. 

The result for all $T>0$ then follows from the strong Markov property. Indeed, the construction below implies that there exists a
stopping time $\tau\in[T_0/2Ku_K\sigma_K^2,T_0/Ku_K\sigma_K^2]$ (a fixation time) such that, with probability converging to 1,
$\nu_\tau^{K}$ has a unique (random) point $Y$ as support and a total mass belonging to
$[\bar{z}(Y)-M\sigma_K,\bar{z}(Y)+M\sigma_K]$. Hence~\eqref{eq:conv-1} and~\eqref{eq:conv-2} also hold for the process
$(\nu_{\tau+t}^{K},t\geq 0)$, and~\eqref{conv_in_proba_first} is thus true for all $T\leq 3T_0/2$. We obtain~\eqref{conv_in_proba_first} for any
fixed $T>0$ by induction.


\subsection{Construction of two processes $\mu^{K,1}$ and $\mu^{K,2}$ such that $\mu_t^{1,K} \preccurlyeq \nu_t^{K}
  \preccurlyeq \mu_t^{2,K}$}
\label{sec:step-1-conv}

Fix $T>0$. Let $\theta_{i}^{K}$ denote the random time of $i$-th invasion (i.e. $\theta_{i}^{K}=\theta_{i, \text{invasion}}^{K}$),
$\theta_{i,\text{fixation}}^{K}$ the time of $i$-th fixation 
and $R_{i}^{K}$ the trait of the $i$th successful mutant. 
Let us fix the following initial conditions $R_{0}^{K,1}=R^K_0-A\s_K$, $R_{0}^{K,2}=R^K_0+A\s_K$ and
$\theta_{0}^{K,1}=\theta_{0}^{K,2}=0$. Assume that we have constructed $\theta_{i}^{K,1}$ and $\theta_{i}^{K,2}$, and $R_{i}^{K,1}$
and $R_{i}^{K,2}$. By Theorem \ref{1.Phase} and Markov's property, we can construct two random variables $R_{i+1}^{K,1}$ and
$R_{i+1}^{K,2}$ such that
\be
R_{i+1}^{K,1}-R^{K,1}_i\leq R_{i+1}^K-R^K_i\leq R^{K,2}_{i+1}-R^{K,2}_i
\ee
with probability $1-o(\sigma_K)$. Moreover, $R^{K,1}_{i+1}-R^{K,1}_i=R^K_{i+1}-R^K_{i}=R^{K,2}_{i+1}-R^{K,2}_i$ with probability $1-O(\e)$ and
$R^{K,2}_{i+1}-R^{K,1}_{i+1}\leq A \s_K $. The distributions of $R_{i+1}^{K,1}-R^{K,1}_i$ and $R_{i+1}^{K,2}-R^{K,2}_i$ are
(cf. Corollary \ref{new_resident_trait})
\begin{equation}
  r^\e_1(R^K_i,h)\equiv \mathbb P[ R_{i+1}^{K,1}\!=\! R_i^K\!+\!\s_k h]
  =\begin{cases}
    \frac{M(R^K_i,1)q^\e_1(R^K_i,1)}{p^\e_2(R^K_i)}+1-\frac{p^\e_1(R^K_i)}{p^\e_2(R^K_i)} 
    & \text{if }h=1\\[0.5em]
    \frac{M(R^K_i,h)q^\e_1(R^K_i,h)}{p^\e_2(R^K_i)} 
    & \text{if }h\in\{2,...,A\}\
  \end{cases}
\end{equation}
and
\begin{equation}
  r^\e_2(R^K_i,h)\equiv \mathbb P[ R_{i+1}^{K,2}\!=\! R_i^K\!+\!\s_k h]
  =\begin{cases}
    \frac{M(R^K_i,h)q^\e_1(R^K_i,h)}{p^\e_2(R^K_i)} 
    & \text{if }h\in\{1,...,A\!-\!1\}\\[0.5em]
    \frac{M(R^K_i,A)q^\e_1(R^K_i,A)}{p^\e_2(R^K_i)}+1-\frac{p^\e_1(R^K_i)}{p^\e_2(R^K_i)} 
    & \text{if }h=A
  \end{cases},
\end{equation}
where 
\begin{equation}
  \label{eq:def-q}
  q^\e_1(x,h)=h\frac{\partial_1 f(x,x)}{b(x)}-C^1_{\text{Bernoulli}}\e,\qquad q^\e_2(x,h)=h\frac{\partial_1
    f(x,x)}{b(x)}+C^2_{\text{Bernoulli}}\e
\end{equation}
and $p^\e_j(x)=\sum_{h=1}^A q^\e_j(x,h)M(x,h)$ for $j=1,2$. (Note that we changed a bit the notations of
Corollary~\ref{new_resident_trait} to make explicit the dependence on $\e$ and $R^K_i$.) Since we assumed that the fitness gradient
$\partial_1 f(x,x)$ is positive and uniformly lower bounded on ${\cal X}$, the transition probabilities $r^\e_j(x,h)$, $j=1,2$ are
uniformly Lipschitz-continuous functions of $x$ with some Lipschitz constant $C^r_{\text{Lip}}$.

By Theorem \ref{1.Phase} and Lemmata~\ref{bounds_for_invasion_time} and~\ref{rv_A}, we can construct two exponential
random variables, $E^{K,1}_{i+1}$ and $E^{K,2}_{i+1}$ ,with parameters $a^{K,\e}_1(R^K_i)p^\e_1(R^K_i) \s_K u_K K $ and
$a^{K,\e}_2(R^K_i)p^\e_2(R^K_i) \s_K u_K K $ given by
\begin{align}
  a_1^{K,\e}(x) & =(\bar z(x)-\e\s_K M)b(x)m(x) \\
  a_2^{K,\e}(x) & =(\bar{z}(x)+\e\s_K(M+\lceil 3/\alpha\rceil))(b(x)m(x)+C^{b,m,M}_L A\sigma_K),
\end{align}
such that
\be
\P\left(
E^{K,2}_{i+1}\leq \th^{K}_{i+1}-\theta_{i,\text{fixation}}^{K} \leq E^{K,1}_{i+1}+\ln(K)\s_K^{-1-\a/2 }\right)
=1-o(\sigma_K). 
\ee
Note that this inequality involves $\theta_{i,\text{fixation}}^{K} $ instead of $\theta^K_i$ since
we apply the Markov property at the fixation time of Lemma~\ref{Step5} before we can apply Theorem~\ref{1.Phase}. However,
Lemma~\ref{Step5} entails that we also have
\be
\P\left(E^{K,2}_{i+1}\leq \th^{K}_{i+1}-\theta^K_i\leq E^{K,1}_{i+1}+6\ln(K)\s_K^{-1-\a/2 }\right)
=1-o(\sigma_K).
\ee
 We then define
\begin{equation}
  \label{eq:conv}
  \theta^{K,1}_{i+1}-\theta^{K,1}_i\equiv E^{K,1}_{i+1}+6\ln(K)\s_K^{-1-\a/2 }
  \quad\text{and}\quad \theta^{K,2}_{i+1}-\theta^{K,2}_i\equiv E^{K,2}_{i+1}.
\end{equation}
In addition, by their construction in Section~\ref{First_Phase}, it is clear that the random vectors
$\{(E^{K,1}_{i+1},E^{K,2}_{i+1}, $ $R^{K,1}_{i+1}-R^{K,1}_i,R^{K,2}_{i+1}-R^{K,2}_i)\}_{i\geq 0}$ are independent conditionally on $(R^K_j)_{j\geq 0}$.

\begin{lemma}
  \label{lem:conv-1}
  With the previous notations, the stochastic processes, $\;\mu^{K,1}\;$ and $\;\mu^{K,2}\;$, in $\:\mathbb D([0,\infty),\mathcal M(\mathcal X))$
  defined for all $t\geq 0$ by
  \begin{align}
    \mu_t^{1,K}&=(\bar z(R_{j}^{K})- (M\epsilon+\overline{C}) \sigma_K)\delta_{R_{i}^{K,1}},  &\text{ for }
    t\in[\theta_{i}^{K,1},\theta_{i+1}^{K,1} )\cap[\theta_{j}^K,\theta_{j+1}^K), \\
    \mu_t^{2,K}&=(\bar z(R_{j}^{K})+ (M\epsilon+\lceil 3/\alpha\rceil\epsilon+\overline{C})\sigma_K)\delta_{R_{i}^{K,2}} , &\text{ for } t\in[\theta_{i}^{K,2},\theta_{i+1}^{K,2} )\cap[\theta_{j}^K,\theta_{j+1}^K),
  \end{align}
  for some constant $\overline{C}$ independent of $K,x,\e$, satisfy for all $T>0$
  \be
  \lim_{K\rightarrow \infty} \mathbb P\left [\forall\: t\leq \tfrac T{Ku_K\sigma_K^2}:\quad \mu_t^{1,K} \preccurlyeq \nu_t^{K} \preccurlyeq \mu_t^{2,K} \:\right]=1.
  \ee
\end{lemma}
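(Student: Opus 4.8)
The plan is to prove Lemma~\ref{lem:conv-1} by patching together, across successive invasion cycles, the comparison estimates already established in Theorems~\ref{1.Phase} and~\ref{Thm_2.Phase}, using the strong Markov property at the fixation times $\theta^K_{i,\text{fixation}}$. First I would set up the inductive structure: on the event
\[
\mathcal{G}_n\equiv\Big\{\forall i\le n:\ R^{K,1}_i\preccurlyeq R^K_i\preccurlyeq R^{K,2}_i,\ \theta^{K,2}_i\le\theta^K_i\le\theta^{K,1}_i,\ \text{and the conclusions of Thm.~\ref{1.Phase},~\ref{Thm_2.Phase} hold in cycle }i\Big\},
\]
I claim $\mu^{1,K}_t\preccurlyeq\nu^K_t\preccurlyeq\mu^{2,K}_t$ for all $t\le\theta^K_{n+1}\wedge\exp(K^\alpha)$. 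The two ingredients of the order $\preccurlyeq$ on measures (total mass comparison and support comparison) are handled separately. For the support: during cycle $j$ (i.e.\ $t\in[\theta^K_j,\theta^K_{j+1})$) the support of $\nu^K_t$ consists, with probability $1-o(\sigma_K)$, of $R^K_j$ together with at most $\lceil3/\alpha\rceil$ unsuccessful mutant traits, all within $A\sigma_K$ of $R^K_j$ in the first phase and within $2A\sigma_K$ in the second; since $R^{K,1}_i\le R^K_j-A\sigma_K$ is forced by the construction (one checks $R^{K,1}_i\le R^K_i$ and $R^K_i$ differs from $R^K_j$ by at most the mutant spread), and similarly $R^{K,2}_i\ge R^K_j+(\text{spread})$, the single atoms of $\mu^{1,K}$ and $\mu^{2,K}$ lie respectively below and above $\text{Supp}(\nu^K_t)$. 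This is exactly where the constant $\overline C$ in the masses is chosen: it absorbs the $O(\sigma_K)$ discrepancy between $\bar z(R^K_j)$ and the total mass of $\nu^K_t$ coming from the $\lceil3/\alpha\rceil$ unsuccessful mutants and from the $M\epsilon\sigma_K$-neighborhood estimates, so that $\bar z(R^K_j)-(M\epsilon+\overline C)\sigma_K\le\langle\nu^K_t,\mathds1\rangle\le\bar z(R^K_j)+(M\epsilon+\lceil3/\alpha\rceil\epsilon+\overline C)\sigma_K$ holds throughout cycle $j$. For the total-mass comparison I would invoke Lemma~\ref{exit_from_domain}, Lemma~\ref{exit_from_domain_2} and Remark~\ref{Remark_2Phase} in the first phase and Lemmata~\ref{Step2}--\ref{Step5} in the second, which together say precisely that $\langle\tilde\nu^K_t,\mathds1\rangle$ stays in an $O(\sigma_K)$-tube around $\bar z(R^K_j)$ for the whole duration $\theta^K_j\le t\le\theta^K_{j+1}$, with probability $1-o(\sigma_K)$ per cycle, up to the cutoff $\exp(K^\alpha)$.

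Next I would control the timing: the construction~\eqref{eq:conv} sets $\theta^{K,1}_{i+1}-\theta^{K,1}_i=E^{K,1}_{i+1}+6\ln(K)\sigma_K^{-1-\alpha/2}$ and $\theta^{K,2}_{i+1}-\theta^{K,2}_i=E^{K,2}_{i+1}$, and by Lemma~\ref{bounds_for_invasion_time} together with the fixation-time bound of Lemma~\ref{Step5} (which adds at most $5\ln(K)\sigma_K^{-1-\alpha/2}$, hence the $6$) we have $E^{K,2}_{i+1}\le\theta^K_{i+1}-\theta^K_i\le E^{K,1}_{i+1}+6\ln(K)\sigma_K^{-1-\alpha/2}$ with probability $1-o(\sigma_K)$ conditionally on $\mathcal{G}_i$. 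Summing, $\theta^{K,2}_i\le\theta^K_i\le\theta^{K,1}_i$ for all $i\le n$; this is what makes the \emph{lower} comparison process $\mu^{1,K}$ (whose atom changes later, at $\theta^{K,1}_i\ge\theta^K_i$) stay below $\nu^K$, since the new resident $R^K_i$ has already appeared in $\nu^K$ while $\mu^{1,K}$ still carries the old, smaller trait; symmetrically $\mu^{2,K}$ switches to the larger new trait at $\theta^{K,2}_i\le\theta^K_i$, hence earlier than $\nu^K$, keeping it above. One has to be slightly careful at the overlap of the two partitions $\{[\theta^{K,1}_i,\theta^{K,1}_{i+1})\}$ and $\{[\theta^K_j,\theta^K_{j+1})\}$ appearing in the definition of $\mu^{1,K}$: on $[\theta^{K,1}_i,\theta^{K,1}_{i+1})\cap[\theta^K_j,\theta^K_{j+1})$ one needs $i\le j$ (so that $R^{K,1}_i\preccurlyeq R^K_j$) and the mass $\bar z(R^K_j)-(M\epsilon+\overline C)\sigma_K$ is the correct lower bound for the current mass of $\nu^K$ — both follow from $\theta^{K,1}_i\le\theta^K_i$ and monotonicity of the indices.

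To make all of this a $K\to\infty$ limiting statement I would, after fixing $T>0$, bound the number of invasion cycles occurring before time $T/(Ku_K\sigma_K^2)$: since each $E^{K,j}_{i+1}$ has mean of order $(Ku_K\sigma_K)^{-1}$, the number of cycles $N^K\equiv\sup\{j:\theta^K_j\le T/(Ku_K\sigma_K^2)\}$ is $O(\sigma_K^{-1})$ with probability $1-o(1)$ (a routine large-deviation bound on a sum of $\asymp\sigma_K^{-1}$ i.i.d.\ exponentials, plus Remark~\ref{remark_main_thm}(ii) on uniform integrability of the total mass, plus the cutoff $\exp(K^\alpha)\gg$ everything by~\eqref{conv2}). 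Each cycle contributes a failure probability $o(\sigma_K)$, so a union bound over $O(\sigma_K^{-1})$ cycles gives total failure probability $o(1)$, which is exactly $\lim_{K\to\infty}\mathbb P[\forall t\le T/(Ku_K\sigma_K^2):\ \mu^{1,K}_t\preccurlyeq\nu^K_t\preccurlyeq\mu^{2,K}_t]=1$. The main obstacle, and the place where I would spend the most care, is the bookkeeping at the \emph{interface} between phases and cycles: one must verify that applying the strong Markov property at $\theta^K_{i,\text{fixation}}$ really puts the process in the state required by Assumption~\ref{ini_con} (monomorphic, mass within $(M/3)\epsilon\sigma_K$ of equilibrium) so that Theorem~\ref{1.Phase} can be re-applied with the \emph{same} constant $M$, and that the $o(\sigma_K)$ error terms, once summed over $O(\sigma_K^{-1})$ cycles, still vanish — this forces one to track that the $o(\sigma_K)$ is uniform in the (random) resident trait $R^K_i$, which is why all the lemmata were stated with $M$ independent of $\epsilon$, $K$ and $R^K$.
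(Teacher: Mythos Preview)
Your approach is essentially the paper's own: a union bound over the $O(\sigma_K^{-1})$ invasion cycles before time $T/(Ku_K\sigma_K^2)$, each failing with probability $o(\sigma_K)$, combined with the support and mass comparisons drawn from Theorems~\ref{1.Phase} and~\ref{Thm_2.Phase} and the $\phi,\psi$ estimates of Section~\ref{2.Phase}. Two minor slips to clean up: in your last paragraph the inequality ``$\theta^{K,1}_i\le\theta^K_i$'' should read $\theta^{K,1}_i\ge\theta^K_i$ (as you correctly wrote earlier --- this is the direction actually needed to deduce $i\le j$), and the justification ``$R^K_i$ differs from $R^K_j$ by at most the mutant spread'' is false for general $i,j$ and should simply be replaced by the monotonicity $i\le j\Rightarrow R^K_i\le R^K_j$ (which, together with $R^{K,1}_i\le R^K_i-A\sigma_K$ from the initial offset $R^{K,1}_0=R^K_0-A\sigma_K$ and the increment comparison, gives what you want).
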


Note that the support of $\mu^{j,K}$, $j=1,2$, is defined from the sequences $(R^{K,j}_i)_{i\geq 0}$ and $(\theta^{K,j}_i)_{i\geq 1}$
but the mass of $\mu^{j,K}$ is defined from the sequences $(R^{K}_i)_{i\geq 0}$ and $(\theta^{K}_i)_{i\geq 1}$.

\begin{proof}
  Let us fix $T>0$ and $\Gamma>0$. Since each of the steps previously described holds with probability $1-o(\sigma_K)$, we deduce
  that the above construction can be done on a so-called \emph{good event} of probability $1-o(1)$ for all integers
  $i\leq\Gamma/\s_K$. Since in addition $a^{K,\e}_2(x)p^\e_2(x)$ is uniformly lower bounded by a positive constant $\underline{a}$ on
  ${\cal X}$, the random variables \ $E^{K,2}_i$ can be coupled with i.i.d.\ exponential ones of parameter $\underline{a}K u_K\sigma_K$, and hence
  $\mathbb{P}\big[\:\theta^{K,2}_{\lfloor \Gamma/\sigma_K\rfloor}<T/(K u_K\sigma_K^2)\:\big]$ is smaller than the probability that a Poisson
  process with parameter $\underline{a}K u_K\sigma_K$ is larger that $\lfloor\Gamma/\sigma_K\rfloor$ at time $T/(Ku_K\sigma_K^2)$. By
  the law of large numbers for Poisson processes, we deduce that, provided that $\Gamma>T\underline{a}$ (which we assume true in the
  sequel),
  \begin{equation}
    \label{eq:conv-bis}
    \lim_{K\rightarrow\infty}\mathbb{P}\left[\theta^{K,2}_{\lfloor
        \Gamma/\sigma_K\rfloor}< \frac{T}{K u_K\sigma_K^2}\right]=0.  
  \end{equation}
 
  Let us recall that, on the previous good event of probability $1-o(1)$, the number, the trait and the size of the living mutant
  populations and the size of the resident population are controlled at any time in the $i$-th first phase
  (Lemmata~\ref{exit_from_domain} and~\ref{2.succ_mutant}). In addition, during the $i$-th second phase, the number, trait and size of living
  mutant populations are controlled (see all the Lemmas of Section~\ref{2.Phase}), the total mass of the population stays within the
  $M\e\sigma_K$-neighborhood of $\phi(y)$ or $\psi(y)$ for some $y\in[0,\bar{z}(R^K_i)]$ (Lemmata~\ref{Step2} and~\ref{Step3}). Since
  $|\phi(y)-\bar{z}(R^K_i)|\leq \overline{C}\sigma_K$ and $|\psi(y)-\bar{z}(R^{K}_i)|\leq\overline{C}\sigma_K$ for some constant
  $\overline{C}$, as seen in~\eqref{phi.1} and~\eqref{psi.1}, and since the sequences $(R^{j,K}_i)_{i\geq 0}$ for $j=1,2$ and
  $(R^K_i)_{i\geq 0}$ are all increasing on the good event, we deduce the required comparison between the supports of
  $\mu_t^{1,K}$, $\nu_t^{K}$ and $\mu_t^{2,K}$ for $t\leq\frac T{Ku_K\sigma_K^2}$, on the good event. Since we used $\bar
  z(R_{j}^{K})$ to define the masses of $\mu_t^{1,K}$ and $\mu_t^{2,K}$, the required comparison between the masses is also clear.
\end{proof}


 Note that, since the function $\bar z$ may not be non-decreasing, replacing $\bar z(R_{j}^{K})$ by $\bar z(R_{j}^{K,1})$ in the
definition of $\mu^{1,K}_t$ may not imply the required comparison between the masses of $\mu_t^{1,K}$, $\nu_t^{K}$ and $\mu_t^{2,K}$.
\medskip

The next goal is now to prove the convergence of both processes $\mu^{K,j}_{t/K u_K\sigma_K^2}$ for $j=1,2$ to $\bar{z}(x_t)\delta_{x_t}$ in
probability in $L^\infty(\mathcal M(\mathcal X),\|\cdot\|_0)$. For this, we will use standard convergence results of Markov jump
processes. However, the two processes $\mu^{K,j}$, $j=1,2$ are not Markov because the $i$-th jump rates and transition probabilities
defined above depend on $R^K_i$ which is close, but different from $R^{K,j}_i$. Therefore, we introduce a small parameter $\eta>0$,
and we shall construct two Markov processes ${\mu}^{K,j,\e,\eta}$, $j=1,2$ in $\mathbb D([0,\infty),\mathcal M(\mathcal X))$ such that
\begin{equation}
  \label{eq:conv-7}
  \lim_{K\rightarrow+\infty}\mathbb{P}\left[{\mu}^{K,1,\e,\eta}_{(t-1/(Ku_K\sigma_K))\vee 0} \preccurlyeq \mu_t^{1,K} \preccurlyeq \nu_t^{K} \preccurlyeq \mu_t^{2,K} \preccurlyeq
    {\mu}^{K,2,\e,\eta}_t,\ \forall t\leq\frac{T}{K u_K\sigma_K^2}\wedge S^K_\eta\right]=1,
\end{equation}
 where $S^K_\eta$ is the first time where the distance between the support of
$\mu^{K,1,\e,\eta}_t$ and $\mu^{K,2,\e,\eta}_t$ is larger than $\eta$. The last equation will be proved below in Section~\ref{sec:step-2-conv}. The
  time-shift of $-1/(Ku_K\sigma_K)$ in ${\mu}^{K,1,\e,\eta}$ is due to the terms $6\ln(K)\s_K^{-1-\a/2 }$ in~\eqref{eq:conv}. We will next study the convergence of these two Markov
processes when $K\rightarrow\infty$ and prove in Section~\ref{sec:step-3-conv} that, for a convenient choice of $\eta$, there exists some $T_0>0$ independent of
$K,x,\e,\eta$ such that
\begin{equation}
  \label{eq:conv-12}
  \lim_{K\rightarrow+\infty}\mathbb{P}\left[S^K_\eta<\frac {T_0}{Ku_K\sigma_K^2}\right]=0.
\end{equation}

\subsection{Proof of~\eqref{eq:conv-7}}
\label{sec:step-2-conv}

For all $x\in{\cal X}$, we define $(\bar{r}^{\e,\eta}_1(x,h),1\leq h\leq A)$ and $(\bar{r}^{\e,\eta}_2(x,h),1\leq h\leq A)$ by, for all $1\leq
\ell\leq A$,
\be
\sum_{h=1}^\ell\bar{r}^{\e,\eta}_1(x,h)\equiv \left[\sum_{h=1}^\ell(r^\e_1(x,h)+C^r_{\text{Lip}}\eta)\right]\wedge 1\geq
\sup_{y\in[x,x+\eta]}\sum_{h=1}^\ell r^\e_1(y,h)
\ee
and
\be
\sum_{h=1}^\ell\bar{r}^{\e,\eta}_2(x,h)\equiv \left[\sum_{h=1}^\ell(r^\e_2(x,h)-C^r_{\text{Lip}}\eta)\right]\vee 0\leq
\inf_{y\in[x,x+\eta]}\sum_{h=1}^\ell r^\e_1(y,h).
\ee
Note that $\bar r^{\e,\eta}_1(x,\cdot)$ and $\bar r^{\e,\eta}_2(x,\cdot)$ are probability distributions on $\{1,\ldots,A\}$ for all
$x\in{\cal X}$ and that, by standard coupling arguments, for all $x<y$ such that $y-x\leq \eta$, the distribution $\bar
r^{\e,\eta}_1(x,\cdot)$ is stochastically dominated by the distribution $r^\e_1(y,\cdot)$ and the distribution $r^\e_2(x,\cdot)$ is
stochastically dominated by the distribution $\bar r^{\e,\eta}_2(y,\cdot)$. We define similarly 
\be
\bar a^{K,\e,\eta}_1(x)
\equiv a^{K,\e}_1(x)p^\e_1(x)-C^a_{\text{Lip}}\eta \leq\inf_{y\in[x,x+\eta]\cap{\cal X}} a^{K,\e}_1(y)p^\e_1(y) ,
\ee
 and
 \be
\bar a^{K,\e,\eta}_2(x)\equiv a^{K,\e}_2(x)p^\e_2(x)+C^a_{\text{Lip}}\eta \geq\sup_{y\in[x-\eta,x]\cap{\cal X}} a^{K,\e}_2(y)p_2(y),
\ee
 where
$C^{a}_{\text{Lip}}$ is a uniform Lipschitz constant for the functions $a^{K,\e}_jp^\e_j$, $j=1,2$. Note that $a^{K,\e,\eta}_1(x)>0$ for all
$x\in{\cal X}$ if $\eta$ is small enough.

It is then clear that there exist two Markov chains $(\bar R^{K,j,\eta}_i)_{i\geq 0}$, $j=1,2$, with initial condition $\bar
R^{K,j,\eta}_0=R^{K,j}_0$ and with transition probabilities $\bar{r}^{K,\e,\eta}_j(x,h)$ from $x$ to $x+h$, such that, for all $i\geq 0$
satisfying $\bar{R}^{K,2,\eta}_{i}-\bar{R}^{K,1,\eta}_{i}\leq\eta$,
\be
\bar{R}^{K,1,\eta}_{i+1}-\bar{R}^{K,1,\eta}_{i}\leq R^{K,1}_{i+1}-R^{K,1}_{i}\quad\text{and}\quad
R^{K,2}_{i+1}-R^{K,2}_{i}\leq\bar{R}^{K,2,\eta}_{i+1}-\bar{R}^{K,2,\eta}_{i}\leq R^{K,1}_{i+1}-R^{K,1}_{i}.
\ee
Similarly, there exists random variables \ $\bar E^{K,j,\eta}_{i+1}$, $j=1,2$, independent and exponentially distributed with parameters $\bar
a^{K,\e,\eta}_j(\bar R^{K,j,\eta}_i)$ conditionally on $(\bar R^{K,j,\eta}_i)_{i\geq 0}$, such that $\bar E^{K,2,\eta}_{i+1}\leq E^{K,2}_{i+1}$ and
$E^{K,1}_{i+1}\leq\bar E^{K,1,\eta}_{i+1}$. We then define
 $\bar\theta^{K,j,\eta}_{i+1}-\bar\theta^{K,j,\eta}_i=E^{K,j,\theta}_{i+1}$
with $\bar\theta^{K,j,\eta}_0=0$.

Since the function $\bar z$ is $C^{\bar z}_{\text{Lip}}$-Lipschitz, it is  clear that~\eqref{eq:conv-7} is satisfied for the processes
\bea
&&\bar\mu^{K,1,\e,\eta}_t=(\bar z(\bar X^{K,1,\eta}_t)- (M\epsilon+\bar C) \sigma_K-C^{\bar z}_{\text{Lip}}\eta)\delta_{\bar X^{K,1,\eta}_t}\\
\text{and }&&
{\mu}^{K,2,\e,\eta}_t=(\bar z(X^{K,2,\eta}_t)+ (M\epsilon+\lceil 3/\alpha\rceil\epsilon+\bar C)\sigma_K+C^{\bar z}_{\text{Lip}}\eta)\delta_{X^{K,2,\eta}_t},
\eea
where
\be
  \bar X^{K,1,\eta}_t  =\bar R_{i}^{K,1,\eta}, \; \text{
    for } t\in\big [\bar
  \theta_{i}^{K,1,\eta}+6i\ln(K)\sigma_K^{-1-\alpha/2},\:\bar\theta_{i+1}^{K,1,\eta}+ 6(i+1)\ln(K)\sigma_K^{-1-\alpha/2}\big), 
  \ee
  and 
  \be
  X^{K,2,\eta}_t=\bar R_{i}^{K,2,\eta} ,\, \text{ for } t\in[\bar\theta_{i}^{K,2,\eta},\bar\theta_{i+1}^{K,2,\eta} ).
\ee
By construction, the processes $X^{K,2,\eta}$ and ${\mu}^{K,2,\eta}$ are Markov jump processes, but the process $\bar X^{K,1,\eta}$ is
not because of the terms $6\ln(K)\sigma_K^{-1-\alpha/2}$ involved in its definition. However, the process ${\mu}^{K,1,\e,\eta}_t=(\bar
z(X^{K,1,\eta}_t)- \epsilon \sigma_K M-C^{\bar z}_{\text{Lip}}\eta)\delta_{X^{K,1,\eta}_t}$ is Markov, where
\be
X^{K,1,\eta}_t=\bar R_{i}^{K,1,\eta}, \quad \text{
    for } t\in[\bar
  \theta_{i}^{K,1,\eta},\bar\theta_{i+1}^{K,1,\eta}).
\ee
The proof of~\eqref{eq:conv-bis} above also applies to the processes ${\mu}^{K,1,\e,\eta}$ and $\bar \mu^{K,1,\e,\eta}$. Since
in addition the support of ${\mu}_t^{K,1,\e,\eta}$ is non-decreasing, it follows that
${\mu}_{(t-6\Gamma\ln(K)\sigma_K^{-2-\alpha/2})\vee 0}^{K,1,\e,\eta}\preccurlyeq\bar{\mu}_t^{K,1,\e,\eta}$ for all $t\leq
T/(Ku_K\sigma_K^2)$ with probability $1+o(1)$. Our assumption~\eqref{conv2} entails~\eqref{eq:conv-7}.
%
%
\subsection{Convergence of $X^{K,j,\eta}$ when $K\rightarrow+\infty$ and proof of~\eqref{eq:conv-12}}
\label{sec:step-3-conv}

The two Markov processes $X^{K,1,\eta}_{t/(Ku_K\sigma_K)}$ and $X^{K,2,\eta}_{t/(Ku_K\sigma_K)}$ fit exactly to the framework and
assumptions of Theorem~2.1 of Chapter 11 of~\cite{E_MP}: their state spaces are (up to a translation) a subset of $\sigma_K\mathbb{Z}$,
and their transition rates from $z$ to $z+h\sigma_K$ have the form $\sigma_K^{-1}[\beta_h(z)+O(\sigma_K)]$ for some Lipschitz
functions $\beta_h$. For such a process $X$, provided $X_0$ converges a.s.\ to $x_0$, the process $(X_{t/\sigma_K},t\geq 0)$
converges when $\sigma_K\rightarrow 0$ almost surely in $L^\infty([0,T])$ for all $T>0$ to the unique deterministic solution of the
ODE\: $dx(t)/dt=\sum_{h}h\beta_h(x)$ with $x(0)=x_0$. In our situation, we obtain, for $j=1,2$, that
\begin{equation}
  \label{eq:conv-15}
  \lim_{K\rightarrow+\infty}\sup_{t\in[0,T]}\left|X^{K,j,\eta}_{t/(Ku_K\sigma^2_K)}-x_j(t)\right|=0\quad\text{a.s.,}
\end{equation}
where $x_1$ and $x_2$ are the unique solutions such that $x_1(0)=x_2(0)=x$ of the ODEs
\be
\frac{d x_1(t)}{dt}=\big[\bar{z}(x_1(t))b(x_1(t))m(x_1(t))p^\e_1(x_1(t))-C^a_{\text{Lip}}\eta\big]\sum_{h=1}^A h\bar r^{\e,\eta}_1(x_1(t),h)
\ee
and
\be
\frac{d x_2(t)}{dt}=\big[\bar{z}(x_2(t))b(x_2(t))m(x_2(t))p^\e_2(x_2(t))+C^a_{\text{Lip}}\eta\big]\sum_{h=1}^A h\bar r^{\e,\eta}_2(x_2(t),h).
\ee

\begin{lemma}
  \label{lem:last-lemma}
  For all $T>0$, and for $j=1,2$,
  \be
  \sup_{t\in[0,T]}|x_j(t)-x_t|\leq CTe^{CT}(\eta+\e),
  \ee
  for a constant $C$ independent of $x$, $T$, $\e$ and $\eta$, where $x_t$ is the solution of the CEAD~\eqref{CEAD} with initial
  condition $x_0=x$.
\end{lemma}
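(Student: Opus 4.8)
The plan is to compare each ODE for $x_j$ with the CEAD by a Gronwall argument, controlling separately the error coming from $\eta$ (the coupling perturbation of rates and transition probabilities) and the error coming from $\e$ (the $O(\e)$ discrepancy between $q^\e_j$ and the true fitness gradient term). First I would write the CEAD~\eqref{CEAD} in the form $dx_t/dt = G(x_t)$ with
\be
G(x)=\sum_{h=1}^A h\,[h\,m(x)\,\overline z(x)\,\partial_1 f(x,x)]_+ M(x,h) = \sum_{h=1}^A h\,b(x)m(x)\overline z(x)\,q^0_1(x,h)\,M(x,h),
\ee
using $\partial_1 f(x,x)>0$, $h\ge 1$ and the definition $q^0_j(x,h)=h\,\partial_1 f(x,x)/b(x)$ (the $\e=0$ version of~\eqref{eq:def-q}); note $q^0_1=q^0_2$ and $p^0_1=p^0_2=:p^0$, and $\bar r^{0,0}_j(x,\cdot)$ is just the normalized measure $M(x,h)q^0_1(x,h)/p^0(x)$, so indeed $dx_t/dt = \overline z(x_t)b(x_t)m(x_t)p^0(x_t)\sum_h h\,\bar r^{0,0}_1(x_t,h)$. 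Then I would write $G_j^{\e,\eta}(x)$ for the right-hand side of the ODE defining $x_j$, and show $\sup_{x\in{\cal X}}|G_j^{\e,\eta}(x)-G(x)|\le C(\e+\eta)$ for a constant $C$ depending only on the model parameters. This bound splits into three pieces: (i) replacing $a_j^{K,\e}(x)p^\e_j(x)\pm C^a_{\text{Lip}}\eta$ by $\overline z(x)b(x)m(x)p^0(x)$ costs $O(\e\sigma_K)+O(\eta)+O(\e)$ since $a_j^{K,\e}(x)=\overline z(x)b(x)m(x)+O(\e\sigma_K)$ and $p^\e_j(x)=p^0(x)+O(\e)$ from~\eqref{eq:def-q}; (ii) replacing $\bar r^{\e,\eta}_j(x,\cdot)$ by $\bar r^{0,0}_1(x,\cdot)$ costs $O(\e+\eta)$ because $\bar r^{\e,\eta}_j$ differs from $r^\e_j$ by at most $C^r_{\text{Lip}}\eta$ in total variation (by its very definition via partial sums) and $r^\e_j$ differs from $\bar r^{0,0}_1$ by $O(\e)$, again from~\eqref{eq:def-q}; and all the functions involved are bounded and the trait space is compact, so products of these perturbations stay $O(\e+\eta)$.

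Next I would establish that $G$ and the $G_j^{\e,\eta}$ are uniformly Lipschitz on ${\cal X}$ with a constant $C$ independent of $\e,\eta$: this follows from Assumption~\ref{ass}(iv), which makes $b,d,m,c\in C^2$, hence $\partial_1 f(x,x)$, $\overline z(x)$, $p^\e_j(x)$ and the (truncated, renormalized) transition probabilities $\bar r^{\e,\eta}_j(x,\cdot)$ all Lipschitz with constants controlled uniformly in $\e,\eta$ (the truncations $\wedge 1$ and $\vee 0$ in the definition of $\bar r^{\e,\eta}_j$ are $1$-Lipschitz operations). Then the standard comparison estimate gives, writing $\delta_j(t)=|x_j(t)-x_t|$ and using $x_j(0)=x_0=x$,
\be
\delta_j(t)\le \int_0^t |G_j^{\e,\eta}(x_j(s))-G(x_j(s))|\,ds + \int_0^t |G(x_j(s))-G(x_s)|\,ds \le C(\e+\eta)t + C\int_0^t \delta_j(s)\,ds,
\ee
and Gronwall's lemma yields $\delta_j(t)\le C(\e+\eta)t\,e^{Ct}$, which is the claimed bound $\sup_{t\in[0,T]}|x_j(t)-x_t|\le CTe^{CT}(\eta+\e)$ (after relabelling the constant). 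The constant $C$ depends only on $\sup/\inf$ of the model parameters and their first two derivatives on the compact set ${\cal X}$, hence is independent of $x$, $T$, $\e$, $\eta$.

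The main obstacle, though largely bookkeeping, is keeping the $\e$-dependence and $\eta$-dependence genuinely uniform: one must check that the Lipschitz constants of $p^\e_j$, $a^{K,\e}_j p^\e_j$ and of the renormalized truncated measures $\bar r^{\e,\eta}_j(x,\cdot)$ do not blow up as $\e,\eta\to 0$, which requires that $p^0(x)=\sum_h q^0_1(x,h)M(x,h)$ stay bounded away from $0$ on ${\cal X}$ — this is exactly where Assumption~\ref{ass3} (uniform positivity of $\partial_1 f(x,x)$, giving $p^0(x)\ge \inf_x \partial_1 f(x,x)/\overline b >0$) is used, so that division by $p^\e_j(x)$ in the definition of $\bar r^{\e,\eta}_j$ is harmless for $\e$ small. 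Once this uniform lower bound and the $C^2$-regularity are in hand, all the perturbation estimates are routine Taylor expansions and the Gronwall step is immediate.
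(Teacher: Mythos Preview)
Your proof is correct and follows essentially the same Gronwall-based strategy as the paper: bound the difference between the drift of the CEAD and the drift of the perturbed ODE uniformly by $C(\e+\eta)$ using the explicit formulas for $q^\e_j$, $p^\e_j$, $\bar r^{\e,\eta}_j$ together with Assumption~\ref{ass3} to keep $p^\e_j$ bounded away from zero, then use Lipschitz continuity and Gronwall. The paper organizes the estimate slightly differently---peeling off the $\eta$-terms first, then the Lipschitz comparison, then the $\e$-terms---but the content is the same; one minor slip in your write-up is that the limiting ODE for $x_j$ already has prefactor $\bar z(x)b(x)m(x)p^\e_j(x)\pm C^a_{\text{Lip}}\eta$ (the $\sigma_K$-dependent part of $a^{K,\e}_j$ has vanished in the $K\to\infty$ limit), so your point~(i) is actually simpler than stated.
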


\begin{proof}
  We only write the proof for $j=1$, the case $j=2$ being similar. Since the functions $\bar r^{\e,\eta}_j$, $j=1,2$, $\bar z$, $b$,
  $m$ and $p_1$ are bounded by constants independent of $K,\e,\eta$, we have for all $t\in[0,T]$ and for a constant $C>0$ that may
  change from line to line,
  \begin{align}\nonumber
    |x_t-x_1(t)| & \leq CC^a_{\text{Lip}}\eta T+\int_0^t\Bigg|(\bar{z}bmp^\e_1)(x_1(s))\sum_{h=1}^A
    h\bar{r}^{\e,\eta}_1(x_1(s),h) \\\nonumber
    &\qquad\qquad -(\bar{z}bmp^\e_1)(x_s)\sum_{h=1}^A \frac{h^2M(x_s,h)\partial_1
      f(x_s,x_s)}{b(x_s)p^\e_1(x_s)}\Bigg|ds \\\nonumber
    & \leq C(C^a_{\text{Lip}}+AC^r_{\text{Lip}})T\eta+C\int_0^t|x_s-x_1(s)|ds \\ & \qquad\qquad
    +C\int_0^t\sum_{h=1}^A\left|r^{\e}_1(x_s,h)-\frac{hM(x_s,h)\partial_1
        f(x_s,x_s)}{b(x_s)p^\e_1(x_s)}\right|ds,
  \end{align}
  where the last inequality follows from the uniform Lipschitz-continuity of all functions involved in the computation. Now,
  $|p^\e_2(x)-p^\e_1(x)|\leq C\e$ and $p_j^\e(x)\geq c>0$ for $j=1,2$, for some constants $C,c>0$ independent of $\e$ and $x$. Hence,
  there exists a constant $C$ such that
  \begin{align}\nonumber
    |x_t-x_1(t)| & \leq CT(\eta+\e)+C\int_0^t|x_s-x_1(s)|ds \\ & \qquad
    +C\int_0^t\sum_{h=1}^A\left|q^\e_1(x_s,h)-h\frac{\partial_1
        f(x_s,x_s)}{b(x_s)}\right|M(x_s,h)ds.
  \end{align}
  In view of~\eqref{eq:def-q}, we obtain $|x_t-x_1(t)|\leq CT(\eta+\e)+C\int_0^t|x_s-x_1(s)|ds$. Gronwall's lemma ends the
  proof of Lemma~\ref{lem:last-lemma}.
\end{proof}

In view of Lemma~\ref{lem:last-lemma}, there exists $T_0>0$ independent of $x,\e,\eta$ such that, for all $\eta\geq\e$,
$\sup_{t\in[0,T_0]}|x_j(t)-x_t|\leq\eta/4$. Let us fix $\eta=\e$. Combining~\eqref{eq:conv-15} with the last inequality
entails~\eqref{eq:conv-12}.

\subsection{End of the proof}
\label{sec:end-conv}
\begin{proof}[Proof of Theorem \ref{main_thm}]
Defining  $\bar\mu^{K,1,\e}={\mu}^{K,1,\e,\e}$ and $\mu^{K,2,\e}={\mu}^{K,2,\e,\e}$,  and 
combining~\eqref{eq:conv-7} and~\eqref{eq:conv-12}, we see that we have
defined a constant $T_0>0$ such that
\begin{equation}
  \lim_{K\rightarrow+\infty}\mathbb{P}\left[\bar{\mu}^{K,1,\e}_{(t-1/(Ku_K\sigma_K))\vee 0} \preccurlyeq \mu_t^{1,K} \preccurlyeq \nu_t^{K} \preccurlyeq \mu_t^{2,K} \preccurlyeq
    {\mu}_t^{K,2,\e},\ \forall t\leq\frac{T_0}{K u_K\sigma_K^2}\right]=1,
\end{equation}
This is~\eqref{eq:conv-1} with $\mu^{K,1,\e}_t=\bar{\mu}^{K,1,\e}_{(t-1/(Ku_K\sigma_K))\vee 0}$. 
It only remains to check~\eqref{eq:conv-2}.

Using that $\eta=\e$, we get
\begin{align}
&  \left\|\mu^{K,1,\e}_{t/Ku_K\sigma_K^2}-\bar{z}(x(t))\delta_{x(t)}\right  \|_0 \\
    & \leq \nonumber
    C\left[\e+\sigma_K+\left|\bar{z}(x_t)-\bar{z}\left(X^{K,1,\eta}_{(t-\sigma_K)\vee
            0/Ku_K\sigma_K^2}\right)\right|+|x_t-x_1((t-\sigma_K)\vee 0)|\right. \\\nonumber
    & \qquad+\left.\left|X^{K,1,\eta}_{(t-\sigma_K)\vee 0/Ku_K\sigma_K^2}-x_1((t-\sigma_K)\vee 0)\right|\right] \\\nonumber
    & \leq C'\left[\e+\sigma_K+\sup_{t\in[0,T]}\left(|x_{(t-\sigma_K)\vee 0}-x_t|+|x_t-x_1(t)|+
      \left|X^{K,1,\eta}_{t/Ku_K\sigma_K^2}-x_1(t)\right|\right)\right],
  \end{align}
for some finite  constants $C,C'>0$. The analogous estimate holds for  for $\mu^{2,K,\eta}_{t/Ku_K\sigma_K^2}$. 
Setting for example
$\delta(\e)=\sqrt{\e}$,~\eqref{eq:conv-2} follows from~\eqref{eq:conv-15}, 
Lemma~\ref{lem:last-lemma} and the uniform continuity of
$x_t$. This ends the proof of Theorem~\ref{main_thm}.
\end{proof}

\section{Appendix}
In this section, we state and prove several elementary results, which we used in the proof of our main theorem. Recall that 
$\Vert \:.\: \Vert^{}_0$ is the Kantorovich-Rubinstein norm on the vector space of finite, signed measures on $\mathcal X$, i.e.
\begin{equation}
		\Vert \mu_t \Vert^{}_0\equiv \sup\Big\{\int_{\mathcal X} f d\mu_t: f\in \text{Lip}_1(\mathcal X) \text{ with } \sup_{x\in\mathcal X}|f(x)|\leq 1\Big\},
\end{equation}		
		where $\text{Lip}_1(\mathcal X)$  is the space of Lipschitz continuous functions from $\mathcal X$ to $\mathbb R$. 
Let $\mathcal M_F(\mathcal X)$ be the set of non-negative finite Borel-measures on $\mathcal X$. 
\begin{proposition}\label{prop0} 
Let  $\{\nu^K,{K\geq 0}\}$ and $\mu$ be random elements in $\mathbb D\left([0,T],\mathcal M_F(\mathcal X)\right)$. 
	If, for all $\d>0$,
	\begin{equation}\label{conv_in_proba}
			\lim_{K\rightarrow \infty} \mathbb P\left [\:\sup_{0\leq t\leq T} \Vert \nu_t^{K}-\mu_t\Vert^{}_0>
			\d\:\right] = 0,
	 \end{equation}
		then $\nu^K$ converges in probability, as $K\rightarrow \infty$,  with respect to the Skorokhod topology 
on  $\mathbb D([0,T],\mathcal M(\mathcal X))$ to $\mu$.
\end{proposition}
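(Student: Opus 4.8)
The plan is to derive Skorokhod convergence in probability from the stronger hypothesis of uniform convergence in probability in the Kantorovich--Rubinstein norm. The key observation is that, along a subsequence on which $\sup_{t\in[0,T]}\|\nu^K_t-\mu_t\|_0\to 0$ almost surely, one actually gets convergence of $\nu^K$ to $\mu$ in $\mathbb D([0,T],(\mathcal M_F(\mathcal X),\|\cdot\|_0))$ equipped with the \emph{uniform} metric, and the uniform topology is finer than the Skorokhod topology; moreover the Skorokhod topology induced by $\|\cdot\|_0$ coincides with the Skorokhod topology induced by the weak topology on $\mathcal M_F(\mathcal X)$, since $\|\cdot\|_0$ metrizes the weak topology on bounded subsets of $\mathcal M_F(\mathcal X)$ (cf.\ \cite{B_MT}).

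First I would recall the standard characterization: a sequence of random elements $\zeta^K$ of a metric space $(S,d)$ converges in probability to $\zeta$ if and only if every subsequence has a further subsequence along which $d(\zeta^K,\zeta)\to 0$ almost surely. So it suffices to fix an arbitrary subsequence, extract from~\eqref{conv_in_proba} (applied with a sequence $\delta=1/n$ and a diagonal argument) a further subsequence $(K_j)$ along which $\sup_{0\le t\le T}\|\nu^{K_j}_t-\mu_t\|_0\to 0$ almost surely, and then argue that on the almost sure event where this holds, $\nu^{K_j}\to\mu$ in the Skorokhod topology on $\mathbb D([0,T],\mathcal M_F(\mathcal X))$.

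The second step is the deterministic implication: if $f^K,f\in\mathbb D([0,T],(\mathcal M_F(\mathcal X),\|\cdot\|_0))$ and $\sup_{t\in[0,T]}\|f^K_t-f_t\|_0\to 0$, then $f^K\to f$ in the Skorokhod topology. This is immediate by taking the identity time-change $\lambda=\mathrm{id}$ in the definition of the Skorokhod metric, which bounds the Skorokhod distance by the uniform distance. The only genuine point to check is that $\|\cdot\|_0$ induces the correct topology on $\mathcal M_F(\mathcal X)$: since all the measures $\nu^{K}_t$ and $\mu_t$ have total mass bounded (uniformly in $K$ and $t$, by the third moment/uniform integrability bounds recorded in Remark~\ref{remark_main_thm}(ix) and the fact that $\mu$ takes values of the form $\bar z(x_t)\delta_{x_t}$), they live in a fixed ball $\{\nu:\langle\nu,\mathds 1\rangle\le C\}$ of $\mathcal M_F(\mathcal X)$, on which the Kantorovich--Rubinstein norm metrizes the weak topology. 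Hence Skorokhod convergence for $\|\cdot\|_0$ is the same as Skorokhod convergence for the weak topology, which is the assertion of the proposition.

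The main (minor) obstacle is bookkeeping around total masses: one must ensure the limiting object $\mu$ and all the $\nu^K$ stay in a common bounded set so that the $\|\cdot\|_0$-topology and the weak topology agree — without this, $\|\cdot\|_0$-convergence and weak convergence could differ. This is handled by invoking the moment bounds already available; alternatively, one can observe that~\eqref{conv_in_proba} with $\mu$ of bounded mass forces, with probability tending to one, $\sup_t\langle\nu^K_t,\mathds 1\rangle$ to be bounded, which is all that is needed. Once this is in place the proof is a routine concatenation of the subsequence criterion, a diagonal extraction, and the trivial uniform-implies-Skorokhod bound.
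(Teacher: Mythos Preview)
Your argument is correct and rests on the same core observation as the paper: taking the identity time-change shows that the Skorokhod distance is bounded by $\sup_{t\in[0,T]}\|\mu_t-\nu_t\|_0$, so uniform convergence in probability implies Skorokhod convergence in probability. The paper simply writes this metric inequality directly and concludes, without passing through the subsequence criterion or an almost-sure extraction; since $d(\nu^K,\mu)\le\sup_t\|\nu^K_t-\mu_t\|_0$ is a pointwise inequality of random variables, the implication between the two convergence-in-probability statements is immediate.

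One remark on your ``bookkeeping'' detour: because $\mathcal X$ is a compact interval, the Kantorovich--Rubinstein norm $\|\cdot\|_0$ (with test functions bounded by $1$ and $1$-Lipschitz) metrizes the weak topology on all of $\mathcal M_F(\mathcal X)$, not merely on bounded subsets (this is the fact the paper cites from \cite{B_MT}). Hence your appeal to uniform mass bounds, while harmless, is unnecessary here.
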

\begin{proof}Let us equip  $\mathcal M_F(\mathcal X)$  with the topology of weak convergence. Obverse that this topology is metrizable with the Kantorovich-Rubinstein norm, see \cite{B_MT} Vol. II, p. 193. Let $\L$ be the  class of strictly increasing,
		 continuous mapping of $[0,T]$ onto itself. If $\l\in\L$, then $\l(0)=0$ and $\l(T)=T$.
		The Skorokhod topology on $\mathbb D\left([0,T],(\mathcal M_F(\mathcal X),\Vert \:.\:\Vert^{}_0)\right)$ is generated by the distance
	\be
		d(\mu,\nu) =\inf_{\l\in\L}\left\{\max\left\{\sup_{t\in[0,T]}|\l (t)-t|,\sup_{t\in[0,T]} \Vert \mu_t-\nu_{\l t} \Vert^{}_0\right\}\right\},
	\ee
		on $\mathbb D\left([0,T],(\mathcal M_F(\mathcal X),\Vert \:.\:\Vert^{}_0)\right)$, see e.g. \cite{B_CoPM}, Chap. 3.
		Since the identity lies in $\L$ it is clear that  $d(\mu,\nu)\leq \sup_{t\in[0,T]}\Vert \mu_t-\nu_{t} \Vert^{}_0 $. 
		Therefore, if a sequence of random elements with state space $\mathbb D([0,T], \mathcal M_F(\mathcal X))$ equipped with the metric induced  by the norm $\sup_{t\in[0,T]}\Vert \mu_t \Vert^{}_0 $
		 convergences in probability to $\mu$, it also convergences in probability to $\mu$ if  
		 $\mathbb D([0,T], \mathcal M_F(\mathcal X))$ is equipped with the metric $d$. 
\end{proof}

\begin{proposition} \label{prop1}  Fix $\e>0$ and let $\s_K$ a sequence in $K$ with $K^{-\nicefrac 1 2+\alpha}\ll\s_K\ll1$.
Let $Z_n$ be a Markov chain with state space $\mathbb N_0$ and with the following transition probabilities 
\be
\mathbb P[Z_{n+1}=j|Z_n=i]=p(i,j)=
	\begin{cases}
				 1 ,  										& \text{for } i=0		\text{ and } j=1,\\
				\tfrac 1 2-C_1i K^{-1}+C_2\epsilon \sigma_K, 	& \text{for } i\geq 1 	\text{ and } j=i+1,\\
				\tfrac 1 2+C_1i K^{-1}-C_2\epsilon \sigma_K, 	& \text{for } i\geq 1 	\text{ and } j=i-1,
	\end{cases}
\ee
for some constants $C_1>0$ and $C_2\geq 0$. 
Let $\tau_i$ be the first hitting time of level $i$ by $Z$  and let $\mathbb P_a$ denote the law of $Z$ conditioned on $Z_0=a$.
Then, for all $M\geq \tfrac {8C_2}{C_1}$  and for all $a\leq \tfrac1 3 M \epsilon \sigma_K K $ 
\be\label{mod_de}
\lim_{K \to \infty}{ e^{K^{2\alpha}}}\:{\mathbb P_a\left[\tau^{}_{\lceil M \epsilon \sigma_KK \rceil }<\tau^{}_0\right]} = 0.
\ee
\end{proposition}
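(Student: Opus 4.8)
\textbf{Proof plan for Proposition \ref{prop1}.}
The plan is to estimate the escape probability $\mathbb P_a[\tau_{\lceil M\e\s_K K\rceil}<\tau_0]$ by a standard one-dimensional potential-theoretic (gambler's ruin) computation for the Markov chain $Z_n$, and then to show that the resulting quantity is not merely small but super-polynomially small, more precisely $o(e^{-K^{2\a}})$. First I would introduce the harmonic function of the chain killed at $0$: set $h(0)=0$, $h(1)=1$ and define $h(i)$ for $i\geq 1$ recursively so that $h$ is harmonic for the transition kernel on $\{1,2,\ldots\}$, i.e.
\be
h(i+1)-h(i)=\frac{p(i,i-1)}{p(i,i+1)}\big(h(i)-h(i-1)\big)=\prod_{k=1}^i\frac{\tfrac12+C_1kK^{-1}-C_2\e\s_K}{\tfrac12-C_1kK^{-1}+C_2\e\s_K}\big(h(1)-h(0)\big).
\ee
Then by optional stopping applied to the bounded martingale $h(Z_{n\wedge\tau_0\wedge\tau_{\lceil M\e\s_K K\rceil}})$,
\be
\mathbb P_a\big[\tau_{\lceil M\e\s_K K\rceil}<\tau_0\big]=\frac{h(a)}{h(\lceil M\e\s_K K\rceil)}\leq\frac{h(\lceil\tfrac13 M\e\s_K K\rceil)}{h(\lceil M\e\s_K K\rceil)}.
\ee

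The key step is to control the ratios of increments $\rho_k:=\frac{p(k,k-1)}{p(k,k+1)}$. Writing $p(k,k+1)=\tfrac12-C_1kK^{-1}+C_2\e\s_K$ and using $\log\rho_k=\log(1+4(C_1kK^{-1}-C_2\e\s_K))-\log(1-4(C_1kK^{-1}-C_2\e\s_K))\approx 8(C_1kK^{-1}-C_2\e\s_K)$ for small argument, I would show that for $k$ in the range $[\tfrac13 M\e\s_K K,\,M\e\s_K K]$ the quantity $C_1 kK^{-1}-C_2\e\s_K$ is bounded below by a positive multiple of $\e\s_K$: indeed $C_1 kK^{-1}\geq \tfrac13 C_1 M\e\s_K\geq\tfrac83 C_2\e\s_K$ once $M\geq 8C_2/C_1$, so $C_1kK^{-1}-C_2\e\s_K\geq\tfrac53 C_2\e\s_K\geq c\,\e\s_K$ for a constant $c>0$ (and likewise $\geq c'\e\s_K$ uniformly with $c'$ depending only on $C_1,C_2,M$). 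Hence each factor $\rho_k\geq 1+c''\e\s_K$ on this range, and therefore
\be
\frac{h(\lceil M\e\s_K K\rceil)}{h(\lceil\tfrac13 M\e\s_K K\rceil)}\geq\prod_{k=\lceil\frac13 M\e\s_K K\rceil}^{\lceil M\e\s_K K\rceil}\rho_k\geq(1+c''\e\s_K)^{\frac23 M\e\s_K K}\geq\exp\!\Big(c'''\,M\,\e^2\,\s_K^2\,K\Big)
\ee
for $K$ large, using $\log(1+x)\geq x/2$ for small $x\geq 0$.

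It then remains to compare the exponent $c'''M\e^2\s_K^2 K$ with $K^{2\a}$. Since $\s_K\gg K^{-1/2+\a}$, we have $\s_K^2 K\gg K^{2\a}$, so $\e^2\s_K^2 K/K^{2\a}\to\infty$; thus $\mathbb P_a[\tau_{\lceil M\e\s_K K\rceil}<\tau_0]\leq\exp(-c'''M\e^2\s_K^2 K)=o(e^{-K^{2\a}})$, which is exactly \eqref{mod_de}. The main obstacle I anticipate is bookkeeping the lower bound on the increment ratios uniformly over the relevant range of $k$ and over all admissible starting points $a\leq\tfrac13 M\e\s_K K$, together with making the Taylor estimates $\log(1+x)\asymp x$ rigorous on the (small, $K$-dependent) argument $x=O(\e\s_K)$; these are elementary but must be done carefully so that the constant $c'''$ depends only on $C_1,C_2,M$ and not on $K$ or $\e$. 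A minor point is that $h$ is a priori defined only on $\{0,1,\ldots\}$ and unbounded, so I would apply optional stopping only up to the bounded stopping time $\tau_0\wedge\tau_{\lceil M\e\s_K K\rceil}$, which is finite a.s.\ since the chain is irreducible on a set from which $0$ is reachable.
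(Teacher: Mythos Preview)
Your approach is essentially the same as the paper's: both set up the explicit gambler's-ruin harmonic function $h(i)=\sum_{j=0}^{i-1}\prod_{k=1}^{j}\rho_k$ with $\rho_k=p(k,k-1)/p(k,k+1)$, express $\mathbb P_a[\tau_N<\tau_0]=h(a)/h(N)$, Taylor-expand $\log\rho_k\approx 4(C_1 kK^{-1}-C_2\e\s_K)$, and conclude using $\s_K^2K\gg K^{2\a}$. The paper sums the linear-in-$k$ terms to get a quadratic exponent $\exp\big(-C_3K^{-1}((\tfrac13 M\e\s_K K)^2-a^2)\big)$; you instead take a uniform lower bound $\rho_k\geq 1+c''\e\s_K$ on the range $k\in[\tfrac13 M\e\s_K K,\,M\e\s_K K]$ and multiply. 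Both yield an exponent of order $\e^2\s_K^2 K$, which suffices.

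Two small corrections to tighten before writing this up. First, your Taylor step has a slip: with $x=C_1kK^{-1}-C_2\e\s_K$ one has $\rho_k=\frac{1+2x}{1-2x}$, so $\log\rho_k=\log(1+2x)-\log(1-2x)\approx 4x$, not $8x$; this only changes constants. Second, the displayed inequality $h(N)/h(a)\geq\prod_{k=a}^{N}\rho_k$ is not literally true as written: since $h(a)=\sum_{i=0}^{a-1}\prod_{k=1}^{i}\rho_k$ and $h(N)\geq\prod_{k=1}^{N-1}\rho_k$, what you actually get is $h(N)/h(a)\geq \frac{1}{a}\prod_{k=a}^{N-1}\rho_k$ (using that, for $M\geq 8C_2/C_1$, the product $\prod_{k=1}^{i}\rho_k$ over $i\leq a-1$ is maximized at an endpoint and bounded by $\prod_{k=1}^{a-1}\rho_k$). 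The polynomial prefactor $1/a=O((\s_K K)^{-1})$ is harmless against the exponential and is exactly what the paper absorbs as well.
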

\begin{remark}
The proposition can be seen as a moderate deviation result for this particular Markov chain. 
More precisely, we can prove that there exist two constants $M>0$ and $C_3>0$ which depend only on $C_1$ and $C_2$ such that for $a<\tfrac1 3 M \epsilon \sigma_K K $ 
\be
\mathbb P_a\left[\tau_{\lceil M \epsilon \sigma_KK \rceil }<\tau_0\right] \leq \exp \lb - C_3K^{-1}\lb(\tfrac 1 3 M \epsilon \sigma_KK)^2 -a^2\rb\rb,
\ee
 for all  $ K $ large enough.
\end{remark}
\begin{proof}
We calculate this probability with some standard potential theory arguments.
Let $h_{\lceil M \epsilon \sigma_K K\rceil ,0} (a)$ be the solution of the Dirichlet problem with $\lambda=0$, i.e. 
\begin{align}\nonumber
\mathscr Lh_{\lceil M \epsilon \sigma_K K\rceil ,0} (x)&=0 \qquad\qquad \text{for } 0<x<\lceil M \epsilon \sigma_K K\rceil \\ \nonumber
 h_{\lceil M \epsilon \sigma_K K\rceil ,0} (x)&=1 \qquad\qquad\text{for } x\geq \lceil M \epsilon \sigma_K K\rceil \\
 h_{\lceil M \epsilon \sigma_K K\rceil ,0} (x)&=0\qquad\qquad \text{for } x=0.
 \end{align}
Therefore, we obtain for $0<a<\lceil M \epsilon \sigma_K K\rceil $ (cf. \cite{B_M} p. 188) 
\be\Eq(needsanumber.1)
\mathbb P_a\bigl[\tau_{\lceil M \epsilon \sigma_K K\rceil }<\tau_0\bigr]
		=h_{\lceil M \epsilon \sigma_K K\rceil ,0} (a)=\frac{\sum_{i=1}^a \frac{1}{\pi(i)}\frac{1}{p(i,i-1)}}{\sum_{i=1}^{\lceil M \epsilon \sigma_K K\rceil } \frac{1}{\pi(i)}\frac{1}{p(i,i-1)}},
\ee 
where $\pi=(\pi(0),\pi(1),\pi(2),\ldots)$ is an invariant measure 
of the one-dimensional Markov chain $Z_n$. In our case any invariant measure $\pi$ has to satisfy, for all  $ i\geq 1$,
\be
\pi(0)=p(1,0)\pi(1)\quad\text{ and }\quad
\pi(i)=p(i-1,i)\pi(i-1)+p(i+1,i)\pi(i+1).
\ee
Therefore, $\pi$ with $\pi(0)=1, \pi(1)=\frac 1 {p(1,0)}$ and $\pi(i)=\prod_{j=1}^{i-1}\frac{p(j,j+1)}{p(j,j-1)} \frac{1}{p(i,i-1)}$ is the unique invariant measure for the  Markov chain $Z_n$.
Thus we get  from \eqv(needsanumber.1)
\begin{eqnarray}\nonumber
h_{\lceil M \epsilon \sigma_K K\rceil ,0} (a)
		&=&\frac{\sum_{i=1}^a \prod_{j=1}^{i-1}\frac{p(j,j-1)}{p(j,j+1)}}{\sum_{i=1}^{\lceil M \epsilon \sigma_K K\rceil } \prod_{j=1}^{i-1}\frac{p(j,j-1)}{p(j,j+1)}}\\
&=&\frac{\sum_{i=1}^a \exp\left(\sum_{j=1}^{i-1}\ln\left( \frac{ 1 +2C_1 K^{-1}j- 2C_2\epsilon \sigma_K}{1 -2 C_1K^{-1}j+2C_2 \epsilon \sigma_K}\right)\right)}
	{\sum_{i=1}^{\lceil M\epsilon \sigma_KK\rceil} \exp\Bigl(\sum_{j=1}^{i-1}\underbrace{\ln\Bigl( \tfrac{1 +2C_1 K^{-1}j- 2C_2\epsilon \sigma_K}{ 1 - 2C_1K^{-1}j+ 2C_2\epsilon \sigma_K}\Bigr)}_{=:f(j)}\Bigr)}.
\end{eqnarray}		
For all $j\leq M\epsilon \sigma_KK$ we can approximate $f(j)$ as follwos
\begin{align} \nonumber
f(j) &=\ln\lb1+\tfrac{4C_1 K^{-1}j- 4C_2\epsilon \sigma_K}{ 1 - 2C_1K^{-1}j+ 2C_2\epsilon \sigma_K} \rb
     	= \tfrac{4C_1 K^{-1}j- 4C_2\epsilon \sigma_K}{ 1 - 2C_1K^{-1}j+ 2C_2\epsilon \sigma_K}-O\lb(\tfrac{4C_1 K^{-1}j- 	
     		4C_2\epsilon \sigma_K}{ 1 - 2C_1K^{-1}j+ 2C_2\epsilon \sigma_K})^2\rb\\\nonumber
      &= 4C_1\tfrac j K- 4C_2\epsilon \sigma_K+O\lb (\tfrac j K )^2+\epsilon\sigma_K\tfrac j K+\epsilon^2 \sigma_K^2\rb\\
       &= 4C_1\tfrac j K- 4C_2\epsilon \sigma_K+O\lb (M\epsilon\sigma_K)^2\rb
\end{align}			
Therefore,
\begin{align}
h_{\lceil M \epsilon \sigma_K K\rceil ,0} (a)&
 \leq \frac{\sum_{i=1}^a \exp(\sum_{j=1}^{i-1}4C_1\tfrac j K+O\lb (M\epsilon\sigma_K)^2\rb}
		{\sum_{i=1}^{\lceil M \epsilon \sigma_K K\rceil } \exp(\sum_{j=1}^{i-1}4C_1\tfrac j K- 4C_2\epsilon \sigma_K -O\lb (M\epsilon\sigma_K)^2\rb}\\\nonumber
&\leq \frac{a \exp(2 C_1a^2 K^{-1}+O\lb a (M\epsilon\sigma_K)^2\rb}
		{\sum_{i=1}^{\lceil M \epsilon \sigma_K K\rceil } \exp\lb2C_1 K^{-1}(i^2-i)- 4C_2\epsilon \sigma_Ki -O\lb (i-1)(M\epsilon\sigma_K)^2\rb\rb}\\\nonumber
&\leq\frac{a \exp(2 C_1a^2 K^{-1}+O\lb a (M\epsilon\sigma_K)^2\rb}
		{\sum_{i=\frac 1 2 \lceil M \epsilon \sigma_K K\rceil }^{\lceil M \epsilon \sigma_K K\rceil } \exp\lb2C_1 K^{-1}i^2-(2C_1K^{-1}+ 4C_2\epsilon \sigma_K) i -O\lb i (M\epsilon\sigma_K)^2\rb\rb}.
		\end{align}
Choosing $M\geq \tfrac {8C_2}{C_1}$, if $a<\tfrac { M\epsilon\sigma_KK}{3}$,  then 
\begin{align}\nonumber
h_{\lceil M \epsilon \sigma_K K\rceil ,0}(a)
		&\leq\frac{a \exp(2 C_1a^2 K^{-1}+O\lb a (M\epsilon\sigma_K)^2\rb}
		{\frac 1 2 \lceil M \epsilon \sigma_K K\rceil  \exp\lb (\frac 1 2 C_1M- 2 C_2)M \epsilon^2 \sigma_K^2 K -O\lb (\e \s M)^3K+ \e\s_K M\rb\rb}\\\nonumber
		&\leq 2 a (\lceil M \epsilon \sigma_K K\rceil )^{-1} \: \exp\lb C_1K^{-1}\lb 2 a^2 -\tfrac 1 4 (\lceil M \epsilon \sigma_K K\rceil )^2 \rb\rb\\
&\leq \exp\lb -C_3 K^{-1}\lb (\tfrac 1 3 \lceil M \epsilon \sigma_K K\rceil )^2-a^2\rb\rb.
\end{align}
Since $K^{-\nicefrac 1 2 +\alpha}\ll \sigma_K$ when $K$ tends to infinity, (\ref{mod_de}) follows.
\end{proof}
%
%
\begin{proposition}
\label{prop2}
Let $(Z_t)_{t\geq 0}$ be a branching process with birth rate per individual $b$ and death rate per individual $d$. 
Let $\tau_i$ be the first hitting time of level $i$ by $Z$ and let $\mathbb P_j$ denote the law of $Z$ conditioned on $Z_0=j$,
 and $\mathbb E_j$ the corresponding expectation. Then
\bea
\label{eq:basic} \mathbb P_j\left[ \tau_k<\tau_0\right]&=&\frac{(d/b)^j-1}{(d/b)^k-1} \qquad\qquad\text{for all }1\leq j\leq k-1,\\
\label{rec}\Bigl|\mathbb P_1[\tau_k<\tau_0]-\frac{[b-d]_+}b\Bigr|& \leq &k^{-1}\qquad\qquad \qquad\quad\text{and}\\
\mathbb E_1[\tau_k\wedge \tau_0]&\leq& \frac {1+\ln(k)}{b},
\eea
where $[b-d]_+\equiv\max\{b\!-\!d\:, 0\}$.
Moreover, if $Z_t$ is slightly super-critical i.e. $b=d+\e$, then
\begin{align} \label{max_E/P}
\max_{n\leq k}\frac{\mathbb E_n[\tau_k\wedge \tau_0]}{\mathbb P_n[\tau_k<\tau_0]}\leq\frac {1+\ln(k)}{\e} 
\end{align}
\end{proposition}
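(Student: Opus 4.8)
The plan is to prove the four statements in Proposition~\ref{prop2} in turn, each by elementary means, since $(Z_t)$ is a standard linear birth-death process.

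\textbf{Formula \eqref{eq:basic}.} First I would pass to the embedded discrete-time chain $(\hat Z_n)$ recording the successive values of $Z_t$; as long as $Z_t\geq 1$ this is a simple random walk on $\mathbb Z_+$ that goes up with probability $b/(b+d)$ and down with probability $d/(b+d)$, independently of the current state. For such a walk the classical gambler's ruin computation (solve the harmonic equation $h(i)=\frac{b}{b+d}h(i+1)+\frac{d}{b+d}h(i-1)$ with $h(0)=0$, $h(k)=1$, whose solution is $h(i)=\frac{(d/b)^i-1}{(d/b)^k-1}$) gives $\mathbb P_j[\tau_k<\tau_0]=\frac{(d/b)^j-1}{(d/b)^k-1}$, valid for $b\neq d$; the case $b=d$ is the $\frac{d}{b}\to 1$ limit $j/k$. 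This is routine.

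\textbf{Estimate \eqref{rec}.} Taking $j=1$ in \eqref{eq:basic} gives $\mathbb P_1[\tau_k<\tau_0]=\frac{(d/b)-1}{(d/b)^k-1}=\frac{d-b}{d(1-(d/b)^{k})/\ldots}$; more simply, $\mathbb P_1[\tau_k<\tau_0]=\frac{1-(b/d)}{1-(b/d)^k}$ when $d>b$ — wait, I would just write $\rho=d/b$ and $\mathbb P_1[\tau_k<\tau_0]=(\rho-1)/(\rho^k-1)$. If $b\leq d$ (so $\rho\geq 1$), then $[b-d]_+/b=0$ and $(\rho-1)/(\rho^k-1)\leq 1/k$ because $\rho^k-1\geq k(\rho-1)$ by convexity. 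If $b>d$ (so $\rho<1$), then $[b-d]_+/b=1-\rho$, and $(\rho-1)/(\rho^k-1)=(1-\rho)/(1-\rho^k)$, so $|\mathbb P_1[\tau_k<\tau_0]-(1-\rho)|=(1-\rho)\frac{\rho^k}{1-\rho^k}=(1-\rho)\frac{1}{\rho^{-k}-1}\leq \frac{1-\rho}{k(\rho^{-1}-1)}\cdot\ldots$; cleaner: $(1-\rho)\rho^k/(1-\rho^k)\leq \rho^k\leq 1/k$ is false in general, so instead I would bound $(1-\rho)\frac{\rho^k}{1-\rho^k}$ by noting $1-\rho^k\geq 1-\rho$ hence the quantity is $\leq \rho^k$, and then use $\rho^k\le e^{-k(1-\rho)}\le 1/(k(1-\rho))$ — this still has a $1/(1-\rho)$, so actually the sharp route is: $1-\rho^k=(1-\rho)(1+\rho+\cdots+\rho^{k-1})\ge (1-\rho)\,k\rho^{k-1}$, whence $(1-\rho)\rho^k/(1-\rho^k)\le \rho^k/(k\rho^{k-1})=\rho/k\le 1/k$. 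Good — that is the clean argument.

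\textbf{The two expectation bounds.} For $\mathbb E_1[\tau_k\wedge\tau_0]$ I would compare with the total time the continuous process spends: each visit to a state $i$ (for $1\le i\le k-1$) lasts an $\mathrm{Exp}((b+d)i)$ time, and the expected number of visits to $i$ before $\tau_0\wedge\tau_k$ started from $1$ is at most the corresponding quantity for the pure random walk, which one computes via the Green function to be $\le 1/i$ times a bounded factor; summing the exponential means $\frac{1}{(b+d)i}\cdot(\text{expected visits})$ telescopes to give $\le \frac{1}{b}\sum_{i=1}^{k}\frac1i\le \frac{1+\ln k}{b}$. Alternatively, and perhaps more robustly, use a supermartingale/Lyapunov argument with $V(i)=\frac1b\sum_{j=1}^i\frac1j$ checking $\mathscr L V\le -1$ on $\{1,\dots,k-1\}$, so $\mathbb E_1[\tau_k\wedge\tau_0]\le V(1)\ldots$ — I would work out the exact Lyapunov function so that $\mathscr{L}V=-1$ there, giving the stated bound directly. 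Finally \eqref{max_E/P}: in the slightly supercritical case $b=d+\e$, combine the lower bound $\mathbb P_n[\tau_k<\tau_0]\ge \mathbb P_1[\tau_k<\tau_0]\ge \frac{[b-d]_+}{b}-\frac1k\ge \frac{\e}{b}-\frac1k$ with the upper bound $\mathbb E_n[\tau_k\wedge\tau_0]\le\mathbb E_1[\ldots]\le\frac{1+\ln k}{b}$ (monotonicity in the starting point, since starting higher can only help reach $k$ and hurt reaching $0$) — hmm, the ratio of these is $\frac{1+\ln k}{\e - b/k}$, not quite $\frac{1+\ln k}{\e}$, so the honest approach is to observe that by the strong Markov property $\mathbb E_n[\tau_k\wedge\tau_0]=\mathbb E_n[\tau_k\wedge\tau_0\mid\tau_k<\tau_0]\,\mathbb P_n[\tau_k<\tau_0]+\mathbb E_n[\tau_0\mid\tau_0<\tau_k](1-\mathbb P_n[\ldots])$, and then directly estimate $\mathbb E_n[\tau_k\wedge\tau_0]/\mathbb P_n[\tau_k<\tau_0]$ by conditioning the walk on the event $\{\tau_k<\tau_0\}$: the conditioned process is again a birth-death chain whose drift is now bounded below by a constant of order $\e$ uniformly, for which the hitting time of $k$ has mean $\le(1+\ln k)/\e$ by the same Lyapunov estimate. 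The main obstacle is making this last conditioning step clean without circularity — I expect to handle it by an explicit $h$-transform: the chain conditioned on $\{\tau_k<\tau_0\}$ has transition probabilities weighted by the harmonic function $h(i)=(\rho^i-1)/(\rho^k-1)$, one checks its upward probability is $\ge 1/2+c\e$ for a universal $c>0$, and then the unconditioned-style Lyapunov bound with $V(i)=\frac{1+\ln k}{\e}\cdot(\text{something})$ closes the argument.
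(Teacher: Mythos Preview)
Your arguments for \eqref{eq:basic} and \eqref{rec} are correct and coincide with the paper's (gambler's ruin for the embedded walk; for $\rho<1$ the paper writes $\mathbb P_1[\tau_k<\tau_0]-(1-\rho)=\big(\tfrac{b}{d}+\cdots+(\tfrac{b}{d})^k\big)^{-1}\le 1/k$, which is exactly your inequality $1-\rho^k\ge(1-\rho)k\rho^{k-1}$). For $\mathbb E_1[\tau_k\wedge\tau_0]$ you end up proposing to ``work out the exact Lyapunov function so that $\mathscr L V=-1$'': this is precisely what the paper does --- it solves the inhomogeneous recurrence $be_{n+1}-(b+d)e_n+de_{n-1}=-1/n$ with $e_0=e_k=0$ by variation of parameters, obtaining the closed form
\[
e_n=\tfrac{1}{b-d}\sum_{j=1}^k\tfrac1j\,\tfrac{((d/b)^{k-j}-1)(1-(d/b)^n)}{(d/b)^k-1}
+\tfrac{1}{b-d}\sum_{j=1}^n\tfrac1j\big((d/b)^{n-j}-1\big),
\]
and reads off $e_1\le\frac1b\sum_{j=1}^k\frac1j$.

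Where you diverge, and where there is a genuine gap, is \eqref{max_E/P}. First, the throwaway monotonicity claim ``$\mathbb E_n[\tau_k\wedge\tau_0]\le\mathbb E_1[\tau_k\wedge\tau_0]$'' is false (for a near-critical walk the exit time from the middle of $\{0,\dots,k\}$ is much larger than from the edge). You rightly abandon it, but your replacement via the $h$-transform is incomplete: conditioning on $\{\tau_k<\tau_0\}$ yields control only on $\mathbb E_n[\tau_k\mid\tau_k<\tau_0]$, whereas
\[
\frac{\mathbb E_n[\tau_k\wedge\tau_0]}{\mathbb P_n[\tau_k<\tau_0]}
=\mathbb E_n[\tau_k\mid\tau_k<\tau_0]
+\mathbb E_n[\tau_0\mid\tau_0<\tau_k]\,\frac{\mathbb P_n[\tau_0<\tau_k]}{\mathbb P_n[\tau_k<\tau_0]},
\]
and you never bound the second summand. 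For small $n$ this term is not negligible: with $\rho=d/b$ one has $\mathbb P_n[\tau_0<\tau_k]/\mathbb P_n[\tau_k<\tau_0]\sim\rho^n/(1-\rho^n)$, which is of order $1/\e$ at $n=1$, so the term genuinely contributes at the scale $(1+\ln k)/\e$ and cannot be dropped. The paper sidesteps this entirely by plugging the explicit formulas for $e_n$ and $p_n$ into the ratio: one obtains
\[
\frac{e_n}{p_n}=\tfrac{1}{b-d}\sum_{j=1}^k\tfrac1j\big(1-(d/b)^{k-j}\big)
+\tfrac{1}{b-d}\sum_{j=1}^n\tfrac1j\,\tfrac{((d/b)^{n-j}-1)(1-(d/b)^k)}{1-(d/b)^n},
\]
observes that the second sum is $\le 0$ (each factor $(d/b)^{n-j}-1$ is negative) and bounds the first by $\tfrac{1}{\e}\sum_{j=1}^k\frac1j$. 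If you want to salvage your route, you would need a separate argument for the extinction-conditioned term --- e.g.\ use that the process conditioned on $\{\tau_0<\tau_k\}$ is dominated by the linear birth-death process conditioned on extinction (which is the subcritical process with rates swapped) and bound $\mathbb E_n[\tau_0\mid\tau_0<\tau_k]\cdot\rho^n/(1-\rho^n)$ directly; but this is more work than simply forming $e_n/p_n$ from the closed forms you will already have computed for the third inequality.
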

\begin{proof}
Let $p_j\equiv \mathbb P_j[\tau_k<\tau_0]$. Then $p_0=0$, $p_k=1$ and 
$p_j ={\frac{b}{b+d}}\:p_{j+1} +{\frac{d}{b+d}}\:p_{j-1}$ for all $1\leq j\leq k-1$
by the Markov property. From this recursion, we obtain the characteristic polynomial
\begin{align} 
P(x)=bx^2-(b+d)x+d.
\end{align}
With its roots $1$ and $ d/b$, we obtain the following general solution for the recursion 
\begin{align} 
p_n=\k_0\cdot 1^n +\k_1\Bigl(\frac d b\Bigr)^n, 
\end{align}
where $\k_0$ and $\k_1$ are constants. 
From the initial condition $p_0=0$ and $p_k=1$, we obtain $\k_0=-((\frac d b)^k-1)^{-1}$ and $\k_1=((\frac d b)^k-1)^{-1}$.
Therefore, 
\begin{align}
p_n=\frac {(\frac d b)^n-1} {(\frac d b)^k-1}\quad \text{ and } \quad
p_1=\frac{\frac d b -1}{(\frac d b)^k-1}=\frac{1}{1+\frac d b+\ldots +(\frac d b)^{k-1}}.
\end{align}
If $d\geq b$, this computation implies that $p_1\equiv\mathbb P_1[\tau_k<\tau_0]\leq 1/k$ and $[b-d]_+=0$.
If $d< b$, 
\begin{align}\nonumber
\mathbb P_1[\tau_k<\tau_0]- \small{\frac {b-d}{b}} 
	&=\frac{\frac d b -1}{(\frac d b)^k-1}-(1-\tfrac {d} b)\frac{(\frac d b)^k-1}{(\frac d b)^k-1}
	=\frac{(\frac {d} b-1)(\frac d b)^k}{(\frac d b)^k-1}
	=\frac{\frac {d} b-1}{1-(\frac b d)^k}\\\nonumber
	&= \frac{\frac d b(1-\frac {b} d)}{1-(\frac b d)^k}
	= \frac{1}{\frac b d(1+\frac b d+\ldots +(\frac b d)^{k-1})}
	= \frac{1}{\frac b d+\ldots +(\frac b d)^{k}}\\
	&\leq \frac 1 k.
 \end{align}
Similarly, if $e_n\equiv \mathbb E_n[\tau_k\wedge \tau_0]$, then  $e_n$ is the solution of the following non-homogenous Dirichlet problem:
\begin{align} \nonumber 
\mathscr L\; e_n&=-1,\qquad \;\text{ for } n\in\{1,.., k-1\}\\ 
e_n&=0,\quad \qquad\text{ for } n\in\mathbb N_0\setminus\{1,.., k-1\},
\end{align}
where $(\mathscr L f) (x)=x \big(b[f(x+1)-f(x)]+d[f(x-1)-f(x)]\big)$ is the generator of the branching process $Z$. 
Therefore, we have to solve the following non-homogeneous recurrence
 \be
  e_{n+2}-\tfrac {b+d} {b}e_{n+1}+\tfrac {d}{b}e_{n}=\tfrac{-1}{b(n+1)}\quad\text{ and } 
\quad e_0=e_k=0
\ee
We solve this by variation of parameters. Thus, we first solve the associated linear homogeneous recurrence relation:
\be
 h_{n+2}-\tfrac {b+d} b h_{n+1}+\tfrac d b h_n=0 
 \ee
As we have seen before $h_n=\k_2\: 1+\k_3(\tfrac db)^j $ for any $\k_2,\k_3\in \mathbb R$ solves the equation. 
Obverse that this functions are  the harmonic functions of $\mathscr L$. Second, we have to find a particular solution. 
Let $(x_{1j}, x_{2j})$ the solution of the system of linear equations
\begin{align}
x_{1j}+ (\tfrac d b)^{j+1} x_{2j}&=0\\
x_{1j}+ (\tfrac d d)^{j+2} x_{2j}&=-\tfrac 1{b(j+1)},
\end{align}
then 
\begin{align}\nonumber
e_n^p&=\sum_{j=0}^{n-1}x_{1j}1^n+\sum_{j=0}^{n-1}x_{2j}\Big(\frac d b\Big)^n
		=\tfrac {-1}{b-d}\sum_{j=1}^{n}\frac 1 j + \tfrac 1{b-d}\sum_{j=1}^{n}\frac 1 j \Big(\frac b d\Big)^j\Big(\frac d b\Big)^n\\
		&=\tfrac 1{b-d}\sum_{j=1}^{n}\frac 1 j\Big(\Big(\frac d b\Big)^{n-j}-1\Big)
\end{align}
is a particular solution.
Now, we obtain we obtain the following general solution for the recurrence:
\be
e_n=h_n+e_n^p
=\k_2+\k_3(\tfrac db)^n+\tfrac 1 {b-d}\sum_{j=1}^{n}\frac 1 j\Big(\Big(\frac d b\Big)^{n-j}-1\Big).
\ee
We have the boundary condition $e_0=e_k=0$, therefore $\k_2$ and $\k_3$
 are given by the solution of the following system of linear equations
\begin{align}
\k_2+ \k_3(\frac d b)^0+\tfrac 1 {b-d}\sum_{j=1}^{0}\frac 1 j\Big(\Big(\frac d b\Big)^{0-j}-1\Big)&=0,\\
\k_2+ \k_3(\frac d b)^k+\tfrac 1 {b-d}\sum_{j=1}^{k}\frac 1 j\Big(\Big(\frac d b\Big)^{k-j}-1\Big)&=0,
\end{align}
 and we obtain that
 \begin{align}\nonumber
e_n&=\tfrac 1{b-d}\sum_{j=1}^{k}\frac 1 j\frac{(\frac d b)^{k-j}-1}{(\frac d b)^k-1}
			-\tfrac 1{b-d}\sum_{j=1}^{k}\frac 1 j\frac{(\frac d b)^{k-j}-1}{(\frac d b)^k-1}\Big(\frac db\Big)^n
			+\tfrac 1 {b-d}\sum_{j=1}^{n}\frac 1 j\Big(\Big(\frac d b\Big)^{n-j}-1\Big)\\
	&=\tfrac 1{b-d}\sum_{j=1}^{k}\frac 1 j\frac{((\frac d b)^{k-j}-1)(1-(\frac d b)^n)}{(\frac d b)^k-1}
					+\tfrac 1 {b-d}\sum_{j=1}^{n}\frac 1 j\Big(\Big(\frac d b\Big)^{n-j}-1\Big).		
\end{align}
With this formula we can easily prove the second inequality of the proposition,
\be
e_1
=\tfrac 1{ b -d}\sum_{n=1}^{k}\frac 1 n\frac{(\frac d b)^{k-n}-1}{(\frac d b)^k-1}(1-\tfrac db)+0
\leq\frac 1 b \sum_{n=1}^{k}\frac 1 n \leq \frac {1+\ln(k)} b.
\ee
Finally, we obtain for slightly super-critical  $Z_t$, i.e.  with $b=d+\e$,
\begin{align}\nonumber
\frac{\mathbb E_n[\tau_k\wedge \tau_0]}{\mathbb P_n[\tau_k<\tau_0]}&=\frac{e_n}{p_n}
=\tfrac 1{b-d}\sum_{j=1}^{k}\frac 1 j\underbrace{\Big(\left(\frac d b\right)^{k-j}-1\Big)(-1)}_{\leq 1}
					+\tfrac 1 {b-d}\sum_{j=1}^{n}\frac 1 j\underbrace{\frac{((\frac d b)^{n-j}-1)(1-(\frac d b)^k)}{1-(\frac d b)^n}}_{\leq 0}\\
&\leq \frac{1}{\e}	\sum_{j=1}^{k}\frac 1 j	\leq \frac{ 1+ \ln(k)}{\e},			
 \end{align}
 which proves (\ref{max_E/P}). 
\end{proof}
%
%
\begin{proposition}
\label{prop2.1}
Let $(Z^K_t)_{t\geq 0}$ be a sequence branching process with birth rate per individual $b\geq0$ and death rate per individual $d\geq0$ and $|b-d|=O(\s_K)$, where $K^{-1/2+\a}\ll\s_K\ll 1$.  
Let $\tau_i$ be the first hitting time of level $i$ by $Z$ and let $\mathbb P_j$ denote the law of $Z$ conditioned on $Z_0=j$. 
\begin{enumerate}[(a)]
\setlength{\itemsep}{3pt}
\item The invasion probability can be approximated up to a error of order $\exp(-K^{\a})$, i.e. 
\be
\lim_{K\to \infty}\exp(K^{\a})\big |\mathbb P_1\left[\tau_{\lceil \e\s_K K\rceil}<\tau_{0}\right] -\frac{[b-d]_+}{b}\big |=0.
\ee
\item If $b>d$ (super-critical case), we have exponential tails, i.e.
 \begin{align}
\lim_{K\to \infty}\exp(\sigma_K^{-\a/3})\mathbb P_1\left[\tau_{\lceil \e\s_K K\rceil}>\ln(K)\s_K^{-1-\a/2}\Bigr| \tau_{\lceil \e\s_K K\rceil}<\tau_{0}\right]=0
\end{align}
and  \begin{align}
\lim_{K\to \infty}\exp(K^{\a})\mathbb P_{\lceil\e\s_K K\rceil}\left[\tau_{\lceil \e K\rceil}>\tau_{0}\right]=0
\end{align}
\end{enumerate}
\end{proposition}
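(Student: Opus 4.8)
The plan is to derive Proposition~\ref{prop2.1} directly from Proposition~\ref{prop2} by substituting the concrete levels $k=\lceil\epsilon\sigma_K K\rceil$ and $\lceil\epsilon K\rceil$ into the exact formulas established there, together with two elementary facts: that $|b-d|$ is of order $\sigma_K$, in particular bounded below by $c\sigma_K$ for a uniform $c>0$ (which is the case in all the applications, where the branching process is slightly super- or sub-critical of order $\sigma_K$), so that $|\ln(b/d)|\geq c'\sigma_K$; and that $\sigma_K^2K\gg K^{2\alpha}$, which is immediate from $K^{-1/2+\alpha}\ll\sigma_K$. The one point to be careful about is that the crude estimate $|\mathbb P_1[\tau_k<\tau_0]-[b-d]_+/b|\leq k^{-1}$ of Proposition~\ref{prop2} is far too weak here: with $k=\lceil\epsilon\sigma_K K\rceil$ it only yields an error of order $(\sigma_K K)^{-1}\ll K^{-1/2-\alpha}$, which does not beat $\exp(-K^\alpha)$. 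The point is to use instead the full formulas, in which the geometric factor $(b/d)^{\pm k}$ is super-exponentially large in $K^\alpha$; this is really the only non-routine part.

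For (a), recall from the proof of Proposition~\ref{prop2} that in the supercritical case $b>d$ one has $\mathbb P_1[\tau_k<\tau_0]-\frac{b-d}{b}=\big(\sum_{j=1}^k(b/d)^j\big)^{-1}$, while in the subcritical case $d>b$ one has $[b-d]_+=0$ and $\mathbb P_1[\tau_k<\tau_0]=\big(\sum_{j=0}^{k-1}(d/b)^j\big)^{-1}$. I would bound the geometric sum from below by its largest term, which is at least $\exp(c'(k-1)\sigma_K)$. With $k=\lceil\epsilon\sigma_K K\rceil$ this gives
\[
\Big|\mathbb P_1[\tau_{\lceil\epsilon\sigma_K K\rceil}<\tau_0]-\tfrac{[b-d]_+}{b}\Big|\leq\exp\big(-C\epsilon\,\sigma_K^2K\big)
\]
for $K$ large and a constant $C>0$; since $\sigma_K^2K\gg K^{2\alpha}\gg K^\alpha$, this is $o(\exp(-K^\alpha))$, which is (a).

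For the exponential tail in (b) (where $b>d$), I would use the standard exponential tail bound for hitting times of a Markov chain (\cite{A_MC}, p.~41, already invoked in the proof of Lemma~\ref{Step1}): for every $t>0$, $\mathbb P_1\big[\tau_{\lceil\epsilon\sigma_K K\rceil}\geq t\,\big|\,\tau_{\lceil\epsilon\sigma_K K\rceil}<\tau_0\big]\leq\exp\!\left(-\left\lfloor\frac{t}{e\,m_K}\right\rfloor\right)$, where $m_K:=\max_{n\leq\lceil\epsilon\sigma_K K\rceil}\mathbb E_n\big[\tau_{\lceil\epsilon\sigma_K K\rceil}\wedge\tau_0\,\big|\,\tau_{\lceil\epsilon\sigma_K K\rceil}<\tau_0\big]$. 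Since conditioning on $\{\tau_k<\tau_0\}$ only divides the unconditioned expectation by $\mathbb P_n[\tau_k<\tau_0]\leq1$, estimate~\eqref{max_E/P} of Proposition~\ref{prop2} yields $m_K\leq(1+\ln\lceil\epsilon\sigma_K K\rceil)/(b-d)=O(\ln(K)\,\sigma_K^{-1})$. Taking $t=\ln(K)\sigma_K^{-1-\alpha/2}$ gives $t/(e\,m_K)\geq C'\sigma_K^{-\alpha/2}$, and since $\sigma_K^{-\alpha/2}\gg\sigma_K^{-\alpha/3}$ as $\sigma_K\to0$, the conditional probability is at most $\exp(-\sigma_K^{-\alpha/3})$ for $K$ large.

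Finally, for the extinction estimate in (b), I would set $r=d/b\in(0,1)$ and apply formula~\eqref{eq:basic} of Proposition~\ref{prop2} with $j=\lceil\epsilon\sigma_K K\rceil<k=\lceil\epsilon K\rceil$ (valid for $K$ large since $\sigma_K<1$):
\[
\mathbb P_{\lceil\epsilon\sigma_K K\rceil}\big[\tau_{\lceil\epsilon K\rceil}>\tau_0\big]=1-\frac{r^j-1}{r^k-1}=\frac{r^j-r^k}{1-r^k}\leq\frac{r^j}{1-r}=O(\sigma_K^{-1})\,r^{\lceil\epsilon\sigma_K K\rceil},
\]
and, exactly as in (a), $r^{\lceil\epsilon\sigma_K K\rceil}=\exp(-\lceil\epsilon\sigma_K K\rceil\ln(b/d))\leq\exp(-C\epsilon\,\sigma_K^2K)=o(\exp(-K^\alpha))$. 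This closes the argument; as already stressed, the whole proof is a substitution into Proposition~\ref{prop2} except for the observation that one must keep the exact geometric factors rather than use the $k^{-1}$ bound, and the main obstacle is simply to organize this cleanly across the super- and sub-critical cases.
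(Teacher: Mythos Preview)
Your proposal is correct and follows essentially the same route as the paper: for (a) you use the exact gambler's-ruin formula~\eqref{eq:basic} rather than the crude $k^{-1}$ bound, exploiting that $(b/d)^{\pm\lceil\epsilon\sigma_K K\rceil}$ is $\exp(\pm\Theta(\sigma_K^2 K))$ with $\sigma_K^2 K\gg K^{2\alpha}$; for the tail in (b) you invoke the Aldous--Fill exponential bound together with~\eqref{max_E/P}; and for the extinction estimate you again substitute into~\eqref{eq:basic}. One small wording issue: when you say ``conditioning on $\{\tau_k<\tau_0\}$ only divides the unconditioned expectation by $\mathbb P_n[\tau_k<\tau_0]$'', what you actually need (and what the paper uses) is $\mathbb E_n[\tau_k\mid\tau_k<\tau_0]=\mathbb E_n[(\tau_k\wedge\tau_0)\mathds 1_{\tau_k<\tau_0}]/\mathbb P_n[\tau_k<\tau_0]\leq \mathbb E_n[\tau_k\wedge\tau_0]/\mathbb P_n[\tau_k<\tau_0]$, after which~\eqref{max_E/P} applies directly---but the conclusion is unaffected.
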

\begin{proof}(a) Compare with (\ref{eq:basic}) that 
\begin{align}\label{eq2.2.2}
\mathbb P_1\left[\tau_{\lceil \e\s_K K\rceil}<\tau_{0}\right]= \frac{ (d/ b) - 1}{( d /b)^{\lceil \e\s_K K\rceil} - 1}.
\end{align}
If $b>d$ (sub-critical case), there exist two constants $\underline C^{\text{sub}}>0$ and $\bar C^{\text{sub}}>0$ such that $1+\underline C^{\text{sub}}\s_K\leq d/b\leq 1+\bar C^{\text{sub}}\s_K$. Therefore, the left hand site of  (\ref{eq2.2.2}) does not 
exceed
\be
\frac{\bar C^{\text{sub}} \s_K}{(1+\underline C^{\text{sub}}\s_K)^{\lceil \e\s_K K\rceil} - 1}
\leq\frac{\bar C^{\text{sub}} \s_K}
{\exp(\underline C^{\text{sub}}\s_K\lceil \e\s_K K\rceil-O(\s_K^3\e K))-1}
=o(\exp(-K^{\a})).
\ee
The last equality holds, since  $K^{2\a}\ll\s_K^2 K$.
If $b>d$ (super-critical case),  we obtain similarly 
\be
\left |\mathbb P_1[\tau_k<\tau_0]- \small{\frac {b-d}{b}} \right|
=\left |\frac{\frac {d} b-1}{1-(\frac b d)^k}\right|=o(\exp(-K^{\a})).
\ee
	\\[0.5em]
(b) Compare with \cite{A_MC} page 41, that
  \bea
 &&\mathbb P_1\left[\tau_{\lceil \e\s_K K\rceil}>\ln(K)\s_K^{-1-\a/2}\Bigr| \tau_{\lceil \e\s_K K\rceil}<\tau_{0}\right]\\\nonumber
 &&\leq \exp\left(-\left\lfloor\frac{ \ln(K)\s_K^{-1-\a/2} }{e \max_{n\leq \lceil \e\s_K K\rceil} 
 \mathbb E_n\left[\tau_{\lceil \e\s_K K\rceil}\big| \tau_{\lceil \e\s_K K\rceil}<\tau_{0}\right]} \right\rfloor\right)\leq \exp\left(-\s_K^{-\a/3} \right), 
  \eea
where the last inequality holds, because we can apply Proposition \ref{prop2} 
\bea
\max_{n\leq \lceil \e\s_K K\rceil} 
 \mathbb E_n\left[\tau_{\lceil \e\s_K K\rceil}\big| \tau_{\lceil \e\s_K K\rceil}<\tau_{0}\right]
& =&\max_{n\leq\lceil \e\s_K K\rceil }\frac{ \mathbb E_n\big[\tau_{\lceil \e\s_K K\rceil}\wedge \tau_0\mathds 1_{\tau_0>\tau_{\lceil \e\s_K K\rceil}}\big]}
{\mathbb P_n\big[\tau_0>\tau_{\lceil \e\s_K K\rceil}\big]}\quad
\\\nonumber&\leq&O(\ln(K)\s_K^{-1}).
 \eea 
 On the other hand, we have  
 \be
 \mathbb P_{\lceil\e\s_K K\rceil}\left[\tau_{\lceil \e K\rceil}>\tau_{0}\right]=1- \frac{(d/b)^{\lceil\e\s_K K\rceil} -1}{ (d/b)^{\lceil\e K \rceil}- 1}
 \leq \exp(-K^{2\a})
 \ee
 since $d/b=1-O(\s_K)$ and $ K^{2\a}\ll \s_K\e K$.
\end{proof}
\begin{proposition}
\label{prop2.2}
Let $(Z^K_n)_{n\geq 0}$ a sequence of discrete time Markov Chain with state space $\mathbb Z$ and with transition probabilities 
\be
\mathbb P[Z^K_{n+1}=j|Z^K_n=i]=p(i,j)=
	\begin{cases}
				\tfrac 1 2+C \sigma_K, 	& \text{if } j=i+1,\\
				\tfrac 1 2-C \sigma_K, 	& \text{if }  j=i-1,\\
				0, 	& \hbox{\rm else} ,
	\end{cases}
\ee 
for some constant $C\neq 0$. 
Let $\tau_i$ be the first hitting time of level $i$ by $Z^K$ and let $\mathbb P_j$ denote the law of $Z^K$ conditioned on $Z^K_0=j$ and let $\s_K$ a zero sequence such that $K^{-\frac 1 2+\a}\ll \s_K\ll 1$. 
\begin{enumerate}[(a)]
\setlength{\itemsep}{3pt}
\item If $Z^K$ is slightly supercritical, i.e. $C>0$, then, for all $i\geq 1$
 \be
\lim_{K\to \infty}\exp(K^{\a})\:\mathbb P_{i\lceil(\e/2)\s_K K \rceil}\left[\tau_{(i-1)\lceil (\e/2)\s_K K \rceil}<\tau_{(i+1)\lceil (\e/2)\s_K K \rceil}\right]=0.
\ee
\item  If $Z^K$ is slightly subcritical, i.e. $C<0$, then, for all constants $C_1,C_2,C_3>0$
 \be\label{eq:very last}
\lim_{K\to \infty}\exp(K^{\a})\:\mathbb P_{(C_1+C_2)\lceil\e\s_K K \rceil}\left[\tau_{(C_1+C_2+C_3)\lceil \e\s_K K \rceil}<\tau_{C_1\lceil \e\s_K K \rceil}\right]=0.
\ee
\end{enumerate}
\end{proposition}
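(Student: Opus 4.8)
The plan is to treat both parts by the same elementary gambler's-ruin computation that underlies Propositions~\ref{prop1} and~\ref{prop2}: the chain $Z^K$ is a spatially homogeneous nearest-neighbour walk on $\mathbb{Z}$, so I would set $p_K\equiv\tfrac12+C\sigma_K$, $q_K\equiv\tfrac12-C\sigma_K$ and $r_K\equiv q_K/p_K$, and recall that for integers $a<j<b$ the function $h(x)=\mathbb{P}_x[\tau_b<\tau_a]$ is the bounded solution of $h(x)=p_Kh(x+1)+q_Kh(x-1)$ on $\{a+1,\dots,b-1\}$ with boundary values $0$ at $a$ and $1$ at $b$. Since the characteristic polynomial $p_Kx^2-x+q_K$ has roots $1$ and $r_K$, and $r_K\neq1$ because $C\neq0$, this gives the closed form
\be
\mathbb{P}_j\bigl[\tau_b<\tau_a\bigr]=\frac{r_K^{\,j-a}-1}{r_K^{\,b-a}-1}.
\ee
The whole $K$-dependence then sits in $r_K$, which tends to $1$, and I would record the expansion $\ln r_K=\ln\tfrac{1-2C\sigma_K}{1+2C\sigma_K}=-4C\sigma_K+O(\sigma_K^3)$ once and for all.

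For part (a) I would put $N_K\equiv\lceil(\e/2)\sigma_KK\rceil$ and apply the formula with $a=(i-1)N_K$, $j=iN_K$, $b=(i+1)N_K$, so that $j-a=N_K$ and $b-a=2N_K$; factoring $r_K^{2N_K}-1=(r_K^{N_K}-1)(r_K^{N_K}+1)$ collapses the fraction to $\mathbb{P}_j[\tau_b<\tau_a]=(r_K^{N_K}+1)^{-1}$, hence $\mathbb{P}_j[\tau_a<\tau_b]=r_K^{N_K}/(r_K^{N_K}+1)\le r_K^{N_K}$. Since $C>0$ gives $r_K<1$ and $N_K\ln r_K=-2C\e\sigma_K^2K(1+o(1))$, this probability is bounded by $\exp(-2C\e\sigma_K^2K(1+o(1)))$, and the hypothesis $K^{-1/2+\a}\ll\sigma_K$ forces $\sigma_K^2K\gg K^{2\a}\gg K^\a$, which is exactly what is needed to beat the prefactor $\exp(K^\a)$.

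Part (b) I would do identically, only with the drift reversed: now $C<0$, so $r_K>1$ and $\ln r_K=4|C|\sigma_K+O(\sigma_K^3)$. Taking $N_K\equiv\lceil\e\sigma_KK\rceil$, $a=C_1N_K$, $j=(C_1+C_2)N_K$, $b=(C_1+C_2+C_3)N_K$ (and, if $C_1,C_2,C_3$ are not such that these levels are integers, replacing them by nearby integers, which is harmless since the gaps still grow like $\sigma_KK$), the formula gives $\mathbb{P}_j[\tau_b<\tau_a]=(r_K^{C_2N_K}-1)/(r_K^{(C_2+C_3)N_K}-1)$. For $K$ large the denominator is at least $\tfrac12 r_K^{(C_2+C_3)N_K}$, so the probability is at most $2r_K^{-C_3N_K}\le2\exp(-4|C|C_3\e\sigma_K^2K(1+o(1)))$, again $o(e^{-K^\a})$ by $\sigma_K^2K\gg K^\a$.

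I do not expect a genuine obstacle here: the entire content is the gambler's-ruin identity together with the inequality $\sigma_K^2K\gg K^\a$. The one point deserving care is that $r_K\to1$, so one must keep the leading term of $N_K\ln r_K$ and verify that the $o(1)$ relative error in the exponential rate does not spoil the comparison with $\exp(K^\a)$; since $\sigma_K^2K$ dominates $K^\a$ by a full power $K^\a$, this check is immediate, and the same remark makes the rounding of non-integer levels in (b) irrelevant.
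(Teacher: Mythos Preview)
Your proposal is correct and follows essentially the same approach as the paper: both reduce to the explicit gambler's-ruin formula for a spatially homogeneous nearest-neighbour walk and then invoke $\sigma_K^2K\gg K^{2\alpha}$ to absorb the $\exp(K^\alpha)$ prefactor. The paper's version is terser---it simply cites the formula from Proposition~\ref{prop2} and notes the translation invariance---whereas you spell out the factorisation $(r_K^{N_K}+1)^{-1}$ in (a) and the crude bound $2r_K^{-C_3N_K}$ in (b), but there is no substantive difference in method.
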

\begin{proof}  Since the transition probabilities of $Z^K$ do not depend on the state of $Z^K$, we have that
\begin{align}\label{eq2.2.1}
\mathbb P_{i\lceil (\e/2)\s_K K \rceil}\left[\tau_{(i-1)\lceil  (\e/2) \s_K K \rceil}>\tau_{(i+1)\lceil (\e/2)\s_K K \rceil}\right]=
\mathbb P_{\lceil (\e/2)\s_K K \rceil}\left[\tau_{0}>\tau_{2\lceil (\e/2)\s_K K \rceil}\right]
\end{align} By (\ref{eq:basic}) the left site of (\ref{eq2.2.1}) is equal
\be
\frac{1-(1-2C\s_K+O(\s_K^2))^{\lceil(\e/2)\s_K K \rceil}}{1-(1-2C\s_K+O(\s_K^2))^{2\lceil (\e/2)\s_K K \rceil}}
\geq 1-\exp(- K^2\a),
\ee since $\s_K^2K\gg K^{2\a}$. With the same arguments, we obtain also (\ref{eq:very last}).
\end{proof}
\bibliographystyle{abbrv}


\end{document}